\documentclass[11pt]{article}
\usepackage[a4paper,margin=28mm]{geometry}
\usepackage[T1]{fontenc}
\usepackage{lmodern}
\usepackage{amsmath,amssymb,amsthm,mathtools}
\usepackage{microtype}
\usepackage{enumitem}
\usepackage{bm}
\usepackage{stmaryrd}
\SetSymbolFont{stmry}{bold}{U}{stmry}{m}{n}
\usepackage[hidelinks]{hyperref}
\hypersetup{pdftitle={Bulk universality of random regular graphs},
 pdfauthor={Yukun He, Jiaoyang Huang, Xiaoyu Wang}}
\newcommand{\proofstep}[1]{\par\medskip\noindent\textit{#1.}\ }
\numberwithin{equation}{section}
\newtheorem{theorem}{Theorem}[section]
\newtheorem*{mainthm}{Main theorem}
\newtheorem{proposition}[theorem]{Proposition}
\newtheorem{lemma}[theorem]{Lemma}
\newtheorem{corollary}[theorem]{Corollary}
\theoremstyle{remark}
\newtheorem{remark}[theorem]{Remark}

\def\R{\mathbb{R}}
\newcommand{\cG}{\mathcal G}
\newcommand{\bma}{{\bm{a}}}
\newcommand{\bmb}{{\bm{b}}}
\newcommand{\bmt}{\bm{t}}
\newcommand{\bmu}{{\bm{u}}}
\newcommand{\bmv}{{\bm{v}}}
\newcommand{\bmw}{{\bm{w}}}
\newcommand{\bmx}{{\bm{x}}}
\newcommand{\bmy}{{\bm{y}}}
\newcommand{\bmz}{\bm{z}}
\newcommand{\ri}{\mathrm{i}}
\newcommand{\bC}{{\mathbb C}}
\DeclareMathOperator{\Tr}{Tr}
\DeclareMathOperator{\supp}{supp}
\DeclareMathOperator{\diag}{diag}
\renewcommand{\Re}{\mathop{\mathrm{Re}}}
\renewcommand{\Im}{\mathop{\mathrm{Im}}}
\renewcommand{\leq}{\leqslant}
\renewcommand{\geq}{\geqslant}
\newcommand{\bla}{\bm{\lambda}}
\newcommand{\boxi}{\bm{\xi}}
\DeclareMathOperator{\Cov}{Cov}
\DeclareMathOperator{\Var}{Var}
\DeclareMathOperator{\Law}{Law}
\DeclareMathOperator{\TV}{TV}
\newcommand{\E}{\mathbb E}
\newcommand{\PP}{\mathbb P}
\newcommand{\F}{\mathcal F}
\newcommand{\dd}{\,\mathrm d}
\providecommand{\bmn}{\bm{n}}
\providecommand{\bme}{\bm{e}}
\providecommand{\bmg}{\bm{g}}

\usepackage{titlesec}
\titleformat{\subsection}[block]{\large\bfseries}{\thesubsection.}{.5em}{}
\titlespacing{\subsection}{0pt}{2ex plus .1ex minus .2ex}{.8em}
\titleformat{\subsubsection}[runin]{\normalfont\bfseries}{\thesubsubsection.}{.3em}{}[.]
\titlespacing{\subsubsection}{0pt}{2ex plus .1ex minus .2ex}{.5em}

\title{\bf \Large Bulk universality of random regular graphs}
\author{Yukun He\footnote{Shanghai Center for Mathematical Sciences, Fudan University. Email:	\href{mailto:heyukun@fudan.edu.cn}{heyukun@fudan.edu.cn}} \and Jiaoyang Huang\footnote{Courant Institute of Mathematical Sciences, New York University. Email:	\href{mailto:jh4427@nyu.edu}{jh4427@nyu.edu}} \and Xiaoyu Wang\footnote{Courant Institute of Mathematical Sciences, New York University. Email:	\href{mailto:xiaoyu.w@cims.nyu.edu }{xiaoyu.w@cims.nyu.edu }}\vspace{1em}} 
	
\date{}

\begin{document}
\maketitle
\begin{abstract}
We consider the adjacency matrix of a uniformly random simple $d$-regular
 graph on $N$ vertices. For every fixed $d\geq3$ and every fixed bulk
 energy, we prove that the rescaled eigenvalue point process converges
 to the $\mathrm{Sine}_1$ process with intensity $1/\pi$. We also establish
 universality of consecutive gaps at deterministic bulk labels and of
 fixed-energy correlation
 measures. The eigenvector input consists of polynomial estimates for
 mixed fourth moments on a uniformly sampled star. These estimates
 prevent loss of variance in the qualitative joint Gaussian limits of
 Backhausz--Szegedy, yielding independent variance-one Gaussian waves
 for fixed tuples at nearby energies. Exact switching identities then
 give a stationary marked flow and the microscopic loop hierarchy,
 which identifies the point-process limit. The appendices prove the
 stronger correlation and gap statements using counting-moment bounds
 and conditional log-gas laws for tagged limits.
\end{abstract}

\tableofcontents\bigskip

\section{Introduction}
We study a uniformly random simple, undirected $d$-regular graph
$\mathcal G=\mathcal G_{N,d}$ on the labelled vertex set
$\llbracket N\rrbracket=\{1,\ldots,N\}$, where $d\ge3$ is fixed.
Throughout, $N\to\infty$ along integers for which $dN$ is even
and the graph exists. Its adjacency matrix $A$ is real symmetric and
has zero diagonal.
We order its eigenvalues, including multiplicities, as
\[
 d=\lambda_1\ge\lambda_2\ge\cdots\ge\lambda_N.
\]
We choose an orthonormal eigenbasis with
$\bmu_1=N^{-1/2}\bm 1$, where $\bm 1$ is the all-ones vector.
This vector has eigenvalue $d$; every other basis vector is orthogonal
to it, including when the graph is disconnected.
All bulk intervals considered below are compact subsets of
$(-2\sqrt{d-1},2\sqrt{d-1})$, and hence exclude this eigenvalue. 

One of the most prominent questions for a random matrix model is its local eigenvalue statistics. For Wigner matrices, the universality of bulk eigenvalue gaps and correlation functions has been established under general assumptions~\cite{BEYY,EYGap,EYY}. The results were later extended to sparse matrices in \cite{EKYY1,EKYY2,HLY}, including adjacency matrices of Erd\H{o}s-R\'enyi graphs in regimes with polynomially growing average degree. Very recently, Henheik and Knowles \cite{HK26} proved convergence of the bulk point process at a fixed energy for $(\log N)^{2+\kappa}\leq Np\leq N^{1-\kappa}$, for every fixed $\kappa>0$.

The same question becomes much more difficult for random regular graphs, where the adjacency matrix is
highly structured and its entries are no longer independent. A local resampling technique was developed in \cite{BKY}, and it was proved that when $N^\varepsilon\leq d\leq N^{2/3-\varepsilon}$ for a fixed $\varepsilon>0$, the eigenvalue density of $A/\sqrt{d-1}$ matches with the semicircle law on all mesoscopic scales. Together with a comparison between switching dynamics and Dyson Brownian motion, \cite{BHKY} proved the bulk universality in the same degree regime. More recently, Bourgade and Huang~\cite{BH} proved convergence of the bulk point process at a fixed energy for $(\log N)^{24}\ll d\leq N^{1/2}$ using microscopic loop equations.

When $d$ is fixed, the empirical eigenvalue distribution converges to the
Kesten--McKay law~\cite{Kesten,McKay}. Local laws and eigenvector delocalization were proved for sufficiently large fixed degree by Bauerschmidt, Huang and Yau~\cite{BHY}, and for every fixed $d\geq3$ by Huang and Yau~\cite{HY}. Numerical simulations have long
suggested that the rescaled local eigenvalue statistics in the bulk agree
with those of the Gaussian Orthogonal Ensemble (GOE)~\cite{JMRR,OS}. 

In this paper, we prove bulk universality for every fixed $d\geq3$.
We first state convergence of the local point process, and then the
stronger gap and correlation conclusions. Write
\[
 \rho_d(x)=\frac{d\sqrt{4(d-1)-x^2}}{2\pi(d^2-x^2)}
          \mathbf1_{|x|\leq2\sqrt{d-1}},\qquad
 \rho_{\mathrm{sc}}(x)=\frac{\sqrt{4-x^2}}{2\pi}
          \mathbf1_{|x|\leq2}
\]
for the Kesten--McKay and semicircle densities, respectively.
Our GOE convention is the real symmetric matrix whose upper-triangular
entries are independent, with off-diagonal entries $N(0,N^{-1})$
and diagonal entries $N(0,2N^{-1})$. Its limiting density is
$\rho_{\mathrm{sc}}$; write $\mu_1\geq\cdots\geq\mu_N$ for its
ordered eigenvalues.

We use the normalization of Bourgade and Huang~\cite[Section~1.1]{BH}:
$\Theta_1$ denotes the translation-invariant $\mathrm{Sine}_1$ point
process with intensity $1/\pi$, so
\[
 \E\Theta_1(I)=\frac{|I|}{\pi}
 \qquad\text{for every bounded interval }I\subset\R.
\]
Thus the mean microscopic spacing is $\pi$. All eigenvalue and gap
scalings below use this convention. Here $\delta_x$ denotes a unit
point mass at $x$, and point measures count eigenvalues with their
multiplicities.

\begin{mainthm}[Fixed-energy point-process convergence]
\phantomsection\label{th:main-pointprocess}
Fix $d\geq3$ and a deterministic energy
$E\in(-2\sqrt{d-1},2\sqrt{d-1})$. Then, as $N\to\infty$,
\[
 \sum_{j=1}^N\delta_{N\pi\rho_d(E)(\lambda_j-E)}
 \ \Longrightarrow\ \Theta_1
\]
in distribution for the vague topology on locally finite counting
measures on $\R$. The randomness on the left is only that of the
uniformly random simple $d$-regular graph.
\end{mainthm}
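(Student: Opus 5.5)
The strategy follows the abstract: the random regular graph is treated through the microscopic loop-equation method of Bourgade and Huang~\cite{BH}, with the Gaussian-wave eigenvector input that is available at fixed degree replacing the large-$d$ concentration used there. Set $\varphi_N=\sum_j\delta_{N\pi\rho_d(E)(\lambda_j-E)}$. We first establish tightness. The local law of Huang and Yau~\cite{HY}, valid for every fixed $d\ge3$, yields rigidity and Wegner-type bounds at scales $N^{-1+\varepsilon}$; combined with a counting-moment bound $\E\varphi_N(I)^k\le C_{k,I}$ on microscopic intervals, this gives tightness of $\varphi_N$ in the vague topology. It then suffices to show that every subsequential limit $\Phi$ equals $\Theta_1$ in law. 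For this we use the characterization behind~\cite{BH}: a point process that is translation invariant with intensity $1/\pi$, has finite counting moments, and satisfies the microscopic loop hierarchy with parameter $\beta=1$ (the hierarchy of Dyson--Schwinger identities for the limiting Stieltjes-type functionals $\sum_j (z-x_j)^{-1}$ regularized against the mean) must be $\mathrm{Sine}_1$.

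Stationarity and the loop hierarchy both come from exact switching identities. A simple switching on a uniformly chosen star, or two-edge configuration, preserves the uniform measure on $d$-regular graphs. First-order perturbation then expresses the induced eigenvalue motion $\delta\lambda_j$ through products $u_j(a)u_j(b)$ of eigenvector entries at the switched vertices. Summed over switchings and rescaled in time, this produces a stationary marked flow for $\varphi_N$. Its drift, expanded to second order, has the structure of Dyson Brownian motion with $\beta=1$: a diffusion coefficient $\propto \sum (u_j(a)u_j(b))^2$ and a repulsion $\propto\sum_{k\ne j}\frac{u_j u_k\cdots}{\lambda_j-\lambda_k}$. Because the measure is invariant, the generator applied to any smooth local test functional of $\varphi_N$ has expectation zero. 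Applying this identity to products of resolvent functionals $\Tr(A-E-z/N)^{-1}$ and passing to the limit produces the microscopic loop hierarchy. The prerequisite is that the mark averages converge to their Gaussian values: for fixed tuples of eigenvectors $\bmu_{j_1},\dots,\bmu_{j_m}$ with eigenvalues within $O(1/N)$ of $E$, the entries on a uniformly sampled star must behave like independent $N(0,1/N)$ variables. In particular, $\E[N^2u_j(a)^2u_k(b)^2]\to1$, $\E[N^2u_j(a)^4]\to3$, and mixed odd moments vanish.

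The main obstacle is this eigenvector step. Backhausz--Szegedy give only qualitative joint Gaussian limits of eigenvectors, with variance at most $1$, and variance may in principle be lost, since the limit could be $\sigma g$ with $\sigma<1$ or even $0$. Moreover, their result does not control tuples or fourth moments uniformly enough to pass under the switching expectations. The plan is to prove polynomial bounds on mixed fourth moments over a uniformly sampled star, namely $\E\,N^2 u_j(a)^2u_k(b)^2\le C$ uniformly at nearby energies, together with the normalization $\sum_a u_j(a)^2=1$. The bounds would come from local-law moment estimates for the Green's function at scale $N^{-1+\varepsilon}$ and spectral averaging over the $O(1)$ eigenvalues in a microscopic window. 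Uniform integrability then forces the Backhausz--Szegedy limiting variance to equal $1$, because no mass can escape, and orthogonality gives independence across distinct eigenvectors. The delicate point is that at fixed $d$ the local law controls only averages over windows of many eigenvalues, not individual eigenvectors. I would therefore try first to work with window-averaged observables $\sum_j f(N(\lambda_j-E))u_j(a)u_j(b)$, whose moments are controlled by the resolvent, and to deduce the per-tuple statements by a tagged/conditioning argument.

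With the Gaussian marks established, the limiting identities are exactly the $\beta=1$ loop hierarchy. The intensity $1/\pi$ after scaling by $\rho_d(E)$ follows from the Kesten--McKay local law. Uniqueness in~\cite{BH} then identifies $\Phi=\Theta_1$, which completes the proof.
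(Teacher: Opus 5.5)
Your outline has the same overall structure as the paper: Backhausz--Szegedy waves whose variance is pinned by uniform integrability, reversibility of switchings, the $\beta=1$ hierarchy, and the Bourgade--Huang characterization. The gap is that its central mechanism fails at fixed degree. A simple switching is not a small perturbation, so there is no first- or second-order expansion and no time rescaling that turns the switching chain into a Dyson-type diffusion. Write $\widetilde A-A=2WDW^T$, where the two columns of $W$ are the switching directions, and let $\bmg_j=\sqrt N\,W^T\bmu_j$, which is of order one. The first-order shift of a bulk eigenvalue is $\frac2N\bmg_j^TD\bmg_j$, that is $2\sigma\,\bmg_j^TD\bmg_j$ in the coordinate $x=N\sigma(\lambda-E)$; this is comparable to the mean spacing. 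The second-order term is $4\sigma^2\sum_{k\ne j}(\bmg_j^TD\bmg_k)^2/(x_j-x_k)$, again of order one. Every higher order is equally large, and the series need not converge when neighbouring gaps are small. At growing degree the normalized switching has size $O(d^{-1/2})$, which is what makes such expansions possible there. Hence the drift ``with the structure of $\beta=1$ Dyson Brownian motion'' cannot be produced at finite $N$, and the hierarchy cannot be read off from it.

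The missing idea is nonperturbative. Woodbury's identity and the determinant lemma express $\widetilde G$, $\widetilde m_N$ and the new $2\times2$ compression exactly and rationally in $K(z)=W^TG(z)W$. After the normalization $S_N=\sigma^{-1}V^{-1/2}(K-H)V^{-1/2}$ and passage to a subsequential limit, two symmetries hold:
\begin{itemize}
\item reversibility makes the law of the marked state $(s,S)$ invariant under the discrete involution $\mathcal T(s,S)=\bigl(s-\partial_w\log\det(S+qD),\,-qD-c^2L(S+qD)^{-1}L^T\bigr)$;
\item Gaussianity of the limiting marks makes it invariant under rotations $\mathcal R_\theta$.
\end{itemize}
The continuous flow is $\Phi_\theta=\mathcal T\mathcal R_\theta\mathcal T$, which exists only in the limit. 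Apply stationarity $\E\mathcal VF=0$ of its generator to $F=S_{12}(w_1)\prod_{j\ge2}s(w_j)$ and average over the marks. This gives the hierarchy, including the constant $1$ in $s'+s^2+1$, which comes out of the exact algebra of $\mathcal T$. That constant carries the intensity. So you need neither translation invariance of the limit, which you could not prove a priori, nor a microscopic intensity statement from the Kesten--McKay law, which the local law does not give below $\eta=N^{-1+\varepsilon}$.

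The eigenvector and tightness inputs you propose would not suffice either. At fixed $d$ the local law reaches only $\eta\ge N^{-1+\varepsilon}$. From $\Im G_{vv}(\lambda_j+\ri\eta)\ge\bmu_j(v)^2/\eta$ and bounded diagonal entries one gets $N\sum_v\bmu_j(v)^4\le CN^{2\varepsilon}$ (or a polylogarithm from delocalization), never $O(1)$. A microscopic window is also only known to contain $O(N^{\varepsilon})$ eigenvalues. So neither the uniform integrability that excludes loss of variance nor the bound $\E\varphi_N(I)^k\le C_{k,I}$ follows. ``Spectral averaging over the $O(1)$ eigenvalues in a microscopic window'' presupposes the microscopic tightness you are trying to prove.

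Moreover, an upper bound $\E N^2u_j(a)^2u_k(b)^2\le C$ is the wrong kind of estimate for the switching identities. $K$ is a sum over all bulk eigenvectors, and its conditional variance contains the cross-covariances of quadratic marks at distinct eigenvectors multiplied by $\bigl(\frac1N\sum_{\lambda_i\in J}|\lambda_i-z|^{-1}\bigr)^2$. At microscopic height this factor is only bounded by $N^{2\varepsilon}$. You therefore need the mixed fourth moments on the star to agree with their Gaussian values up to $N^{-\chi}$ with $\chi>2\varepsilon$, simultaneously for all eigenvectors in a bulk interval. Qualitative per-tuple Gaussianity controls only a fixed finite prefix of the spectral sum.

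In the paper this polynomial estimate comes from a quantitative form of the Backhausz--Szegedy entropy-maximality argument, not from Green-function moments. Its ingredients are an entropy deficit for Gaussian noise with covariance $\Im(A-\lambda-\ri N^{-c})^{-1}$, a uniform counting bound for colour profiles, Ornstein--Uhlenbeck monotonicity, and fourth-chaos coercivity. Microscopic tightness comes from reversibility itself. The forward and reverse determinant ratios are reciprocal and cannot both be large, which yields a factorial tail for $|m_N(E+\ri\eta)|$ at $\eta\asymp1/N$ on a high-probability event. Only capped moments are needed for the point-process theorem.
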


The vague topology tests a measure against continuous compactly
supported functions. Equivalently, the conclusion can be written in
terms of Laplace functionals: for every nonnegative $f\in C_c(\R)$,
\[
 \lim_{N\to\infty}\E\exp\left\{-\sum_{j=1}^N
       f\bigl(N\pi\rho_d(E)(\lambda_j-E)\bigr)\right\}
 =\E\exp\left\{-\int_\R f(x)\,\dd\Theta_1(x)\right\}.
\]
The expectations on the two sides are under the graph law and the
$\mathrm{Sine}_1$ law, respectively. The distinguished eigenvalue $\lambda_1=d$
escapes every bounded microscopic interval, so omitting it gives the
same limit.

The main text proves this theorem in Section~\ref{fe:sec}.
Convergence in law does not by itself justify expectations of
unbounded local counting observables, and a fixed-energy window does
not keep track of a deterministic eigenvalue label. The following
stronger theorem treats both issues. Its correlation statement is
proved in Appendix~\ref{app:correlations}; its gap statement is proved
in Appendix~\ref{sec:gap-universality}, using the comparison established
in Appendix~\ref{app:gap-identification}.

For the gap normalization, define the upper-tail quantiles by
\[
 \int_{\gamma_i}^{2\sqrt{d-1}}\rho_d(x)\,\dd x=\frac{i}{N},\qquad
 \int_{\gamma_i^{\mathrm{sc}}}^{2}\rho_{\mathrm{sc}}(x)\,\dd x
 =\frac{i}{N},\qquad 1\leq i\leq N.
\]
These quantiles follow the decreasing eigenvalue order. We use this
convention throughout the main text and the graph gap argument;
Appendix~\ref{app:gap-identification} explicitly reverses the order for
its auxiliary GOE comparison.

\begin{theorem}[Gap and fixed-energy correlation universality]\label{th:gaps}
Fix $d\geq3$ and $k\geq1$.

\noindent\textup{(i)} For every $0<\delta<1/2$ and $F\in C_b(\R^k)$,
\[
\begin{aligned}
 \lim_{N\to\infty}\sup_{\delta N\leq i\leq(1-\delta)N}
 \Big|&\E F\bigl(N\pi\rho_d(\gamma_i)
       (\lambda_i-\lambda_{i+1},\ldots,
        \lambda_{i+k-1}-\lambda_{i+k})\bigr)\\
 &-\E F\bigl(N\pi\rho_{\mathrm{sc}}(\gamma_i^{\mathrm{sc}})
       (\mu_i-\mu_{i+1},\ldots,
        \mu_{i+k-1}-\mu_{i+k})\bigr)\Big|=0.
\end{aligned}
\]
The supremum is over integer indices; all displayed labels exist for
sufficiently large $N$.

\noindent\textup{(ii)} Fix $\tau>0$, a deterministic energy
$E\in[-2\sqrt{d-1}+\tau,2\sqrt{d-1}-\tau]$, and
$\varphi\in C_c(\R^k)$. Put $\widetilde E=E/\sqrt{d-1}$. Then
\begin{equation}\label{th:corrformula}
\begin{aligned}
 \lim_{N\to\infty}\Bigg(&\E\sum_{i_1,\ldots,i_k}^{\ne}
 \varphi\bigl(N\pi\rho_d(E)(\lambda_{i_1}-E),\ldots,
              N\pi\rho_d(E)(\lambda_{i_k}-E)\bigr)\\
 &-\E\sum_{i_1,\ldots,i_k}^{\ne}
 \varphi\bigl(N\pi\rho_{\mathrm{sc}}(\widetilde E)
                         (\mu_{i_1}-\widetilde E),\ldots,
              N\pi\rho_{\mathrm{sc}}(\widetilde E)
                         (\mu_{i_k}-\widetilde E)\bigr)\Bigg)=0.
\end{aligned}
\end{equation}
The sums run over ordered tuples of distinct indices in
$\{1,\ldots,N\}$. In both parts, the graph and GOE expectations are
separate; no coupling of the two ensembles is required.
\end{theorem}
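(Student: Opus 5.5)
The plan is to bootstrap both parts from the Main theorem, which gives vague convergence of the rescaled point process to $\Theta_1$ at every fixed bulk energy. Two further inputs are needed: uniform integrability of local counts, and a comparison that tracks deterministic labels.

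For part (ii), I would first show a counting-moment bound. For every compact interval $I$ and every $p\ge1$, the quantity $\sup_N\E\bigl[\#\{j:N\pi\rho_d(E)(\lambda_j-E)\in I\}^p\bigr]$ is finite, uniformly in $E$ in the bulk. The natural route is the local law of Huang--Yau at scale $\eta=N^{-1}$ (or slightly above) combined with the Stieltjes-transform bound $\#\{j:|\lambda_j-E|\le\eta\}\le 2N\eta\,\Im m_N(E+\ri\eta)$. The difficulty is that the fixed-$d$ local law holds only down to $\eta\ge N^{-1+\epsilon}$ with high probability, which gives counts of size $N^{\epsilon}$, not $O(1)$. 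I would therefore obtain polynomial tails at the true microscopic scale from the mixed fourth-moment eigenvector estimates and the switching identities already established: iterated loop-hierarchy identities bound the factorial moments $\E\prod(\mathcal N_I-r)$. Given uniform moments of all orders, the map $\mu\mapsto\int\varphi\,\dd\mu^{\otimes k}$ restricted to off-diagonal tuples is a continuous functional of the counting measure for $\varphi\in C_c$; the diagonal is handled by the simple-points property of $\Theta_1$ and by no accumulation in the prelimit. Uniform integrability then upgrades convergence in law to convergence of the $k$-point factorial moment expectations. GOE converges to the same $\Theta_1$ with the same normalization, by classical results, which proves \eqref{th:corrformula}.

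For part (i), labels must be located. I would prove rigidity: $|\lambda_i-\gamma_i|\le N^{-1+\epsilon}$ fails with vanishing probability is too strong for fixed $d$, so instead I would use $|\lambda_i-\gamma_i|=o(1)$ with mesoscopic precision $N^{-1+\epsilon}$, which follows from the local law. Then I would average over energies. The gap statistic at label $i$ is compared with the gap statistic at a random label near energy $\gamma_i$. Translation invariance of the fixed-energy limits over a mesoscopic window $|E'-\gamma_i|\le N^{-1+\epsilon}$ (the Main theorem, applied uniformly in $E'$ via a compactness argument) identifies the averaged gap law with the Palm gap law of $\Theta_1$. Passing from averaged labels to a single deterministic label is the genuine obstacle. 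For this I would use a tagged-limit argument: condition on the configuration outside a microscopic window, and show that the conditional law of the points inside converges to a log-gas with $\beta=1$. The labels are then determined by the exterior count, which concentrates, and I would invoke the conditional log-gas laws mentioned in the abstract. The GOE side admits the same description, since Appendix~\ref{app:gap-identification} compares the two, so the two gap laws agree uniformly over $\delta N\le i\le(1-\delta)N$.

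The main obstacle is step one, the microscopic-scale counting-moment bounds for fixed $d$, together with the label-fixing in (i). I would attack the former first by deriving a closed inequality for factorial moments from the microscopic loop hierarchy.
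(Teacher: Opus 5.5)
Your outline has the right overall architecture: uniform integrability for (ii), and for (i) a tagged limit with conditional $\beta=1$ log-gas laws. However, the key step in each part has a genuine gap.

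\textbf{Part (ii): the counting-moment bound.} The loop hierarchy of Proposition~\ref{lp:criterion} holds only for subsequential limits. It therefore cannot supply the finite-$N$ moment bounds that uniform integrability needs; using it that way is circular. What the switching identities do give at finite $N$ is the reversibility tail of Proposition~\ref{rs:scalar}:
\[
\PP(n_N(I)>M)\le Ce^{-cM\log(2+M)}+CN^{-b}.
\]
The additive $N^{-b}$ is the real obstacle, and you do not address it. It is the failure probability of the graph events (fourth-moment, local-law and geometric) on which every eigenvector input lives, and $b$ is a fixed small exponent. Off those events $n_N(I)$ may be as large as $N$, so $\E n_N(I)^p$ is uncontrolled once $p$ is large.

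The paper closes this with a separate cap, Lemma~\ref{co:cap}: $n_N(I)\le C(\log N)^B$ outside probability $N^{-D_0}$, for arbitrary $D_0$. Its proof runs as follows.
\begin{enumerate}
\item Repair the finitely many bad geometric clusters by a bounded number of switchings (Lemma~\ref{co:repair}).
\item Apply the Huang--Yau local law, which holds with arbitrarily large polynomial probability only on the geometric class, to the repaired graph.
\item Transfer the bad event back through a polynomial fiber bound, and return to the original graph by rank interlacing.
\end{enumerate}
One then integrates the strong tail only up to $(\log N)^B$ and uses $n_N(I)\le N$ beyond it (Lemma~\ref{co:ui}). Without an ingredient of this kind, your step one does not go through.

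\textbf{Part (i): the deterministic label.} Here the decisive steps are asserted rather than proved. You ``invoke the conditional log-gas laws mentioned in the abstract'' and Appendix~\ref{app:gap-identification}, but those are exactly what must be established. Graph eigenvalues have no Gibbs structure at finite $N$. The paper obtains one only for a limit object, after building a tagged limit centred at $\lambda_i$. That construction needs:
\begin{enumerate}
\item a random-centre tightness estimate (Lemma~\ref{gp:tag}), because fixed-energy tightness does not transfer when a switching can move $\lambda_i$ by more than a microscopic amount;
\item the normalization $s(\ri Y)\to\ri$, and infinitely many particles on both sides;
\item retention of the label under switching, via the inertia identity~\eqref{gp:inertia};
\item simplicity and the recentred stationary flow;
\item rigidity from the rooted loop equation;
\item coordinate integration by parts with the compensator-corrected derivative, after integrating over translations to free the root (Lemma~\ref{gp:gibbs}).
\end{enumerate}

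Your alternative, determining the label from a concentrating exterior count, fails at fixed $d$. Only coarse rigidity is available, so counting cannot locate a deterministic label to $O(1)$ accuracy. The paper never tries to; it keeps the tag at the origin throughout.

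The energy-averaging detour via translation invariance only recovers averaged labels, which you yourself note is insufficient. It therefore contributes nothing to the deterministic-label statement. The uniformity over $\delta N\le i\le(1-\delta)N$ also still requires the subsequence/contradiction argument, which you do not give.
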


Part (i) concerns a deterministic eigenvalue label and the consecutive
gaps following that label. Part (ii) instead observes the spectrum near
a deterministic energy, without averaging that energy. The first
window moves with the random spectrum, whereas the second does not;
in particular, the two statements require different additional
arguments beyond the \hyperref[th:main-pointprocess]{main point-process
theorem}. The sums in (ii) are factorial-moment observables. We assert
convergence of correlation measures tested against $C_c(\R^k)$, not
pointwise convergence of finite-$N$ correlation densities: the graph
model is discrete, and such densities are not assumed to exist.

The common input to all three eigenvalue conclusions is the analysis
of bulk eigenvectors. It allows us to pass to the microscopic limit in
exact switching identities. In the main text these identities yield
loop equations for the fixed-energy process; in the gap appendix they
also yield conditional log-gas laws for a process viewed from a fixed
label.

A Gaussian wave on the infinite
$d$-regular tree is a centered Gaussian process, invariant under tree
automorphisms, which satisfies the eigenvalue equation at every vertex~\cite{Elon}. We use the normalization in which every coordinate has
variance one, and call its restriction to a root and its $d$ neighbors
the Gaussian star law. Gaussian waves are the tree analogue of Berry's
random wave model~\cite{Berry}.

For an orthonormal selection $\bmu_1,\ldots,\bmu_k$ with eigenvalues
$\bla=(\lambda_1,\ldots,\lambda_k)$, let $\nu_{\bm{X}}$ be the law, conditional on the graph,
of $(\sqrt N \bmu_a(v_j))_{a,j}$ at a uniform root $v_0$ with uniformly
ordered neighbors $v_1,\ldots,v_d$. In this empirical law the graph
and the selected eigenvectors are fixed; the only randomness is the
choice of root and neighbor ordering. Let $\nu_{\bm{Z}_{\bla}}$ be the product
of the centered variance-one Gaussian star laws at these eigenvalues.
Here the subscripts enumerate the selected eigenvectors. For a Gaussian
wave with eigenvalue $\lambda$, the covariance between the root and a
neighbor is $\lambda/d$, and the covariance between distinct neighbors
is $(\lambda^2-d)/(d(d-1))$. The following theorem is the quantitative
eigenvector input used throughout the paper.

\begin{theorem}[Polynomial mixed fourth moments]\label{th:stars}\label{en:fourth}
Fix a compact bulk interval $K$. There are $b,\chi,C>0$ and graph
events of probability at least $1-CN^{-b}$ such that, on each fixed
graph in these events, simultaneously
for $1\leq k\leq4$, every orthonormal selection of $k$ real eigenvectors
with eigenvalues in $K$, and every coordinate choice
$(a_\ell,j_\ell)\in\{1,\ldots,k\}\times\{0,\ldots,d\}$,
$1\leq\ell\leq4$,
\begin{equation}\label{en:fourth-result}
 \left|\int\prod_{\ell=1}^4x_{a_\ell,j_\ell}\dd\nu_{\bm{X}}
 -\int\prod_{\ell=1}^4x_{a_\ell,j_\ell}\dd\nu_{\bm{Z}_{\bla}}\right|
 \leq CN^{-\chi}.
\end{equation}
Repetitions are allowed in the coordinate choices. The selected
eigenvalues may lie anywhere in $K$, and the constants depend only
on $d$ and $K$.
\end{theorem}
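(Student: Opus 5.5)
I would prove the bound by comparing empirical star moments with Gaussian star moments through exact local identities, with the input coming from the local law of Huang and Yau~\cite{HY}. First I would fix the event. It is the high-probability event on which the local law for the Green's function $G(z)=(A-z)^{-1}$ holds down to scale $\Im z\geq N^{-1+\epsilon}$, uniformly for $\Re z\in K$, together with eigenvector delocalization $\|\bmu\|_\infty\leq N^{-1/2+\epsilon}$. On this event, $G_{xy}(z)$ is close to the tree Green's function $P_{xy}(z)$ on the radius-$\ell$ neighborhood of $x$, for all but $N^{1-c}$ vertices $x$, namely those whose $\ell$-neighborhood is a tree. I would take $\ell\asymp c\log_{d-1}N$. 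All estimates below are deterministic on this event. This is what makes the statement hold simultaneously for every orthonormal selection, including degenerate eigenspaces, since each step uses only orthonormality and the eigenvalue equations.

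\emph{Second moments.} The moments $N^{-1}\sum_{v_0}\sum_{j}\bmu_a(v_0)\bmu_b(v_j)$ and the neighbor-neighbor moments are computed exactly from $A\bmu_a=\lambda_a\bmu_a$ and orthonormality. The identity $\sum_j \bmu_b(v_j)=\lambda_b\bmu_b(v_0)$ gives the root-neighbor covariance $\lambda\delta_{ab}/d$. Applying $A^2$ on a tree-like neighborhood gives the neighbor-neighbor covariance $(\lambda^2-d)/(d(d-1))$. Averaging over the uniform neighbor ordering symmetrizes these, and the non-tree vertices contribute $O(N^{-c})$ by delocalization. For $a\ne b$ the cross terms vanish up to $O(N^{-\chi})$, because they reduce to $\langle\bmu_a,\bmu_b\rangle=0$ plus local cycle corrections.

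\emph{Fourth moments.} Here I would use switchings, in the spirit of the local resampling of~\cite{BKY,HY}. Resample the edges at distance $\ell$ from a uniform root, and write $\bmu_a$ on the star as the tree-propagated boundary data $\sum_{y\in\partial B_\ell}P^{(\lambda)}_{v_j y}\,\bmu_a(y)$, which is exact on tree-like balls via the Schur complement. The boundary values are near $N^{-1/2}$ times approximately uncorrelated entries. Their sum over the $(d-1)^\ell$ boundary vertices then satisfies a quantitative CLT, and Wick's formula gives the Gaussian fourth moment with error $(d-1)^{-c\ell}+N^{-c}=N^{-\chi}$.

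\emph{Main obstacle.} The hard step is controlling the fourth-order cross terms $N^{-1}\sum_{v}\prod \bmu_{a}(y_\ell)$ over boundary vertices $y_\ell$. These require decorrelation of products of eigenvector entries at distinct far-apart vertices, a quantum-unique-ergodicity-type statement at polynomial precision. I would try to obtain it by expressing the products through $\Im G$ at spectral scale $\eta=N^{-1+\epsilon}$. Eigenvectors are not isolated by $G$ at this scale, so I would instead use the exact switching invariance of the uniform measure, together with a self-consistent equation for the quantity $\Phi=N^{-1}\sum_v \bmu_a(v)^2\bmu_b(v)^2-(\text{Gaussian value})$. The switching would show that $\Phi$ is approximately equal to a contraction of itself plus $O(N^{-\chi})$, which forces $\Phi=O(N^{-\chi})$. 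The same fixed-point argument would then handle mixed neighbor indices.
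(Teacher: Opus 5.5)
Your second-moment step is harmless. In fact those moments are exact on every simple $d$-regular graph: summing the eigenrelation over the neighbors and removing the diagonal gives $\E_\star X_{a,i}X_{b,j}=\mathbf 1_{a=b}(\Sigma_{\lambda_a})_{ij}$, with no tree or cycle corrections (Lemma~\ref{sp:covariance}).

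\textbf{There is a genuine gap in the fourth-moment step, which is the entire content of the theorem.}

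\emph{The boundary CLT is circular.} You write the star values as $\sum_{y\in\partial B_\ell}P_{v_jy}\bmu_a(y)$ and invoke a quantitative CLT. This requires the boundary values $\sqrt N\bmu_a(y)$, viewed as functions of the uniform root, to be nearly independent with negligible higher joint cumulants. That is precisely the Gaussian-wave statement to be proved. The local law controls Green's function entries, which average over many eigenvectors. It says nothing about the third and fourth joint cumulants of the entries of an \emph{individual} eigenvector.

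\emph{The switching fixed point is not an argument.} You correctly identify the needed input as polynomial-precision decorrelation of $N^{-1}\sum_v\prod_\ell\bmu_{a_\ell}(y_\ell)$. Your remedy is a switching self-consistent equation making $\Phi$ a contraction of itself plus $O(N^{-\chi})$, but no mechanism for the contraction is given. At fixed $d$ a switching is an order-one rank-two perturbation, and it reshuffles all eigenvectors in a microscopic spectral window. So there is no perturbative relation between $\Phi$ computed for $A$ and for $\widetilde A$.

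\emph{It also conflicts with the required uniformity.} Any such argument is probabilistic in the graph and attached to individual eigenvectors. This contradicts your claim that everything is deterministic on one event and therefore uniform over all orthonormal selections, including those inside degenerate eigenspaces.

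\textbf{The missing idea is that the paper never proves any decorrelation of eigenvector entries; it uses a global entropy argument instead.}

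\begin{enumerate}
\item A first-moment count, in the configuration model, of colorings with a prescribed star profile (Lemma~\ref{ch:count}) gives an entropy inequality on one graph event. It holds simultaneously for every bounded signal and every admissible covariance. Applied to the eigenvector tuple plus Gaussian noise with covariance proportional to $\Im(A-\lambda-\ri\eta)^{-1}$, $\eta=N^{-c}$, it reads $h(P_S)-\frac d2h(P_e)\ge N^{-1}h(\bm Y)-o(1)$.
\item Coarse rigidity evaluates the right side as a log-determinant.
\item The local law at height $\eta$ evaluates the local star and edge entropies. The singular normal components cancel exactly, so the star--edge functional of $(\bm X+\bm Z_{\bla})/\sqrt2$ lies within $N^{-\gamma}$ of its Gaussian maximum.
\item Fourth-chaos coercivity of that Gaussian maximizer, Ornstein--Uhlenbeck monotonicity and high-noise smoothing then convert this entropy deficit into the quartic-moment bound. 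The bound $B=(\log N)^C$ on the whitened vector comes from delocalization.
\end{enumerate}

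Uniformity over all selections comes for free, because the counting event does not depend on the signal.
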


\begin{corollary}[Qualitative joint Gaussian stars]\label{en:qualitative}
Fix a compact bulk interval $K$ and $k\geq1$. For every deterministic
sequence $\delta_N\downarrow0$,
\begin{equation}\label{en:qualitative-result}
 \sup_{\substack{\text{orthonormal selections }\bmu_1,\ldots,\bmu_k\\
 \lambda_a\in K,\ \max_{a,b}|\lambda_a-\lambda_b|\leq\delta_N}}
 W_2(\nu_{\bm{X}},\nu_{\bm{Z}_{\bla}})
 \xrightarrow{\PP}0.
\end{equation}
The convergence in probability is with respect to the random graph;
the measures inside $W_2$ are its conditional empirical sampling laws
and their Gaussian comparison laws. The supremum over an empty set is
zero. The cost is Euclidean in the
$k(d+1)$ physical star coordinates. In particular, every fixed tuple
selected measurably from the graph whose eigenvalues converge in
probability to a common bulk energy has independent variance-one
Gaussian star limits, conditionally on the graph. No convergence rate
is asserted. For $k=1$, the restriction on the eigenvalue diameter is
vacuous.
\end{corollary}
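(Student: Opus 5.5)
The approach is to combine the qualitative joint Gaussian limit of Backhausz--Szegedy with the polynomial fourth-moment bounds of Theorem~\ref{th:stars}. Convergence in distribution plus convergence of second moments gives $W_2$ convergence, and Theorem~\ref{th:stars} supplies exactly the second-moment control.

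The first step is to identify subsequential weak limits. Argue by contradiction: suppose there are $\varepsilon>0$, a subsequence, and measurably chosen selections $\bmu^{(N)}_1,\ldots,\bmu^{(N)}_k$ with eigenvalues in $K$ and diameter at most $\delta_N$, such that $W_2(\nu_{\bm X},\nu_{\bm Z_{\bla}})>\varepsilon$ with probability at least $\varepsilon$. Measurable selection of near-extremizers is possible because the supremum runs over a compact set of orthonormal frames of a finite-dimensional space.

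Passing to a further subsequence, the common eigenvalue converges to some $\lambda\in K$. Since $\int\|x\|^2\,\dd\nu_{\bm X}=k(d+1)$ exactly by orthonormality and vertex averaging, the laws $\nu_{\bm X}$ are tight.

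Backhausz--Szegedy's theorem says that, for random regular graphs, every subsequential limit of such eigenvector processes, including joint ones for vectors in nearby spectral windows, is a (possibly degenerate) Gaussian wave. The joint version passes to the Benjamini--Schramm limit with the tuple as a $k$-dimensional factor-of-iid-type process satisfying the eigenvalue equation. So every weak limit of $\nu_{\bm X}$ is a centered Gaussian star law for a $k$-dimensional tree-invariant eigenvector process at eigenvalue $\lambda$.

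Its covariance has the following form: some matrix $\Sigma\preceq I_k$ in the vector index tensored with the single Gaussian wave covariance at $\lambda$. The bound $\Sigma\preceq I_k$ holds because second moments can only drop under weak limits. This drop is the possible loss of variance, and it includes the degenerate case $\Sigma=0$, where mass escapes to localized or non-Gaussian parts.

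The second step uses Theorem~\ref{th:stars} to rule out variance loss. On the high-probability event, all mixed fourth moments of $\nu_{\bm X}$ are within $CN^{-\chi}$ of those of $\nu_{\bm Z_{\bla}}$, hence bounded uniformly. Uniform fourth moments make $x_{a,j}x_{b,j'}$ uniformly integrable, so second moments pass to the limit.

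The relevant second moments are computed directly as root averages. They are $\int x_{a,0}x_{b,0}\,\dd\nu_{\bm X}=\delta_{ab}$, and the root--neighbor moment is $\lambda_a\delta_{ab}/d$ via $\bmu_a^{\top}A\bmu_b$. So the limit covariance equals the full Gaussian covariance with $\Sigma=I_k$, and the limit is $\nu_{\bm Z_\lambda}$. Continuity of the Gaussian star covariance in $\lambda$ gives $W_2(\nu_{\bm Z_{\bla}},\nu_{\bm Z_\lambda})\to0$.

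Weak convergence plus convergence of second moments is equivalent to $W_2$ convergence, so $W_2(\nu_{\bm X},\nu_{\bm Z_{\bla}})\to0$ along the subsequence on the good events. This contradicts the choice of subsequence, proving \eqref{en:qualitative-result}. The remaining claims follow directly: a fixed tuple with eigenvalues converging in probability to $E$ eventually lies in the supremum's range for some $\delta_N\downarrow0$. When $k=1$ the diameter condition is vacuous.

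The main obstacle is the first step. One must invoke Backhausz--Szegedy in a form that applies jointly to $k$ vectors, conditionally on a random graph, with the graph-dependent selection and convergence in probability rather than along deterministic graph sequences. The plan there is to work with the random rooted graph decorated by the $k$ vectors and use local weak convergence to the tree. Then apply their entropy-based Gaussianity theorem for each fixed linear combination $\sum_a c_a\bmu_a$, which is an approximate eigenvector since the diameter is at most $\delta_N$. Finally, use Cramér--Wold to recover joint Gaussianity and invariance.
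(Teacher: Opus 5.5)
Apart from one step, your argument is the paper's. You argue by contradiction along a subsequence, and Backhausz--Szegedy identify every subsequential limit of $\nu_{\bm X}$ as a centered joint Gaussian wave, possibly with lost covariance. The scalar case of Theorem~\ref{th:stars} gives $\E_\star|X_{a,j}|^4\le4$, hence uniform integrability of quadratic products. The exact covariance \eqref{def:cov} then passes to the limit and forces full variance, and weak convergence plus convergence of second moments gives $W_2$ convergence. Getting tightness from the exact second moment, and identifying $\Sigma$ from root--root covariances alone, are harmless variants.

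The step that fails is your plan, in the last paragraph, to obtain the joint statement by applying the single-vector theorem to each combination $\sum_a c_a\bmu_a$ and then invoking Cram\'er--Wold. The single-vector theorem only gives Gaussian limits for the functionals $\sum_{a,j}c_as_jX_{a,j}$. These are projections along coefficient arrays $(t_{a,j})$ of rank one, whereas Cram\'er--Wold needs all arrays. Rank-one projections do not force Gaussianity. Add to the standard Gaussian density on $\R^{2\times2}$ a small multiple of the inverse Fourier transform of $\det(\bm t)e^{-|\bm t|^2/4}$. The result is a non-Gaussian probability density whose characteristic function coincides with the Gaussian one at every rank-one $\bm t$. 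Joint Gaussianity is therefore genuinely additional input, and the paper does not derive it. It cites the joint conclusion of Backhausz--Szegedy, which is uniform over orthonormal tuples of almost eigenvectors at a common energy and so allows graph-dependent selections. Its hypothesis is checked via $\|(A-\lambda_1)\sum_ac_a\bmu_a\|_2^2=\sum_ac_a^2(\lambda_a-\lambda_1)^2\le\delta_N^2$ for unit $c$. Replace your Cram\'er--Wold reduction by that citation.

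Likewise, no local-weak-convergence construction is needed to handle the random graph. As in the paper, fix for each $N$ in the subsequence one graph lying in the fourth-moment event, in the Backhausz--Szegedy events with diagonally shrinking accuracy, and in the bad event (the intersection has probability bounded below). Then run the compactness argument along this deterministic sequence.
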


The uniformity in the preceding statements is quenched: one first
chooses a graph in a common good event, and then may choose any of the
allowed eigenvectors on that graph. It is stronger than a separate
high-probability statement for each predetermined eigenvector. Within
a multiple eigenspace, any real orthonormal choice is allowed. When
random choices of eigenvectors are needed, we fix a measurable
orthonormal eigenbasis, for example by applying Gram--Schmidt in a
deterministic order to spectral projections of the coordinate vectors.
The results do not depend on this convention, and we do not insert
independent random signs in the empirical star laws.

We use the convention
$W_2(\mu,\nu)^2=\inf\E|\bm{X}-\bm{Y}|^2$, where the infimum is over couplings.
The two statements have distinct roles in the proof. The polynomial
mixed fourth moments control covariances of quadratic eigenvector
observables, including sums over eigenvalues throughout $K$.
Qualitative convergence identifies the joint law of each fixed collection
of eigenvector marks whose eigenvalues approach the same energy.

Previously, for fixed $d\geq 3$, \cite{BS} proved that almost
eigenvectors of random regular graphs converge locally to Gaussian waves
with variance in $[0,1]$. The paper also obtained joint Gaussian limits for
fixed tuples at nearby energies. For edge eigenvectors, \cite{HHY} proved convergence to independent Gaussian waves of variance
one, jointly with and independently of the edge eigenvalues. The result
holds on every fixed-radius ball, with independent random signs attached
to the eigenvectors. For
growing degrees
$N^\varepsilon\leq d\leq N^{2/3-\varepsilon}$ with a fixed $\varepsilon>0$, asymptotic normality of bulk
eigenvectors was established by Bourgade, Huang and Yau~\cite{BHYev}.

This paper considers bulk eigenvalue statistics. Near the edge of the spectrum, Bauerschmidt, Huang, Knowles and
Yau~\cite{BHKYedge} proved universality for
$N^{2/9+o(1)}\leq d\leq N^{1/3-o(1)}$. Huang and Yau~\cite{HYedge} extended
this range to $N^{o(1)}\leq d\leq N^{1/3-o(1)}$, while He~\cite{He} established
edge universality for $N^{2/3+o(1)}\leq d\leq N/2$. In \cite{HMY}, Huang, McKenzie and Yau proved optimal eigenvalue rigidity up to subpolynomial
factors and edge universality for every fixed $d\geq3$.

\subsection{Proof ideas}

The point-process proof and the appendix upgrades use the polynomial
mixed fourth moments in Theorem~\ref{th:stars} and the qualitative Gaussian limit in
Corollary~\ref{en:qualitative}. To obtain the moment estimates, we develop
a quantitative fourth-moment refinement of the Gaussian maximality
argument of Backhausz--Szegedy \cite[Theorem~11.2]{BS}; see
Proposition~\ref{en:quartic}. The heat-flow comparison follows
\cite[Proposition~11.1]{BS}, while fourth-chaos coercivity and a
high-noise entropy expansion yield the explicit moment bound.
For an eigenvector with eigenvalue $\lambda$, we add correlated Gaussian
noise with covariance proportional to
$\Im(A-\lambda-\ri\eta)^{-1}$, where $\eta=N^{-c}$ for a small fixed
$c>0$. The local laws of Huang and Yau~\cite{HY} control both the local
covariance and the total entropy of this perturbation. The resulting
local and global entropy contributions cancel up to a polynomial error.
We need this deficit estimate only at one fixed noise level, as stated
in Proposition~\ref{en:deficit}. Ornstein--Uhlenbeck monotonicity transfers
the bound to the high-noise regime, where the abstract fourth-moment
estimate applies.

For the full law, we use the qualitative joint neighborhood conclusion
of Backhausz--Szegedy for fixed tuples at nearby energies. The fourth-moment
bound makes all quadratic coordinate products uniformly integrable.
The exact empirical covariance therefore passes to every Gaussian limit,
fixing each variance at one and making the channels independent. This
also gives qualitative convergence in $W_2$; a quantitative full-law
stability estimate is unnecessary for the switching argument.

At fixed degree, a simple switching changes adjacency matrix entries
by quantities of order one. We therefore use the exact resolvent
identity for the corresponding rank-two perturbation. In this identity,
products of eigenvector coordinates are summed over the spectrum. The qualitative joint
Gaussian limit identifies each fixed collection with eigenvalues
approaching the same energy; the polynomial mixed fourth-moment
estimate controls the contribution of the remaining eigenvectors
throughout the fixed bulk interval. Together with the switching identities, they give
tightness at the microscopic scale.

Fix a bulk energy $E$, set
$\sigma=\pi\rho_d(E)$ and $a_E=\Re m_d(E+\ri0)$, and, for
$z,w\in\bC^+$, define
\[
 \begin{gathered}
 m_N(z)=\frac1N\Tr(A-z)^{-1},\qquad
 m_d(z)=\int_\R\frac{\rho_d(t)}{t-z}\,\dd t,\\
 s_N(w)=\frac{m_N(E+w/(N\sigma))-a_E}{\sigma}.
 \end{gathered}
\]
Write $s$ for a subsequential limit of $s_N$. The switching identity also
involves the normalized $2\times2$ compression of the resolvent to the
two switching directions.
Its limit, denoted by $S$, has a spectral representation with Gaussian
vectors attached to the eigenvalues. We call these vectors the
eigenvector marks. By the quenched uniformity in Corollary~\ref{en:qualitative},
the limiting marks are independent standard Gaussian vectors conditionally
on the scalar spectral data.

Simple switching is reversible: the original and switched graphs form
an exchangeable pair. Thus the limiting law of $(s,S)$ is invariant
under the switching involution $\mathcal T$. It is also invariant under
rotations $\mathcal R_\theta$ of the Gaussian marks. Conjugating these
rotations by switching gives the flow $\Phi_\theta$ below, which preserves
the same law. Although the switching is discrete, we can differentiate
this flow.
If $\mathcal V$ is the infinitesimal generator and $F$ is a polynomial observable of the scalar and marked
resolvents, then
\[
 \Phi_\theta=\mathcal T\mathcal R_\theta\mathcal T,
 \qquad \E\mathcal V F=0.
\]
Averaging the resulting identities over the Gaussian marks gives the
microscopic loop equations, starting with
\[
 \E\bigl[s'(w)+s(w)^2+1\bigr]=0.
\]
The full hierarchy is proved in Proposition~\ref{lp:criterion}. The
characterization of Bourgade and Huang~\cite[Theorem~1.5]{BH}
identifies the limiting point process as $\Theta_1$, in the same
coordinates as the \hyperref[th:main-pointprocess]{main point-process
theorem}. This proves that theorem and is the endpoint of
Section~\ref{sec:fixed-energy}.
Appendix~\ref{app:correlations} then establishes uniform integrability
of every fixed local counting moment. It upgrades point-process
convergence to the correlation statement in Theorem~\ref{th:gaps}(ii).

For Theorem~\ref{th:gaps}(i), Appendix~\ref{sec:gap-universality}
keeps the eigenvalue at a deterministic bulk label as the origin when
taking the microscopic limit. With this
choice of origin, the switching argument gives loop equations and
integration-by-parts identities for the labelled eigenvalues. The loop
equations yield rigidity and control the force exerted by distant
eigenvalues. To use the integration-by-parts identities for a conditional
density, we first integrate the rooted law over translations, freeing
the tagged eigenvalue from the origin. The identities then show that,
given the exterior eigenvalues, the density in a finite block has the
form of a $\beta=1$ log gas; see Lemma~\ref{gp:gibbs}.

Proposition~\ref{gp:rigid-gibbs} isolates the final implication:
rigidity and these conditional Gibbs laws identify the gaps with those
of the Palm process $\Theta_1^0$ associated with $\Theta_1$, with the
same mean spacing $\pi$.
Its proof, given in Appendix~\ref{app:gap-identification}, completes the
distant exterior by Gaussian classical locations and applies the local
log-gas comparison of Erd\H{o}s and Yau~\cite[Proposition~5.3]{EYGap},
which uses Dyson Brownian motion~\cite{Dyson}. GOE kernel
asymptotics~\cite{DG} and Campbell's formula identify the reference gap
law. The microscopic graph limit is taken first, and the conditional
block size is then sent to infinity.

\subsubsection*{Logical dependencies of the proof}
The main-text proof has three stages. First, the entropy tools, the
weighted Gaussian-star frame, and fourth-chaos coercivity convert a
small entropy deficit into polynomial control of mixed fourth moments.
Second, the random-graph local law supplies this deficit uniformly over
fixed finite collections of eigenvectors, and the qualitative theorem
of Backhausz--Szegedy identifies variance-one joint Gaussian waves at
nearby energies. Third, exact switchings and Gaussian marks yield the
microscopic loop hierarchy and identify the fixed-energy point process.
None of these stages uses the appendix upgrades.

The two parts of Theorem~\ref{th:gaps} are proved separately after this
identification. The correlation argument adds counting-moment bounds
to the point-process limit. The gap argument instead adapts the marked
construction to a deterministic eigenvalue label; rooted loop equations,
rigidity, and coordinate integration by parts give conditional
$\beta=1$ log-gas laws, whose gap statistics are identified by Gaussian
completion. The graph limit is taken before the auxiliary spectral
and conditional-block cutoffs are removed.

\subsubsection*{Organization and notation}
The main text consists of Sections~1--3.
Section~\ref{sec:gaussian-wave} proves the polynomial mixed fourth
moments and the qualitative joint Gaussian limit at nearby energies.
Section~\ref{sec:fixed-energy} constructs the microscopic marked limit
and its stationary switching flow, derives the loop hierarchy, and ends
in Section~\ref{fe:sec} with the proof of the
\hyperref[th:main-pointprocess]{main point-process theorem}.
Appendix~\ref{app:correlations} proves Theorem~\ref{th:gaps}(ii) by
controlling local counting moments.
Appendix~\ref{sec:gap-universality} constructs the tagged limit, proves
rigidity and conditional Gibbs laws, and deduces Theorem~\ref{th:gaps}(i)
from Proposition~\ref{gp:rigid-gibbs}.
Appendix~\ref{app:gap-identification} proves that abstract identification
proposition by Gaussian completion and a GOE comparison.

Unless stated otherwise, the large parameter is $N$, and dependence
on $N$ is suppressed. Constants $c,C>0$ may change from line to line;
their allowed dependence is specified in the relevant statement.
The notation $O_\alpha(\cdot)$ records additional dependence on $\alpha$.
Bold symbols denote vectors, and $\bmu(v)$ is the scalar coordinate
of $\bmu$ at vertex $v$. For a star, $X_{a,j}$ denotes channel $a$
at position $j$: $j=0$ is the root and $1\le j\le d$ are its neighbors.
An index $a$ on a selected eigenpair labels that selection, rather
than the global eigenvalue order.

We write $|\bm v|$ for Euclidean length, $\|M\|$ for the operator
norm of a matrix, $M^T$ for transpose, and $M^*$ for conjugate
transpose. Thus $\Im M=(M-M^*)/(2\ri)$ for a matrix-valued resolvent.
For a scalar random variable and $1\le p<\infty$,
$\|X\|_p=(\E|X|^p)^{1/p}$; the probability law is specified locally.
For Gaussian function spaces it is always stated explicitly.
We use $[M_1,M_2]=M_1M_2-M_2M_1$ and the total variation convention
$\|\mu-\nu\|_{\TV}=\sup_A|\mu(A)-\nu(A)|$.
For graph-dependent matrices and transforms, a tilde denotes the
switched graph; in particular $G$ and $\widetilde G$ are the original
and switched Green functions.

\subsubsection*{Probability spaces and order of limits}
The probability law in use must be distinguished from any additional
sampling on a fixed graph. Unsubscripted $\PP$ and $\E$ refer to the
law currently under discussion: the graph law in a graph-only
statement, the joint law when a proposal or noise is included, and
the law of the limiting state after passage to a limit. A superscript,
such as $\E^{\mu}$, specifies the probability law explicitly.

After the graph and its eigenvectors have been fixed, $\E_\star$
averages only a uniform root and a uniform ordering of its neighbors.
The notation $\E_*$ instead averages the two independent uniform
oriented edges in a switching proposal, before acceptance is imposed.
The corresponding conditional variances are $\Var_\star$ and $\Var_*$.
For the Gaussian channel, $\E_{\bm Y}$ averages only the auxiliary
Gaussian vector, with its signal and covariance fixed. An empirical
profile already contains the finite averages over roots and neighbor
orderings; those samples are not averaged a second time in
$\E_{\bm Y}$. In a limiting marked state, conditioning on the scalar
state leaves only the Gaussian marks random. Independence is used
only where it is specified in the construction.

A compact bulk interval $K$ and the degree $d$ are fixed before $N$
tends to infinity. A finite number of selected eigenvectors is also
fixed before that limit. In marked resolvent sums, we first retain a
fixed microscopic window and a fixed number of marks, then take
$N\to\infty$, and only afterwards remove the cutoffs. In the gap
argument, the limiting tagged process is constructed first; the
conditional block size is then sent to infinity. Keeping these
orders separate is what allows qualitative Gaussian convergence to
be sufficient.

\subsubsection*{Resolvent conventions and elementary estimates}
For a real symmetric matrix $A$ and $z=E+\ri\eta$ with $\eta>0$,
the spectral theorem gives
\[
 G(z)=\sum_j\frac{\bmu_j\bmu_j^T}{\lambda_j-z},\qquad
 \Im G(z)=\eta G(z)G(z)^*,\qquad \|G(z)\|\le\eta^{-1}.
\]
Here $G(z)^*$ denotes the conjugate transpose. In particular,
$\Im G(z)$ is positive definite and
\[
 \frac1N\sum_j\frac1{|\lambda_j-z|^2}
 =\frac{\Im m_N(z)}\eta.
\]
This last equality is the Ward identity in the form used below.
Our Stieltjes convention is $(x-z)^{-1}$, so the imaginary part is
positive in the upper half-plane. An atom of mass $r$ at $x$ contributes
$r/(x-z)$: its \emph{pole weight} is $r$, whereas its complex-analytic
residue at $z=x$ is $-r$.

We use repeatedly the following elementary implication. If
$f(w)=a+\int((x-w)^{-1}-x/(1+x^2))\,\dd\mu(x)$, with $a\in\R$
and $\mu$ a positive measure, then on any compact
$B\subset\bC\setminus\R$ contained in one half-plane,
\[
 \sup_{w\in B}|f(w)-f(\ri)|\le C_B\Im f(\ri)
 \quad\text{in the upper half-plane}.
\]
Indeed,
$(x-w)^{-1}-(x-\ri)^{-1}=(w-\ri)/((x-w)(x-\ri))$, and the
absolute value of this kernel is at most $C_B/(1+x^2)$.
The lower half-plane follows by complex conjugation. Cauchy's
integral formula gives the corresponding bounds for derivatives on
smaller compact sets. Thus a moment estimate at one height provides
local analytic tightness; it does not, by itself, identify the poles.

Finally, our microscopic coordinate is $x=N\sigma(\lambda-E)$,
where $\sigma=\pi\rho_d(E)$. This is the coordinate used in both the
\hyperref[th:main-pointprocess]{main point-process theorem} and
Theorem~\ref{th:gaps}, as well as in the resolvent arguments and the
appendices. The limiting point process is $\Theta_1$ with intensity
$1/\pi$; no further dilation is made when identifying its law.

\subsection*{Acknowledgment}
YH is supported by NSFC No.12322121 and the National Key R\&D Program of China No.2023YFA1010400. The research of JH is partially supported  by the Sloan Research Award.

\subsection*{LLM disclosure}
GPT-6 assisted the proof development at the level of a coauthor. The authors are responsible for the entire content of the paper.

\section{Quantitative Gaussian wave limit}\label{sec:gaussian-wave}

We collect the general entropy and Gaussian tools and apply them to prove
polynomial mixed fourth-moment estimates and qualitative joint Gaussian
limits for empirical eigenvector laws. An empirical star
is random only through the uniform root and the uniform ordering of
its neighbors. Auxiliary Gaussian noise is introduced explicitly when
needed; probabilities of good graph events refer to the random graph.

\subsection{General entropy and Gaussian tools}\label{en:tools}

We collect statements about fixed probability laws, covariance matrices,
and finite-dimensional Gaussian spaces. Each expectation in this
subsection is taken under the law specified in the corresponding
statement. These tools will later be applied after the graph and the
selected eigenvectors have been fixed.

\subsubsection{Entropy and Gaussian identities}\label{en:entropy-tools}

For a probability vector $\bm p=(p_j)_j$, write
$H(\bm p)=-\sum_jp_j\log p_j$, with $0\log0=0$ and natural logarithms.
For a discrete random variable $X$, the abbreviation $H(X)$ means
$H(\Law(X))$. A finite-valued random probability vector needs more care:
$H(\widehat{\bm p})$ is the entropy of its realized entries and is
itself a random variable, whereas $H(\Law(\widehat{\bm p}))$ is
the entropy of the random profile as an outcome. We never use the
first notation for the second quantity. In particular,
$\E H(\widehat{\bm p})$ and $H(\E\widehat{\bm p})$ are generally
different.

For a density $p$ on a Euclidean space, write $h(p)=-\int p\log p$.
The standard Gaussian law on $\R^m$ is denoted by $\gamma_m=N(0,I_m)$.
On a subspace, the reference measure is its induced Euclidean volume.
We also write $h(X)=h(\Law(X))$ when that law has a density on the
specified space. Conditional entropy is an average of entropies of
conditional laws; for example,
\[
 H(X\mid Z)=\E\bigl[H(\Law(X\mid Z))\bigr],\qquad
 h(X\mid Z)=\E\bigl[h(\Law(X\mid Z))\bigr].
\]
In the discrete expression the inner $H$ is the entropy of the
conditional probability vector, evaluated at the realized $Z$.
Finally, put $H_2(t)=-t\log t-(1-t)\log(1-t)$.
All uses below have finite displayed entropies.

If a finite-valued label $J$ selects a component law $P_J$ with
probability $q_J$, write $\bm{q}=(q_j)_j$. The entropy chain rule gives
\begin{equation}\label{en:mixture-entropy}
 \sum_jq_jh(P_j)\le h\left(\sum_jq_jP_j\right)
 \le\sum_jq_jh(P_j)+H(\bm{q}).
\end{equation}
Indeed the difference is the mutual information between the sample
and its component label, which lies between zero and $H(\bm{q})$.
The same bounds hold for discrete entropy.

More explicitly, if $X$ is the mixture sample, then
$h(X\mid J)=\sum_jq_jh(P_j)$ and the chain rule gives
\[
 h(X)-\sum_jq_jh(P_j)=H(J)-H(J\mid X).
\]
The right side lies between zero and $H(J)$. This calculation also explains why the upper
bound is independent of overlap between the components: complete
overlap gives zero mutual information, while perfectly distinguishable
components give $H(J)$.
 For probability laws
$\bm{p},\bm{q}$ on an alphabet of size $M$, with total variation distance $v$,
\begin{equation}\label{en:entropy-continuity}
 |H(\bm{p})-H(\bm{q})|\le H_2(v)+v\log M.
\end{equation}
This follows by coupling the two samples to disagree with probability
$v$ and applying the entropy chain rule to the disagreement indicator.

For completeness, let $(X,Y)$ be such a coupling and
$B=\mathbf1_{X\ne Y}$. Given $Y$ and $B=0$, the value of $X$ is
known; given $Y$ and $B=1$, it has at most $M$ possibilities. Therefore
$H(X\mid Y)\le H_2(v)+v\log M$. The inequality
$H(X)\le H(Y)+H(X\mid Y)$ gives one direction, and exchanging
$X,Y$ gives the other. This proof uses total variation in the convention
$\frac12\sum_j|p_j-q_j|$.

For $\bm{Z}\sim N(\bm{m},Q)$ in $\R^n$, with $Q$ positive definite,
\begin{equation}\label{en:gaussian-entropy}
 h(\bm{Z})=\tfrac12\log\det(2\pi eQ).
\end{equation}
Every density with covariance $Q$ has entropy at most the right-hand
side, by nonnegativity of its relative entropy with respect to the
Gaussian law with the same mean and covariance.

Indeed, writing $g$ for that Gaussian density, its logarithm is a
constant minus a quadratic form. Matching means and covariances implies
$\int p\log g=\int g\log g$, and therefore
$\int p\log(p/g)=h(g)-h(p)\ge0$. This also proves the maximum-entropy
bound when the competing law is a non-Gaussian mixture.

If a Gaussian vector $(\bm{Z}_1,\bm{Z}_2)$ has covariance blocks $Q_{11}$,
$Q_{12}$ and $Q_{22}$, with $Q_{11}$ invertible, then
\begin{equation}\label{en:gaussian-conditional}
 \Cov(\bm{Z}_2\mid \bm{Z}_1)
 =Q_{22}-Q_{12}^TQ_{11}^{-1}Q_{12}.
\end{equation}
This is obtained by subtracting the linear regression on $\bm{Z}_1$;
the residual is Gaussian and independent of $\bm{Z}_1$.

The residual is
$\bm{Z}_2-\E\bm{Z}_2-Q_{12}^TQ_{11}^{-1}
(\bm{Z}_1-\E\bm{Z}_1)$. Its covariance with $\bm{Z}_1$ is zero;
for jointly Gaussian vectors, zero covariance implies independence.
Computing its covariance gives the Schur complement in
\eqref{en:gaussian-conditional}.

We also use the Gaussian Poincar\'e inequality in the form
\begin{equation}\label{en:gaussian-poincare}
 \Var(f(\bm{Z}))\le \|Q\|\E|\nabla f(\bm{Z})|^2,
\end{equation}
for locally Lipschitz $f$ with square-integrable gradient. It follows
from the standard Gaussian inequality by the change of variables
$\bm Z=\bm m+Q^{1/2}\bm G$; smooth approximation gives the
locally Lipschitz version used for the softened quantizer below.

\subsubsection{Gaussian mixtures and discretization}

\begin{lemma}[Entropy stability of Gaussian mixtures]\label{sp:mixture}
Let $w_1,\ldots,w_r\geq0$ satisfy $\sum_{j=1}^r w_j=1$, and let
$\bm m_1,\ldots,\bm m_r\in\R^m$ be fixed.  For $0\leq t\leq1$, set
\[
 p_t=\sum_{j=1}^r p_{t,j},
 \qquad
 p_{t,j}=w_j\,\mathcal N(\bm m_j,K_j(t)),
\]
where $\mathcal N(\bm m,K)$ denotes the Gaussian density with mean
$\bm m$ and covariance $K$.  Assume that every $K_j(t)$ is
differentiable and
\[
 K_j(t)\geq\kappa I,
 \qquad
 \|\dot K_j(t)\|\leq\varepsilon.
\]
Then
\begin{equation}\label{sp:mixture-bound}
 |h(p_1)-h(p_0)|\leq \frac{m\varepsilon}{2\kappa}.
\end{equation}
The estimate is independent of the component means, weights, and number
of components.
\end{lemma}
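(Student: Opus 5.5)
We differentiate $t\mapsto h(p_t)$ and bound the derivative pointwise in $t$ by $m\varepsilon/(2\kappa)$; integrating over $[0,1]$ then gives \eqref{sp:mixture-bound}. Write $g_j=\mathcal N(\bm m_j,K_j(t))$, so $p_t=\sum_j w_jg_j$. Formally,
\[
 \frac{\dd}{\dd t}h(p_t)=-\int \dot p_t\log p_t\,\dd x-\int\dot p_t\,\dd x=-\int\dot p_t\log p_t\,\dd x .
\]
The second term vanishes because each $g_j$ has total mass one for every $t$. The derivative of a Gaussian density in its covariance is given by the heat-equation identity
\[
 \partial_t g_j=\tfrac12\sum_{k,l}(\dot K_j)_{kl}\,\partial_k\partial_l g_j=\tfrac12\nabla\cdot\bigl(\dot K_j\nabla g_j\bigr).
\]
Inserting this and integrating by parts once gives
\[
 \frac{\dd}{\dd t}h(p_t)=\tfrac12\sum_jw_j\int \frac{\langle \dot K_j\nabla g_j,\nabla p_t\rangle}{p_t}\,\dd x .
\]
Boundary terms vanish because of the Gaussian tails, and the uniform lower bound $K_j\ge\kappa I$ makes all integrands dominated on compact $t$-intervals. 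This justifies differentiating under the integral.

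Next we bound the right side by Cauchy--Schwarz, with weight $g_j\,\dd x$ for each component. Write $\nabla g_j=g_j\nabla\log g_j$ and use $|\langle\dot K_j\bm a,\bm b\rangle|\le\varepsilon|\bm a||\bm b|$. This yields
\[
 \Bigl|\frac{\dd}{\dd t}h(p_t)\Bigr|\le\frac\varepsilon2\Bigl(\sum_jw_j\!\int g_j|\nabla\log g_j|^2\Bigr)^{1/2}\Bigl(\sum_jw_j\!\int g_j\frac{|\nabla p_t|^2}{p_t^2}\Bigr)^{1/2}.
\]
In the second factor we use $\sum_jw_jg_j=p_t$, so it equals the Fisher information $I(p_t)=\int|\nabla p_t|^2/p_t$. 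In the first factor, each Gaussian satisfies $\int g_j|\nabla\log g_j|^2=\Tr K_j^{-1}\le m/\kappa$. Convexity of Fisher information gives $I(p_t)\le\sum_jw_jI(g_j)\le m/\kappa$; this follows from Cauchy--Schwarz, $|\nabla p|^2=|\sum w_j\nabla g_j|^2\le p\sum w_j|\nabla g_j|^2/g_j$. Both factors are therefore at most $(m/\kappa)^{1/2}$, and $|\frac{\dd}{\dd t}h(p_t)|\le m\varepsilon/(2\kappa)$. None of these bounds involves the means, weights, or $r$.

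The main obstacle is technical rather than conceptual. We must justify differentiating under the integral and integrating by parts when $p_t$ may be very small far from the means, where $\log p_t$ is large. The plan is to use the lower bound $K_j\ge\kappa I$ together with the upper bound on $K_j(t)$ that follows from $\|\dot K_j\|\le\varepsilon$ on $[0,1]$. These give the Gaussian envelope estimates $|\log p_t(x)|\le C(1+|x|^2)$ and $|\dot p_t|\le C(1+|x|^2)\max_jg_j$, uniformly in $t$, and dominated convergence then applies. If that becomes cumbersome, a fallback is to prove the estimate first for a truncated integral over a large ball and pass to the limit, since the bound on the derivative is uniform.
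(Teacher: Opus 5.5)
Your proposal is correct and follows essentially the same route as the paper: the heat-equation identity for the covariance derivative, one integration by parts, Cauchy--Schwarz jointly in the component index and $x$, and the bound $I(p_t)\le\sum_j w_j\Tr K_j^{-1}\le m/\kappa$. The paper obtains that last bound by pointwise Jensen applied to the mixture score as a posterior-weighted average of the component scores, which is the same inequality as your Cauchy--Schwarz convexity step.
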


\begin{proof}
Components of weight zero may be discarded.  All derivatives below are
with respect to the sample variable $\bm x$ unless a $t$-derivative is
shown explicitly. In each integrand $K_j$ and $\dot K_j$ are
evaluated at the same time $t$. The integrals in this proof are over
$\R^m$ with respect to Euclidean volume.

\proofstep{Differentiate a Gaussian with respect to its covariance}
For one component, direct differentiation gives
\begin{align*}
 \partial_t p_{t,j}(\bm x)
 &=\frac12p_{t,j}(\bm x)
 \Bigl[(\bm x-\bm m_j)^T K_j^{-1}\dot K_jK_j^{-1}
             (\bm x-\bm m_j)-\Tr(K_j^{-1}\dot K_j)\Bigr]\\
 &=\frac12\Tr\bigl(\dot K_j\nabla^2p_{t,j}(\bm x)\bigr).
\end{align*}
The second identity may also be checked from the Fourier transform of a
Gaussian.  Since $\int\partial_t p_t=0$, differentiating the entropy and
integrating by parts once gives
\begin{equation}\label{sp:mixture-derivative}
 \frac{\dd}{\dd t}h(p_t)
 =\frac12\sum_{j=1}^r\int
   (\nabla p_{t,j})^T\dot K_j\nabla\log p_t\,\dd\bm x.
\end{equation}
The boundary terms vanish because all component densities and their
first derivatives have Gaussian decay.  Equivalently, one may first
insert a smooth cutoff and then let its radius tend to infinity.

\proofstep{Compare the component scores with the mixture score}
Define
\[
 \bm s_{t,j}=\nabla\log p_{t,j}
   =-K_j^{-1}(\bm x-\bm m_j),
 \qquad
 \bm s_t=\nabla\log p_t.
\]
At every $\bm x$,
\[
 \bm s_t(\bm x)
 =\sum_j\frac{p_{t,j}(\bm x)}{p_t(\bm x)}\bm s_{t,j}(\bm x).
\]
The coefficients $p_{t,j}(\bm x)/p_t(\bm x)$ are the posterior
component probabilities given the sample value $\bm x$; they are
nonnegative and sum to one. Pointwise Jensen followed by integration gives
\begin{equation}\label{sp:mixture-score}
 \int p_t|\bm s_t|^2
 \leq\sum_j\int p_{t,j}|\bm s_{t,j}|^2
 =\sum_jw_j\Tr K_j^{-1}\leq\frac m\kappa.
\end{equation}
The middle equality is the elementary Gaussian identity
$\E|K^{-1}(\bm Z-\bm m)|^2=\Tr K^{-1}$ for $\bm Z\sim N(\bm m,K)$.

\proofstep{Bound and integrate the entropy derivative}
Using $\nabla p_{t,j}=p_{t,j}\bm s_{t,j}$, the operator-norm bound on
$\dot K_j$, and Cauchy--Schwarz jointly in $j$ and $\bm x$, we obtain
\begin{align*}
 \left|\frac{\dd}{\dd t}h(p_t)\right|
 &\leq\frac\varepsilon2\sum_j\int
       p_{t,j}|\bm s_{t,j}|\,|\bm s_t|\\
 &\leq\frac\varepsilon2
   \left(\sum_j\int p_{t,j}|\bm s_{t,j}|^2\right)^{1/2}
   \left(\int p_t|\bm s_t|^2\right)^{1/2}\\
 &\leq\frac\varepsilon2\sum_jw_j\Tr K_j^{-1}
 \leq\frac{m\varepsilon}{2\kappa}.
\end{align*}
Integrating this uniform estimate over $0\leq t\leq1$ proves
\eqref{sp:mixture-bound}.  Notice that the means never enter the bound;
this is important later because the mixture components are centered at
graph-dependent eigenvector coordinates.
\end{proof}

\begin{lemma}[Discretization of Gaussian mixtures]\label{ch:quant}
Fix $m\geq1$ and $\vartheta,\kappa>0$, and put
\[
 R_0=N^\vartheta,\qquad \eta_0=N^{-\vartheta},\qquad
 a_0=N^{-4\vartheta},\qquad K_0=\lfloor R_0/a_0\rfloor.
\]
For $z\in\R$, let $J(z)=\lfloor z/a_0\rfloor$. For an integer $n$, set
$T(n)=\min\{K_0-1,\max\{-K_0,n\}\}$. Both maps act coordinatewise,
so $TJ(\bm Z)$ is the clipped random grid label, not the quantizer map itself.
Suppose that $\bm Z$ is a finite Gaussian mixture in $\R^m$, every
component mean has coordinate size at most $R_0/2$, and every component
covariance $K$ satisfies
\[
 \kappa\eta_0 I\leq K\leq\eta_0^{-1}I.
\]
Then, for all sufficiently large $N$,
\begin{equation}\label{ch:quant-bound}
 \left|H(TJ(\bm Z))+m\log a_0-h(\bm Z)\right|
 \leq C_{m,\kappa}N^{-7\vartheta}
       +C_mN^{5\vartheta}e^{-c_mN^\vartheta}.
\end{equation}
The constants are uniform in the number, weights, and means of the
mixture components.
\end{lemma}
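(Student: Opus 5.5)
The plan is to compare the discrete entropy of the clipped label with the differential entropy of a piecewise-constant density, and to control the error in two pieces: a clipping part and a fine-grid smoothing part.

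\emph{Step 1: reduce to unclipped cells inside the box.} Write $B=[-K_0a_0,K_0a_0)^m$; this box has side roughly $R_0$. Every component mean lies within $R_0/2$ of the origin, and every covariance is at most $\eta_0^{-1}=N^{\vartheta}$. A Gaussian tail bound in each coordinate therefore gives
\[
\PP(\bm Z\notin B)\le C_m e^{-c_mN^{\vartheta}},
\]
uniformly over the mixture. On the event $\{\bm Z\in B\}$ we have $TJ(\bm Z)=J(\bm Z)$. The label alphabet has size $(2K_0)^m\le CN^{5\vartheta m}$, so its logarithm is $O_m(\log N)$.

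\emph{Step 2: handle the clipping.} Let $\widetilde{\bm Z}$ be $\bm Z$ conditioned to lie in $B$. By \eqref{en:entropy-continuity}, the law of $TJ(\bm Z)$ and the law of $J(\widetilde{\bm Z})$ differ in entropy by at most $H_2(v)+v\log M$, with $v\le C_me^{-c_mN^\vartheta}$. Applying the mixture bound \eqref{en:mixture-entropy} to the decomposition of $\bm Z$ over $\{\bm Z\in B\}$ and its complement compares $h(\bm Z)$ with $h(\widetilde{\bm Z})$ up to a similar exponentially small error. The outside piece needs its own entropy bound. Its covariance is dominated by second moments of order $N^{2\vartheta}$, so \eqref{en:gaussian-entropy} bounds its entropy by $O_m(\log N)$, and this is multiplied by the small probability. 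This step produces the term $C_mN^{5\vartheta}e^{-c_mN^\vartheta}$, where $N^{5\vartheta}$ absorbs the $\log N$ and polynomial factors.

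\emph{Step 3: the discretization error for the smooth density.} Set $P_n=\int_{\mathrm{cell}_n}p$ and let $\bar p$ be the cellwise average of $p$. Then
\[
H(J(\bm Z))+m\log a_0=h(\bar p),
\]
so it remains to bound
\[
h(\bar p)-h(p)=\int p\log(p/\bar p)\ge0 .
\]
Within a cell, each component's log-density varies by a controlled amount. With gradient $K^{-1}(\bm x-\bm m)$ and Hessian $K^{-1}$, where $\|K^{-1}\|\le(\kappa\eta_0)^{-1}=N^{\vartheta}/\kappa$, a second-order Taylor expansion on a cell of side $a_0=N^{-4\vartheta}$ controls how far $p$ is from its cell average.

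I expect this to be the main obstacle, because it must be made uniform over means and weights without using the component count. I would apply the score bound \eqref{sp:mixture-score}, which is mean-independent, to the mixture. The relative entropy $\int p\log(p/\bar p)$ is bounded by a Fisher-type quantity: a Poincar\'e-type inequality on each cube, or equivalently the $\chi^2$ divergence, gives
\[
\int p\log(p/\bar p)\le C_m a_0^2\int p\,|\nabla\log p|^2\le C_m a_0^2\,\frac{m}{\kappa\eta_0}=C_{m,\kappa}N^{-7\vartheta}.
\]
The delicate point is the step from the cellwise Poincar\'e inequality to relative entropy. On each cube, $\int p\log(p/\bar p)\le\int(p-\bar p)^2/\bar p$, and this requires $\bar p$ bounded below relative to $p$ on the cube. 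That ratio bound follows because $\log p$ is Lipschitz on the cell with constant $N^{\vartheta}|\bm x-\bm m|/\kappa+\dots$, which is small times $a_0$ except where $|\bm x-\bm m|\gtrsim N^{3\vartheta}$. Those far regions are absorbed into the exponential tail of Step 1. Alternatively, one can apply the log-Sobolev inequality on cubes directly, which avoids the ratio issue.

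Combining Steps 2 and 3 gives \eqref{ch:quant-bound}.
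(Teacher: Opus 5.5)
Your proof is correct, but it reaches \eqref{ch:quant-bound} by a different route than the paper in both halves.

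\emph{Smoothing error.} You start from the same cellwise identity, $H(J(\bm Z))+m\log a_0-h(\bm Z)=\int p\log(p/\bar p)\ge0$. The paper then applies $\log(a_0^{-m}\int_C f)\ge a_0^{-m}\int_C\log f$ in each cell and symmetrizes, which gives \eqref{ch:translation-entropy}. That is an average over $\bm y\in[-a_0,a_0]^m$ of the symmetrized relative entropy between $f$ and $f(\cdot+\bm y)$. Joint convexity of relative entropy then splits this over the components, and each Gaussian pair contributes exactly $\bm y^TK^{-1}\bm y\le|\bm y|^2/(\kappa\eta_0)$. Your route, in its log-Sobolev form, applies the log-Sobolev inequality for the uniform law on a cube of side $a_0$ to $p/\bar p$ in each cell. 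Its constant is $Ca_0^2$, dimension-free by tensorization. This gives $\int p\log(p/\bar p)\le Ca_0^2\int|\nabla p|^2/p$ for any smooth positive density, and the mean-free Fisher bound \eqref{sp:mixture-score} finishes. Your version is more general and reuses a bound already in the paper. The paper's version is entirely elementary and needs no functional inequality on cubes.

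\emph{Clipping.} The paper never conditions the continuous variable. With $v=\PP(\bm Z\notin B)$, it writes $H(J(\bm Z))-H(TJ(\bm Z))=H(J(\bm Z)\mid TJ(\bm Z))\le H_2(v)+vH(J(\bm Z)\mid B^c)$. It then controls the unbounded label by \eqref{ch:integer-entropy} and Cauchy--Schwarz. Your conditioning on the box keeps every label in a finite alphabet, so \eqref{en:entropy-continuity} applies directly. The price is that you must control differential entropies of conditioned laws.

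That last step needs more care than you gave it.
\begin{itemize}
\item By \eqref{en:mixture-entropy}, $h(\bm Z)-h(\widetilde{\bm Z})=v\,[h(\Law(\bm Z\mid B^c))-h(\widetilde{\bm Z})]+I$ with $0\le I\le H_2(v)$. You therefore need lower as well as upper bounds on both conditional entropies, not only the covariance upper bound.
\item The lower bounds follow from $\|f\|_\infty\le(2\pi\kappa\eta_0)^{-m/2}$. For example, $h(\Law(\bm Z\mid B^c))\ge\frac m2\log(2\pi\kappa\eta_0)-\log(1/v)$.
\item For the upper bound you need not track the conditional second moment. The crude estimate $\E[|\bm Z|^2\mid B^c]\le\E|\bm Z|^2/v$ suffices, since $v\log(1/v)$ is still exponentially small.
\item Your Step~3 is stated for $\bm Z$, but your decomposition needs it for $\widetilde{\bm Z}$. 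Since $B$ is a union of grid cells, the cellwise relative entropy of $\widetilde{\bm Z}$ equals $(1-v)^{-1}\int_Bp\log(p/\bar p)$, so the bound transfers.
\end{itemize}

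Finally, make the log-Sobolev version your primary argument. In the $\chi^2$ version, the inequality $\int_Cp\log(p/\bar p)\le\int_C(p-\bar p)^2/\bar p$ holds unconditionally. The ratio bound is needed only to replace $1/\bar p$ by $1/p$ after Poincar\'e. The far cells are not disposed of by a tail probability alone. You would need, for example, $\int_Cp\log(p/\bar p)\le\PP(\bm Z\in C)\,\mathrm{osc}_C\log p$, together with $|\nabla\log p(\bm x)|\le(\kappa\eta_0)^{-1}\max_j|\bm x-\bm m_j|$.
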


\begin{proof}
Write $f$ for the density of $\bm Z$.  The proof has two independent
parts.  First we compare differential entropy with the entropy of the
unbounded grid label $J(\bm Z)$.  We then show that clipping this label
changes entropy only by an exponentially small amount.

\proofstep{The exact cellwise entropy identity}
Let
\[
 C_{\bm j}=a_0(\bm j+[0,1)^m),\qquad
 p_{\bm j}=\int_{C_{\bm j}}f,\qquad
 \bar f_{\bm j}=p_{\bm j}/a_0^m.
\]
A direct expansion of the definitions gives
\begin{equation}\label{ch:cell-entropy}
 H(J(\bm Z))+m\log a_0-h(\bm Z)
 =\sum_{\bm j}\int_{C_{\bm j}}
       f(\bm x)\log\frac{f(\bm x)}{\bar f_{\bm j}}\,\dd\bm x.
\end{equation}
Every summand is nonnegative by Jensen's inequality.  This also
explains the correction $m\log a_0$: a small cell has probability
approximately $a_0^mf$.

We next obtain a quantitative upper bound without requiring derivative
bounds on individual component means.  Concavity of the logarithm and
symmetrization imply
\begin{align}
 0\leq{}&H(J(\bm Z))+m\log a_0-h(\bm Z)\notag\\
 \leq{}&\frac{a_0^{-m}}2
 \int_{[-a_0,a_0]^m}\int_{\R^m}
 [f(\bm x)-f(\bm x+\bmy)]
 \log\frac{f(\bm x)}{f(\bm x+\bmy)}
 \,\dd\bm x\,\dd\bmy.
 \label{ch:translation-entropy}
\end{align}
For clarity, this follows by applying
$\log(a_0^{-m}\int_C f)\geq a_0^{-m}\int_C\log f$ in each cell,
then averaging the resulting expression with the same expression in
which the two points in the cell are interchanged.  The integrand in
\eqref{ch:translation-entropy} is nonnegative, so enlarging the set of
pairs from points in the same cell to all pairs separated by a vector
in $[-a_0,a_0]^m$ can only increase the integral.

The inner integral is the sum of the two relative entropies between
$f$ and its translate $f(\cdot+\bmy)$. Relative entropy is jointly
convex: the log-sum inequality bounds the relative entropy of two
mixtures with the same weights by the weighted component relative
entropies. Pair each Gaussian component with its translate. The two
Gaussian relative entropies are both $\tfrac12\bmy^TK^{-1}\bmy$,
so their sum is $\bmy^TK^{-1}\bmy$. Consequently,
\[
 \int_{\R^m}[f(\bm x)-f(\bm x+\bmy)]
 \log\frac{f(\bm x)}{f(\bm x+\bmy)}\,\dd\bm x
 \leq\frac{|\bmy|^2}{\kappa\eta_0}.
\]
Substitution in \eqref{ch:translation-entropy} yields
\begin{equation}\label{ch:unclipped-error}
 0\leq H(J(\bm Z))+m\log a_0-h(\bm Z)
 \leq C_{m,\kappa}\frac{a_0^2}{\eta_0}
 =C_{m,\kappa}N^{-7\vartheta}.
\end{equation}

\proofstep{The clipping probability}
Let $B=\{TJ(\bm Z)\neq J(\bm Z)\}$ and $p=\PP(B)$.  If $B$ occurs,
some coordinate of $\bm Z$ has absolute value at least $3R_0/4$ for
all sufficiently large $N$.  Its component mean has absolute value at
most $R_0/2$ and its component variance is at most $\eta_0^{-1}=R_0$.
The one-dimensional Gaussian tail bound, followed by averaging over
components and a union bound over coordinates, gives
\begin{equation}\label{ch:clipping-probability}
 p\leq 2m e^{-c_mR_0}=2m e^{-c_mN^\vartheta}.
\end{equation}
This estimate is uniform in the mixture weights and means.

\proofstep{Entropy carried by the clipped tail}
Because $TJ(\bm Z)$ is a deterministic function of $J(\bm Z)$
and the two labels agree on $B^c$, the chain rule applied also to
the indicator $\mathbf1_B$ gives
\[
 H(J(\bm Z))-H(TJ(\bm Z))=H(J(\bm Z)\mid TJ(\bm Z))
 \leq H_2(p)+pH(J(\bm Z)\mid B).
\]
Here $H(J(\bm Z)\mid B)$ denotes conditioning on the event $B$,
not on its indicator; the product is taken to be zero when $p=0$.
For any $\mathbb Z^m$-valued random vector $\bm V$, comparison with the
probability mass function proportional to $e^{-|\bm j|_1}$ gives
\begin{equation}\label{ch:integer-entropy}
 H(\bm V)\leq C_m+\E|\bm V|_1.
\end{equation}
Indeed, nonnegativity of relative entropy with respect to that reference
law is exactly \eqref{ch:integer-entropy}.  Applying it to the
conditional law of $J(\bm Z)$ given the event $B$ gives
\[
 H(J(\bm Z))-H(TJ(\bm Z))
 \leq H_2(p)+C_mp+\E[|J(\bm Z)|_1\mathbf1_B].
\]
Since $|J(Z_r)|\leq a_0^{-1}|Z_r|+1$ and the component means and
variances satisfy the assumed bounds,
\[
 \E|J(\bm Z)|_1^2\leq C_m a_0^{-2}(R_0^2+\eta_0^{-1})+C_m
 \leq C_mN^{10\vartheta}.
\]
Cauchy--Schwarz and \eqref{ch:clipping-probability} therefore imply
\begin{equation}\label{ch:clipping-entropy}
 0\leq H(J(\bm Z))-H(TJ(\bm Z))
 \leq C_mN^{5\vartheta}e^{-c_mN^\vartheta}.
\end{equation}
Combining \eqref{ch:unclipped-error} and
\eqref{ch:clipping-entropy} proves \eqref{ch:quant-bound}.  No
independence between the coordinates of $\bm Z$ has been used.
\end{proof}

\subsubsection{Gaussian smoothing at high noise}

\begin{lemma}[High-noise smoothing]\label{en:smoothing}
Let $\bm Y$ be an $m$-dimensional random vector satisfying
\[
 \E\bm Y=0,\qquad \Cov(\bm Y)=I_m,
 \qquad |\bm Y|\leq B\quad\text{almost surely},
\]
where $B\geq2$.  Let $\bm G$ be an independent standard Gaussian and
put
\[
 f_s=\frac{\dd\Law(\sqrt s\,\bm Y+
             \sqrt{1-s}\,\bm G)}{\dd\gamma_m},\qquad 0<s<1.
\]
There are constants $c,C>0$, depending only on $m$, such that
\begin{equation}\label{en:mehler}
 \|f_s-1\|_{L^3(\gamma_m)}\leq CBs^{3/2},
 \qquad 0<s\leq cB^{-2}.
\end{equation}
\end{lemma}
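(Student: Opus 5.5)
The natural route is the Mehler / Hermite expansion of $f_s$. I would condition on $\bm Y=\bm y$ and write the exact Mehler formula
\[
 f_s(\bm x)=\E_{\bm Y}\,\frac{\dd N(\sqrt s\,\bm Y,(1-s)I_m)}{\dd\gamma_m}(\bm x)
 =\E_{\bm Y}\,(1-s)^{-m/2}\exp\Bigl(-\frac{|\bm x-\sqrt s\bm Y|^2}{2(1-s)}+\frac{|\bm x|^2}{2}\Bigr).
\]
Equivalently, in multivariate Hermite polynomials $H_\alpha$ (orthonormal in $L^2(\gamma_m)$),
\[
 f_s=\sum_{\alpha}s^{|\alpha|/2}\,\E\bigl[H_\alpha(\bm Y)\bigr]\,H_\alpha
\]
as a formal series. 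The hypotheses $\E\bm Y=0$ and $\Cov(\bm Y)=I_m$ kill every term with $|\alpha|=1$ or $|\alpha|=2$. For $|\alpha|=2$ the coefficient is, up to normalization, $\E[Y_iY_j]-\delta_{ij}=0$. Hence
\[
 f_s-1=\sum_{|\alpha|\ge3}s^{|\alpha|/2}\,\E[H_\alpha(\bm Y)]\,H_\alpha,
\]
and the claimed $s^{3/2}$ is exactly the order of the first surviving term.

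Rather than summing the series in $L^3$, which is awkward, I would work with the kernel directly. Put
\[
 \phi_s(\bm x,\bm y)=(1-s)^{-m/2}\exp\Bigl(\frac{2\sqrt s\,\bm x\cdot\bm y-s|\bm x|^2-s|\bm y|^2}{2(1-s)}\Bigr),
\]
so that $f_s(\bm x)=\E\phi_s(\bm x,\bm Y)$. Taylor expand $u\mapsto\phi_{u^2}(\bm x,\bm y)$ in $u=\sqrt s$ to second order with integral remainder. The zeroth-order term is $1$. The first-order term is $\bm x\cdot\bm y$, which averages to zero. The second-order term is a polynomial of degree $2$ in $\bm y$ whose expectation is determined by $\Cov(\bm Y)=I_m$, and for which it vanishes. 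Thus
\[
 f_s(\bm x)-1=\E\,R_s(\bm x,\bm Y),\qquad
 |R_s(\bm x,\bm y)|\le C s^{3/2}\,(1+|\bm x|)^3(1+|\bm y|)^3\sup_{0\le u\le\sqrt s}\phi_{u^2}(\bm x,\bm y)
\]
for $s\le1/2$. The bound comes from differentiating the exponent three times in $u$, each derivative producing polynomial factors in $\bm x,\bm y$.

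Next I would use $|\bm y|\le B$ and $s\le cB^{-2}$. For $u\le\sqrt s$ one has $\phi_{u^2}(\bm x,\bm y)\le C e^{\sqrt s B|\bm x|}\le Ce^{\sqrt c\,|\bm x|}$. Consequently
\[
 |f_s(\bm x)-1|\le C s^{3/2}B^3(1+|\bm x|)^3e^{\sqrt c|\bm x|}.
\]
Its $L^3(\gamma_m)$ norm is finite when $c$ is small, which gives $CB^3s^{3/2}$.

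This falls short of the stated $CBs^{3/2}$, and improving $B^3$ to $B$ is the main obstacle. To fix it, I would not bound the third derivative crudely by $|\bm y|^3$. Instead I would keep the leading cubic Hermite term separately. Its coefficient is $\E H_\alpha(\bm Y)$ with $|\alpha|=3$, and a cruder estimate still bounds it by $\E|\bm Y|^3\le B\,\E|\bm Y|^2=mB$, using only the covariance normalization. The remaining terms of order $|\alpha|\ge4$ carry coefficients bounded by $\E|\bm Y|^{|\alpha|}\le mB^{|\alpha|-2}$. In the Hermite series these contribute at most
\[
 \sum_{k\ge4}s^{k/2}B^{k-2}C^k=O\bigl(s^2B^2\bigr)=O\bigl(Bs^{3/2}\cdot B\sqrt s\bigr)=O\bigl(Bs^{3/2}\bigr)
\]
for $s\le cB^{-2}$. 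In $L^2$ this estimate is immediate from orthonormality. To pass to $L^3$, I would apply Gaussian hypercontractivity to each Hermite chaos of degree $k$, which costs a factor $2^{k/2}$. That factor is absorbed into $C^k$, and the resulting geometric series converges once $c$ is small. This hypercontractive chaos-by-chaos summation is the step I expect to require the most care. In particular, the coefficients must be bounded via $|H_\alpha(\bm y)|\lesssim C^{|\alpha|}(1+|\bm y|)^{|\alpha|}/\sqrt{\alpha!}$ together with $\E|\bm Y|^k\le B^{k-2}m$, while tracking the multinomial count of indices $\alpha$ of each degree.
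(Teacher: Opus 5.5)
Your proposal is correct in substance, but your first route, which is exactly the paper's proof, already gives $CBs^{3/2}$. The ``main obstacle'' is self-inflicted.

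\textbf{Closing the kernel route.} After expanding the Mehler kernel to third order in $u=\sqrt s$ and cancelling the first two coefficients, you replaced $(1+|\bm y|)^3$ by $(1+B)^3$ pointwise. Average instead. Once $\sqrt s\,B\le\sqrt c$, the $\bm x$-dependent factor, a polynomial $P(|\bm x|)$ times $e^{C|\bm x|}$ coming from $\sup_{u\le\sqrt s}\phi_{u^2}$, is uniform over $|\bm y|\le B$. Hence
\[
 |f_s(\bm x)-1|\le\E|R_s(\bm x,\bm Y)|
 \le Cs^{3/2}P(|\bm x|)e^{C|\bm x|}\,\E(1+|\bm Y|)^3,
 \qquad
 \E(1+|\bm Y|)^3\le C(1+\E|\bm Y|^3)\le C(1+mB).
\]
This uses exactly the inequality $\E|\bm Y|^3\le B\E|\bm Y|^2=mB$ that you later apply only to the cubic Hermite coefficient. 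The paper does the same thing via Minkowski in $L^3(\gamma_m)$ and the kernel bound $\|\partial_t^3K(t;\bm y,\cdot)\|_{L^3(\gamma_m)}\le C_m(1+|\bm y|^3)$ for $t|\bm y|\le1/4$.

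\textbf{Two small corrections to your remainder estimate.}
\begin{itemize}
\item The power of $|\bm x|$ exceeds $3$, because $\partial_u\log\phi$ contains $u|\bm x|^2$, so its cube has degree $6$ in $|\bm x|$.
\item The power $3$ in $|\bm y|$ holds only when $u|\bm y|=O(1)$, because $\partial_u\log\phi$ also contains $u|\bm y|^2$. State the bound in the regime $\sqrt s\,B\le\sqrt c$, not for all $s\le1/2$.
\end{itemize}
Neither affects the conclusion.

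\textbf{The Hermite--hypercontractivity route.} It is unnecessary, but it can be made to work. Chaos by chaos, $\|\cdot\|_{L^3}\le2^{n/2}\|\cdot\|_{L^2}$, and the sum over $n\ge4$ is geometric in $C\sqrt s\,B$.

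However, your pointwise bound $|H_\alpha(\bm y)|\lesssim C^{|\alpha|}(1+|\bm y|)^{|\alpha|}/\sqrt{\alpha!}$ is false. For example, the normalized one-dimensional $h_{2n}$ satisfies $|h_{2n}(0)|=\binom{2n}{n}^{1/2}2^{-n}\asymp n^{-1/4}$. Use instead $|H_\alpha(\bm y)|\le C^{|\alpha|}(1+|\bm y|)^{|\alpha|}$, which follows from $He_n(x)=\E(x+\ri Z)^n$ for $Z\sim N(0,1)$. With $B\ge2$, this gives $|\E H_\alpha(\bm Y)|\le C_mC^{|\alpha|}B^{|\alpha|-2}$ for $|\alpha|\ge2$, which suffices.

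\textbf{Comparison.} The chaos route needs hypercontractivity, the multi-index count, and a remark that absolute summability of the $L^3$ norms makes the chaos series converge in $L^3$ to $f_s$. In return it gives the finer statement that $f_s-1$ equals its third-chaos term up to $O(B^2s^2)$ in $L^3$. The lemma does not need that, and the paper's direct kernel argument is shorter.
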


\begin{proof}
The first two moment assumptions are used only to cancel the terms of
orders $\sqrt s$ and $s$.  The remaining third-order Taylor term is of
order $s^{3/2}$. Throughout this proof, $\|\cdot\|_3$ is the
$L^3(\gamma_m)$ norm in $\bm z$. After the Gaussian convolution is
written as a density, every remaining expectation is over $\bm Y$.

Put $a=\sqrt s$.  For fixed $\bmy$, the density of
$a\bmy+\sqrt{1-a^2}\,\bm G$ relative to $\gamma_m$ is
\begin{equation}\label{en:mehler-kernel}
 K(a;\bmy,\bm z)
 =(1-a^2)^{-m/2}
 \exp\left(
 \frac{2a\bm z\cdot\bmy-a^2(|\bm z|^2+|\bmy|^2)}{2(1-a^2)}
 \right).
\end{equation}
Consequently $f_s(\bm z)=\E K(a;\bm Y,\bm z)$.

\proofstep{Cancellation of the first two Taylor coefficients}
Direct differentiation at $a=0$ gives
\begin{align*}
 K(0;\bmy,\bm z)&=1,\\
 \partial_aK(0;\bmy,\bm z)&=\bm z\cdot\bmy,\\
 \partial_a^2K(0;\bmy,\bm z)
   &=(\bm z\cdot\bmy)^2- |\bmy|^2-|\bm z|^2+m.
\end{align*}
Now
$\E(\bm z\cdot\bm Y)=0$,
$\E(\bm z\cdot\bm Y)^2=|\bm z|^2$, and
$\E|\bm Y|^2=m$.  Hence the expectations of the first and second
derivatives vanish.  This is the precise reason that matching both the
mean and covariance is necessary.

\proofstep{A uniform third-derivative bound}
Write $\ell=\log K$.  When $0\leq t\leq a$, $aB\leq1/4$, and
$|\bmy|\leq B$, we have $t\leq1/4$ and $t|\bmy|\leq1/4$.  From
\eqref{en:mehler-kernel},
\[
 \partial_t\ell
 =\frac{mt}{1-t^2}
  +\frac{(1+t^2)\bm z\cdot\bmy
          -t(|\bm z|^2+|\bmy|^2)}{(1-t^2)^2}.
\]
Differentiating twice more shows, for $j=1,2,3$,
\[
 |\partial_t^j\ell|
 \leq C_m(1+|\bmy|)^j(1+|\bm z|^2).
\]
Also, the negative quadratic terms in the exponent and
$t|\bmy|\leq1/4$ give $K(t;\bmy,\bm z)\leq C_me^{C_m|\bm z|}$.
Since
\[
 \partial_t^3K
 =K\bigl[(\partial_t\ell)^3
       +3(\partial_t\ell)(\partial_t^2\ell)
       +\partial_t^3\ell\bigr],
\]
Gaussian integrability of every polynomial times $e^{C|\bm z|}$
yields
\begin{equation}\label{en:kernel-third-derivative}
 \|\partial_t^3K(t;\bmy,\cdot)\|_{L^3(\gamma_m)}
 \leq C_m(1+|\bmy|^3).
\end{equation}
All constants in this estimate are independent of $B$ once
$t|\bmy|\leq1/4$.

\proofstep{Taylor's integral remainder}
Taylor's formula and the two cancellations give
\[
 f_s(\bm z)-1
 =\frac12\E\int_0^a(a-t)^2
       \partial_t^3K(t;\bm Y,\bm z)\,\dd t.
\]
Minkowski's inequality and \eqref{en:kernel-third-derivative} imply
\[
 \|f_s-1\|_3
 \leq Ca^3(1+\E|\bm Y|^3).
\]
Finally, $|\bm Y|\leq B$ and covariance normalization give
$\E|\bm Y|^3\leq B\E|\bm Y|^2=mB$.  Since $a^3=s^{3/2}$ and
$B\geq2$, this proves \eqref{en:mehler} whenever
$aB\leq1/4$, which is ensured by $s\leq cB^{-2}$.
\end{proof}

\subsubsection{Gaussian star laws}\label{en:star-laws}

Fix $d\ge3$, $k\ge1$ and
$K\Subset(-2\sqrt{d-1},2\sqrt{d-1})$. Let
$\bla=(\lambda_1,\ldots,\lambda_k)\in K^k$ be fixed parameters.
For $\lambda\in K$, define the $(d+1)\times(d+1)$ covariance matrix
\begin{equation}\label{def:star-covariance}
 \begin{gathered}
 (\Sigma_\lambda)_{ii}=1\quad(0\le i\le d),\qquad
 (\Sigma_\lambda)_{0i}=(\Sigma_\lambda)_{i0}=\lambda/d
 \quad(1\le i\le d),\\
 (\Sigma_\lambda)_{ij}=(\lambda^2-d)/(d(d-1))
 \quad(1\le i\ne j\le d).
 \end{gathered}
\end{equation}
Set
\[
 H_\lambda=\bigl\{\bmy\in\R^{d+1}:\textstyle\sum_{i=1}^d y_i=\lambda y_0\bigr\},
 \qquad H_{\bla}=\bigoplus_{a=1}^k H_{\lambda_a}.
\]
The kernel of $\Sigma_\lambda$ is spanned by $(-\lambda,1,\ldots,1)^T$.
Its nonzero eigenvalues are $(d+\lambda^2)/d$ once and
$(d^2-\lambda^2)/(d(d-1))$ with multiplicity $d-1$.

Here is the underlying decomposition. Vectors with zero root
coordinate and leaf coordinates summing to zero form a $(d-1)$-dimensional
subspace on which the eigenvalue is
$1-(\lambda^2-d)/(d(d-1))$. Its orthogonal complement is the
root--constant-leaf plane. On that plane one eigenvalue is zero,
because the vector $(-\lambda,1,\ldots,1)$ is normal to $H_\lambda$;
the other follows either by a $2\times2$ calculation or by subtracting
the transverse contribution from $\Tr\Sigma_\lambda=d+1$.
Thus the Gaussian is nondegenerate on $H_\lambda$, but singular in
$\R^{d+1}$. Entropy must consequently be computed in $H_\lambda$,
not in the ambient space.
 Let $\bm{Z}_{\bla}$ have independent centered Gaussian
components with covariances $\Sigma_{\lambda_a}$. For a law $P$ on
$H_{\bla}$ with finite entropies define
\[
 \F_{\bla}(P)=h_{H_{\bla}}(P)-\frac d2h(P_e),
\]
where $P_e$ is the marginal on the $k$ root coordinates and their first
neighbor coordinates, of total dimension $2k$, and subspace
entropy uses induced Euclidean volume, as in \cite[Section~7]{BS}.
For one channel this is the functional of \cite[Definition~11.1]{BS};
we use its joint version on the direct sum of the wave subspaces.
As with entropy, $\F_{\bla}(\bm X)$ means $\F_{\bla}(\Law(\bm X))$
when the required star and edge densities exist.
This functional is unchanged by
a common scalar dilation.

Indeed, dilation by $r>0$ adds $dk\log r$ to the star entropy and
$2k\log r$ to the edge entropy, and these terms cancel.
The product structure of $\bm Z_{\bla}$ makes the functional additive in its $k$ channels.

\begin{lemma}[Entropy of the Gaussian star]\label{en:star-entropy}
Let
$L_d(\lambda)=\int\rho_d(t)\log|t-\lambda|\,\dd t$.  Then, for every
$\lambda$ in the open Kesten--McKay bulk,
\begin{equation}\label{def:gauss-entropy}
 L_d(\lambda)
 =\frac{d-1}{2}\log(d-1)
  -\frac{d-2}{4}\log(d^2-\lambda^2),
\end{equation}
and
\begin{equation}\label{en:star-functional-value}
 \F_{\bla}(\bm Z_{\bla})
 =\sum_{a=1}^k
 \left(\frac12\log(d+\lambda_a^2)-L_d(\lambda_a)\right).
\end{equation}
\end{lemma}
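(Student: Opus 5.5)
The proof has two independent parts. For \eqref{def:gauss-entropy} we identify $L_d$ through its derivative, a Stieltjes transform. For \eqref{en:star-functional-value} we do a finite Gaussian entropy computation based on the spectral description of $\Sigma_\lambda$ in Section~\ref{en:star-laws}.

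\emph{The logarithmic potential.} In the bulk, differentiation under the integral (principal value, justified since $\rho_d$ is Hölder) gives
\[
 L_d'(\lambda)=\mathrm{p.v.}\!\int\frac{\rho_d(t)}{\lambda-t}\,\dd t=-\Re m_d(\lambda+\ri0).
\]
We use the standard closed form
\[
 m_d(z)=-\frac{2(d-1)}{(d-2)z+d\sqrt{z^2-4(d-1)}},
\]
with the branch $\sqrt{z^2-4(d-1)}\sim z$ at infinity. It can be verified from the self-consistent equation of the Kesten--McKay law, or by checking that its boundary imaginary part is $\pi\rho_d$. Multiply by the conjugate of the denominator at $z=\lambda+\ri0$. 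Its squared modulus is
\[
 (d-2)^2\lambda^2+d^2(4(d-1)-\lambda^2)=4(d-1)(d^2-\lambda^2),
\]
so
\[
 -\Re m_d(\lambda+\ri0)=\frac{(d-2)\lambda}{2(d^2-\lambda^2)}.
\]
This is exactly the derivative of the right side of \eqref{def:gauss-entropy}. Hence the two sides differ by a constant on the bulk.

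\emph{Fixing the constant.} This is the main obstacle. I would consider $\Phi(z)=\int\rho_d(t)\log(z-t)\,\dd t$, which is analytic off $(-\infty,2\sqrt{d-1}]$, satisfies $\Phi'=-m_d$, and has $\Phi(z)-\log z\to0$ as $z\to\infty$. I would write an explicit primitive of $-m_d$ by rationalizing $m_d$ into a rational function of $z$ and $\sqrt{z^2-4(d-1)}$. The terms that appear are of the types
\[
 \log\bigl(z+\sqrt{z^2-4(d-1)}\bigr)\quad\text{and}\quad \log\bigl(d^2-z^2\bigr),
\]
or equivalently one uses the substitution $z=\sqrt{d-1}(\zeta+\zeta^{-1})$. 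The constant is fixed by the behaviour at infinity, and then we take $\Re\Phi(\lambda+\ri0)=L_d(\lambda)$, which is continuous up to the support. As a fallback one can instead evaluate $L_d(0)$ directly; after the substitution $t=2\sqrt{d-1}\cos\theta$ this is a classical trigonometric integral. In either approach the value should come out as $\frac{d-1}{2}\log(d-1)-\frac{d-2}{4}\log d^2$.

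\emph{The star functional.} By independence of the channels, star entropy on $H_{\bla}$ and edge entropy are both additive, so it suffices to treat $k=1$. The kernel vector $(-\lambda,1,\ldots,1)$ of $\Sigma_\lambda$ is normal to $H_\lambda$, so $\Sigma_\lambda$ restricted to $H_\lambda$ is nondegenerate with the nonzero eigenvalues listed in Section~\ref{en:star-laws}. By \eqref{en:gaussian-entropy} in orthonormal coordinates of $H_\lambda$,
\[
 h_{H_\lambda}=\tfrac d2\log(2\pi e)+\tfrac12\log\tfrac{d+\lambda^2}{d}+\tfrac{d-1}2\log\tfrac{d^2-\lambda^2}{d(d-1)}.
\]
The edge marginal has covariance $\begin{pmatrix}1&\lambda/d\\ \lambda/d&1\end{pmatrix}$ with determinant $(d^2-\lambda^2)/d^2$. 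Its entropy is
\[
 \log(2\pi e)+\tfrac12\log\tfrac{d^2-\lambda^2}{d^2}.
\]
Subtract $\frac d2$ times the edge entropy from the star entropy. The $2\pi e$ terms cancel, and the coefficient of $\log d$ is $-\tfrac12-\tfrac{d-1}2+\tfrac d2=0$. The coefficient of $\log(d^2-\lambda^2)$ is $\frac{d-1}2-\frac d4=\frac{d-2}4$. What remains is
\[
 \tfrac12\log(d+\lambda^2)-\tfrac{d-1}2\log(d-1)+\tfrac{d-2}4\log(d^2-\lambda^2),
\]
which equals $\tfrac12\log(d+\lambda^2)-L_d(\lambda)$ by \eqref{def:gauss-entropy}.
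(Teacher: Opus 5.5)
Your star--edge computation is correct and is the paper's determinant calculation. Your identity $L_d'(\lambda)=-\Re m_d(\lambda+\ri0)=\frac{(d-2)\lambda}{2(d^2-\lambda^2)}$ is also right.

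\textbf{The gap: the additive constant is never computed.} You call it the main obstacle and then only state what it ``should come out as''. So \eqref{def:gauss-entropy} is not proved, and neither is \eqref{en:star-functional-value}, which you reduce to it.

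The shape you predict for the primitive also cannot work. Since $\frac{\dd}{\dd z}\log\bigl(z+\sqrt{z^2-4(d-1)}\bigr)=1/\sqrt{z^2-4(d-1)}$, the combination $A\log\bigl(z+\sqrt{z^2-4(d-1)}\bigr)+B\log(d^2-z^2)$ has derivative $A/\sqrt{z^2-4(d-1)}-2Bz/(d^2-z^2)$. Rationalizing your closed form gives $-m_d(z)=\frac{(d-2)z-d\sqrt{z^2-4(d-1)}}{2(d^2-z^2)}$. Matching the two would force $(z^2-4(d-1))/(d^2-z^2)$ to be constant, which it is not. What is needed in place of $\log(d^2-z^2)$ is a logarithm of the denominator $(d-2)z+d\sqrt{z^2-4(d-1)}$ of your formula for $m_d$.

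\textbf{How to close it.} Differentiate directly, using $d^2-(d-2)^2=4(d-1)$. This shows that
\[
 \Psi(z)=\frac d2\log\frac{z+\sqrt{z^2-4(d-1)}}{2}
 -\frac{d-2}2\log\frac{(d-2)z+d\sqrt{z^2-4(d-1)}}{2(d-1)}
\]
satisfies $\Psi'=-m_d$ on $\bC^+$. Both arguments of the logarithms are asymptotic to $z$, so $\Psi(z)-\log z\to0$ and $\Re\Psi=\Re\Phi$. At $z=\lambda+\ri0$ in the bulk, $|z+\sqrt{z^2-4(d-1)}|^2=4(d-1)$. Your own modulus computation gives $|(d-2)z+d\sqrt{z^2-4(d-1)}|^2=4(d-1)(d^2-\lambda^2)$. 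Taking real parts yields \eqref{def:gauss-entropy} with no separate constant to fix, and your real-variable derivative step becomes only a consistency check.

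This is exactly the paper's proof, written there as $-\log(-g)-\frac{d-2}{2}\log(1-g^2)$ with $g(z)=\frac{-z+\sqrt{z^2-4(d-1)}}{2(d-1)}$; your $\zeta$ is $-\sqrt{d-1}\,g$. The paper uses the tree recursion $(d-1)g^2+zg+1=0$, $m_d=g/(1-g^2)$, and the boundary values $|g|=(d-1)^{-1/2}$, $|1-g^2|=\sqrt{d^2-\lambda^2}/(d-1)$. Your fallback of evaluating $L_d(0)$ directly would also work, but it too has to be carried out rather than asserted.
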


\begin{proof}
\proofstep{The logarithmic potential of the Kesten--McKay law}
Put
\[
 g(z)=\frac{-z+\sqrt{z^2-4(d-1)}}{2(d-1)},
\]
where $z\in\mathbb C^+$ and the square root is chosen to be
asymptotic to $z$ at infinity. The logarithms below are analytic
branches fixed by their asymptotics as $z=\ri y$, $y\to\infty$.
The tree resolvent recursion gives
\[
 (d-1)g^2+zg+1=0,
 \qquad m_d(z)=\frac{g(z)}{1-g(z)^2}.
\]
Differentiating the recursion, or substituting it directly, gives
\[
 \frac{\dd}{\dd z}
 \left[-\log(-g(z))-\frac{d-2}{2}\log(1-g(z)^2)\right]
 =-m_d(z).
\]
On the other hand,
$\frac{\dd}{\dd z}\int\rho_d(t)\log(z-t)\,\dd t=-m_d(z)$
with our Stieltjes-transform convention.  Comparison as $z\to\infty$
fixes the integration constant.  Taking the real part of the boundary
value at a bulk point $\lambda$, and using
\[
 |g(\lambda+\ri0)|=(d-1)^{-1/2},
 \qquad
 |1-g(\lambda+\ri0)^2|
   =\frac{\sqrt{d^2-\lambda^2}}{d-1},
\]
proves \eqref{def:gauss-entropy}. Taking this boundary value under
the integral is justified by local integrability of the logarithmic
singularity and boundedness of $\rho_d$ near the fixed bulk point.

\proofstep{The star--edge Gaussian entropy difference}
For one channel, the nonzero eigenvalues of $\Sigma_\lambda$ are
\[
 \frac{d+\lambda^2}{d}
 \quad\text{and}\quad
 \frac{d^2-\lambda^2}{d(d-1)}
 \quad\text{with multiplicity }d-1.
\]
Therefore the determinant of the covariance restricted to the
$d$-dimensional space $H_\lambda$ is
\[
 \det(\Sigma_\lambda|_{H_\lambda})
 =\frac{d+\lambda^2}{d}
   \left(\frac{d^2-\lambda^2}{d(d-1)}\right)^{d-1}.
\]
The root--first-neighbor covariance is
$\begin{psmallmatrix}1&\lambda/d\\ \lambda/d&1\end{psmallmatrix}$,
whose determinant is $(d^2-\lambda^2)/d^2$.
Using the Gaussian entropy formula on $H_\lambda$ and in the
$2$-dimensional edge space, the factors $2\pi e$ cancel because
$\dim H_\lambda=d=(d/2)\cdot2$.  We obtain
\[
 \F_\lambda(\bm Z_\lambda)
 =\frac12\log\det(\Sigma_\lambda|_{H_\lambda})
  -\frac d4\log\frac{d^2-\lambda^2}{d^2}.
\]
Substituting the two determinants and simplifying with
\eqref{def:gauss-entropy} gives
$\F_\lambda(\bm Z_\lambda)=\frac12\log(d+\lambda^2)-L_d(\lambda)$.
The $k$ channels of $\bm Z_{\bla}$ are independent, so both star and
edge entropy are additive.  Summing the one-channel identity proves
\eqref{en:star-functional-value}.
\end{proof}

\subsubsection{Entropy of Gaussian resolvent noise}

\begin{lemma}[Entropy per vertex of resolvent noise]\label{sp:noise}
Let $A$ be a fixed real symmetric $N\times N$ matrix with eigenvalues
$d=\lambda_1\ge\lambda_2\ge\cdots\ge\lambda_N\ge-d$ satisfying
$\max_{2\le j\le N}|\lambda_j-\gamma_j|\le N^{-a}$ for some $a>0$.
Put $Q_\lambda=\Im(A-\lambda-\ri\eta)^{-1}$, with
$\eta=N^{-c_0}$ and $0<c_0<a$. For $\bm{Y}\sim N(0,Q_\lambda)$,
\[
 \frac1Nh(\bm{Y})=\frac12\log(2\pi e\eta)-L_d(\lambda)
 +O(N^{-a}/\eta+\eta+N^{-1}\log N)
\]
uniformly for $\lambda\in K$.
\end{lemma}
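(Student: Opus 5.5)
The plan is to diagonalize $Q_\lambda$ and compute the Gaussian entropy explicitly through \eqref{en:gaussian-entropy}. Since $A$ is real symmetric, $Q_\lambda$ is positive definite, with eigenvalues $\eta/((\lambda_j-\lambda)^2+\eta^2)$, $1\le j\le N$. Therefore
\[
 \frac1Nh(\bm Y)=\frac12\log(2\pi e\eta)-\frac1{2N}\sum_{j=1}^N\log\bigl((\lambda_j-\lambda)^2+\eta^2\bigr).
\]
It remains to show that
$\frac1{2N}\sum_j\log((\lambda_j-\lambda)^2+\eta^2)=L_d(\lambda)+O(N^{-a}/\eta+\eta+N^{-1}\log N)$, uniformly for $\lambda\in K$. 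The term $j=1$ contributes $O(N^{-1})$, because $|d-\lambda|$ is bounded above and below on $K$; we drop it together with the requirement that $\gamma_1$ match.

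First, replace $\lambda_j$ by $\gamma_j$. Put $\phi(x)=\frac12\log((x-\lambda)^2+\eta^2)$. Then $|\phi'(x)|=|x-\lambda|/((x-\lambda)^2+\eta^2)\le\min(1/(2\eta),1/|x-\lambda|)$. The mean value theorem gives
\[
 \frac1N\sum_{j\ge2}|\phi(\lambda_j)-\phi(\gamma_j)|\le \frac{N^{-a}}N\sum_{j}\sup_{|x-\gamma_j|\le N^{-a}}|\phi'(x)|.
\]
For indices with $|\gamma_j-\lambda|\ge2N^{-a}$, the supremum is at most $2/|\gamma_j-\lambda|$. The quantile spacing is $\asymp1/N$ in the bulk, and no worse than $N^{-2/3}$ near the edges, where $\lambda\in K$ is far away anyway. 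So the sum of these terms is $O(N\log N)$ after the division is accounted for, giving a contribution $O(N^{-a}\log N)$. The remaining $O(N^{1-a})$ indices each contribute at most $1/(2\eta)$, giving $O(N^{-a}/\eta)$. Both pieces are within the claimed error, since $\log N\le 1/\eta$. A slightly cruder route that uses only $1/(2\eta)$ for every term also gives $O(N^{-a}/\eta)$ directly.

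Second, replace the quantile sum by an integral. The function $\phi$ has at most two monotone pieces, namely decreasing on $x<\lambda$ and increasing on $x>\lambda$. Each $\gamma_j$ is the upper endpoint of a $\rho_d$-mass-$1/N$ cell. The standard comparison of a Riemann sum with an integral for a piecewise monotone function then gives
\[
 \frac1N\sum_{j}\phi(\gamma_j)=\int\phi\,\rho_d\,\dd x+O\Bigl(\frac{\sup|\phi|}{N}\Bigr),
\]
and $\sup|\phi|\le C+|\log\eta|=O(\log N)$. This is where the $N^{-1}\log N$ term comes from.

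Third, remove $\eta$. The difference is $\frac12\int\log\bigl(1+\eta^2/(x-\lambda)^2\bigr)\rho_d(x)\,\dd x$. Since $\rho_d$ is bounded near the bulk point $\lambda$, we may substitute $x-\lambda=\eta u$, which yields $\eta\|\rho_d\|_\infty\cdot\frac12\int\log(1+u^{-2})\,\dd u=O(\eta)$. The definition of $L_d$ then gives $\int\frac12\log((x-\lambda)^2)\rho_d=L_d(\lambda)$, with \eqref{def:gauss-entropy} available if an explicit form is wanted.

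All constants depend only on $d$ and $K$, which gives uniformity in $\lambda$. The main obstacle is the first step near $\lambda$: the derivative of $\phi$ blows up like $1/\eta$. One must check that the rigidity window only costs $N^{-a}/\eta$ rather than something worse. The assumption $c_0<a$ guarantees that this error is $o(1)$.
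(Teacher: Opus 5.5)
Your proposal is correct and follows the paper's proof essentially step for step. Both use the exact log-determinant formula for the Gaussian entropy, replace the nontrivial eigenvalues by quantiles via the $O(\eta^{-1})$ Lipschitz bound at cost $N^{-a}/\eta$, and control the quantile Riemann sum by a bounded-variation estimate at cost $O(N^{-1}\log N)$; the trivial eigenvalue costs $O(N^{-1})$ and the smoothing step costs $O(\eta)$. Your refined treatment of the first step is not needed: as you note yourself, the crude bound $1/(2\eta)$ on every term already gives the stated error.
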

\begin{proof}
Equation~\eqref{en:gaussian-entropy} gives exactly
\[
 \frac1Nh(\bm{Y})=\frac12\log(2\pi e\eta)
 -\frac1N\sum_j\log|\lambda_j-\lambda-\ri\eta|.
\]
The summand has Lipschitz constant at most $\eta^{-1}$, so replacing the
nontrivial eigenvalues by their quantiles incurs an error of at most
$N^{-a}/\eta$. Its total variation on the spectral support is
$O(1+|\log\eta|)$, which bounds the quantile Riemann-sum error by
$O(N^{-1}\log N)$. The trivial eigenvalue contributes $O(N^{-1})$.
It remains to bound the smoothing error:
\[
 \frac12\int\rho_d(t)\log\left(1+\frac{\eta^2}{(t-\lambda)^2}\right)\dd t
 \leq C\eta\int_\R\log(1+r^{-2})\dd r=O(\eta).
\]
The deterministic approximation just used can also be viewed as a
transport estimate. The quantiles give a Riemann sum for the probability
measure $\rho_d(t)\,\dd t$. For a function of bounded variation on the
spectral interval, its quantile Riemann error is bounded by its total
variation divided by $N$. For
$\log|t-\lambda-\ri\eta|$ that variation is $O(1+|\log\eta|)$.
The single trivial eigenvalue changes the normalized sum by at most
$O(N^{-1}\log N)$ as well. Thus no microscopic rigidity estimate is
being used in this entropy computation; the coarse displacement
$N^{-a}$ is sufficient because the noise height is larger.
\end{proof}

\subsubsection{Gaussian star geometry and fourth-chaos coercivity}\label{en:geometry}

We retain the fixed parameters and Gaussian star laws from
Subsubsection~\ref{en:star-laws}. The weighted frame and
Fisher-information argument adapt
\cite[Lemma~11.3 and Proposition~11.1]{BS} componentwise, allowing
different parameters $\lambda_a$. The mixed-derivative calculation and
fourth-chaos coercivity below are finite-dimensional statements,
with constants uniform over $\bla\in K^k$.

The wave constraint depends on $\lambda$, so different star laws
initially live on different hyperplanes. We want to move all of them
to one fixed Euclidean space, with the standard Gaussian as reference.
Let $U=\R^{d-1}$ contain the vertices
$\bme_1,\ldots,\bme_d$ of a regular unit simplex. Thus
\[
 \bme_i\cdot\bme_j=\begin{cases}1,&i=j,\\-1/(d-1),&i\ne j,\end{cases}
 \qquad \sum_{i=1}^d\bme_i=0,\qquad
 \sum_{i=1}^d\bme_i\bme_i^T=\frac d{d-1}I_U.
\]

A concrete realization helps interpret these formulas.  Identify $U$
isometrically with the zero-sum hyperplane
\[
 \left\{(u_1,\ldots,u_d)\in\R^d:\sum_{i=1}^du_i=0\right\}
\]
and take
\[
 \bme_i=\sqrt{\frac d{d-1}}
 \left(\bm\varepsilon_i-\frac1d\mathbf1\right),
\]
where $\bm\varepsilon_i$ is the $i$th coordinate vector.  The displayed
simplex identities then follow by direct calculation.  We use the
abstract space $U\simeq\R^{d-1}$ only to avoid carrying the redundant
zero-sum coordinate.

A \emph{standard Gaussian on $U$} means a centered Gaussian whose
covariance is the identity operator on the Euclidean space $U$.  In the
concrete model above, start with $\bm G\sim N(0,I_d)$ and orthogonally
project it onto this realization of $U$:
\[
 \bm G_U=\bm G-\frac1d(\mathbf1\cdot\bm G)\mathbf1.
\]
For every $\bmu,\bmv\in U$,
\[
 \E[(\bmu\cdot\bm G_U)(\bmv\cdot\bm G_U)]=\bmu\cdot\bmv.
\]
Thus $\bm G_U$ is standard intrinsically on $U$, even though its $d$
ambient coordinates are dependent.  In particular,
$\Cov(\bme_i\cdot\bm G_U,\bme_j\cdot\bm G_U)=\bme_i\cdot\bme_j$.

For a single channel, the space is $\R+U$, where $+$ denotes the
orthogonal direct sum. A point is a pair $(y_0,\bmy)$ with
$y_0\in\R$ and $\bmy\in U$. The root coordinate is $y_0$, and the
$i$th leaf coordinate is
\[
 cy_0+\rho\,\bme_i\cdot\bmy,
 \qquad c=\lambda/d,\qquad \rho=\sqrt{1-c^2}.
\]
The leaf coordinates sum to $dcy_0=\lambda y_0$, because the simplex
vertices sum to zero. For a standard Gaussian pair $(y_0,\bmy)$,
the root--leaf covariance is $c$, each leaf has variance
$c^2+\rho^2=1$, and two distinct leaves have covariance
\[
 c^2-\frac{\rho^2}{d-1}
 =\frac{\lambda^2-d}{d(d-1)}.
\]
Thus this map gives exactly the covariance $\Sigma_\lambda$.

For $k$ channels, take $k$ copies of the same space and set
\[
 E=(\R+U)^k,\qquad V_i=(\R+\R\bme_i)^k.
\]
We write a point of $E$ as
$\bmy=((y_{a,0},\bmy_a))_{a=1}^k$, where
$y_{a,0}\in\R$ and $\bmy_a\in U$. The inner product is the sum of
the inner products in the $k$ blocks. We identify $\R^k$ with the
subspace in which every $\bmy_a=0$, and $U^k$ with the subspace in
which every $y_{a,0}=0$. These are the scalar and transverse
coordinates, respectively. The space $E$ and its edge subspaces
$V_i$ do not depend on $\bla$.

Let $P_i$ be the orthogonal projection onto $V_i$. It acts separately
in each block:
\[
 P_i\bmy
 =\bigl((y_{a,0},(\bme_i\cdot\bmy_a)\bme_i)\bigr)_{a=1}^k.
\]
In this subsubsection $E$ denotes the Euclidean space, not a spectral
energy. Its dimension is $dk$, while $\dim V_i=2k$. Write $\gamma_E$ for
the standard Gaussian law on $E$, with the analogous convention on
its subspaces. A standard Gaussian on $E$ has independent standard
Gaussian scalar and transverse coordinates across all $k$ blocks.
For $1\le a\le k$, put
\[
 c_a=\lambda_a/d,\qquad \rho_a=(1-c_a^2)^{1/2}.
\]
Let $\bmw_a\in E$ have $a$th block $(1,\bm0)$ and all other blocks
zero. Let $\bmv_{i,a}\in E$ have $a$th block
$(c_a,\rho_a\bme_i)$ and all other blocks zero.
Thus $\bmw_a$ and $\bmv_{i,a}$ represent the root and the $i$th leaf
in channel $a$, and $V_i$ is their span over all $a$.

The physical coordinate map $\mathcal L_{\bla}$ is consequently
\[
 (\mathcal L_{\bla}\bmy)_{a,0}=y_{a,0},\qquad
 (\mathcal L_{\bla}\bmy)_{a,i}
 =c_a y_{a,0}+\rho_a\,\bme_i\cdot\bmy_a.
\]
Equivalently, its root coordinates are $\bmw_a\cdot\bmy$ and its
neighbor coordinates are $\bmv_{i,a}\cdot\bmy$.
For a fixed $i$, knowing these two physical coordinates in every channel
is equivalent to knowing $P_i\bmy$.  Indeed, the root coordinate gives
$y_{a,0}$, and then
\[
 \bme_i\cdot\bmy_a
 =\frac{(\mathcal L_{\bla}\bmy)_{a,i}
       -c_a(\mathcal L_{\bla}\bmy)_{a,0}}{\rho_a}.
\]
This is why the physical oriented-edge marginal becomes the ordinary
orthogonal projection onto the fixed subspace $V_i$ after whitening.
The one-channel calculation above applies independently in every
block, so
\[
 \mathcal L_{\bla}\mathcal L_{\bla}^T
 =\diag(\Sigma_{\lambda_1},\ldots,\Sigma_{\lambda_k}).
\]
In each block, $\mathcal L_{\bla}^T\mathcal L_{\bla}$ has scalar and
transverse eigenvalues $1+dc_a^2$ and $d\rho_a^2/(d-1)$.
Thus $\mathcal L_{\bla}$ is invertible onto $H_{\bla}$, the map and
its inverse are uniformly bounded on $K^k$, and it sends a standard
Gaussian on $E$ to $\bm Z_{\bla}$.

An explicit inverse clarifies this change of coordinates. For
$\bm x\in H_{\bla}$, the $a$th block of the inverse is
\[
 (\mathcal L_{\bla}^{-1}\bm x)_a
 =\left(x_{a,0},\frac{d-1}{d\rho_a}
                  \sum_{i=1}^d x_{a,i}\bme_i\right).
\]
Indeed, the simplex identities and
$\sum_i x_{a,i}=dc_ax_{a,0}$ give
\[
 \bme_j\cdot\left(\frac{d-1}{d\rho_a}
                  \sum_{i=1}^d x_{a,i}\bme_i\right)
 =\frac{x_{a,j}-c_ax_{a,0}}{\rho_a}.
\]
Substitution in the physical coordinate map recovers every leaf
coordinate. The change of variables adds a deterministic logarithmic
Jacobian to star and edge entropies; these constants disappear when
the functional of a law is subtracted from that of its Gaussian
reference.

Let $R_i$ be the orthogonal involution exchanging $\bmw_a,\bmv_{i,a}$ for
all $a$ and fixing $V_i^\perp$. It represents simultaneous endpoint
reversal on $V_i$. Permuting the simplex vertices gives the
orthogonal action of neighbor permutations on $E$. The statements
below concern laws invariant under this action whose $V_i$ marginals
are invariant under $R_i$.

Define directions in $V_i$ by
\[
\begin{aligned}
\bma_{i,a}
 &=\frac{\bmw_a+\bmv_{i,a}}{2\sqrt{1+c_a}}
   +\frac{\bmw_a-\bmv_{i,a}}{2\sqrt{1-c_a}},\\
\bmb_{i,a}
 &=\frac{\bmw_a+\bmv_{i,a}}{2\sqrt{1+c_a}}
   -\frac{\bmw_a-\bmv_{i,a}}{2\sqrt{1-c_a}}.
\end{aligned}
\]
The sum $\bmw_a+\bmv_{i,a}$ and difference $\bmw_a-\bmv_{i,a}$
are orthogonal, with squared lengths $2(1+c_a)$ and $2(1-c_a)$.
Each summand in the definition of $\bma_{i,a}$ therefore has squared
length $1/2$. Their sum and difference, $\bma_{i,a}$ and $\bmb_{i,a}$,
are orthonormal. Different channels are orthogonal, so
$\{\bma_{i,a},\bmb_{i,a}:1\leq a\leq k\}$ is an orthonormal basis
of $V_i$. Endpoint reversal fixes the sum and negates the difference;
consequently
\[
 R_i\bma_{i,a}=\bmb_{i,a},\qquad
 R_i\bmb_{i,a}=\bma_{i,a}.
\]

Put
\[
t_{1,a}=\frac{\rho_a-1+2/d}{2\rho_a},\qquad
t_{2,a}=\frac{\rho_a+1-2/d}{2\rho_a}.
\]
The open bulk condition gives
$\rho_a>\sqrt{1-4(d-1)/d^2}=1-2/d$.  Since $K$ is compactly
contained in the bulk, both weights are bounded below by a positive
constant depending only on $d,K$, and $t_{1,a}+t_{2,a}=1$.
The weights are chosen so that the following positive frame is exactly
the identity:
\begin{equation}\label{en:frame}
\sum_{i=1}^d\sum_{a=1}^k
\left(t_{1,a}\bma_{i,a}\bma_{i,a}^T
     +t_{2,a}\bmb_{i,a}\bmb_{i,a}^T\right)=I_E.
\end{equation}

We verify this identity explicitly.  Work in one channel and decompose
vectors into their scalar part in $\R$ and their transverse part in
$U$.  A direct calculation from the definitions gives
\[
 |\bmw_a\cdot\bma_{i,a}|^2=\frac{1+\rho_a}{2},\qquad
 |\bmw_a\cdot\bmb_{i,a}|^2=\frac{1-\rho_a}{2}.
\]
Because both vectors have norm one, their transverse squared lengths
are the two quantities in the opposite order.  Therefore the weighted
scalar coefficient contributed by a fixed neighbor $i$ is
\[
 t_{1,a}\frac{1+\rho_a}{2}
 +t_{2,a}\frac{1-\rho_a}{2}=\frac1d,
\]
whereas the weighted transverse coefficient is
\[
 t_{1,a}\frac{1-\rho_a}{2}
 +t_{2,a}\frac{1+\rho_a}{2}=\frac{d-1}{d}.
\]
The scalar--transverse cross term is a scalar multiple of $\bme_i$.
It vanishes after summing over $i$ because $\sum_i\bme_i=0$.  Finally,
\[
 \sum_{i=1}^d\frac1d I_{\R}=I_{\R},\qquad
 \frac{d-1}{d}\sum_{i=1}^d\bme_i\bme_i^T=I_U.
\]
This proves the frame identity in one channel, and the sum over the
orthogonal channel blocks proves \eqref{en:frame} on $E$.

The special case $\lambda_a=0$ is a useful check.  Then
$c_a=0$, $\rho_a=1$, and, in the $a$th block,
\[
 \bma_{i,a}=\bmw_a,\qquad
 \bmb_{i,a}=(0,\bme_i),\qquad
 t_{1,a}=\frac1d,\qquad t_{2,a}=\frac{d-1}{d}.
\]
Thus \eqref{en:frame} reduces to one copy of the scalar direction with
total weight one and the standard simplex tight frame on $U$.

Let $p$ be a smooth positive Lebesgue density on $E$ with finite
Fisher information, and let $p_i$ be its marginal density on $V_i$.
Assume that $p_i\circ R_i=p_i$. In this paragraph $\E_p$ means
expectation under the density $p$.  We regard $\log p_i(P_i\bmx)$ as a
function on $E$.  If $\bm Y$ has density $p$, differentiating the
marginal under the integral gives the score-projection identity
\begin{equation}\label{en:conditional-score}
 \E_p[P_i\nabla\log p(\bm Y)\mid P_i\bm Y]
 =\nabla\log p_i(P_i\bm Y).
\end{equation}
For completeness, for any $\bmu\in V_i$ and smooth compactly supported
$\phi$ on $V_i$, integration by parts gives
\[
 \E_p[\phi(P_i\bm Y)\,\partial_{\bmu}\log p(\bm Y)]
 =-\E_p[\partial_{\bmu}\phi(P_i\bm Y)]
 =\E_p[\phi(P_i\bm Y)\,\partial_{\bmu}\log p_i(P_i\bm Y)],
\]
which is exactly \eqref{en:conditional-score}.

Write
\[
 I(p)=\int_Ep|\nabla\log p|^2\,\dd\bmx,
 \qquad
 I(p_i)=\int_{V_i}p_i|\nabla\log p_i|^2.
\]
For every unit vector $\bmu\in V_i$, conditional expectation is an
orthogonal projection in $L^2(p)$, so
\begin{align*}
 &\int_Ep\,
 |\partial_{\bmu}(\log p-\log p_i\circ P_i)|^2\,\dd\bmx\\
 &\qquad=
 \int_Ep|\partial_{\bmu}\log p|^2\,\dd\bmx
 -\int_{V_i}p_i|\partial_{\bmu}\log p_i|^2.
\end{align*}
Sum this identity in the directions of the weighted frame.  The full
terms sum to $I(p)$ by \eqref{en:frame}.  For the marginal terms,
$R_i$ preserves $p_i$ and exchanges $\bma_{i,a}$ with
$\bmb_{i,a}$, hence
\[
 \int p_i|\partial_{\bma_{i,a}}\log p_i|^2
 =\int p_i|\partial_{\bmb_{i,a}}\log p_i|^2.
\]
Since $t_{1,a}+t_{2,a}=1$ and the paired directions form an
orthonormal basis of $V_i$, their weighted marginal contribution is
$I(p_i)/2$.  We have proved the exact sum-of-squares identity
\begin{equation}\label{en:fisher}
\begin{split}
I(p)-\frac12\sum_{i=1}^dI(p_i)
=\sum_{i=1}^d\sum_{a=1}^k\int_Ep\bigl(
&t_{1,a}|\partial_{\bma_{i,a}}(\log p-\log p_i\circ P_i)|^2\\
&+t_{2,a}|\partial_{\bmb_{i,a}}(\log p-\log p_i\circ P_i)|^2
\bigr)\,\dd\bmx\geq0.
\end{split}
\end{equation}
No independence among the $k$ channels is used here.

The same proof applies to scores relative to the standard Gaussian:
replace $\log p$ by the logarithm of the density of the same law
relative to $\gamma_E$, and use the analogous relative marginal
density on $V_i$.  The Gaussian score $-\bmx$ projects
compatibly under $P_i$, so the conditional-score calculation and the
frame identity are unchanged.  This relative version is the one used
for Ornstein--Uhlenbeck interpolation below.

\begin{lemma}[Kernel of mixed third derivatives]\label{en:tensor}
Let $T$ be a symmetric trilinear form on $E=(\R+U)^k$. Suppose that
\[
 T((I-P_i)\bmx,P_i\bmy,\bmz)=0
 \qquad\text{for every }i\text{ and every }\bmx,\bmy,\bmz\in E.
\]
Then $T$ depends only on the scalar coordinates of its arguments.
Equivalently, $T(\bmx,\bmy,\bmz)=0$ whenever at least one of its
arguments belongs to $U^k$.
\end{lemma}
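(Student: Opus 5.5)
The plan is to work in an explicit spanning set adapted to the subspaces $V_i$. In block $a$ write $\bmw_a$ for the scalar unit vector and $\bm f_{i,a}\in E$ for the vector with $a$th block $(0,\bme_i)$ and all other blocks zero. Then $V_i$ is spanned by $\{\bmw_a,\bm f_{i,a}\}_a$, and $U^k$ is spanned by the $\bm f_{j,b}$, subject only to $\sum_j\bm f_{j,b}=0$. For $j\ne i$ the simplex identities give $P_i\bm f_{j,b}=-\bm f_{i,b}/(d-1)$. Hence
\[
 \bm g_{ij,b}:=(I-P_i)\bm f_{j,b}=\bm f_{j,b}+\tfrac1{d-1}\bm f_{i,b}\in V_i^\perp .
\]
The hypothesis, with $\bmx=\bm f_{j,b}$, $\bmy\in V_i$ and arbitrary $\bmz$, then reads $T(\bm g_{ij,b},\bmy,\bmz)=0$.

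\emph{Step 1: one scalar argument.} I take $\bmy=\bmw_a\in V_i$. This gives $T(\bm g_{ij,b},\bmw_a,\bmz)=0$ for all $i\ne j$, all $a,b$ and all $\bmz$. Since
\[
 \bm g_{ij,b}-\bm g_{ji,b}=\bigl(1-\tfrac1{d-1}\bigr)(\bm f_{j,b}-\bm f_{i,b})
\]
and $1-1/(d-1)\ne0$ for $d\ge3$, the vectors $\bm g_{ij,b}$ span $U^k$. So $T(\bmu,\bmw_a,\bmz)=0$ for every $\bmu\in U^k$. By symmetry, $T$ vanishes whenever one argument is transverse and another is scalar. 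Multilinearity then reduces the lemma to showing that $T$ vanishes on $U^k\times U^k\times U^k$.

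\emph{Step 2: purely transverse arguments.} I take $\bmy=\bm f_{i,a}\in V_i$ and $\bmz=\bm f_{l,c}$, and put $t^{bac}(j,i,l)=T(\bm f_{j,b},\bm f_{i,a},\bm f_{l,c})$. The hypothesis gives the rule
\[
 t^{bac}(j,i,l)=-\tfrac1{d-1}\,t^{bac}(i,i,l)\qquad (j\ne i),
\]
for all block labels $b,a,c$ and all $l$. Symmetry of $T$ lets me permute the index positions together with their block labels. I then combine the rule with these permutations as follows.
\begin{itemize}
\item For distinct $i,j,l$, comparing $t(i,j,l)=t(j,i,l)$ through the rule applied in both orders gives $t(i,i,l)=t(j,j,l)$, up to the matching block labels. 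So $\alpha:=t(i,i,l)$ does not depend on the repeated index $i\neq l$.
\item Writing $t(i,i,l)=t(l,i,i)$ and applying the rule with $l\ne i$ gives $\alpha=-\gamma/(d-1)$, where $\gamma=t(i,i,i)$. Since $d\ge3$ supplies at least two admissible indices, $\gamma$ is independent of $i$.
\item Writing $t(i,i,l)=t(i,l,i)$ and applying the rule gives $\alpha=-t(l,l,i)/(d-1)=-\alpha/(d-1)$, so $\alpha=0$.
\end{itemize}
It follows that $\gamma=0$, and the rule then forces every value with two distinct indices, and every value with three distinct indices, to vanish. Hence $T=0$ on $(U^k)^3$, which proves the lemma.

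The main obstacle I expect is Step 2, specifically the bookkeeping of block labels. The hypothesis ties the block of $\bm g_{ij,b}$ to the first slot and the block of $\bm f_{i,a}$ to the second. Each permutation of index positions must therefore carry its block label along, and the conclusion must hold for all block patterns $(b,a,c)$ simultaneously rather than only for equal blocks. I would handle this by treating, for each fixed triple of blocks, the array $t$ as a tensor and applying each permutation to the tensor together with its block pattern. The argument above only chains relations among the finitely many patterns obtained by permuting a fixed $(b,a,c)$, so this should close. The one place where $d\ge3$ is genuinely used is the spanning claim of Step 1, together with the existence of a third index in Step 2.
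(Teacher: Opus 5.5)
Your proof is correct, and it takes a more hands-on route than the paper's. The paper fixes $\bmz$ and restates the hypothesis as: the self-adjoint operator $B_{\bmz}$ commutes with every $P_i$. It then separates scalar and transverse coordinates using the distinct eigenvalues $d$ and $d/(d-1)$ of $\sum_iP_i$. Next it shows, via self-adjointness and $\bme_i\cdot\bme_j\ne0$, that on $U^k$ the form is a single channel-mixing matrix $M_{\bmz}$ paired with the inner product of $U$. Finally it kills $M_{\bmz}$ by symmetry in the third slot, using two orthogonal directions of $U$.

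Your Step~1 bypasses the commutant and eigenspace argument with a more direct observation. Scalar vectors lie in every $V_i$, while the spaces $V_i^\perp$, which are $\bme_i^{\perp}$ in each block, together span $U^k$. Your Step~2 replaces the $M_{\bmz}$ argument by solving the linear relations for $t^{bac}(j,i,l)$ on the overcomplete frame $\bm f_{j,b}$.

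What each approach buys: the paper's version shows structurally why full symmetry is indispensable, since commuting with the $P_i$ alone still permits $M\otimes I_U$. Yours is elementary, and it makes the role of the nonzero simplex inner product visible as the factor $-1/(d-1)$ in your rule.

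On the bookkeeping you flagged: as written, your third bullet gives $\alpha^{bac}=-\alpha^{bca}/(d-1)$, with a different block as the odd one out. It still closes, because your second bullet gives $\alpha=-\gamma/(d-1)$ with $\gamma$ symmetric in the block labels. You can also avoid permuting blocks entirely by keeping blocks fixed in their slots. Apply the rule to $(i,i,l)$ once directly, and once in two steps through $(l,i,l)$. This gives $\gamma(i)=-\gamma(l)/(d-1)$ for $i\ne l$. Swapping $i$ and $l$ yields $\gamma(i)=(d-1)^{-2}\gamma(i)$, which forces $\gamma(i)=0$ since $d\ge3$.
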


\begin{proof}
Symmetry means that the value of $T$ is unchanged by any permutation
of its three arguments. Fixing the last argument $\bmz$ gives a
self-adjoint operator $B_{\bmz}$ on $E$, defined by
\[
 \langle B_{\bmz}\bmx,\bmy\rangle=T(\bmx,\bmy,\bmz).
\]
The hypothesis says that this operator has no block between $V_i$
and $V_i^\perp$. More explicitly,
$(I-P_i)B_{\bmz}P_i=0$; taking adjoints gives
$P_iB_{\bmz}(I-P_i)=0$. Hence $B_{\bmz}$ commutes with every $P_i$.
We first determine the operators with this property, and then use
symmetry in the third argument.

\proofstep{Separate scalar and transverse coordinates}
The simplex identities give, for
$\bmy=((y_{a,0},\bmy_a))_{a=1}^k$,
\[
 \sum_{i=1}^dP_i\bmy
 =\left(\left(dy_{a,0},\frac d{d-1}\bmy_a\right)\right)_{a=1}^k.
\]
Thus $\R^k$ and $U^k$ are the eigenspaces of $\sum_iP_i$ with
eigenvalues $d$ and $d/(d-1)$, respectively. These eigenvalues are
distinct because $d\ge3$. Since $B_{\bmz}$ commutes with their sum,
it preserves both subspaces. Consequently,
\[
 T(\bmx,\bmy,\bmz)=0
 \qquad(\bmx\in\R^k,\ \bmy\in U^k,\ \bmz\in E).
\]
By symmetry, $T$ vanishes whenever one argument is scalar and another
is transverse. Only the case in which all three arguments are
transverse remains to be considered.

\proofstep{Determine the action on transverse coordinates}
The intersection $V_i\cap U^k$ consists of the vectors
$((0,a_a\bme_i))_{a=1}^k$ with $\bma\in\R^k$.
It is invariant under $B_{\bmz}$, so there is a self-adjoint
$k\times k$ matrix $M_i(\bmz)$ such that
\[
 B_{\bmz}\bigl((0,a_a\bme_i)\bigr)_{a=1}^k
 =\bigl((0,(M_i(\bmz)\bma)_a\bme_i)\bigr)_{a=1}^k.
\]
Self-adjointness follows because
$\bma\mapsto((0,a_a\bme_i))_{a=1}^k$ preserves inner products.
For $i\ne j$, self-adjointness of $B_{\bmz}$ further gives
\[
 (\bme_i\cdot\bme_j)\langle M_i(\bmz)\bma,\bmb\rangle
 =(\bme_i\cdot\bme_j)\langle\bma,M_j(\bmz)\bmb\rangle.
\]
Since $\bme_i\cdot\bme_j=-1/(d-1)\ne0$, all the matrices
$M_i(\bmz)$ agree; write their common value as $M_{\bmz}$.
The simplex vertices span $U$, so for arbitrary transverse vectors
$\bmx=((0,\bmx_a))_{a=1}^k$ and
$\bmy=((0,\bmy_a))_{a=1}^k$, linearity yields
\[
 T(\bmx,\bmy,\bmz)
 =\sum_{a,b=1}^k(M_{\bmz})_{ab}\,\bmx_b\cdot\bmy_a.
\]
In words, the same matrix mixes the $k$ channels in every direction
of $U$.

\proofstep{Use symmetry to eliminate the transverse part}
Choose a unit vector $\bmu\in U$ and put
$\bmz=((0,z_a\bmu))_{a=1}^k$, with arbitrary scalar coefficients.
Since $\dim U\ge2$, choose a unit vector $\bmv\perp\bmu$.
For arbitrary $\bma,\bmb\in\R^k$, take
\[
 \bmx=((0,a_a\bmv))_{a=1}^k,\qquad
 \bmy=((0,b_a\bmv))_{a=1}^k.
\]
The preceding formula and symmetry give
\[
 \langle M_{\bmz}\bma,\bmb\rangle
 =T(\bmx,\bmy,\bmz)
 =T(\bmx,\bmz,\bmy)=0.
\]
For the last equality, every inner product between a transverse
block of $\bmx$ and a transverse block of $\bmz$ is zero, since
$\bmv\perp\bmu$. Thus $M_{\bmz}=0$ for all such $\bmz$.
These vectors span $U^k$, so linearity in the last argument proves
that $T$ vanishes when all three arguments are transverse as well.
Together with the first step, this is the claimed conclusion.

The assumption $d\ge3$ has been used twice: to separate the scalar
and transverse eigenspaces of $\sum_iP_i$, and to choose two
orthogonal directions in $U$. Preservation of every edge subspace
alone does not eliminate the transverse part; symmetry in all three
arguments is essential.
\end{proof}

We recall the finite-dimensional Gaussian terminology used in the
next lemma. The $n$th Gaussian chaos is the span of multivariate
Hermite polynomials of total degree $n$. These are ordinary
polynomials with lower-degree corrections chosen so that different
chaoses are orthogonal in $L^2(\gamma_E)$. The
Ornstein--Uhlenbeck operator $\Delta-\bm x\cdot\nabla$ acts as
multiplication by $-n$ on the $n$th chaos. Gaussian integration by
parts therefore gives
$\int|\nabla h|^2\,\dd\gamma_E=n\int h^2\,\dd\gamma_E$.
Conditional expectation onto a linear subspace is an orthogonal
projection in $L^2$ and preserves each chaos: in coordinates adapted
to the subspace, all Hermite factors of positive degree in a discarded
coordinate have expectation zero.

Let $\mathcal C_i$ denote Gaussian conditional expectation onto
$V_i=(\R+\R\bme_i)^k$. Thus $\mathcal C_i h$ keeps the scalar
coordinates and the components of each transverse block along
$\bme_i$, and averages over the remaining Gaussian coordinates.
More explicitly, if $\bm G$ is standard Gaussian on $E$, then
\[
 (\mathcal C_i h)(\bm y)
 =\E\bigl[h(\bm G)\mid P_i\bm G=P_i\bm y\bigr].
\]
The right side denotes the Gaussian conditional integral on the
orthogonal complement of $V_i$, and is lifted back to a function on $E$.
In particular, if $f$ is a density relative to $\gamma_E$, then
$\mathcal C_i f$ is its marginal density relative to $\gamma_{V_i}$.
Integrating a lifted marginal over $\gamma_E$ is the same as
integrating it over $\gamma_{V_i}$.
For a function $h$ on $E$, put
\[
Q(h)=\|h\|_2^2-\frac d2\|\mathcal C_1h\|_2^2.
\]
Until the end of Subsubsection~\ref{en:moments}, these function norms
are with respect to $\gamma_E$, and $Q(h)$ denotes this quadratic
functional, not a covariance matrix.

\begin{lemma}[Fourth-chaos coercivity]\label{en:quartic-coercivity}
Suppose $h$ belongs to Gaussian chaos $n\ge1$, is invariant under
neighbor permutations, and satisfies
$(\mathcal C_i h)\circ R_i=\mathcal C_i h$ for every $i$. Then
$Q(h)\ge0$. If $n=4$, there is $\kappa>0$, uniform for
$\bla\in K^k$, such that
\begin{equation}\label{en:coercivity}
Q(h)\ge\kappa\|h\|_2^2.
\end{equation}
\end{lemma}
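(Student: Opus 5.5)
We plan to follow the Gaussian analogue of the Fisher-information sum-of-squares identity \eqref{en:fisher}, linearized at the standard Gaussian. First, neighbor-permutation invariance gives $\|\mathcal C_ih\|_2=\|\mathcal C_1h\|_2$ for every $i$, so
\[
Q(h)=\|h\|_2^2-\frac12\sum_{i=1}^d\|\mathcal C_ih\|_2^2 .
\]
Since $h$ lies in chaos $n$, the identity $\int|\nabla h|^2\dd\gamma_E=n\|h\|_2^2$ holds. Each $\mathcal C_ih$ also lies in chaos $n$, so $\int|\nabla\mathcal C_ih|^2=n\|\mathcal C_ih\|_2^2$. Moreover, $\nabla\mathcal C_ih$ is the conditional expectation of $P_i\nabla h$ onto $V_i$, which is the Gaussian form of the score projection \eqref{en:conditional-score}. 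We then apply the frame identity \eqref{en:frame} to $\int|\nabla h|^2$. The reversal symmetry $(\mathcal C_ih)\circ R_i=\mathcal C_ih$ exchanges the $\bma_{i,a}$ and $\bmb_{i,a}$ derivatives of $\mathcal C_ih$. Exactly as in \eqref{en:fisher}, this gives
\[
nQ(h)=\sum_{i,a}\int\Bigl(t_{1,a}|\partial_{\bma_{i,a}}(h-\mathcal C_ih)|^2+t_{2,a}|\partial_{\bmb_{i,a}}(h-\mathcal C_ih)|^2\Bigr)\dd\gamma_E\ \ge0 .
\]
This proves $Q(h)\ge0$ for every $n\ge1$.

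For coercivity at $n=4$ we argue by compactness. The fourth chaos on $E$ is finite dimensional, and the constraint set is a linear subspace. The functional $Q$ depends continuously on $\bla$ through $c_a$, since $V_i$, $P_i$ and $\mathcal C_i$ are fixed but the weights, the vectors $\bma_{i,a},\bmb_{i,a}$ and the reversal $R_i$ vary with $\bla$. The weights $t_{1,a},t_{2,a}$ are bounded below on $K$. It therefore suffices to show that $Q(h)=0$ forces $h=0$ for each fixed $\bla$ in the closure of $K^k$. We will also need some care with the possible dependence of the constraint subspace on $\bla$. We plan to handle this by minimizing the positive semidefinite form $Q$ over the unit sphere of the constrained space, and checking that the constrained subspaces vary continuously in $\bla$.

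Suppose $Q(h)=0$. All derivatives of $h-\mathcal C_ih$ along $V_i$ vanish, because the weighted directions span $V_i$ with positive weights. Hence $P_i\nabla h=\nabla\mathcal C_ih$, which depends only on $P_i\bmx$. Differentiating once more shows that the mixed Hessian block $(I-P_i)\nabla^2hP_i$ vanishes identically. The third derivative $T=\nabla^3h(\bmx)$ is a symmetric trilinear form satisfying $T((I-P_i)\cdot,P_i\cdot,\cdot)=0$ for every $i$. By Lemma~\ref{en:tensor}, every third derivative of $h$ at every point depends only on the scalar coordinates. Since $h$ is a polynomial of degree $4$, it follows that $h$ is a polynomial in the $k$ scalar coordinates $y_{a,0}$, up to a correction of degree at most $2$. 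The correction must vanish because $h$ lies in the pure fourth chaos. Thus $h$ is a degree-$4$ Hermite polynomial in the scalar coordinates alone.

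For such an $h$, $\mathcal C_ih=h$ for all $i$, so $Q(h)=(1-d/2)\|h\|_2^2$. This is negative unless $h=0$, since $d\ge3$, which contradicts $Q(h)=0\ge$ the semidefinite bound. Hence $h=0$, and compactness gives a uniform $\kappa$. The main obstacle is the step from $Q(h)=0$ to the third-derivative condition. We must check that the vanishing of $\partial_{\bmu}(h-\mathcal C_ih)$ for $\bmu\in V_i$ really implies that $\nabla^2 h$ has no $V_i$--$V_i^\perp$ block, and not just that it has none along $V_i$ after conditioning. It also remains to see why $n=4$ rather than general $n$ is needed. Here the plan is to observe that for $n\le 2$ the lemma's reduction is vacuous. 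For $n=4$, the conclusion of Lemma~\ref{en:tensor} applied to the top-degree part pins $h$ down to the scalar coordinates as described.
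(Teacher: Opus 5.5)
Your proposal is correct and follows the paper's proof essentially step for step: the weighted-frame sum-of-squares identity for $nQ(h)$ via reflection symmetry of $\mathcal C_ih$, then $Q(h)=0\Rightarrow P_i\nabla(h-\mathcal C_ih)=0\Rightarrow$ a vanishing mixed third derivative, Lemma~\ref{en:tensor}, reduction to a scalar-coordinate fourth-chaos polynomial with $Q(h)=(1-d/2)\|h\|_2^2$, and compactness in $\bla$. The only adjustment is in the compactness step. $Q$ itself does not depend on $\bla$; only the reflection constraint does. You also need not prove that the admissible subspaces vary continuously, since their dimension could a priori jump. As in the paper, it suffices that limits of admissible $h_r$ for $\bla^{(r)}\to\bla$ are admissible for $\bla$, which gives the contradiction directly.
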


\begin{proof}
The frame identity, Gaussian conditional expectation, and the
reflection symmetry of the marginals give
\[
nQ(h)=\sum_{i,a}\bigl(
 t_{1,a}\|(I-\mathcal C_i)\partial_{\bma_{i,a}}h\|_2^2
+t_{2,a}\|(I-\mathcal C_i)\partial_{\bmb_{i,a}}h\|_2^2\bigr).
\]
Indeed, the full derivative energy is $n\|h\|_2^2$. Reflection
equates the paired projected derivative energies, so their weighted
sum is $(n/2)\|\mathcal C_i h\|_2^2$ for each $i$; neighbor
symmetry then gives the displayed identity. This proves nonnegativity.

For degree four it suffices to show that $Q(h)=0$ forces $h=0$.
Since the weights are positive, equality implies
$P_i\nabla(h-\mathcal C_i h)=0$ for every $i$.
Thus every derivative of $h$ in a direction of $V_i$ depends only
on $P_i\bmx$. Differentiating in a direction perpendicular to $V_i$
and then in an arbitrary direction gives
\[
 D^3h(\bmx)[(I-P_i)\bmu,P_i\bmv,\bmz]=0
 \qquad(\bmx,\bmu,\bmv,\bmz\in E).
\]
Here $D^3h(\bmx)$ is the symmetric trilinear form obtained by taking
three directional derivatives of $h$ at $\bmx$.
Apply Lemma~\ref{en:tensor} at each point $\bmx$:
this third derivative vanishes whenever one of its arguments
belongs to $U^k$. Differentiating once more, and using symmetry of
fourth derivatives, shows that $D^4h$ vanishes whenever any of its
four arguments is transverse.

A fourth-chaos polynomial is determined by its fourth derivative:
in the product Hermite basis, each basis element has a distinct
leading monomial of total degree four. The preceding vanishing therefore
means that $h$ depends only on the $k$ scalar coordinates, with no
transverse coordinate present. These scalar coordinates are retained
by every $P_i$, so $\mathcal C_i h=h$ for every $i$. It follows that
\[
 0=Q(h)=\left(1-\frac d2\right)\|h\|_2^2,
\]
and hence $h=0$, since $d\ge3$.
The unit sphere of fourth chaos is finite dimensional,
and the symmetry constraints depend continuously on $\bla$.
Compactness over $\bla\in K^k$ now proves~\eqref{en:coercivity}.
To spell out the uniform compactness argument, suppose no positive
uniform $\kappa$ existed. There would be $\bla^{(r)}\in K^k$ and
admissible fourth-chaos polynomials $h_r$ with
$\|h_r\|_2=1$ and $Q(h_r)\to0$. Passing to subsequences in the
compact parameter set and the finite-dimensional unit sphere gives
$\bla^{(r)}\to\bla$ and $h_r\to h$ in $L^2$. The reflection and
permutation equations pass to the limit, as do conditional expectation
and $Q$. Thus $h$ is admissible, $\|h\|_2=1$, and $Q(h)=0$,
contradicting the kernel computation. This is a finite-dimensional
spectral gap, not a quantitative stability theorem for arbitrary
probability laws.
\end{proof}

\subsubsection{An abstract fourth-moment estimate}\label{en:moments}

The following proposition gives a quantitative fourth-moment
refinement of the Gaussian maximality argument of
Backhausz--Szegedy \cite[Theorem~11.2]{BS}. In the scalar setting, their
theorem identifies the Gaussian law as the unique maximizer of the
star--edge entropy functional under the prescribed covariance and
symmetry constraints. The heat-flow comparison below follows
\cite[Proposition~11.1]{BS}, with the weighted-frame calculation carried
out in the present block coordinates $E=(\R+U)^k$.

To extract an explicit moment bound, we combine the fourth-chaos
coercivity in Lemma~\ref{en:quartic-coercivity} with the high-noise
remainder estimate in Lemma~\ref{en:smoothing}. Uniqueness of the
Gaussian maximizer alone does not provide this coercivity or a
power-law estimate on moments. The argument applies to the joint law
of $k$ channels with possibly different eigenvalue parameters,
uniformly for $\bla\in K^k$.

Let $\bm{Y}$ be an $E$-valued vector with
the neighbor and edge-reflection symmetries above, and suppose
\[
\E \bm{Y}=0,\qquad \Cov(\bm{Y})=I_{E},\qquad |\bm{Y}|\le B,
\qquad B\ge2.
\]
For an independent standard Gaussian $\bm{G}$ on $E$, define
\[
f_s=\frac{\dd\Law(\sqrt s\,\bm{Y}+\sqrt{1-s}\,\bm{G})}
{\dd\gamma_{E}},\qquad
\Delta_s=\int f_s\log f_s\,\dd\gamma_{E}
-\frac d2\int(\mathcal C_1f_s)\log(\mathcal C_1f_s)\,\dd\gamma_{E}.
\]

\begin{proposition}[Fourth moments from an entropy deficit]\label{en:quartic}
If $0\le\Delta_{1/2}\le\varepsilon\le cB^{-6}$,
then every homogeneous polynomial $P$ of degree four whose coefficients
in the fixed orthonormal coordinates of $E$ are bounded by one satisfies
\begin{equation}\label{en:quartic-bound}
|\E P(\bm{Y})-\E P(\bm{G})|\le CB^{4/3}\varepsilon^{1/18}.
\end{equation}
Moreover, $0\le\Delta_s\le\varepsilon$ for $0<s\le1/2$. The constants
depend only on $d,k,K$.
\end{proposition}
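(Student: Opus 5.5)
The plan has three steps: show that $\Delta_s$ is monotone in $s$ along the Ornstein--Uhlenbeck interpolation, expand $\Delta_s$ to second order at a small noise level $s\sim(\varepsilon/B^3)^{2/9}$, and extract the fourth-chaos coefficient of $f_s$ via Lemma~\ref{en:quartic-coercivity}. Balancing the quadratic gain against the cubic remainder from Lemma~\ref{en:smoothing} produces the exponent $1/18$.

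\emph{Step 1: monotonicity.} Write $s=e^{-2t}$, so that $f_s=P_t f_{1}$-type Ornstein--Uhlenbeck evolution of the law of $\bm Y$ relative to $\gamma_E$. By de Bruijn's identity,
\[
 \tfrac{\dd}{\dd t}\int f\log f\,\dd\gamma_E=-I_{\gamma}(f),
\]
where $I_\gamma$ is the relative Fisher information. The same holds for each marginal $\mathcal C_i f$, because conditional expectation commutes with the Ornstein--Uhlenbeck semigroup on a linear subspace. The symmetries of $\bm Y$ (neighbour permutations and edge reflection) are preserved by the flow. Hence neighbour symmetry gives $\frac d2 I(\mathcal C_1 f)=\frac12\sum_i I(\mathcal C_if)$. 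The relative version of \eqref{en:fisher} then yields $\frac{\dd}{\dd t}\Delta\le0$. Thus $\Delta_s$ is nondecreasing in $s$. Since $f_s\to1$ as $s\to0$ (for instance by Lemma~\ref{en:smoothing}), we have $\Delta_s\to0$, and therefore $0\le\Delta_s\le\Delta_{1/2}\le\varepsilon$ for $0<s\le1/2$. This proves the second assertion.

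\emph{Step 2: quadratic expansion.} Fix $0<s\le cB^{-2}$ and put $h=f_s-1$, so that $\int h\,\dd\gamma_E=0$ and $\int\mathcal C_1h\,\dd\gamma_E=0$.
\begin{itemize}[leftmargin=*]
 \item For $u\ge-1$ one has $(1+u)\log(1+u)\le u+u^2/2$, which follows from $\log(1+u)\le u$. Apply this to the marginal term.
 \item For the full term use $(1+u)\log(1+u)\ge u+u^2/2-C|u|^3$.
\end{itemize}
Together these give
\[
 \Delta_s\ge\tfrac12Q(h)-C\|h\|_3^3\ge\tfrac12Q(h)-CB^3s^{9/2},
\]
where the last inequality is Lemma~\ref{en:smoothing}. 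Next decompose $h=\sum_{n\ge1}\pi_nh$ into chaoses. Because $\mathcal C_1$ preserves chaoses, $Q(h)=\sum_nQ(\pi_nh)$. Each $\pi_nh$ inherits the permutation and reflection symmetries, so Lemma~\ref{en:quartic-coercivity} gives $Q(\pi_nh)\ge0$ for all $n$ and $Q(\pi_4h)\ge\kappa\|\pi_4h\|_2^2$.

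\emph{Step 3: identifying the fourth chaos.} By Mehler's formula, $\pi_4h=s^2\sum_{|\alpha|=4}\E[H_\alpha(\bm Y)]H_\alpha/\alpha!$, with normalised Hermite polynomials $H_\alpha$. A degree-four monomial minus the corresponding Hermite polynomial is a combination of monomials of degree $2$ and $0$. These have the same expectation under $\bm Y$ and $\bm G$, because $\Cov\bm Y=I_E$. Since $\E H_\alpha(\bm G)=0$, we get
\[
 M:=\max_P|\E P(\bm Y)-\E P(\bm G)|\le C\max_\alpha|\E H_\alpha(\bm Y)|\le Cs^{-2}\|\pi_4h\|_2 .
\]
Combining Steps 1--3,
\[
 \varepsilon\ge\Delta_s\ge c\,s^4M^2-CB^3s^{9/2},
 \qquad\text{so}\qquad
 M^2\le C\bigl(\varepsilon s^{-4}+B^3s^{1/2}\bigr).
\]
Choose $s=(\varepsilon/B^3)^{2/9}$. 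The admissibility condition $s\le cB^{-2}$ is exactly the hypothesis $\varepsilon\le cB^{-6}$, and $s\le1/2$ holds as well. Both terms then equal $B^3(\varepsilon/B^3)^{1/9}=B^{8/3}\varepsilon^{1/9}$, giving $M\le CB^{4/3}\varepsilon^{1/18}$, which is \eqref{en:quartic-bound}.

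I expect the main obstacle to be Step 1. The rigorous justification of de Bruijn's identity for both the full density and its $V_1$ marginal, and of applying \eqref{en:fisher} to relative scores, requires smoothness, positivity and finite Fisher information of $f_s$. These hold for $s<1$ because of the Gaussian component and $|\bm Y|\le B$. One must also check that the $s\to0$ limit of $\Delta_s$ is zero. Lemma~\ref{en:smoothing} handles this, since it gives $\|h\|_3\to0$ and hence convergence of both entropy integrals.
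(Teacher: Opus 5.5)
Your proposal follows the paper's proof step for step. The ingredients are the same: Fisher-information monotonicity of $\Delta_s$ along the Ornstein--Uhlenbeck flow, the chaos decomposition in which Lemma~\ref{en:quartic-coercivity} isolates the $s^4$ fourth-chaos term, the $L^3$ remainder from Lemma~\ref{en:smoothing}, and the same choice $s=(\varepsilon/B^3)^{2/9}$.

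There is one false step, in the bound on the marginal entropy. The inequality $(1+u)\log(1+u)\le u+u^2/2$ fails on $[-1,0)$. Set $\phi(u)=u+u^2/2-(1+u)\log(1+u)$. Then $\phi(0)=0$ and $\phi'(u)=u-\log(1+u)\ge0$, so $\phi<0$ for $-1\le u<0$; at $u=-1$ the claim reads $0\le-1/2$. The fact $\log(1+u)\le u$ only gives $(1+u)\log(1+u)\le u+u^2$. That would produce $\tfrac12\|h\|_2^2-\tfrac d2\|\mathcal C_1h\|_2^2$ instead of $\tfrac12Q(h)$, and the coercivity lemma does not apply to that expression. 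Since $\mathcal C_1h$ has mean zero, it takes negative values unless it vanishes, so your upper bound on the marginal term is unjustified as written.

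The repair is the one the paper uses. Apply the two-sided bound $|(1+u)\log(1+u)-u-u^2/2|\le C|u|^3$, valid for all $u\ge-1$, to the marginal term as well. Then use that conditional expectation contracts $L^3(\gamma_E)$, so $\|\mathcal C_1h\|_3\le\|h\|_3\le CBs^{3/2}$. The extra error $\tfrac d2C\|\mathcal C_1h\|_3^3$ is absorbed into the same $CB^3s^{9/2}$, and the rest of your argument goes through unchanged.

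One small omission: for $\varepsilon=0$ your choice of $s$ degenerates. In that case let $s\downarrow0$ in $M^2\le CB^3s^{1/2}$, which gives $M=0$.
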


The conclusion controls fourth moments, rather than a distance between
the full law and its Gaussian reference. We do not optimize the
exponent $1/18$; it results from balancing the entropy deficit against
the high-noise remainder in the proof.

\begin{proof}
\proofstep{1. Heat-flow monotonicity}
We follow the Fisher-information argument of
\cite[Proposition~11.1]{BS} in the block coordinates introduced above.
For a positive density $f$ relative to $\gamma_E$, write
$I(f)=\int|\nabla f|^2/f\,\dd\gamma_E$ for its relative Gaussian
Fisher information. The reference measure here is Gaussian, whereas
it was Lebesgue measure in~\eqref{en:fisher}. The same projected-score
calculation gives $I(f)\ge(d/2)I(\mathcal C_1f)$ under the stated
symmetries.
Under Ornstein--Uhlenbeck evolution,
\[
\frac{\dd}{\dd t}\Delta_{se^{-2t}}
=-I(f_{se^{-2t}})+\frac d2I(\mathcal C_1f_{se^{-2t}})\le0.
\]
Boundedness of $\bm{Y}$ justifies differentiation and integration by parts.
Lemma~\ref{en:smoothing} gives $f_s\to1$ in $L^2$ as $s\downarrow0$; the
relative entropy is at most $\|f_s-1\|_2^2$, giving
$\Delta_s\to0$ and $0\le\Delta_s\le\varepsilon$.

\proofstep{2. Isolating the fourth chaos}
Lemma~\ref{en:quartic-coercivity} gives $Q\geq0$ on each positive
Hermite degree with the stated symmetries and
$Q(h)\geq\kappa\|h\|_2^2$ on degree four.

Use probabilists' multivariate Hermite polynomials normalized by
$\int H_\alpha H_\beta\,\dd\gamma_{E}
=\alpha!\mathbf1_{\alpha=\beta}$. Set
\[
g_4=\sum_{|\alpha|=4}\frac{\E H_\alpha(\bm{Y})}{\alpha!}H_\alpha,
\qquad A_4=\|g_4\|_2^2.
\]
The Hermite generating function gives, for every multi-index,
\[
 \E H_\alpha(\sqrt s\,\bm Y+\sqrt{1-s}\,\bm G)
 =s^{|\alpha|/2}\E H_\alpha(\bm Y).
\]
Consequently the fourth chaos of $f_s-1$ is $s^2g_4$.
The zeroth chaos vanishes because $\int f_s=1$; the first and
second vanish because the smoothed vector is centered and isotropic.
If $g_n$ denotes the corresponding coefficient of degree $n$, then
orthogonality and preservation of degree by $\mathcal C_1$ give
\[
 Q(f_s-1)=\sum_{n\ge3}s^nQ(g_n).
\]
Each summand is nonnegative by Lemma~\ref{en:quartic-coercivity},
including the possible third-degree term. We may therefore retain
only degree four. Hence
\begin{equation}\label{en:q-fourth}
Q(f_s-1)\ge\kappa s^4A_4.
\end{equation}

\proofstep{3. Entropy expansion and choice of the noise level}
Lemma~\ref{en:smoothing} gives the bound~\eqref{en:mehler} for
$0<s\le cB^{-2}$. This compares the quadratic deficit with entropy.
The global remainder estimate
\[
\bigl|(1+u)\log(1+u)-u-u^2/2\bigr|\le C|u|^3,
\qquad u\ge-1,
\]
applies to $f_s-1$ and its marginal. Since conditional expectation
contracts $L^3$, equations~\eqref{en:q-fourth} and~\eqref{en:mehler} yield
\[
\varepsilon\ge\Delta_s
\ge\tfrac12Q(f_s-1)-C\|f_s-1\|_3^3
\ge\tfrac\kappa2s^4A_4-CB^3s^{9/2}.
\]
Dividing the preceding inequality by $s^4$ gives
$A_4\le C(\varepsilon s^{-4}+B^3s^{1/2})$.
For $\varepsilon>0$, balance the two terms by taking
$s=(\varepsilon/B^3)^{2/9}$. After decreasing the constant in
$\varepsilon\le cB^{-6}$, this choice satisfies
$s\le\min(1/2,cB^{-2})$. Both terms then equal a constant times
$B^{8/3}\varepsilon^{1/9}$, so
$A_4\le CB^{8/3}\varepsilon^{1/9}$; for $\varepsilon=0$, let
$s\downarrow0$. A homogeneous quartic has Hermite degrees four, two and zero.
Covariance normalization fixes the last two, so its moment error is
at most $C\sqrt{A_4}$, proving~\eqref{en:quartic-bound}.
\end{proof}

\subsection{Empirical stars and spectral inputs}\label{sp:section}

\subsubsection{Sampling a fixed graph}\label{sp:sampling}

Fix a simple $d$-regular graph $\mathcal G$ and an orthonormal selection
$\bmu_1,\ldots,\bmu_k$ of real eigenvectors with eigenvalues
$\bla=(\lambda_1,\ldots,\lambda_k)\in K^k$. The graph, eigenvectors,
and eigenvalues are held fixed throughout this subsubsection.
Here $a=1,\ldots,k$ labels the selected eigenpairs; it does not
refer to the full ordered spectrum, whose first eigenvalue is $d$.
Choose $v_0$ uniformly from $\llbracket N\rrbracket$, then order its
neighbors $v_1,\ldots,v_d$ uniformly among their $d!$ permutations, and put
$X_{a,j}=\sqrt N\bmu_a(v_j)$. The empirical star law $\nu_{\bm{X}}$
is the distribution under this sampling alone.

Write $\E_\star$ and $\Cov_\star$ for expectation and
covariance over the sampled root and neighbor ordering. In particular,
\[
 \E_\star f(X_{a,0})=\frac1N\sum_{v=1}^N f(\sqrt N\bmu_a(v)).
\]
For the full star, $\E_\star$ additionally averages the $d!$ neighbor
orderings at each vertex. Thus all empirical moments below are finite
averages on a fixed graph. The pair $(v_0,v_1)$ is a uniform oriented
edge. Probabilities of good graph events will be specified separately.

\begin{lemma}[Exact empirical covariance]\label{sp:covariance}
\begin{equation}\label{def:cov}
 \E_\star X_{a,i}=0,\qquad
 \E_\star[X_{a,i}X_{b,j}]=\mathbf1_{a=b}(\Sigma_{\lambda_a})_{ij}.
\end{equation}   
\end{lemma}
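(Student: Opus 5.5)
The plan is to reduce every empirical moment in \eqref{def:cov} to a quadratic form in the selected eigenvectors, and then evaluate that form using only the eigenvalue equation $A\bmu_a=\lambda_a\bmu_a$ and orthonormality. No randomness of the graph enters; the identities hold exactly on every fixed simple $d$-regular graph.

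First I treat the means. The root coordinate gives $\E_\star X_{a,0}=N^{-1/2}\sum_v\bmu_a(v)=N^{-1/2}\langle\bm1,\bmu_a\rangle$. This vanishes because $\bmu_a$ is orthogonal to $\bmu_1=N^{-1/2}\bm1$; orthogonality holds because $\lambda_a\in K$ lies strictly below $d$, and also by the basis convention. For a leaf $i\ge1$, the vertex $v_i$ is a uniform neighbor of a uniform root, so its law is uniform on the vertices by regularity. Hence $\E_\star X_{a,i}=\E_\star X_{a,0}=0$.

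Next come the second moments, split by position type. Every $v_j$ is marginally uniform, so $\E_\star X_{a,j}X_{b,j}=\langle\bmu_a,\bmu_b\rangle=\mathbf1_{a=b}$. For the root--leaf case, $(v_0,v_i)$ is a uniform oriented edge, which gives
\[
 \E_\star X_{a,0}X_{b,i}=\frac1d\sum_v\bmu_a(v)\sum_{w\sim v}\bmu_b(w)
 =\frac1d\langle\bmu_a,A\bmu_b\rangle=\frac{\lambda_b}{d}\mathbf1_{a=b}.
\]
The same calculation applies to the case with leaf first and root second. For distinct leaves $i\ne j$, the pair $(v_i,v_j)$ is uniform among ordered pairs of distinct neighbors of a uniform root. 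Therefore
\[
 \E_\star X_{a,i}X_{b,j}=\frac1{d(d-1)}\sum_v\Bigl[\Bigl(\sum_{w\sim v}\bmu_a(w)\Bigr)\Bigl(\sum_{w'\sim v}\bmu_b(w')\Bigr)-\sum_{w\sim v}\bmu_a(w)\bmu_b(w)\Bigr].
\]
The first term equals $\langle A\bmu_a,A\bmu_b\rangle=\lambda_a\lambda_b\mathbf1_{a=b}$. The second equals $d\langle\bmu_a,\bmu_b\rangle$, because each vertex $w$ has exactly $d$ neighbors, which uses simplicity and regularity. The result is $(\lambda_a^2-d)/(d(d-1))\mathbf1_{a=b}$, matching \eqref{def:star-covariance}.

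The main point to check carefully is the distinct-leaf computation. It needs the graph to be simple (no loops or multi-edges) so that the diagonal correction is exactly $d\langle\bmu_a,\bmu_b\rangle$. It also relies on symmetrizing over neighbor orderings, which makes all off-diagonal leaf pairs equidistributed. Once this is in place, the remaining cases are routine bookkeeping.
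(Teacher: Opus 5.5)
Your proposal is correct and is essentially the paper's proof: uniformity of each star position gives the zero means and the diagonal identities, the eigenrelation at the root gives the root--neighbor covariance $\lambda_b\mathbf1_{a=b}/d$ in either order, and expanding the product of the two neighbor sums and subtracting the coincident-neighbor terms gives $(\lambda_a\lambda_b-d)\mathbf1_{a=b}/(d(d-1))$ for distinct leaves.
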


\begin{proof}
Every vertex appears as the root with probability $1/N$.
It also appears at any fixed neighbor position with probability $1/N$:
it has $d$ possible roots, and each such root places it in that
position with probability $1/d$. Therefore, for every $0\le i\le d$,
\[
 \E_\star X_{a,i}X_{b,i}
 =\sum_{v=1}^N\bmu_a(v)\bmu_b(v)=\mathbf1_{a=b}.
\]
Since each $\lambda_a\in K$ differs from $d$, its eigenvector is
orthogonal to the constant vector. The same uniformity proves
$\E_\star X_{a,i}=N^{-1/2}\sum_v\bmu_a(v)=0$.

For a root and a neighbor, use the eigenrelation at the root:
\[
 \E_\star X_{a,0}X_{b,1}
 =\frac1d\sum_{v=1}^N\bmu_a(v)
           \sum_{w\sim v}\bmu_b(w)
 =\frac{\lambda_b}{d}\mathbf1_{a=b}.
\]
Interchanging the two endpoints gives the same result with the
positions reversed. For two distinct neighbor positions, multiply
the eigenrelations and subtract terms with the same neighbor:
\begin{align*}
 d(d-1)\E_\star X_{a,1}X_{b,2}
 &=\E_\star\left[
     \left(\sum_{r=1}^dX_{a,r}\right)
     \left(\sum_{s=1}^dX_{b,s}\right)
       -\sum_{r=1}^dX_{a,r}X_{b,r}\right]\\
 &=(\lambda_a\lambda_b-d)\mathbf1_{a=b}.
\end{align*}
These three cases give exactly~\eqref{def:cov}.
Uniform neighbor ordering makes the law invariant under simultaneous
permutations of all channels' neighbors. Finally, $(v_0,v_1)$ is a
uniform oriented edge, and its reversal is uniform as well. This
proves the endpoint symmetry of the joint edge marginal. All these
identities are deterministic finite averages on the given graph;
no random-graph estimate has been used.
\end{proof}

The eigenrelations imply $\bm{X}\in H_{\bla}$. Define
$\bm{Y}=\mathcal L_{\bla}^{-1}\bm{X}$ using the deterministic map of
Subsubsection~\ref{en:geometry}. Equation~\eqref{def:cov} gives
\[
 \E_\star\bm{Y}=0,\qquad \Cov_\star(\bm{Y})=I_E.
\]
Uniform neighbor ordering gives neighbor-permutation symmetry, and
reversal of a uniform oriented edge gives the $R_i$-invariance of each
$V_i$ marginal after whitening. These are precisely the hypotheses on
symmetry and covariance in Proposition~\ref{en:quartic}; they hold for
every fixed graph and orthonormal selection as above.

\subsubsection{Spectral inputs for the random graph}

We now let $\mathcal G$ be uniformly random. Probabilities in
Proposition~\ref{sp:inputs} refer to this graph randomness. Once a graph
in the stated event is chosen, all its spectral estimates hold
deterministically, and the empirical-star expectations remain
$\E_\star$. In this subsubsection, $\lambda_1\ge\cdots\ge\lambda_N$
again denotes the full ordered spectrum of $A$.

For a fixed energy $E\in(-2\sqrt{d-1},2\sqrt{d-1})$, define
\[
 \sigma=\pi\rho_d(E),\qquad a_E=-\frac{(d-2)E}{2(d^2-E^2)},\qquad
 x_j=N\sigma(\lambda_j-E).
\]
The resolvents and microscopic variables are
\[
 \begin{gathered}
 G(z)=(A-z)^{-1},\qquad m_N(z)=\frac1N\Tr G(z),\qquad
 \mu_N=\sum_j\delta_{x_j},\\
 s_N(w)=\frac{m_N(E+w/(N\sigma))-a_E}{\sigma}.
 \end{gathered}
\]
Thus $s_N(w)=\sum_j(x_j-w)^{-1}-a_E/\sigma$. All complex spectral
parameters lie off the real axis, with reflection to the lower half-plane.
The measure $\mu_N$ is exactly the point measure in the
\hyperref[th:main-pointprocess]{main point-process theorem}. Its
coordinates agree with the intensity-$1/\pi$ convention of~\cite{BH},
and its claimed limit is $\Theta_1$. The sum here includes all $N$
eigenvalues.

\begin{proposition}[Spectral and geometric input events]\label{sp:inputs}
There are $a,b_0>0$ and $C<\infty$ such that, on a graph event of probability
$1-O(N^{-b_0})$,
\begin{equation}\label{sp:rigidity}
 \max_{2\leq j\leq N}|\lambda_j-\gamma_j|\leq N^{-a},\qquad
 \sup_{\|\bmu\|_2=1,\ A\bmu=\lambda\bmu}\sqrt N\|\bmu\|_\infty\leq(\log N)^C.
\end{equation}
For each fixed $0<c_0<1/2$, on such an event all but at most $N^{1/2}$
roots satisfy, uniformly for $\lambda\in K$,
\begin{equation}\label{sp:star}
 \max_{v,w\in B_1(o)}
 |G_{vw}(\lambda+\ri N^{-c_0})-G_{\mathbb T_d,vw}(\lambda+\ri N^{-c_0})|
 \leq N^{-1/128}.
\end{equation}
The comparison entries use distances zero, one, and two on a tree star.
For every fixed $\alpha>0$, another such event satisfies
\begin{equation}\label{sp:meso}
 \Im m_N(x+\ri\eta)\leq C_K,
 \qquad x\in K,\quad N^{-1+\alpha}\leq\eta\leq1.
\end{equation}
For every fixed radius, the fraction of roots whose neighborhood is not
a tree tends to zero on these events. All spectral-parameter bounds are
uniform on their stated domains.
\end{proposition}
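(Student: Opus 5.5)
The proposition collects known inputs from the literature for fixed $d\ge3$, so the plan is to cite them and check that their quantitative forms match the stated exponents. Each item is obtained separately, and at the end we intersect the resulting finitely many good events. The failure probabilities add, which gives $1-O(N^{-b_0})$ with $b_0$ the smallest exponent. Since intersecting events only shrinks them, every deterministic conclusion survives on the intersection.

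\textbf{Rigidity and delocalization.} For~\eqref{sp:rigidity} we invoke the local Kesten--McKay law of Huang and Yau~\cite{HY}. On an event of probability $1-O(N^{-b})$ it controls $m_N(z)-m_d(z)$ down to scales $\Im z\ge N^{-1+o(1)}$. Its eigenvector conclusion gives $\|\bmu\|_\infty\le(\log N)^C/\sqrt N$ for all eigenvectors simultaneously. We derive coarse rigidity $|\lambda_j-\gamma_j|\le N^{-a}$ by the standard route: integrate the Stieltjes-transform error along a contour (Helffer--Sjöstrand) to compare the counting function with the Kesten--McKay distribution to within $N^{-a'}$ uniformly. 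This is done on $[-2\sqrt{d-1}-\epsilon,2\sqrt{d-1}+\epsilon]$. To exclude outliers, including near $-d$ when the graph is bipartite-like, we use the absence of nontrivial eigenvalues outside $[-2\sqrt{d-1}-N^{-a},2\sqrt{d-1}+N^{-a}]$; this follows from Friedman/Bordenave, or more sharply from Huang--McKenzie--Yau~\cite{HMY}, with polynomial probability. Since $\rho_d$ has square-root vanishing at the edges, a counting error $N^{-a'}$ translates into a location error $N^{-2a'/3}$ near the edges and $N^{-a'}$ in the bulk. Hence a uniform $N^{-a}$ bound holds for a suitable small $a$.

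\textbf{Local resolvent comparison on stars.} For~\eqref{sp:star} we use the local law of~\cite{HY} at the entry level. For $z$ with $\Im z=N^{-c_0}\gg N^{-1+o(1)}$ and $\Re z$ in the bulk, it says $G_{vw}(z)$ is close to the tree (or tree-like neighborhood) Green's function for $v,w$ within bounded distance. This holds uniformly over all vertices whose radius-$R$ neighborhood is a tree, with $R\asymp\log N$ small. The error is some power $N^{-\epsilon}$; we only need it to be at least $N^{-1/128}$, which holds provided $c_0<1/2$ gives enough room. The exceptional roots are those whose $R$-neighborhood contains a cycle. A first-moment computation in the configuration model, conditioned on simplicity, shows that their expected number is $O((d-1)^{2R})=N^{o(1)}\ll N^{1/2}$. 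Markov's inequality then gives at most $N^{1/2}$ of them with polynomial probability. Uniformity in $\lambda\in K$ comes from a net of spacing $N^{-3}$ together with the Lipschitz bound $\|G'\|\le\eta^{-2}$. The final assertion, that non-tree neighborhoods form a vanishing fraction at every fixed radius, is the same first-moment bound.

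\textbf{Mesoscopic bound.} For~\eqref{sp:meso}, note that $|m_N-m_d|\le N^{-\epsilon}$ holds for $\eta\ge N^{-1+\alpha}$ by the averaged local law, and $\Im m_d$ is bounded on the bulk. A net argument again gives uniformity over $(x,\eta)$.

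\textbf{Main obstacle.} We expect the main obstacle to be bookkeeping: checking that the cited local laws give polynomial, rather than merely $1-o(1)$, failure probabilities and the specific numerical exponent $1/128$ for entrywise star errors at height $N^{-c_0}$. If~\cite{HY} states only weaker probability bounds at some scale, the first thing to try is working at the larger height $N^{-c_0}$, where error bounds are cruder but probabilities are better. If needed, we would recover the entrywise estimate by the Schur complement on a ball, using the averaged law.
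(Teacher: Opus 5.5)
Your plan matches the paper's proof, which likewise just cites Huang--Yau \cite{HY}: Theorems~1.2 and~1.4 for rigidity and delocalization, Theorem~4.2 with approximation radius $(\log_{d-1}N)/64$ for \eqref{sp:star}, the averaged estimate (4.7) for \eqref{sp:meso}, and Proposition~2.1 for the geometric event, so your Helffer--Sj\"ostrand and first-moment re-derivations are optional. One slip to fix is that with $R=c\log_{d-1}N$ (needed for a polynomial local-law error) the first-moment count $(d-1)^{2R}$ equals $N^{2c}$, not $N^{o(1)}$, so you must take $c<1/4$; the paper's tree radius $(\log_{d-1}N)/8$ gives $N^{1/4}\ll N^{1/2}$ and a polynomial Markov bound.
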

\begin{proof}
We state explicitly the normalization change needed to use
\cite{HY}. If $H=A/\sqrt{d-1}$ and $G_H$ is its resolvent, then
\[
 G_A(z)=\frac1{\sqrt{d-1}}
          G_H\left(\frac z{\sqrt{d-1}}\right).
\]
Since $d$ is fixed and $K$ stays a positive distance from the spectral
edges, this change only modifies constants. The input theorem is
uniform over its spectral domain; evaluating it later at a random
eigenvalue therefore requires no additional union bound.

The estimates in \cite[Theorems 1.2 and 1.4]{HY}, with eigenvalues multiplied by
$\sqrt{d-1}$, give \eqref{sp:rigidity} after decreasing $a$ if necessary.
For \eqref{sp:star}, take geometric parameter $1/2$ and approximation radius
$(\log_{d-1}N)/64$. Except at $N^{1/2}$ roots, the radius
$(\log_{d-1}N)/8$ neighborhood is a tree, so the tree extension of the
approximation neighborhood of a star is $\mathbb T_d$.
The error from \cite[Theorem 4.2 and (4.2)--(4.6)]{HY} is at most
\[
 C(\log N)^C\bigl(N^{-1/64}+N^{-(1-c_0)/2}+N^{-2(1-c_0)/3}\bigr)
 \leq N^{-1/128}.
\]
The stability denominator is bounded below in $K$, and the slack in
the exponents absorbs the fixed factors from the adjacency normalization.
The same theorem and its averaged estimate (4.7) give \eqref{sp:meso}.
The geometric event has polynomial failure probability by
\cite[Proposition 2.1]{HY}, while the local-law failures conditional on it
have arbitrarily large polynomial exponents. In each application we take
the intersection of the finitely many required events. Since the estimates
are uniform, they apply at graph-dependent eigenvalues and to all
orthonormal selections.
\end{proof}

For each graph satisfying~\eqref{sp:rigidity}, Lemma~\ref{sp:noise}
applies with its adjacency matrix held fixed. Its Gaussian vector is
auxiliary noise; the entropy in that lemma is taken over this Gaussian
law with the graph fixed.

\subsection{Counting and a uniform Gaussian channel}\label{ch:section}

This subsection proves a uniform bound on the number of colorings
with a prescribed local profile and uses it to derive an entropy
inequality for correlated Gaussian noise. The counting estimate
produces a single graph event on which the Gaussian inequality holds
for every admissible signal and covariance.

Throughout this subsection, $\cG$ is a uniformly random simple
$d$-regular graph on $\llbracket N\rrbracket$, where $d\geq3$ is fixed and $dN$ is even.
All logarithms are natural. A coloring with $q$ colors means an
arbitrary map from $\llbracket N\rrbracket$ to $\llbracket q\rrbracket$; it need not be a proper coloring.

For a fixed graph and a fixed coloring, choose a uniform vertex and
uniformly order its $d$ neighbors. Let $\bm p$ be the law of the
resulting ordered list of $d+1$ colors, with the root color first.
Let $\bm p_e$ be its marginal on the first two coordinates.
Equivalently, $\bm p_e$ is the law of the colors at the endpoints
of a uniform oriented edge. Here the graph and the coloring are
fixed: the only randomness is the choice of the root and the
ordering of its neighbors.

Write $Z_{\bm p}$ for the number of colorings of the fixed graph
with star law $\bm p$.
The next estimate is a finite-$N$ version of the counting argument
in \cite[Lemma~6.3]{BS}, with explicit dependence on the palette
size.

\begin{lemma}[Uniform counting of empirical color profiles]
\label{ch:count}
For every integer $q\geq1$ and every $t>0$, except on a graph event
of probability at most $C_de^{-t}$,
\begin{equation}\label{ch:count-eq}
 Z_{\bm p}
 \leq
 \exp\left(
 N\left[H(\bm p)-\frac d2H(\bm p_e)\right]+C_dq^{d+1}\log(N+1)+t
 \right)
 \qquad\text{for every }\bm p.
\end{equation}
There are at most $(N+1)^{q^{d+1}}$ possible star profiles.
The constants depend only on $d$, and $q$ may depend on $N$.
\end{lemma}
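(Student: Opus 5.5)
The plan is a first-moment computation in the configuration model, followed by Markov's inequality and a union bound over profiles. Work with the configuration (pairing) model on $dN$ half-edges. A uniformly random simple $d$-regular graph is the pairing model conditioned on simplicity, and for fixed $d$ this event has probability bounded below by a constant $c_d>0$. Any graph event of pairing probability at most $p$ therefore has probability at most $p/c_d$ under the simple-graph law. It then suffices to bound, for each profile $\bm p$,
\[
 \E Z_{\bm p}\le \exp\bigl(N[H(\bm p)-\tfrac d2H(\bm p_e)]+C_dq^{d+1}\log(N+1)\bigr).
\]
Markov's inequality at level $e^{t}\cdot(N+1)^{q^{d+1}}$ and a union bound over the at most $(N+1)^{q^{d+1}}$ profiles then give~\eqref{ch:count-eq}, after enlarging $C_d$.

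To count profiles, note that $N\bm p$ is determined by, for each vertex, its root color together with the multiset of neighbor colors; equivalently the empirical star law has entries in $\frac{1}{N\,d!}\mathbb Z$ determined by at most $q^{d+1}$ counts, each in $\{0,\ldots,N\}$. This gives the stated bound up to the trivial reformulation, since the ordered law is the symmetrization of the unordered-neighbor counts.

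For the first moment, fix $\bm p$. The expected number of pairs (coloring, pairing) with star law $\bm p$, divided by the number of pairings $(dN-1)!!$, factors into three pieces. The first is the number of ways to assign to each vertex a root color and an ordered list of half-edge colors, i.e.\ a labelled star type, consistent with $\bm p$. This is a multinomial coefficient at most $\exp(NH(\bm p)-\ldots)$. More precisely, one should count unordered types $\bar{\bm p}$, using $H(\bm p)=H(\bar{\bm p})+\E\log(\text{number of distinct orderings})$. Together with the $d!$ orderings of half-edges at each vertex, the count is bounded by $\exp(NH(\bm p))$ times a polynomial factor $(N+1)^{q^{d+1}}$.

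The second piece is the probability that a uniform pairing matches half-edges so that the oriented edge color counts equal $dN\bm p_e$. Given half-edge colors with counts $n_{c,c'}=dN\,p_e(c,c')$ for a half-edge of color $c$ whose partner has color $c'$, we count pairings that match exactly $n_{c,c'}$ half-edges of class $(c,c')$ with class $(c',c)$. This gives $\prod_{c<c'}n_{cc'}!\prod_c(n_{cc}-1)!!$, divided by $(dN-1)!!$, times the number of ways to split the half-edges of each color into these classes, which is already included via the labelled star type. Stirling's formula gives $\exp(-\tfrac{dN}{2}H(\bm p_e)+O(q^2\log N))$.

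The main obstacle is the bookkeeping showing that the Stirling exponents combine exactly to $H(\bm p)-\frac d2H(\bm p_e)$ with only $O(q^{d+1}\log(N+1))$ polynomial losses, uniform in $q$ possibly growing with $N$. I would use the non-asymptotic bounds $\binom{n}{n_1,\ldots,n_r}\le e^{nH}$ and $n!\ge (n/e)^n$, together with $n!\le (n+1)(n/e)^n$, so that each factorial contributes at most $\log(N+1)$ error. The number of factorials is at most $q^{d+1}+q^2$, which yields the stated constant.
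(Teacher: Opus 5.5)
Your proposal is correct and follows essentially the paper's proof: a first-moment bound in the labelled-half-edge configuration model (a multinomial factor times $\prod_{a<b}m_{ab}!\,\prod_a(m_{aa}-1)!!/(Nd-1)!!$, where Stirling yields $-\tfrac{Nd}{2}H(\bm p_e)$), then Markov's inequality with a union bound over at most $(N+1)^{q^{d+1}}$ profiles, and finally conditioning on simplicity. The only deviation is cosmetic: you count half-edge labellings of the symmetrized profile directly via $H(\bm p)=H(\bar{\bm p})+\E\log(\#\text{orderings})$, whereas the paper bounds each ordered profile $\bm p^{\rm ord}$ separately and uses $H(\bm p^{\rm ord})\le H(\bm p)$ by concavity before summing; also, $n!\le(n+1)(n/e)^n$ fails for $1\le n\le4$, so use $n!\le e\sqrt n\,(n/e)^n$ instead, which only changes constants.
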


\begin{proof}
We first work in the configuration model. Each vertex has $d$
labelled half-edges, and we choose a uniform perfect matching of
the $Nd$ half-edges. Until we condition on simplicity, loops and
multiple edges are allowed. The half-edge labels give a fixed
ordering of the $d$ neighbor slots at each vertex. In this proof,
$\PP_{\rm conf}$ and $\E_{\rm conf}$ refer to the uniform matching
of these labelled half-edges, before conditioning on simplicity.
The palette is fixed during the count, but may vary with $N$.

\proofstep{Counting ordered profiles}
For an ordered star word $s=(s_0,\ldots,s_d)\in\llbracket q\rrbracket^{d+1}$, let
$n_s$ be the number of vertices with this word in the fixed
half-edge ordering. Thus
\[
 \sum_s n_s=N,
 \qquad
 \bm p^{\rm ord}(s)=\frac{n_s}{N}.
\]
For $a,b\in\llbracket q\rrbracket$, set
\[
 m_{ab}
 =
 \sum_{s:s_0=a}n_s
       \#\{i\in\llbracket d\rrbracket:s_i=b\}.
\]
This is the number of half-edges whose incident vertex has color
$a$ and whose prescribed neighbor has color $b$. A necessary
consistency condition is
\[
 m_{ab}=m_{ba},
 \qquad
 m_{aa}\text{ is even},
 \qquad
 \sum_{a,b}m_{ab}=Nd.
\]

Let $Z^{\rm ord}_{\bm p^{\rm ord}}$ be the number of colorings
with this ordered profile. For a consistent profile,
\begin{equation}\label{ch:ordered-count}
 \E_{\rm conf}Z^{\rm ord}_{\bm p^{\rm ord}}
 =
 \frac{N!}{\prod_s n_s!}\,
 \frac{\prod_{a<b}m_{ab}!\prod_a(m_{aa}-1)!!}
      {(Nd-1)!!}.
\end{equation}
Indeed, the first factor assigns the prescribed words to the
labelled vertices. Once the words are assigned, they determine
the vertex colors and the color demanded by each half-edge.
For $a<b$, the $m_{ab}$ half-edges demanding an $a$--$b$ edge
must be matched bijectively to the $m_{ba}$ half-edges demanding
a $b$--$a$ edge, giving $m_{ab}!$ choices. The $m_{aa}$ half-edges
demanding an $a$--$a$ edge can be paired in $(m_{aa}-1)!!$ ways.
Finally, $(Nd-1)!!$ is the total number of perfect matchings.
We use the conventions $0!=1$ and $(-1)!!=1$. An inconsistent
profile has expected count zero.

The oriented-edge law associated with this ordered profile is
\[
 \bm p_e(a,b)
 =
 \frac1d\sum_{i=1}^d
 \sum_{\substack{s:s_0=a\\s_i=b}}
       \bm p^{\rm ord}(s)
 =
 \frac{m_{ab}}{Nd}.
\]
Thus, before symmetrization, $\bm p_e$ is the average of the
root--neighbor marginals over all $d$ slots, rather than
necessarily the marginal on the first two coordinates.

Stirling's estimate, uniformly for integers between $0$ and $Nd$,
gives
\[
 \log\frac{N!}{\prod_s n_s!}
 =
 NH(\bm p^{\rm ord})
 +O_d\bigl(q^{d+1}\log(N+1)\bigr).
\]
For the matching factor, symmetry of $(m_{ab})$ gives
\begin{align*}
 &\log
 \frac{\prod_{a<b}m_{ab}!\prod_a(m_{aa}-1)!!}
      {(Nd-1)!!}\\
 &\qquad=
 \frac12\sum_{a,b}m_{ab}\log m_{ab}
 -\frac{Nd}{2}\log(Nd)
 +O_d\bigl(q^2\log(N+1)\bigr)\\
 &\qquad=
 -\frac{Nd}{2}H(\bm p_e)
 +O_d\bigl(q^2\log(N+1)\bigr).
\end{align*}
Here and below, $0\log0=0$. Consequently,
\begin{equation}\label{ch:ordered-count-bound}
 \E_{\rm conf}Z^{\rm ord}_{\bm p^{\rm ord}}
 \leq
 \exp\left(
 N\left[H(\bm p^{\rm ord})-\frac d2H(\bm p_e)\right]
 +C_dq^{d+1}\log(N+1)
 \right).
\end{equation}

\proofstep{Symmetrization and the simultaneous bound}
Averaging the ordered profile over all permutations of the
neighbor slots gives the star law used in the statement:
\[
 \bm p(s_0,\ldots,s_d)
 =
 \frac1{d!}\sum_{\pi}
 \bm p^{\rm ord}(s_0,s_{\pi(1)},\ldots,s_{\pi(d)}),
\]
where the sum is over all permutations of $\llbracket d\rrbracket$.
Concavity of entropy gives
\[
 H(\bm p)\geq H(\bm p^{\rm ord}),
\]
and symmetrization preserves the averaged oriented-edge law
$\bm p_e$.

There are at most $(N+1)^{q^{d+1}}$ ordered count vectors
$(n_s)_s$, since each $n_s$ is an integer between $0$ and $N$.
Every symmetrized profile is determined by such a vector, so the
same bound holds for the number of possible symmetrized profiles.
This bound is over all configurations, not just those arising
from one fixed graph.

For each symmetrized profile $\bm p$, sum
\eqref{ch:ordered-count-bound} over the ordered profiles whose
symmetrization is $\bm p$. After increasing $C_d$, we obtain
\[
 \E_{\rm conf}Z_{\bm p}
 \leq
 \exp\left(
 N\left[H(\bm p)-\frac d2H(\bm p_e)\right]+C_dq^{d+1}\log(N+1)
 \right).
\]
For a fixed $\bm p$, Markov's inequality therefore gives
\begin{align*}
 &\PP_{\rm conf}\left(
 Z_{\bm p}>
 \exp\left(
 N\left[H(\bm p)-\frac d2H(\bm p_e)\right]
 +(C_d+1)q^{d+1}\log(N+1)+t
 \right)
 \right)\\
 &\qquad\leq
 e^{-t}(N+1)^{-q^{d+1}}.
\end{align*}
Taking a union bound over all possible profiles proves
\eqref{ch:count-eq} in the configuration model, with failure
probability at most $e^{-t}$.

\proofstep{Conditioning on simplicity}
The standard configuration-model enumeration
\cite{Bollobas} gives
\[
 \PP_{\rm conf}(\text{simple})
 \longrightarrow
 \exp\left(-\frac{d^2-1}{4}\right)>0.
\]
Conditioning on simplicity therefore increases the failure
probability by at most a factor depending only on $d$.
Moreover, every simple labelled $d$-regular graph has exactly
$(d!)^N$ half-edge representations: at each vertex, assign its
$d$ incident edges bijectively to its $d$ labelled half-edges.
Thus the conditional graph law is uniform, which proves the
lemma.
\end{proof}

We next adapt the entropy argument of
\cite[Theorem~6.1]{BS} to correlated Gaussian noise on a finite
graph. Both the signal and the covariance may depend on the
graph. The proof first bounds the entropy of the entire
discretized coloring by its empirical star and edge entropies.
Concavity handles the star entropy; only the edge entropy
requires a concentration estimate.

\begin{lemma}[A uniform correlated Gaussian channel]\label{ch:channel}
Fix an integer $k\geq1$ and $\kappa>0$, and put
\[
 m=k(d+1),
 \qquad
 \vartheta=\frac1{100(m+1)}.
\]
For sufficiently large $N$, there is a graph event of probability
at least
\[
 1-C_d\exp(-N^{1-\vartheta})
\]
on which the following holds simultaneously for every
$\bm x\in\R^{kN}$ and every covariance matrix $Q$ satisfying
\[
 \|\bm x\|_\infty\leq\frac12N^\vartheta,
 \qquad
 \kappa N^{-\vartheta}I\leq Q\leq N^\vartheta I.
\]

Fix a graph in this event and an admissible pair $\bm x,Q$.
Index the $kN$ scalar coordinates by $(a,v)$, and write
$\bm x_v=(x_{a,v})_{a=1}^k$ and $\bm Y_v=(Y_{a,v})_{a=1}^k$
for the vertex blocks. Introduce an auxiliary Gaussian vector
$\bm Y\sim N(0,Q)$. Independently of $\bm Y$, choose a uniform
vertex $v$ and a uniform ordering $v_1,\ldots,v_d$ of its
neighbors. Let
\begin{align*}
 P_S
 &=
 \Law\bigl(
 \bm x_v+\bm Y_v,
 \bm x_{v_1}+\bm Y_{v_1},
 \ldots,
 \bm x_{v_d}+\bm Y_{v_d}
 \bigr),\\
 P_e
 &=
 \Law\bigl(
 \bm x_v+\bm Y_v,
 \bm x_{v_1}+\bm Y_{v_1}
 \bigr).
\end{align*}
These laws average over both the Gaussian noise and the local
sampling, with the graph and $\bm x,Q$ fixed. Then
\begin{equation}\label{ch:channel-eq}
 h(P_S)-\frac d2h(P_e)
 \geq
 \frac1Nh(\bm Y)
 -C_{d,k,\kappa}N^{-\vartheta}\log N.
\end{equation}
The same graph event works for all admissible $\bm x,Q$,
including graph-dependent choices.
\end{lemma}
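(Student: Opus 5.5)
The plan is to discretize the noisy signal $\bm x+\bm Y$ with the grid of Lemma~\ref{ch:quant}. This turns it into a random coloring with $q=(2K_0)^k$ colors per vertex. We then compare three entropies of that coloring: the entropy of the full coloring, the count bound of Lemma~\ref{ch:count}, and the empirical star and edge entropies. Concretely, let $\bm W=TJ(\bm x+\bm Y)\in(\llbracket 2K_0\rrbracket^k)^N$ be the clipped grid label, one color per vertex block. Conditional on $\bm W$, the coloring has a realized empirical star profile $\widehat{\bm p}$ with edge marginal $\widehat{\bm p}_e$. The chain rule gives
\[
 H(\bm W)\le H(\Law(\widehat{\bm p}))+\E\log Z_{\widehat{\bm p}}.
\]
The first term is at most $q^{d+1}\log(N+1)$, since this bounds the number of profiles. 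The second is controlled on the event of Lemma~\ref{ch:count}. We take $t=N^{1-\vartheta}$, which gives failure probability $C_de^{-N^{1-\vartheta}}$. Since $q^{d+1}\le CN^{5\vartheta k(d+1)}\le N^{1/20}$ by the choice of $\vartheta$, all error terms are $o(N^{1-\vartheta})$ after division by $N$. This yields
\[
 \tfrac1N H(\bm W)\le \E\bigl[H(\widehat{\bm p})-\tfrac d2H(\widehat{\bm p}_e)\bigr]+CN^{-\vartheta}.
\]
The graph event depends only on $q$, so it is uniform over all $\bm x,Q$.

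Next we relate both sides to differential entropies. On the left, $H(\bm W)$ is the clipped quantized entropy of the Gaussian vector $\bm x+\bm Y$ in $\R^{kN}$, a single component with covariance $Q$. Lemma~\ref{ch:quant} does not apply directly in dimension $kN$, but its proof does: the cellwise identity \eqref{ch:cell-entropy} and the translation bound give an error of $kN\,a_0^2/(\kappa\eta_0)$, and clipping is handled coordinatewise. This gives $H(\bm W)+kN\log a_0\ge h(\bm Y)-CN^{1-7\vartheta}$, and translation invariance gives $h(\bm x+\bm Y)=h(\bm Y)$. For the right side, $\E H(\widehat{\bm p})\le H(\E\widehat{\bm p})$ by concavity. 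The law $\E\widehat{\bm p}$ is exactly the law of $TJ$ applied to a sample from $P_S$. This is a Gaussian mixture in dimension $m=k(d+1)$, mixed over the root and the ordering, and it satisfies the hypotheses of Lemma~\ref{ch:quant}. So $H(\E\widehat{\bm p})=h(P_S)-m\log a_0+O(N^{-7\vartheta})$, and similarly $H(\E\widehat{\bm p}_e)=h(P_e)-2k\log a_0+O(N^{-7\vartheta})$. The $\log a_0$ terms cancel, because $m-\tfrac d2\cdot2k=k$ matches the $k\log a_0$ per vertex on the left.

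The main obstacle is the edge term. Its sign is unfavorable: we need $\E H(\widehat{\bm p}_e)\ge H(\E\widehat{\bm p}_e)-\text{error}$, and concavity points the wrong way. Following \cite[Theorem~6.1]{BS}, I would prove concentration of $\widehat{\bm p}_e$ around its mean in total variation and then apply \eqref{en:entropy-continuity}. The alphabet has size $q^2$, so $\log M=O(\vartheta\log N)$, and the resulting error is $H_2(v)+v\log M$ with $v=\E\|\widehat{\bm p}_e-\E\widehat{\bm p}_e\|_{\TV}$. To bound $v$, replace each cell indicator by a softened quantizer, namely a Lipschitz bump of width $a_0N^{-\vartheta}$. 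The empirical edge frequency of a fixed color pair is then a Lipschitz function of the Gaussian input. Its gradient has squared norm $O(a_0^{-2}N^{2\vartheta}/N)$, using that each vertex lies in $d$ edges, so by the Poincaré inequality \eqref{en:gaussian-poincare} its standard deviation is at most
\[
 \|Q\|^{1/2}a_0^{-1}N^{\vartheta-1/2}\le N^{6\vartheta-1/2}.
\]
The softening error is the probability that some coordinate lies within $a_0N^{-\vartheta}$ of a cell boundary. Since each coordinate has variance at least $\kappa\eta_0$, this probability is $O(N^{-\vartheta/2})$ up to constants. Summing over the $q^2$ cells gives $v\le q^2N^{6\vartheta-1/2}+CN^{-\vartheta/2}$. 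This is the delicate bookkeeping step. If the exponents do not close, I would refine the argument by using a grid only slightly finer than the noise scale, or by bounding the softening error in total variation over all pairs at once. Collecting the three estimates gives \eqref{ch:channel-eq} with error $CN^{-\vartheta}\log N$, after adjusting exponents; absorbing polynomial losses is possible because $\vartheta$ is small compared with the $1/2$ gained from concentration.
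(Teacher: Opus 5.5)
Your route is the paper's proof step for step. You quantize with the grid of Lemma~\ref{ch:quant} and bound the coloring entropy by the profile entropy plus $\E\log Z_{\widehat{\bm p}}$ on the counting event with $t=N^{1-\vartheta}$. You use concavity for the star term. You get $\ell^1$-concentration of the edge profile from a softened quantizer and \eqref{en:gaussian-poincare}, then apply \eqref{en:entropy-continuity}. Finally you return to $h(P_S),h(P_e)$ by Lemma~\ref{ch:quant} in the fixed dimensions $m$ and $2k$.

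The one step that does not deliver the lemma as written is the softening error. With transition width $\ell'=a_0N^{-\vartheta}=N^{-5\vartheta}$, each coordinate has about $2K_0\approx2N^{5\vartheta}$ jumps. The transition set therefore has total length of order one. The only input you cite is the density bound $(2\pi\kappa\eta_0)^{-1/2}\asymp N^{\vartheta/2}$ from $Q\ge\kappa\eta_0I$, and it gives a bound of order $N^{\vartheta/2}$, not $N^{-\vartheta/2}$. Even granting $v=O(N^{-\vartheta/2})$, entropy continuity would only give an error $CN^{-\vartheta/2}\log N$. That is weaker than the $CN^{-\vartheta}\log N$ asserted in \eqref{ch:channel-eq}, so ``adjusting exponents'' at the end does not recover the statement.

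Either of two fixes closes this.
\begin{itemize}
\item The paper narrows the width to $\ell=\delta^{3/2}/(10q_0)$ with $\delta=N^{-\vartheta}$. The crude bound $2q_0\ell(2\pi\kappa\delta)^{-1/2}\le C\delta$ then works. The Poincar\'e sum becomes $Cq_0^{2k+1}N^{-1/2}\delta^{-2}\le CN^{-1/2+(10k+7)\vartheta}\le C\delta$, using $(10k+8)\vartheta<1/2$.
\item Alternatively, keep your width and use that each coordinate density $\phi$ is a unimodal Gaussian whose jumps are spaced $a_0$ apart. Then the probability of lying within $\ell'$ of a jump is at most $C(\ell'/a_0+\ell'\sup\phi)=O(N^{-\vartheta})$.
\end{itemize}
Either way $\E\|\widehat{\bm p}_e-\overline{\bm p}_e\|_1=O(N^{-\vartheta})$, and the continuity bound gives the required $CN^{-\vartheta}\log N$.

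Two smaller points.
\begin{itemize}
\item For the lower bound $H(\bm W)+kN\log a_0\ge h(\bm Y)-o(N^{1-\vartheta})$, you need only the nonnegativity of the cellwise terms in \eqref{ch:cell-entropy} plus the coordinatewise clipping estimate. The translation bound controls the opposite direction and should not be invoked in dimension $kN$. As proved in Lemma~\ref{ch:quant}, its constant carries a factor $2^{m}$ from integrating over $[-a_0,a_0]^m$ against $a_0^{-m}$.
\item To apply Lemma~\ref{ch:quant} to $P_S$, note explicitly that the star vertices are distinct because the graph is simple. Each mixture component then has a principal submatrix of $Q$ as covariance, which gives the required bounds $\kappa\eta_0I\le\cdot\le\eta_0^{-1}I$.
\end{itemize}
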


\begin{proof}
Set
\[
 \delta=N^{-\vartheta},
 \qquad
 a_0=N^{-4\vartheta},
 \qquad
 K_0=\lfloor N^{5\vartheta}\rfloor,
 \qquad
 q_0=2K_0,
 \qquad
 q=q_0^k.
\]
We use the quantizer from Lemma~\ref{ch:quant}: each scalar
coordinate is divided by $a_0$, rounded down, and then clipped
to $\{-K_0,\ldots,K_0-1\}$. A vertex carries $k$ scalar labels,
so the vertex palette has $q=q_0^k$ colors.

The choice of $\vartheta$ ensures that
\[
 (5m+1)\vartheta<1,
 \qquad
 (10k+8)\vartheta<\frac12.
\]
In particular, the counting error satisfies
\begin{equation}\label{ch:palette-error}
 q^{d+1}\log(N+1)
 \leq C_{d,k}N^{5m\vartheta}\log(N+1)
 =o(N\delta).
\end{equation}

Apply Lemma~\ref{ch:count} with this palette and
$t=N\delta=N^{1-\vartheta}$. Fix a graph in the resulting
simultaneous event, and then fix any admissible $\bm x,Q$.
From this point onward, $\E_{\bm Y}$, $\PP_{\bm Y}$ and
$\Var_{\bm Y}$ refer only to $\bm Y\sim N(0,Q)$, with
$\mathcal G$, $\bm x$ and $Q$ held fixed.
The empirical profiles already include the finite averages
over vertices and neighbor orderings.

\proofstep{1. Entropy of the entire coloring}
Let
\[
 \bm{\mathsf J}
 =
 \left\lfloor\frac{\bm x+\bm Y}{a_0}\right\rfloor
\]
coordinatewise, and let $\bm{\mathsf C}$ be obtained by clipping
each coordinate to $\{-K_0,\ldots,K_0-1\}$. Thus
$\bm{\mathsf J}=J(\bm x+\bm Y)$ and
$\bm{\mathsf C}=T\bm{\mathsf J}$. Write $\bm{\mathsf C}(v)$ for
the $k$-tuple of scalar labels at vertex $v$.
For each realization of $\bm Y$, let
$\widehat{\bm p}_S,\widehat{\bm p}_e$ be the star and
oriented-edge laws of the resulting fixed coloring
$\bm{\mathsf C}$. Explicitly, for a star word $\bm s$ of $d+1$
vertex colors and vertex colors $\bm a,\bm b$,
\[
\begin{aligned}
 \widehat{\bm p}_S(\bm s)
 &=\frac1{Nd!}\sum_{v_0=1}^N
   \sum_{\substack{(v_1,\ldots,v_d)\text{ orders}\\
                     \text{the neighbors of }v_0}}
   \mathbf1_{\{(\bm{\mathsf C}(v_0),\ldots,
                         \bm{\mathsf C}(v_d))=\bm s\}},\\
 \widehat{\bm p}_e(\bm a,\bm b)
 &=\frac1{Nd}\sum_{v=1}^N\sum_{u\sim v}
   \mathbf1_{\{\bm{\mathsf C}(v)=\bm a,
                          \bm{\mathsf C}(u)=\bm b\}}.
\end{aligned}
\]
These formulas already perform the root and neighbor-ordering
averages. The profiles are random only because the coloring depends
on $\bm Y$.

The star profile is a function of the coloring and takes at most
$(N+1)^{q^{d+1}}$ possible values, by Lemma~\ref{ch:count}. Therefore
\[
 H\!\left(\Law_{\bm Y}(\widehat{\bm p}_S)\right)
 \leq q^{d+1}\log(N+1).
\]
This is the entropy of the random profile as an outcome, not the
random number $H(\widehat{\bm p}_S)$ obtained by computing the
entropy of its realized entries.

For each profile $\bm p$ with positive probability, the conditional
coloring is supported on the $Z_{\bm p}$ colorings having that profile.
A distribution on at most $Z_{\bm p}$ points has entropy at most
$\log Z_{\bm p}$, whether or not it is uniform. Hence
\[
\begin{aligned}
 H(\bm{\mathsf C}\mid\widehat{\bm p}_S=\bm p)
 &\leq\log Z_{\bm p},\\
 H(\bm{\mathsf C}\mid\widehat{\bm p}_S)
 &=\sum_{\bm p}\PP_{\bm Y}(\widehat{\bm p}_S=\bm p)
       H(\bm{\mathsf C}\mid\widehat{\bm p}_S=\bm p)\\
 &\leq\E_{\bm Y}\log Z_{\widehat{\bm p}_S}.
\end{aligned}
\]
The sum runs only over profiles of positive probability, for which
$Z_{\bm p}\geq1$. Since the profile is determined by the coloring,
its conditional entropy given the coloring is zero. The chain rule
for the pair consisting of the coloring and its profile gives
\begin{align}
 H(\bm{\mathsf C})
 &=H\!\left(\Law_{\bm Y}(\widehat{\bm p}_S)\right)
       +H(\bm{\mathsf C}\mid\widehat{\bm p}_S)\notag\\
 &\leq q^{d+1}\log(N+1)
       +\E_{\bm Y}\log Z_{\widehat{\bm p}_S}
 \notag\\
 &\leq
 N\E_{\bm Y}\left[
 H(\widehat{\bm p}_S)-\frac d2H(\widehat{\bm p}_e)
 \right]
 +C_dq^{d+1}\log(N+1)+N\delta.
 \label{ch:coloring-entropy}
\end{align}
The last line applies the simultaneous counting bound to every
realized profile and absorbs the profile-entropy bound into $C_d$.
Inside the expectation, for example,
$H(\widehat{\bm p}_S)=-\sum_{\bm s}\widehat{\bm p}_S(\bm s)
\log\widehat{\bm p}_S(\bm s)$. We compute this number for the
realized coloring before averaging over $\bm Y$.

For the unbounded grid coloring, the cellwise Jensen argument
used in Lemma~\ref{ch:quant} gives
\[
 H(\bm{\mathsf J})+kN\log a_0
 \geq h(\bm x+\bm Y)=h(\bm Y).
\]
Indeed, each grid cell has volume $a_0^{kN}$, and the difference
on the left is the relative entropy of the Gaussian density
with respect to its average on each cell.

Clipping acts coordinatewise. Hence conditional subadditivity
and the fact that conditioning reduces discrete entropy give
\begin{align*}
 H(\bm{\mathsf J})-H(\bm{\mathsf C})
 &=H(\bm{\mathsf J}\mid\bm{\mathsf C})\\
 &\leq\sum_{a=1}^k\sum_{v=1}^N
       H(\mathsf J_{a,v}\mid\bm{\mathsf C})\\
 &\leq\sum_{a=1}^k\sum_{v=1}^N
       H(\mathsf J_{a,v}\mid\mathsf C_{a,v})\\
 &\leq CkN^{1+5\vartheta}e^{-cN^\vartheta}.
\end{align*}
The last inequality is the scalar clipping estimate from the
proof of Lemma~\ref{ch:quant}. It applies uniformly to each
coordinate, since its mean has absolute value at most
$N^\vartheta/2$ and its variance is at most $N^\vartheta$.
No independence between coordinates is needed.

Combining these estimates with
\eqref{ch:coloring-entropy} and \eqref{ch:palette-error}, we obtain
\begin{equation}\label{ch:empirical-entropy}
 \E_{\bm Y}\left[
 H(\widehat{\bm p}_S)-\frac d2H(\widehat{\bm p}_e)
 \right]
 \geq
 \frac{h(\bm Y)}N-k\log a_0-C_{d,k}\delta.
\end{equation}
All discrete and differential entropies used here are finite
by the Gaussian density and moment bounds.

\proofstep{2. Passing to the averaged profiles}
Put
\[
 \overline{\bm p}_S=\E_{\bm Y}\widehat{\bm p}_S,
 \qquad
 \overline{\bm p}_e=\E_{\bm Y}\widehat{\bm p}_e.
\]
These are exactly the laws obtained by applying the clipped
quantizer to $P_S$ and $P_e$, respectively.

Concavity gives
$H(\overline{\bm p}_S)\geq\E_{\bm Y} H(\widehat{\bm p}_S)$.
Consequently,
\begin{align}
 H(\overline{\bm p}_S)-\frac d2H(\overline{\bm p}_e)
 \geq{}&
 \E_{\bm Y}\left[
 H(\widehat{\bm p}_S)-\frac d2H(\widehat{\bm p}_e)
 \right]
 \notag\\
 &-\frac d2\left[
 H(\overline{\bm p}_e)-\E_{\bm Y} H(\widehat{\bm p}_e)
 \right].
 \label{ch:averaging-profiles}
\end{align}
Thus we only need to control the entropy gap for the edge
profile. There is no need to prove concentration of the star
profile.

\proofstep{3. Smoothing and concentration of the edge profile}
The hard quantizer is discontinuous, so we first replace its
label indicators by Lipschitz functions. Set
\[
 \ell=\frac{\delta^{3/2}}{10q_0}.
\]
The scalar clipped quantizer has $q_0-1$ jump points, spaced
$a_0$ apart. Since
\[
 \frac{\ell}{a_0}=O(N^{-5\vartheta/2}),
\]
the intervals of radius $\ell$ around these jumps are disjoint
for sufficiently large $N$.

For each scalar label $b\in\{-K_0,\ldots,K_0-1\}$, let
$\varphi_b:\R\to[0,1]$ agree with the hard label indicator away
from these intervals. At the jump point $ba_0$ between labels
$b-1$ and $b$, replace the two indicators on
$[ba_0-\ell,ba_0+\ell]$ by
\[
 \varphi_{b-1}(z)=\frac{ba_0+\ell-z}{2\ell},
 \qquad
 \varphi_b(z)=\frac{z-ba_0+\ell}{2\ell},
\]
and set all other labels equal to zero there. Thus
\[
 \sum_b\varphi_b(z)=1,
 \qquad
 0\leq\varphi_b(z)\leq1,
 \qquad
 \operatorname{Lip}(\varphi_b)\leq\frac1{2\ell}.
\]

An edge word consists of $2k$ scalar labels. For
$w=(w_1,\ldots,w_{2k})$, define
\[
 S_w(\bm Y)
 =
 \frac1{Nd}
 \sum_{\substack{u,v\in\llbracket N\rrbracket\\u\sim v}}
 \prod_{r=1}^k
 \varphi_{w_r}(x_{r,u}+Y_{r,u})
 \varphi_{w_{k+r}}(x_{r,v}+Y_{r,v}),
\]
where the sum is over oriented edges, so each unoriented edge
appears twice. Write $\bm S=(S_w)_w$. The partition-of-unity
property implies that $\bm S$ is a probability vector for
every realization of $\bm Y$.

For a fixed scalar coordinate $Y_{r,v}$, only the $d$ oriented
edges starting at $v$ and the $d$ oriented edges ending at $v$
contribute to its derivative. Since all other factors are at
most one, almost everywhere
\[
 \left|\frac{\partial S_w}{\partial Y_{r,v}}\right|
 \leq
 \frac{2d}{Nd}\frac1{2\ell}
 =
 \frac1{N\ell}.
\]
Summing over the $kN$ scalar coordinates gives
\[
 |\nabla S_w|^2\leq\frac{k}{N\ell^2}.
\]
By the Gaussian Poincar\'e inequality
\eqref{en:gaussian-poincare} and $Q\leq\delta^{-1}I$,
\[
 \Var_{\bm Y}(S_w)
 \leq
 \E_{\bm Y}\bigl[(\nabla S_w)^TQ\nabla S_w\bigr]
 \leq
 \frac{k}{N\delta\ell^2}.
\]
The inequality applies to these Lipschitz functions by smooth
approximation.

There are $q_0^{2k}$ edge words. Cauchy--Schwarz therefore gives
\begin{align}
 \E_{\bm Y}\|\bm S-\E_{\bm Y}\bm S\|_1
 &\leq
 \sum_w\sqrt{\Var_{\bm Y}(S_w)}
 \notag\\
 &\leq
 \frac{C_kq_0^{2k}}{\sqrt{N\delta}\,\ell}
 =
 \frac{C_kq_0^{2k+1}}{\sqrt N\,\delta^2}
 \notag\\
 &\leq
 C_kN^{-1/2+(10k+7)\vartheta}
 \leq C_k\delta.
 \label{ch:soft-concentration}
\end{align}
The last inequality uses $(10k+8)\vartheta<1/2$.
This estimate uses the covariance bound, not independence of
the coordinates of $\bm Y$.

We now compare the soft profile with the hard profile.
Each scalar coordinate of $\bm x+\bm Y$ has variance at least
$\kappa\delta$, and hence its density is bounded by
$(2\pi\kappa\delta)^{-1/2}$. The total length of the transition
intervals is at most $2q_0\ell$. Thus, uniformly in $v,r$,
\[
 \PP_{\bm Y}\bigl(
 x_{r,v}+Y_{r,v}
 \text{ belongs to a transition interval}
 \bigr)
 \leq
 \frac{2q_0\ell}{\sqrt{2\pi\kappa\delta}}
 \leq C_\kappa\delta.
\]

For a fixed edge, the hard word and its soft distribution agree
unless at least one of the $2k$ scalar coordinates belongs to
a transition interval. Their $\ell^1$ distance is always at
most two, so it is bounded by twice the number of coordinates
in transition intervals. Averaging over oriented edges and
then over $\bm Y$ gives
\[
 \E_{\bm Y}\|\widehat{\bm p}_e-\bm S\|_1
 \leq C_{k,\kappa}\delta.
\]
Jensen's inequality also gives
\[
 \|\overline{\bm p}_e-\E_{\bm Y}\bm S\|_1
 \leq
 \E_{\bm Y}\|\widehat{\bm p}_e-\bm S\|_1.
\]
Combining these inequalities with
\eqref{ch:soft-concentration}, we obtain
\begin{align}
 \E_{\bm Y}\|\widehat{\bm p}_e-\overline{\bm p}_e\|_1
 &\leq
 \E_{\bm Y}\|\widehat{\bm p}_e-\bm S\|_1
 +\E_{\bm Y}\|\bm S-\E_{\bm Y}\bm S\|_1
 +\|\E_{\bm Y}\bm S-\overline{\bm p}_e\|_1
 \notag\\
 &\leq C_{k,\kappa}\delta.
 \label{ch:edge-concentration}
\end{align}

\proofstep{4. Controlling the edge entropy gap}
The edge profiles are probability vectors on an alphabet of
size $q_0^{2k}$. Apply the entropy continuity bound
\eqref{en:entropy-continuity}, with total variation distance
$\frac12\|\widehat{\bm p}_e-\overline{\bm p}_e\|_1$.
Concavity of entropy and then concavity of $H_2$ give
\begin{align*}
 0
 &\leq
 H(\overline{\bm p}_e)-\E_{\bm Y} H(\widehat{\bm p}_e)\\
 &\leq
 \E_{\bm Y}\left|
 H(\overline{\bm p}_e)-H(\widehat{\bm p}_e)
 \right|\\
 &\leq
 H_2\left(
 \frac12\E_{\bm Y}\|\widehat{\bm p}_e-\overline{\bm p}_e\|_1
 \right)
 +\frac12\log(q_0^{2k})
  \E_{\bm Y}\|\widehat{\bm p}_e-\overline{\bm p}_e\|_1\\
 &\leq C_{k,\kappa}\delta\log N.
\end{align*}
In the last step we used \eqref{ch:edge-concentration},
$\log q_0=O(\log N)$, and
$H_2(u)\leq u\log(e/u)$ for sufficiently small $u$.

Substituting this estimate and \eqref{ch:empirical-entropy}
into \eqref{ch:averaging-profiles}, we find
\begin{equation}\label{ch:averaged-entropy}
 H(\overline{\bm p}_S)-\frac d2H(\overline{\bm p}_e)
 \geq
 \frac{h(\bm Y)}N-k\log a_0
 -C_{d,k,\kappa}\delta\log N.
\end{equation}

\proofstep{5. Returning to differential entropy}
With the graph fixed, $P_S$ and $P_e$ are finite Gaussian
mixtures, indexed by the sampled stars and oriented edges.
The graph is simple, so the vertices in a sampled star are
distinct. Consequently, each component covariance is a
principal submatrix of $Q$, up to a permutation of coordinates.
It therefore lies between $\kappa\delta I$ and $\delta^{-1}I$.
The component means also satisfy the coordinate bound
$N^\vartheta/2$.

Lemma~\ref{ch:quant} thus applies in dimensions $m=k(d+1)$
and $2k$. Since $\overline{\bm p}_S$ and
$\overline{\bm p}_e$ are the corresponding clipped
quantization laws, it gives
\begin{align*}
 &\left|
 H(\overline{\bm p}_S)+m\log a_0-h(P_S)
 \right|
 +
 \left|
 H(\overline{\bm p}_e)+2k\log a_0-h(P_e)
 \right|\\
 &\qquad\leq
 C_{d,k,\kappa}N^{-7\vartheta}
 +C_{d,k}N^{5\vartheta}e^{-c_{d,k}N^\vartheta}
 \leq C_{d,k,\kappa}\delta.
\end{align*}
The logarithmic terms combine according to
\[
 m-\frac d2(2k)=k(d+1)-dk=k.
\]
Hence
\begin{align*}
 h(P_S)-\frac d2h(P_e)
 &\geq
 H(\overline{\bm p}_S)-\frac d2H(\overline{\bm p}_e)
 +k\log a_0-C_{d,k,\kappa}\delta\\
 &\geq
 \frac{h(\bm Y)}N
 -C_{d,k,\kappa}\delta\log N,
\end{align*}
where the last inequality follows from
\eqref{ch:averaged-entropy}. This is
\eqref{ch:channel-eq}.

Finally, the only exceptional graph event used in the proof
is the simultaneous counting event from
Lemma~\ref{ch:count}. After fixing a graph in that event, every
subsequent estimate holds for each admissible $\bm x,Q$.
In particular, we have not introduced an exceptional event
depending on $\bm x$ or $Q$, nor taken a union bound over these
parameters. This proves the asserted uniformity, including
for graph-dependent choices.
\end{proof}

\subsection{The graph entropy deficit at a fixed noise level}\label{en:graph}

The lower entropy comparison for smooth typical eigenvector processes
is given in \cite[Theorem~10.3]{BS}. For our finite-$N$ argument, the error
must be polynomially small and uniform over exact eigenvectors.
Lemma~\ref{ch:channel}, applied with graph-dependent resolvent noise,
gives the required comparison at the single noise level used in
Proposition~\ref{en:quartic}.

Fix the graph and the selected eigenvectors. In this subsection
$\bm Y=\mathcal L_{\bla}^{-1}\bm X$ is the whitened empirical star,
not the global Gaussian noise from Lemma~\ref{ch:channel}.
The independent Gaussian star is $\bm Z_{\bla}$.
Whitening the star and edge and then dividing by $\sqrt2$ gives
\begin{equation}\label{en:change}
\F_{\bla}(\bm{Z}_{\bla})-\F_{\bla}(\bm{X}+\bm{Z}_{\bla})
=\Delta_{1/2}.
\end{equation}
Here $\Delta_{1/2}$ is the relative-entropy deficit defined in
Subsubsection~\ref{en:moments}. The whitening Jacobians cancel between the
two laws, as in \cite[proof of Theorem~11.2 and Lemma~B.2]{BS}.
The Gaussian quadratic terms cancel because the normalized law has
covariance $I_E$, and $dk=(d/2)(2k)$ makes the star--edge functional
invariant under common scalar dilation.

\begin{proposition}[Polynomial entropy deficit for empirical stars]\label{en:deficit}
For fixed $d,k,K$, there are $b,\gamma>0$ and $C<\infty$ such that,
on a graph event of probability at least $1-CN^{-b}$, every
orthonormal selection of $k$ real eigenvectors with eigenvalues in $K$
satisfies
\begin{equation}\label{en:graph-deficit}
0\le \F_{\bla}(\bm{Z}_{\bla})
-\F_{\bla}\!\left(\frac{\bm{X}+\bm{Z}_{\bla}}{\sqrt2}\right)
\le N^{-\gamma}.
\end{equation}
For each graph in this event, $\bm{X}$ has the empirical law under
$\E_\star$, and $\bm{Z}_{\bla}$ is an independent auxiliary Gaussian.
The entropies average these two sources of randomness while holding
the graph and eigenvectors fixed.
\end{proposition}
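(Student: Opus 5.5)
By \eqref{en:change}, the middle quantity in \eqref{en:graph-deficit} equals $\Delta_{1/2}$ for the whitened empirical star $\bm Y=\mathcal L_{\bla}^{-1}\bm X$. Lemma~\ref{sp:covariance} verifies the symmetry and covariance hypotheses of Proposition~\ref{en:quartic} for $\bm Y$. Step~1 of that proof shows that $t\mapsto\Delta_{se^{-2t}}$ is nonincreasing and that $\Delta_s\to0$ as $s\downarrow0$. Hence $\Delta_{1/2}\ge0$ deterministically, for every graph and selection, and the lower bound is immediate. It also reduces the upper bound to the claim
\[
\F_{\bla}\bigl((\bm X+\bm Z_{\bla})/\sqrt2\bigr)=\F_{\bla}(\bm X+\bm Z_{\bla})\ge\F_{\bla}(\bm Z_{\bla})-N^{-\gamma},
\]
where the equality uses dilation invariance of $\F_{\bla}$. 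I will prove this claim on the intersection of the events of Proposition~\ref{sp:inputs} and Lemma~\ref{ch:channel}. Since all inputs there are uniform, the result holds simultaneously for every selection.

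\textbf{Construction of the global channel.} Take $\eta=N^{-c_0}$ with $c_0$ small compared with both $a$ and $1/128$. Put $x_{a,v}=\sqrt N\bmu_a(v)$, which satisfies $\|\bm x\|_\infty\le(\log N)^C\le\frac12N^\vartheta$ by \eqref{sp:rigidity}. Let $Q=\bigoplus_a\beta_a^{-1}\Im G(\lambda_a+\ri\eta)$, with $\beta_a=\Im G_{\mathbb T_d,oo}(\lambda_a+\ri\eta)$ bounded above and below on $K$. Then $\|Q\|\le C\eta^{-1}$ and $Q\ge c\eta I$, since the spectrum lies in $[-d,d]$. Choosing $\vartheta$ of Lemma~\ref{ch:channel} larger than $c_0$ (so that $N^{-\vartheta}$ is the weaker bound), we may instead pick $c_0<\vartheta$ with the given $\vartheta$, so $(\bm x,Q)$ is admissible. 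Lemma~\ref{ch:channel} then gives
\[
h(P_S)-\tfrac d2h(P_e)\ge N^{-1}h(\bm Y^{\rm noise})-CN^{-\vartheta}\log N .
\]
Lemma~\ref{sp:noise}, applied channelwise using block diagonality, evaluates the right side as
\[
\sum_a\Bigl[\tfrac12\log(2\pi e\eta/\beta_a)-L_d(\lambda_a)\Bigr]+O(N^{-a}/\eta+\eta+N^{-1}\log N).
\]

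\textbf{Local comparison.} For all but $N^{1/2}$ roots, \eqref{sp:star} gives that the star block of $Q$ is within $N^{-1/128}$ of the normalized tree covariance $\Sigma^{\mathbb T}_{\lambda_a,\eta}$. This matrix equals $\Sigma_{\lambda_a}$ up to $O(\eta)$ on $H_{\lambda_a}$. In the normal direction $(-\lambda,1,\ldots,1)$ it has variance $\asymp\eta^2$, because $(A-\lambda)\Im G=\eta\Re G$. Along the path of covariances, Lemma~\ref{sp:mixture} with $\kappa\asymp\eta^2$ and $\varepsilon=N^{-1/128}$ lets me replace every good component covariance by the tree one. The cost is $O(N^{-1/128}\eta^{-2})$, which is polynomially small for small $c_0$. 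The $N^{1/2}$ bad roots form a mixture component of weight $N^{-1/2}$. By \eqref{en:mixture-entropy} and the crude bounds $\kappa\eta I\le\cdot\le\eta^{-1}I$, they change $h(P_S)$ and $h(P_e)$ by $O(N^{-1/2}\log N)$. The mixture now has signal exactly $\bm X\in H_{\bla}$ and noise with the tree covariance, split as
\begin{itemize}[nosep]
\item an $H_{\bla}$ part equal to $\bm Z_{\bla}$ up to $O(\eta)$ in covariance;
\item an independent Gaussian normal part of variance $\asymp\eta^2$, with known leading constant.
\end{itemize}
Then $h(P_S)$ is $h_{H_{\bla}}(\bm X+\bm Z_{\bla})$ plus the explicit normal Gaussian entropy, up to $O(\eta)$. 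Another application of Lemma~\ref{sp:mixture} on $H_{\bla}$ removes the $O(\eta)$ covariance error at cost $O(\eta)$. The edge law needs no normal correction, since it is nondegenerate. Its covariance is handled the same way, giving $h(P_e)=h(\bm X_e+\bm Z_e)+O(\eta)$.

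\textbf{Closing the identity and the main obstacle.} Collecting terms yields
\[
\F_{\bla}(\bm X+\bm Z_{\bla})\ge\sum_a\Bigl[\tfrac12\log(2\pi e\eta/\beta_a)-L_d(\lambda_a)-\tfrac12\log(2\pi e\,\sigma^2_{\perp,a})\Bigr]-N^{-\gamma}.
\]
Here $\sigma^2_{\perp,a}$ is the normal variance; the volume factor $|(-\lambda,1,\ldots,1)|^2=d+\lambda^2$ enters through the change to the induced volume on $H_{\lambda_a}$. It remains to check that the $\eta$- and $\beta_a$-dependence cancels exactly against $\sigma^2_{\perp,a}$ at leading order, so that the bracket equals $\frac12\log(d+\lambda_a^2)-L_d(\lambda_a)=\F_{\lambda_a}(\bm Z_{\lambda_a})$ by Lemma~\ref{en:star-entropy}. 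I expect this to be the main obstacle: a tree Green function computation of the degenerate normal variance (using $\Im G_{\mathbb T}$ at distances $0,1,2$ and the identity $\Im G=\eta GG^*$), together with careful volume normalization.

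If this bookkeeping of the near-singular direction proves too delicate, I would project everything onto $H_{\bla}$ before applying Lemma~\ref{ch:channel}. Concretely, I would use noise $\Im G\,(A-\lambda)$-corrected so that it satisfies the star constraint exactly. The normal-part term then disappears, and the same comparison arguments apply to the projected noise.
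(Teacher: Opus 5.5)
Your outline follows the paper's proof. The lower bound comes from Ornstein--Uhlenbeck monotonicity and \eqref{en:change}. The upper bound uses Lemma~\ref{ch:channel} with resolvent noise, the global log-determinant from Lemma~\ref{sp:noise}, a good-root comparison with a tangent/normal split, mixture bounds for the bad roots, and Lemma~\ref{en:star-entropy}. Replacing whole star covariances by tree covariances via Lemma~\ref{sp:mixture} is a legitimate, slightly lossier variant of the paper's local step, which applies the local law only in the nondegenerate tangent block; it works provided $c_0$ is small compared with $1/128$.

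The gap is the step you postpone as the ``main obstacle''. It is the whole content of the proposition, and the scaling you state for it is wrong: the normal variance is of order $\eta$, not $\eta^2$. Put $G_a=(A-\lambda_a-\ri\eta)^{-1}$ and $Q_a=\beta_a^{-1}\Im G_a$. Since $\bmn_a=(A-\lambda_a)\bme_o$ holds exactly in the graph, the spectral identity $T\Im(T-\ri\eta)^{-1}T=\eta I-\eta^2\Im(T-\ri\eta)^{-1}$, with $T=A-\lambda_a$, gives $\bmn_a^T(Q_a)_S\bmn_a=\beta_a^{-1}\eta\bigl(1-\eta\Im(G_a)_{oo}\bigr)$. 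Your identity $(A-\lambda)\Im G=\eta\Re G$ produces only one factor of $\eta$, because $\Re G\,(A-\lambda)=I-\eta\Im G$ is of order one. Hence $\sigma^2_{\perp,a}=\eta(1+O(\eta))/(\beta_a(d+\lambda_a^2))$, and your bracket equals $\frac12\log(d+\lambda_a^2)-L_d(\lambda_a)+O(\eta)$. No tree Green-function computation or local law is needed in the normal direction, and the same identity holds on the tree if you compute after your replacement. With the $\eta^2$ you state, the bracket would exceed $\F_{\lambda_a}(\bm Z_{\lambda_a})$ by $\frac12\log(1/\eta)+O(1)$. That contradicts the inequality $\Delta_{1/2}\ge0$ from your first paragraph, so the argument as written does not close.

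Two further points need repair.

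First, the normal noise is not independent of the tangent noise, since $\bmn_a^T(Q_a)_S\bme_v=\beta_a^{-1}\eta\Re(G_a)_{ov}=O(\eta)$. The signal is a mixture over the sampled root and ordering $J$, so the cross term cannot simply be regressed away. The paper instead uses the sandwich $h(\bm{\mathcal X}_\perp\mid\bm{\mathcal X}_\parallel,J)\le h(\bm{\mathcal X}_\perp\mid\bm{\mathcal X}_\parallel)\le h(\bm{\mathcal X}_\perp)$. The left end is computed from the Gaussian Schur complement. The right end uses Gaussian maximum entropy, with the normal mean vanishing for every $J$ because the signal is exactly tangent. Both ends equal $\frac12\sum_a\log(2\pi e\sigma^2_{\perp,a})+O(\eta)$.

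Second, your fallback cannot work. A global noise vector whose star samples all lie in $H_\lambda$ satisfies $(A-\lambda)\bm\zeta=0$. It is therefore supported on an eigenspace, violates the nondegeneracy $Q\ge\kappa N^{-\vartheta}I$ required by Lemma~\ref{ch:channel}, and has entropy $-\infty$. The singular normal entropy is unavoidable: it is exactly what cancels the $\frac12\log\eta$ in the global log-determinant.
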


\begin{proof}
The proof compares the entropy of a Gaussian perturbation in two
ways. Globally, its entropy is a log determinant of the resolvent
covariance. Locally, the same noise looks like a Gaussian wave in
the tangent space $H_{\bla}$, with a much smaller Gaussian component
normal to that space. The normal entropy is singular as $\eta\to0$,
but it is explicitly computable and cancels the corresponding
global term. We keep this normal component until the cancellation
has been performed.

Let $a_*>0$ be the coarse-rigidity exponent in
Proposition~\ref{sp:inputs}, and let $\vartheta>0$ be the exponent in
Lemma~\ref{ch:channel}. Fix
\[
0<c<\min(a_*/2,\vartheta/4,1/2),\qquad
\eta=N^{-c}.
\]
Work on the intersection of the counting and spectral events for this
$c$; its complement has polynomially small graph probability.
Fix a graph in this event and a selected eigenvector tuple for the
remainder of the proof. All local-law errors below use the explicit
exponent $1/128$ from~\eqref{sp:star}.

For $1\le a\le k$ put
\[
s_a=\pi\rho_d(\lambda_a),\qquad
G_a=(A-\lambda_a-\ri\eta)^{-1},\qquad
Q_a=\frac1{s_a}\Im G_a
=\frac{\eta}{s_a}\bigl((A-\lambda_a)^2+\eta^2\bigr)^{-1}.
\]
Independently of the empirical root and neighbor ordering, let
$\bm{\zeta}^a$ be independent centered Gaussian vectors on $\R^N$
with covariances $Q_a$. Write
$\bm\zeta=(\bm\zeta^1,\ldots,\bm\zeta^k)$ for the full noise vector.
Since $s_a$ is bounded above and below on $K$,
$c_K\eta I\le Q_a\le C_K\eta^{-1}I$. Their direct sum satisfies
Lemma~\ref{ch:channel} for all large $N$.
The signal bound follows from delocalization for
$x_{a,v}=\sqrt N \bmu_a(v)$.
Averaging $\bmx+\bm{\zeta}$ over the full star and edge samples and
the auxiliary Gaussian noise, always with the graph fixed, gives
\begin{equation}\label{en:channel}
h((\bmx+\bm{\zeta})_S)-\frac d2h((\bmx+\bm{\zeta})_e)
\ge\frac1Nh(\bm{\zeta}^1,\ldots,\bm{\zeta}^k)
-CN^{-\vartheta}\log N.
\end{equation}
Lemma~\ref{sp:noise}, with the scalar covariance factor $1/s_a$, yields
\begin{equation}\label{en:global}
\frac1Nh(\bm{\zeta}^1,\ldots,\bm{\zeta}^k)
=\sum_a\left(\frac12\log\frac{2\pi e\,\eta}{s_a}
-L_d(\lambda_a)\right)
+O(N^{-a_*}/\eta+\eta+N^{-1}\log N).
\end{equation}

On a good-root star from~\eqref{sp:star},
$\bmn_a=(-\lambda_a,1,\ldots,1)=(A-\lambda_a)\bme_o$ when embedded in the
graph. The useful identities are exact, not consequences of the local law.
Writing $T=A-\lambda_a$, spectral calculus gives
\[
 T\Im(T-\ri\eta)^{-1}=\eta\Re(T-\ri\eta)^{-1},\qquad
 T\Im(T-\ri\eta)^{-1}T
       =\eta I-\eta^2\Im(T-\ri\eta)^{-1}.
\]
Take the $o,v$ or $o,o$ entry, respectively, and divide by $s_a$.
This gives
\begin{equation}\label{en:normal}
\begin{split}
\bmn_a^T(Q_a)_S\bmn_a
&=\frac{\eta}{s_a}\bigl(1-\eta\Im(G_a)_{oo}\bigr),\\
\bmn_a^T(Q_a)_S\bme_v
&=\frac{\eta}{s_a}\Re(G_a)_{ov},\qquad v\in S.
\end{split}
\end{equation}
On the tree, the boundary imaginary part of the resolvent is
$s_a\Sigma_{\lambda_a}$ on a star. Indeed, the diagonal is $s_a$,
the resolvent equation at the root gives the root--neighbor value
$s_a\lambda_a/d$, and applying it at a neighbor gives the
distance-two value $s_a(\lambda_a^2-d)/(d(d-1))$.
The explicit tree recursion varies by $O_K(\eta)$ on this fixed star
between height zero and height $\eta$. Thus
\[
(Q_a)_S=\Sigma_{\lambda_a}+O_K(N^{-1/128}+\eta).
\]

Let $J$ be the uniform root conditioned to be good, together with its
uniform neighbor ordering; this is conditioning the sampling on the
fixed graph.
Let $\bm{X}^{\rm g}$ and $\bm{\mathcal X}^{\rm g}$ be the signal star and the
noisy star sampled with this conditional root law. Write
$(\bm{\mathcal X}_\parallel,\bm{\mathcal X}_\perp)$ for $\bm{\mathcal X}^{\rm g}$ in
orthonormal tangent coordinates on $H_{\bla}$ and unit normal
coordinates $\bmn_a/\sqrt{d+\lambda_a^2}$. Put
\[
\Sigma_\parallel=\bigoplus_a
\Sigma_{\lambda_a}|_{H_{\lambda_a}},\qquad
Q_*=\diag_a\frac{\eta}{s_a(d+\lambda_a^2)}.
\]
The label $J$ specifies both the good root and its neighbor ordering.
Given $J$, the signal is deterministic and only the auxiliary Gaussian
noise remains random. Without conditioning on $J$, local entropies
average over this finite sampling law and the Gaussian noise. Its
conditional covariance has blocks
\[
\begin{pmatrix}
Q_{\parallel,J}&Q_{\times,J}\\
Q_{\times,J}^T&Q_{\perp,J}
\end{pmatrix},\qquad
\begin{aligned}
Q_{\parallel,J}&=\Sigma_\parallel+O(N^{-1/128}+\eta),\\
Q_{\times,J}&=O(\eta),\\
Q_{\perp,J}&=Q_*^{1/2}(I+O(\eta))Q_*^{1/2}.
\end{aligned}
\]
Equation~\eqref{en:normal} and bounded good-root resolvent entries give
the last two estimates. The eigenrelation gives
$\E[\bm{\mathcal X}_\perp\mid J]=0$, and
$Q_{\parallel,J}^{-1}=O(1)$ gives the conditional normal covariance
\[
Q_{\perp,J}-Q_{\times,J}^TQ_{\parallel,J}^{-1}Q_{\times,J}
=Q_*^{1/2}(I+O(\eta))Q_*^{1/2}.
\]
Equations~\eqref{en:gaussian-entropy}--\eqref{en:gaussian-conditional}
and the covariance averaged over $J$ bound the two outer terms in
\[
h(\bm{\mathcal X}_\perp\mid\bm{\mathcal X}_\parallel,J)
\le h(\bm{\mathcal X}_\perp\mid\bm{\mathcal X}_\parallel)
\le h(\bm{\mathcal X}_\perp).
\]
For the leftmost term, condition on $J$ and on the tangent
coordinate, and use the Gaussian Schur complement. Its entropy is
$\frac12\log\det(2\pi eQ_*)+O(\eta)$ uniformly in $J$.
For the rightmost term, its covariance is
$Q_*^{1/2}(I+O(\eta))Q_*^{1/2}$ because the normal mean vanishes
for every $J$. Gaussian maximum entropy gives the same expression
as an upper bound. The entropy sandwich therefore gives
\[
 h(\bm{\mathcal X}_\perp\mid\bm{\mathcal X}_\parallel)
 =\tfrac12\log\det(2\pi eQ_*)+O(\eta).
\]
In this argument it is important that the signal is exactly tangent.
An uncontrolled normal component of the signal would contribute to
the covariance and destroy this comparison.
Apply Lemma~\ref{sp:mixture} directly to the tangential law; its
constant is independent of the signal means.
The entropy chain rule gives
\begin{equation}\label{en:local-entropies}
\begin{split}
h(\bm{\mathcal X}^{\rm g})
&=h_{H_{\bla}}(\bm{X}^{\rm g}+\bm{Z}_{\bla})
+\frac12\sum_a\log\frac{2\pi e\,\eta}{s_a(d+\lambda_a^2)}
+O(N^{-1/128}+\eta),\\
h((\bm{\mathcal X}^{\rm g})_e)
&=h((\bm{X}^{\rm g}+\bm{Z}_{\bla})_e)+O(N^{-1/128}+\eta).
\end{split}
\end{equation}
For the edge law, use the root and its first uniformly ordered
neighbor in the same good-root sample. Its covariance is uniformly
nondegenerate, so the same comparison applies directly.

We now restore the omitted fraction $p\le N^{-1/2}$ of roots.
The actual component covariances are principal submatrices of $\bigoplus_aQ_a$
because star vertices are distinct. Their eigenvalues lie between
$c_K\eta$ and $C_K\eta^{-1}$; the ideal covariances on $H_{\bla}$ are
uniformly bounded and nondegenerate. The means have at most
polynomial size. Entropy concavity and the Gaussian covariance bound
therefore give $|h|\le C\log N$ for every conditional mixture.
The mixture entropy bounds~\eqref{en:mixture-entropy}, applied to
the good-root and bad-root components, now restore the full sampling law.
Thus \eqref{en:local-entropies} holds with $\bm{\mathcal X}^{\rm g}$ and
$\bm{X}^{\rm g}$ replaced by the full-sample stars $(\bmx+\bm{\zeta})_S$ and $\bm{X}$,
after adding an error of at most
$Cp\log N+H_2(p)=O(N^{-1/2}\log N)$. The star entropies throughout
use their respective ambient spaces: the actual law has full
dimension $k(d+1)$ and the ideal tangential law has dimension $dk$.

The local comparison expresses the left side of~\eqref{en:channel}
as $\F_{\bla}(\bm X+\bm Z_{\bla})$ plus the explicit normal entropy
in~\eqref{en:local-entropies}, up to the displayed errors.
Subtract that normal entropy from~\eqref{en:global}, channel by channel:
\[
 \frac12\log\frac{2\pi e\eta}{s_a}-L_d(\lambda_a)
 -\frac12\log\frac{2\pi e\eta}{s_a(d+\lambda_a^2)}
 =\frac12\log(d+\lambda_a^2)-L_d(\lambda_a).
\]
Thus all singular normal terms cancel exactly, leaving
\[
\begin{aligned}
\F_{\bla}(\bm{X}+\bm{Z}_{\bla})
\ge{}&\sum_a\left(\frac12\log(d+\lambda_a^2)-L_d(\lambda_a)\right)\\
&-C\bigl(N^{-\vartheta}\log N+N^{-a_*+c}
+N^{-1/128}+N^{-c}+N^{-1/2}\log N\bigr).
\end{aligned}
\]
The sum is $\F_{\bla}(\bm{Z}_{\bla})$ by
Lemma~\ref{en:star-entropy}, specifically~\eqref{en:star-functional-value}.
Any $0<\gamma<\min(\vartheta,a_*-c,1/128,c,1/2)$ absorbs constants
and logarithms for large $N$. The Ornstein--Uhlenbeck argument in the
proof of Proposition~\ref{en:quartic} shows that $\Delta_s$ increases
from zero as $s$ increases. Thus~\eqref{en:change} gives the lower
bound. Scale invariance permits division by $\sqrt2$ in
\eqref{en:graph-deficit}. All estimates hold on the same graph event,
uniformly in the selected vectors.
\end{proof}

\subsection{Fourth moments and the qualitative Gaussian limit}\label{en:conclusions}

We return to the physical star coordinates to prove
Theorem~\ref{th:stars} and Corollary~\ref{en:qualitative}.

\begin{proof}[Proof of Theorem~\ref{th:stars}]
The abstract fourth-moment estimate applies to the whitened empirical
law, while the theorem is stated in physical coordinates. We check
both the common graph event and this change of coordinates explicitly.
Intersect the events of Proposition~\ref{en:deficit} for $1\le k\le4$
with the spectral event, and use a common $\gamma>0$. Fix any graph
in this event. Under the empirical sampling law $\E_\star$,
$\bm{Y}=\mathcal L_{\bla}^{-1}\bm{X}$ has the required symmetries,
mean zero and covariance $I$. Delocalization and the uniform bound on the whitening
map give
$|\bm{Y}|\le B=(\log N)^C$ for every selection.

Equations~\eqref{en:change} and~\eqref{en:graph-deficit} give
$0\le\Delta_{1/2}\le N^{-\gamma}$ in the notation of
Proposition~\ref{en:quartic}. For large $N$ this is at most $cB^{-6}$.
The proposition bounds every whitened quartic moment error by
$CB^{4/3}N^{-\gamma/18}$. Composing a physical coordinate monomial
with $\mathcal L_{\bla}$ gives a homogeneous quartic with uniformly
bounded coefficients: each of its four linear factors is a row of
$\mathcal L_{\bla}$, whose coefficients are bounded on $K^k$.
The dimension is bounded because $k\le4$. Dividing by a fixed
coefficient bound reduces to the normalization in
Proposition~\ref{en:quartic}, and multiplying back changes only $C$.
Therefore any fixed $0<\chi<\gamma/18$
gives~\eqref{en:fourth-result} after absorbing the polylogarithmic
factor, on the same graph event for all choices.
\end{proof}

\begin{proof}[Proof of Corollary~\ref{en:qualitative}]
Work first on the event of Theorem~\ref{th:stars}. Its scalar case
applies simultaneously to every eigenvector, so it controls each
coordinate of a tuple of any fixed size $k$. Taking four copies of
that coordinate in~\eqref{en:fourth-result} gives, for all large $N$,
\[
 \E_\star|X_{a,j}|^4\leq4,\qquad
 \E_\star|\bm{X}|^4\leq4\bigl(k(d+1)\bigr)^2.
\]
These are empirical moments under $\E_\star$: their only randomness
is the uniform vertex and neighbor ordering on the fixed graph.
Convergence in probability in~\eqref{en:qualitative-result} instead
refers to the random graph.
The second inequality follows from
$(\sum_{a,j}X_{a,j}^2)^2\leq k(d+1)\sum_{a,j}X_{a,j}^4$.
Thus the squares and quadratic products of all star coordinates are
uniformly integrable, uniformly over the selected vectors. For example,
\[
 \E_\star\bigl[|\bm X|^2\mathbf1_{|\bm X|>R}\bigr]
 \le R^{-2}\E_\star|\bm X|^4\le C_{d,k}R^{-2}.
\]
A product of two coordinates is bounded in absolute value by the
sum of their squares divided by two, so it has the same tail control.
This is the exact step that rules out mass escaping to rare, very
large coordinates; convergence in distribution alone would not
preserve variance.

We now apply the qualitative joint neighborhood conclusion of
\cite{BS}.
It is uniform over fixed orthonormal tuples of almost eigenvectors
at a common energy. In the present setting, every unit linear
combination $\sum_a c_a\bmu_a$ satisfies
\[
 \left\|(A-\lambda_1)\sum_a c_a\bmu_a\right\|_2^2
 =\sum_a c_a^2(\lambda_a-\lambda_1)^2\leq\delta_N^2,
 \qquad \sum_a c_a^2=1.
\]
Hence the shrinking eigenvalue diameter is precisely the
almost-eigenvector condition in that result. Its joint neighborhood conclusion applies in particular to
all channels of the sampled star.

To see both the normalization and uniformity, argue by contradiction.
If~\eqref{en:qualitative-result} fails, there are a subsequence and
selections on graphs in the fourth-moment and Backhausz--Szegedy
events for which the Wasserstein distance stays bounded away from
zero. The uniform formulation of their theorem permits these
graph-dependent choices; a diagonal choice of its accuracy parameters
makes all of them tend to zero along this subsequence.
Compactness of $K$ and the diameter assumption allow a further
subsequence with $\lambda_a\to\lambda$ for every $a$.
The displayed fourth-moment bound gives tightness of the empirical
star laws. Backhausz--Szegedy identifies each subsequential limit as
a centered joint Gaussian wave, possibly with a loss of covariance.
Uniform integrability and~\eqref{def:cov} now give its covariance
exactly:
\[
 \lim_{N\to\infty}\E_\star X_{a,i}X_{b,j}
 =\mathbf1_{a=b}(\Sigma_\lambda)_{ij}.
\]
In particular, each root has variance one, and distinct channels
are independent because the joint limit is Gaussian. The limiting
star law is therefore $\nu_{\bm{Z}_{(\lambda,\ldots,\lambda)}}$.

Weak convergence together with uniform integrability of the squared
norm implies convergence in $W_2$. One way to see this implication is
to couple a weakly convergent subsequence almost surely in the fixed
finite-dimensional Euclidean space. The squared distance is bounded
by twice the sum of the two squared norms, whose tails are uniformly
integrable. Truncation followed by dominated convergence then gives a
coupling with mean squared distance tending to zero. The Gaussian comparison laws
$\nu_{\bm{Z}_{\bla}}$ also converge in $W_2$ by continuity of
$\mathcal L_{\bla}$. The triangle inequality contradicts the
positive discrepancy. The fourth-moment event has polynomially
small complement, while the qualitative Backhausz--Szegedy events
have probability tending to one; no rate for their intersection
is required. For a graph-measurable tuple converging in probability
to a common energy, choose a deterministic $\delta_N\downarrow0$
that bounds its eigenvalue diameter with probability tending to one,
and apply~\eqref{en:qualitative-result}.
\end{proof}

\begin{remark}
The scalar fourth-moment bound alone prevents loss of variance in
the qualitative step. Mixed fourth moments are needed later to
control covariances between quadratic marks at different eigenvectors.
Those estimates retain their polynomial rate for arbitrary bulk
energies. Restricting the coupling in
Corollary~\ref{en:qualitative} to edge coordinates gives conditional
bounded-Lipschitz convergence for every fixed graph-measurable
selection at nearby energies. All uses of this convergence keep the
number of selected vectors fixed while taking $N\to\infty$.
\end{remark}

\section{Point-process convergence and loop equations}\label{sec:fixed-energy}

We prove the \hyperref[th:main-pointprocess]{main point-process theorem}.
Switching identities first give microscopic tightness and a Gaussian
marked limit. The stationary marked flow then yields the full loop
hierarchy, which identifies every subsequential point-process limit.
The moment bounds needed to pass to correlation measures are proved
separately in Appendix~\ref{app:correlations}; they are not inputs to
this section.

\subsection{Switching and microscopic tightness}\label{rs:section}

Fix a bulk energy $E$ and set $z_N(w)=E+w/(N\sigma)$.
The polynomial mixed fourth-moment estimate controls the conditional
variance of the switching compression. Reversibility then yields
scalar tightness and capped moments.

\subsubsection{The exact switching identities}

Conditional on the graph, sample two independent uniform oriented edges
$(l,a)$ and $(b,c)$. If their four endpoints are distinct and neither
$\{l,b\}$ nor $\{a,c\}$ is an edge, replace $\{l,a\},\{b,c\}$ by
$\{l,b\},\{a,c\}$. Otherwise retain the graph. Write $\widetilde A$ for the
resulting adjacency matrix and call a proposal accepted when it meets
these conditions. Set
\[
D=\begin{pmatrix}1&0\\0&-1\end{pmatrix},\qquad
P=\begin{pmatrix}0&1\\1&0\end{pmatrix},\qquad
L=PD=\begin{pmatrix}0&-1\\1&0\end{pmatrix},
\]
Let $\bme_v$ denote the coordinate unit vector at $v$ in $\R^N$.
For every proposal, define
\[
\bmw_+=\frac{\bme_l+\bme_b-\bme_a-\bme_c}{2},\qquad
\bmw_-=\frac{\bme_l+\bme_a-\bme_b-\bme_c}{2},\qquad W=(\bmw_+,\bmw_-).
\]
The notation $\E_*$ denotes conditional expectation over this independent
proposal, before imposing acceptance. Put
\[
K(z)=W^TG(z)W.
\]

\begin{lemma}[Exact rank-two switching identities]\label{rs:switch}
The graph pair $(A,\widetilde A)$ is exchangeable. The rejection probability is
$O_d(N^{-1})$, uniformly in the initial simple regular graph. On accepted
proposals, $W^TW=I_2$, the reverse proposal has $\widetilde W=W P$, and
\begin{align}
\widetilde A-A&=2W DW^T,\label{rs:increment}\\
\widetilde G(z)&=G(z)-G(z)W
 \bigl(K(z)+D/2\bigr)^{-1}W^TG(z),\label{rs:woodbury}\\
\mathcal D(z):=\frac{\det(\widetilde A-z)}{\det(A-z)}
 &=\det(I_2+2DK(z)),\label{rs:determinant}\\
\widetilde m_N(z)-m_N(z)&=-\frac1N\partial_z\log\mathcal D(z),\label{rs:trace-shift}\\
\widetilde K(z)&=P\Bigl[D/2-\tfrac14D
 \bigl(K(z)+D/2\bigr)^{-1}D\Bigr]P.
 \label{rs:compression-shift}
\end{align}
Here tildes indicate the switched graph and its reverse proposal; thus
$\widetilde G(z)=(\widetilde A-z)^{-1}$,
$\widetilde m_N(z)=N^{-1}\Tr\widetilde G(z)$, and
$\widetilde K(z)=\widetilde W^T\widetilde G(z)\widetilde W$.
The logarithmic derivative is independent of the logarithm branch. For every
nonreal $z$,
\begin{equation}\label{rs:rank}
|\widetilde m_N(z)-m_N(z)|\leq\frac{2\pi}{N|\Im z|}.
\end{equation}
\end{lemma}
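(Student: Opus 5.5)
The statement collects elementary facts about simple switching, and I would check them in the order stated, working directly from the definitions.

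\emph{Exchangeability.} Conditional on $A$, the proposal picks two independent uniform oriented edges, so each ordered pair has probability $(Nd)^{-2}$. An accepted proposal $(l,a),(b,c)$ turns $A$ into $\widetilde A$, where $\{l,b\},\{a,c\}$ are edges of $\widetilde A$. The oriented edges $(l,b),(a,c)$ in $\widetilde A$ then form an accepted proposal that returns $A$. This correspondence between forward and reverse proposals is a bijection between proposal sets of equal size $(Nd)^2$, since both graphs are $d$-regular. Hence $\PP(A\to\widetilde A)=\PP(\widetilde A\to A)$ for the uniform law. Rejected proposals give the trivial pair $(A,A)$, which is symmetric. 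So $(A,\widetilde A)$ is exchangeable.

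\emph{Rejection probability.} The four endpoints coincide somewhere with probability $O(1/N)$. The events $\{l,b\}\in E$ and $\{a,c\}\in E$ each have probability at most $d/N$, because given $l$ the vertex $b$ is uniform. Both bounds are uniform in the graph.

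\emph{The rank-two algebra.} On acceptance the four vertices are distinct, so $\bmw_\pm$ are orthonormal. The reverse vectors $\widetilde\bmw_+=(\bme_l+\bme_a-\bme_b-\bme_c)/2$ and $\widetilde\bmw_-=(\bme_l+\bme_b-\bme_a-\bme_c)/2$ give $\widetilde W=WP$. For \eqref{rs:increment}, expand
\[
2(\bmw_+\bmw_+^T-\bmw_-\bmw_-^T).
\]
The cross terms combine to $\bme_l\bme_b^T+\bme_b\bme_l^T+\bme_a\bme_c^T+\bme_c\bme_a^T-\bme_l\bme_a^T-\dots$, and the diagonal terms cancel. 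This is exactly the removal of $\{l,a\},\{b,c\}$ and the addition of $\{l,b\},\{a,c\}$.

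The identities then follow in sequence. Woodbury applied to $A-z+W(2D)W^T$ gives \eqref{rs:woodbury}, using $(2D)^{-1}=D/2$. Sylvester's determinant identity gives \eqref{rs:determinant}. Differentiating the logarithm of the determinant gives \eqref{rs:trace-shift}: the trace-derivative formula $\partial_z\log\det(A-z)=-\Tr G$ is branch independent because only $\mathcal D'/\mathcal D$ appears.

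For \eqref{rs:compression-shift}, write $M=K+D/2$. From \eqref{rs:woodbury}, $W^T\widetilde GW=K-KM^{-1}K$. Substituting $K=M-D/2$ gives
\[
M-D-(M-D)M^{-1}\dots,
\]
and after simplification $K-KM^{-1}K=D/2-\tfrac14DM^{-1}D$. Conjugating by $P$ then gives $\widetilde K$.

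\emph{The bound \eqref{rs:rank}.} This is the one step needing an idea rather than algebra. Use eigenvalue interlacing for a rank-two perturbation: the counting functions of $A$ and $\widetilde A$ differ by at most $2$ everywhere. Write
\[
\widetilde m_N-m_N=\frac1N\int (x-z)^{-1}\,\dd\bigl(\widetilde n-n\bigr)(x)
\]
and integrate by parts, so that
\[
\bigl|\widetilde m_N-m_N\bigr|\le\frac1N\int\frac{2}{|x-z|^2}\,\dd x=\frac{2\pi}{N|\Im z|}.
\]
This yields exactly the stated constant.

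I expect no serious obstacle. The only care needed is the bookkeeping of signs in \eqref{rs:compression-shift} and the bijection argument for exchangeability.
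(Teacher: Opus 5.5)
Your proposal is correct and follows the paper's proof essentially step for step. The shared steps are the involution $(l,a),(b,c)\leftrightarrow(l,b),(a,c)$ with equal weights $(Nd)^{-2}$ for exchangeability, a union bound for rejection, the explicit expansion of $2(\bmw_+\bmw_+^T-\bmw_-\bmw_-^T)$, Woodbury and Sylvester, the substitution $K=(K+D/2)-D/2$ for the compression, and integration by parts against the counting-function difference for \eqref{rs:rank}. Two small fixes: your intermediate expression in the compression computation should read $(M-D/2)-(M-D/2)M^{-1}(M-D/2)$, not $M-D$; and before invoking Woodbury you should note that $K+D/2$ is invertible, since $\det(K+D/2)=\det(D/2)\,\mathcal D(z)\neq0$ for nonreal $z$.
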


\begin{proof}
The accepted proposal $(l,a),(b,c)$ has inverse $(l,b),(a,c)$; both have
conditional probability $(Nd)^{-2}$. This correspondence is an involution
on the graph-proposal space, with rejected moves as self-loops, and
therefore proves detailed balance for the uniform graph law. Each endpoint
of a uniform
oriented edge is a uniform vertex, independently of the other sampled
edge. Each forbidden equality has probability $N^{-1}$ and each forbidden
cross adjacency has probability $d/N$. A union bound proves the rejection
estimate.

On the four coordinates ordered as $(l,a,b,c)$, the increment is
\[
 \widetilde A-A=
 \begin{pmatrix}
 0&-1&1&0\\-1&0&0&1\\1&0&0&-1\\0&1&-1&0
 \end{pmatrix},
\]
with all other entries zero. Expansion of
$2\bm w_+\bm w_+^T-2\bm w_-\bm w_-^T$ gives this matrix.
The two displayed columns have norm one and zero inner product
when the endpoints are distinct. The reverse proposal interchanges
these columns. This proves~\eqref{rs:increment} and identifies the
reverse compression basis. The determinant lemma gives
\eqref{rs:determinant}. Its determinant is nonzero for nonreal $z$, since
both adjacency matrices are real symmetric. Woodbury's identity gives
\eqref{rs:woodbury}. Compressing it and using
\[
K-K(K+D/2)^{-1}K
 =D/2-\tfrac14D(K+D/2)^{-1}D,
\]
proves \eqref{rs:compression-shift}. For the sign in the trace identity, recall that
$\partial_z\log\det(A-z)=-\Tr(A-z)^{-1}$.
Differentiating the logarithm of the determinant ratio therefore gives
$-N\widetilde m_N+Nm_N$, proving~\eqref{rs:trace-shift}.
This matrix identity does not require commutativity: substitute
$K=(K+D/2)-D/2$ and expand in the indicated order.
We used $D^2=I_2$.

The increment has rank at most two. The two eigenvalue counting functions
therefore differ by at most two. Integration by parts against
$(t-z)^{-1}$, followed by
$\int_\R|t-z|^{-2}\dd t=\pi/|\Im z|$, proves \eqref{rs:rank}.
\end{proof}

For $t\in\bC$, write
\[
V(t)=\diag(1-t/d,1+t/d).
\]
If $\bmu_\alpha$ is a basis eigenvector orthogonal to the constant
vector, put
$\bmg_\alpha=\sqrt N W^T\bmu_\alpha$. Orthogonality and oriented-edge averaging
give
\begin{equation}\label{rs:mark-covariance}
\E_*\bmg_\alpha=0,\qquad
\E_*\bmg_\alpha \bmg_\beta^T=\mathbf1_{\alpha=\beta}V(\lambda_\alpha).
\end{equation}
The two components are half the sum of the edge differences and half
the difference of the edge sums. Thus \eqref{rs:mark-covariance} follows
from independence of the edges and their exact covariance; edge reversal
makes each difference orthogonal to each sum. Also $W^T\bm{1}=0$.

Consequently, with
\[
\widehat m_N(z)=m_N(z)-\frac1{N(d-z)},
\]
the exact conditional mean is
\begin{equation}\label{rs:mean}
\overline K(z):=\E_*K(z)
=V(z)\widehat m_N(z)-(1-N^{-1})D/d.
\end{equation}
This formula remains valid for a disconnected graph: take the constant
vector as one basis vector and all other eigenvectors orthogonal to it.

Here is also a direct derivation of the mean formula.
The spectral representation and~\eqref{rs:mark-covariance} give
\[
 \E_*K(z)=\frac1N\sum_{i\ne1}
                   \frac{V(\lambda_i)}{\lambda_i-z}.
\]
Since $V(\lambda_i)=V(z)-(\lambda_i-z)D/d$, the sum is exactly
$V(z)\widehat m_N(z)-(1-N^{-1})D/d$.
All these conditional expectations use the proposal \emph{before}
acceptance. Conditioning first on acceptance would slightly change
the sampling law and lose these exact covariance identities. We
instead keep the rejection error separate in each reversibility
argument.

\subsubsection{A conditional variance estimate}

Fix compact bulk intervals $J\Subset K$ with $E$ in the interior
of $J$. Let $\chi>0$ be the mixed fourth-moment exponent supplied by
Theorem~\ref{th:stars} on $K$.
Choose
\[
0<\varepsilon<\min(\chi/4,1/4).
\]
Let $\Omega_N$ be the intersection of its graph event with the spectral
event of Proposition~\ref{sp:inputs} at the scale $N^{-1+\varepsilon}$
and the local tree event in that proposition. After reducing $\beta>0$,
\[
\PP(\Omega_N^c)\leq CN^{-\beta}.
\]
The intervals and $\varepsilon$ remain fixed throughout this subsection.

\begin{lemma}[Conditional variance of the compressed resolvent]\label{rs:variance}
Fix $c_*>0$ and a sufficiently small $\eta_*>0$. Uniformly for
$z=E+\ri\eta$ with $c_*/N\leq\eta\leq\eta_*$, on $\Omega_N$,
\begin{align}
\frac1N\sum_{\lambda_i\in J}|\lambda_i-z|^{-1}
 &\leq CN^\varepsilon,\label{rs:absolute-sum}\\
\E_*\|K(z)-\overline K(z)\|^2
 &\leq C\left(1+\frac{\Im m_N(z)}{N\eta}\right)+N^{-\tau}
 \label{rs:variance-bound}
\end{align}
for some $\tau>0$, with $J$ the inner bulk interval fixed above.
The absolute-sum estimate also holds, with a compact-dependent constant,
at $z=z_N(w)$ uniformly for $w$ in a fixed compact subset of $\bC^+$.
\end{lemma}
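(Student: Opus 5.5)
For \eqref{rs:absolute-sum} I would use a dyadic decomposition around $E$ together with the mesoscopic bound \eqref{sp:meso}. Take $\eta_0=N^{-1+\varepsilon}$ and put $\eta'=\max(\eta,\eta_0)$. The annulus $\{\lambda_i\in J:\ 2^{r}\eta'\le|\lambda_i-E|<2^{r+1}\eta'\}$ contains at most $C N 2^{r}\eta'\,\Im m_N(E+\ri 2^{r+1}\eta')$ eigenvalues. By \eqref{sp:meso} this is at most $C_K N2^r\eta'$ whenever $2^{r+1}\eta'\le1$. Each such eigenvalue contributes at most $(2^r\eta')^{-1}$, so every annulus contributes $O(1)$ after dividing by $N$. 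There are $O(\log N)$ annuli before the scale reaches the diameter of $J$.

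The innermost window $|\lambda_i-E|<\eta'$ holds $O(N\eta')$ eigenvalues, each contributing at most $\eta^{-1}$. This gives $N^{-1}\cdot N\eta'/\eta\le N^{\varepsilon}/c_*$. The total is $O(N^\varepsilon+\log N)=O(N^\varepsilon)$. For $z=z_N(w)$ with $w$ in a compact subset of $\bC^+$, we have $\Im z\ge c/N$ and $|\Re z-E|=O(1/N)$, so the same argument applies with constants depending on the compact set.

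For \eqref{rs:variance-bound} I would write $K-\overline K=N^{-1}\sum_{i\ne1}(\bmg_i\bmg_i^T-V(\lambda_i))/(\lambda_i-z)$ and expand the second moment under $\E_*$. The diagonal terms give $N^{-2}\sum_i \E_*|\bmg_i|^4/|\lambda_i-z|^2$. For $\lambda_i\in K$, each $\bmg_i$ is a fixed linear combination of two independent oriented-edge samples. Its fourth moment therefore reduces, by independence of the two edges, to products of second moments and root--neighbor fourth moments of a single eigenvector. These are bounded by Theorem~\ref{th:stars} with $k=1$. The Ward identity then gives $C\Im m_N/(N\eta)$. For $\lambda_i\notin K$, the delocalization bound in \eqref{sp:rigidity} gives $|\bmg_i|\le(\log N)^C$. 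Since $\operatorname{dist}(E,K^c)\ge c$, these terms contribute $O(N^{-1}(\log N)^C)$, which is absorbed by $N^{-\tau}$.

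The off-diagonal terms $i\ne j$ carry $\E_*[(\bmg_i\bmg_i^T-V_i)\otimes(\bmg_j\bmg_j^T-V_j)]$. I would again use independence of the two edges to reduce this to mixed fourth moments on a single edge of the pair $(\bmu_i,\bmu_j)$. For the Gaussian star law $\nu_{\bm Z_{\bla}}$ with independent channels, this covariance vanishes exactly. Theorem~\ref{th:stars} with $k=2$ therefore bounds each entry by $CN^{-\chi}$. When both eigenvalues lie in $K$, and after splitting $J$ from $K\setminus J$, the off-diagonal sum is at most $CN^{-\chi}\big(N^{-1}\sum_{\lambda_i\in K}|\lambda_i-z|^{-1}\big)^2$. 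By the annulus bound above this is $CN^{-\chi+2\varepsilon}$; for eigenvalues in $K\setminus J$ the summands are bounded. Since $\varepsilon<\chi/4$, we obtain $N^{-\tau}$ with $\tau=\chi/2$.

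Pairs with one or both eigenvalues outside $K$ are the main obstacle, since Theorem~\ref{th:stars} does not cover them. I would first try to handle them by resummation. Let $R=N^{-1}\sum_{\lambda_i\notin K}(\bmg_i\bmg_i^T-V_i)/(\lambda_i-z)$. Then $R=W^T F(A)W\cdot$ minus its mean, where $F$ is a smooth function equal to $(x-z)^{-1}$ off $K$ and supported away from $E$. Its second moment under $\E_*$ is controlled by the entries of $F(A)$ and $F(A)^2$ on the four sampled vertices, and these are bounded on the tree event via the Helffer--Sjöstrand formula and the local law. This gives $\E_*\|R\|^2\le C$. Cauchy--Schwarz on the cross term with the $K$-part then yields the constant $C$ in \eqref{rs:variance-bound}. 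Rejected proposals play no role, because $\E_*$ is taken before acceptance.
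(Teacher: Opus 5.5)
Your proposal is correct and follows essentially the paper's argument: the dyadic counting bound from \eqref{sp:meso} gives \eqref{rs:absolute-sum}, and for \eqref{rs:variance-bound} you split the spectrum, controlling the bulk part by Theorem~\ref{th:stars} (diagonal terms via the Ward identity, off-diagonal covariances of size $CN^{-\chi}$ against the squared absolute sum) and merely bounding the remote part. Your ``main obstacle'' needs no Helffer--Sj\"ostrand formula, local law, or separate delocalization step for the remote diagonal terms: with the sharp cutoff $F(x)=(x-z)^{-1}\mathbf 1_{x\notin J}$ (or $K$), one has $\|W^TF(A)W\|\le\|W\|^2\sup|F|\le C$ on every proposal, which is exactly how the paper handles it, and a smooth $F$ would in any case not reproduce your $R$ exactly.
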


\begin{proof}
This estimate uses fourth moments, not convergence of the full mark
law. The diagonal terms in the variance sum are controlled by the
Ward identity. For distinct eigenvectors, the polynomial error from
Theorem~\ref{th:stars} must be small enough to survive an absolute
sum over all bulk eigenvalues. The choice $2\varepsilon<\chi$ is
made for precisely this reason.

The averaged local law in Proposition~\ref{sp:inputs} gives
\[
\frac1N\#\{i:|\lambda_i-x|\leq h\}\leq Ch,
\qquad h\geq N^{-1+\varepsilon},
\]
for the real energies needed here. At $z=E+\ri c_*/N$, the central interval of width $N^{-1+\varepsilon}$
contributes $O(N^\varepsilon)$ and the dyadic annuli contribute
$O(\log N)$. Increasing $\eta$ decreases the weights; denominator
comparison gives the compact-uniform assertion for $z_N(w)$.

The part of $G(z)$ from eigenvalues outside $J$ has uniformly
bounded operator norm. Since $\|W\|$ is uniformly bounded on all
proposals, its centered conditional second moment is bounded by a
constant. The part inside $J$ is
\[
\frac1N\sum_{\lambda_i\in J}\frac{\bmg_i \bmg_i^T}{\lambda_i-z}.
\]
For entries indexed by $a,b,c,e\in\{1,2\}$, combining the mixed fourth-moment estimate
Theorem~\ref{th:stars} with the independent two-edge sampling gives
\[
\bigl|\Cov_*((\bmg_i)_a(\bmg_i)_b,(\bmg_j)_c(\bmg_j)_e)\bigr|
\leq C\mathbf1_{i=j}+CN^{-\chi}.
\]
In the four-factor expansion, the distributions $(4,0)$, $(2,2)$,
and $(3,1)$ between the two edges use, respectively, the fourth-moment
theorem, exact second moments, and vanishing first moments. Gaussian
quadratic marks have zero covariance at distinct eigenvectors and
bounded covariance at the same eigenvector.

Summing entrywise and using the equivalence of matrix norms in
dimension two, we bound the centered variance of the bulk part by
\[
\frac C{N^2}\sum_i|\lambda_i-z|^{-2}
+CN^{-\chi}\left(\frac1N\sum_{\lambda_i\in J}
 |\lambda_i-z|^{-1}\right)^2
\leq\frac{C\Im m_N(z)}{N\eta}+CN^{-\chi+2\varepsilon}.
\]
The first term is the exact Ward sum. Combining the inside and outside
parts proves \eqref{rs:variance-bound}.
More explicitly, the central interval contains at most
$CN\cdot N^{-1+\varepsilon}=CN^\varepsilon$ eigenvalues, each
contributing at most $C$ to the normalized reciprocal-distance sum
when $\eta\ge c_*/N$. On an annulus of radii $h,2h$ with
$h\ge N^{-1+\varepsilon}$, there are at most $CNh$ eigenvalues
and each contributes at most $(Nh)^{-1}$, giving a constant per
annulus. This explains why the absolute sum is only a small power of
$N$, rather than the potentially much larger bound $\eta^{-1}$.
Since $\chi-2\varepsilon>0$, it can be absorbed into the exponent
$\tau$ in the stated result.
\end{proof}

\subsubsection{Reversibility and scalar moments}

\begin{proposition}[Microscopic scalar tightness at a fixed energy]\label{rs:scalar}
With the event $\Omega_N$ above, there are $M_0,C,\kappa>0$ such that
\begin{equation}\label{rs:factorial-tail}
\PP\bigl(\Omega_N\cap\{|m_N(E+\ri\eta)|>M\}\bigr)
\leq C\exp\bigl(-\kappa M\log(2+M)\bigr)+CN^{-\beta},
\qquad M\geq M_0,
\end{equation}
uniformly for $c_*/N\leq\eta\leq\eta_*$. Define
\[
\Gamma_{N,\eta}=\Omega_N\cap
 \{|m_N(E+\ri\eta)|\leq\log N\}.
\]
Then, uniformly in this range of heights,
\begin{equation}\label{rs:height-moments}
\PP(\Gamma_{N,\eta}^c)\leq CN^{-\beta},\qquad
\E\bigl[\mathbf1_{\Gamma_{N,\eta}}|m_N(E+\ri\eta)|^r\bigr]
\leq C_r\quad(0<r<\infty).
\end{equation}
There is also a single graph event $\Gamma_N$ of probability at least
$1-CN^{-\beta}$ such that, for every compact $B\subset\bC\setminus\R$
and fixed $0<r<\infty$,
\begin{equation}\label{rs:compact-moments}
\E\bigl[\mathbf1_{\Gamma_N}\sup_{w\in B}|s_N(w)|^r\bigr]
\leq C_{B,r}.
\end{equation}
In particular $s_N$ is tight for locally uniform analytic convergence
on both half-planes, related by Schwarz reflection.
\end{proposition}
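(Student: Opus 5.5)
The plan is to run the reversibility of the switching pair $(A,\widetilde A)$ from Lemma~\ref{rs:switch} against a test function of $m_N$ that grows with $|m_N|$. Write $m=m_N(E+\ri\eta)$ and $z=E+\ri\eta$. Exchangeability gives $\E[F(\widetilde m)-F(m)]=0$ for every bounded measurable $F$. I would take $F$ to be a smoothed truncation of a power of $\Im m$ (or of $|m|$), localized to $\Omega_N$. Since $\mathbf1_{\Omega_N}$ is not switching-invariant, the identity only holds up to $\PP(\Omega_N^c)+\PP(\widetilde\Omega_N^c)=O(N^{-\beta})$; this is the source of the additive $CN^{-\beta}$.

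The increment is exact. By \eqref{rs:trace-shift}, $\widetilde m-m=-N^{-1}\partial_z\log\det(I_2+2DK(z))$. Expanding with \eqref{rs:mean}, the mean $\overline K(z)\approx V(z)m-D/d$ enters through the drift, and the fluctuation $K-\overline K$ is controlled by Lemma~\ref{rs:variance}. Averaging over the proposal, the $\E_*$-drift of $N(\widetilde m-m)$ has a restoring sign when $|m|$ is large. For large $|\overline K|$ one has $\log\mathcal D\approx\log\det(2DK)$, so the resolvent response to a switching pulls $m$ back toward its bounded Kesten--McKay value $m_d(z)$. This gives a one-step inequality of the form
\[
\E\bigl[\mathbf1_{\Omega_N}\phi(|m|)\bigr]\lesssim \E\bigl[\mathbf1_{\Omega_N}\phi(|m|)\,\mathbf1_{|m|\le M_0}\bigr]+\text{(variance terms)},
\]
where the variance terms carry $\E_*\|K-\overline K\|^2\le C(1+\Im m/(N\eta))+N^{-\tau}$.

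Iterating this over the choices $\phi(t)=t^r$ bounds the $r$-th moments on $\Omega_N$ by $C^r r^{r}$-type constants, which I would call factorial moments. Optimizing $r\sim M$ in Markov's inequality then yields the tail $\exp(-\kappa M\log(2+M))$ in \eqref{rs:factorial-tail}.

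I expect the main obstacle to be the regime $\eta\sim c_*/N$. There the term $\Im m/(N\eta)$ is of order $\Im m$ itself, so the variance is not small and the drift must dominate it. My first attempt would treat the variance term as part of the recursion: show that it contributes at most a lower power of $|m|$ than the restoring drift, via the bound $\Im m\le|m|$. I would also use \eqref{rs:rank} to guarantee that single steps are bounded by $2\pi/(N\eta)\le C$, so that the test function changes controllably per switch. In parallel I would use \eqref{rs:absolute-sum} and the mesoscopic bound \eqref{sp:meso} at height $N^{-1+\varepsilon}$. These give the a priori estimate $|m|\le CN^{\varepsilon}$ on $\Omega_N$, which keeps every expansion of $\log\det$ legitimate and makes the truncated test functions bounded.

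Given \eqref{rs:factorial-tail}, the remaining statements follow directly. The bound $\PP(\Gamma_{N,\eta}^c)\le CN^{-\beta}$ is \eqref{rs:factorial-tail} at $M=\log N$, since $\exp(-\kappa\log N\log\log N)$ is smaller than any power. The moments in \eqref{rs:height-moments} come from integrating the tail on $\{|m|\le\log N\}$.

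For \eqref{rs:compact-moments}, I would define $\Gamma_N$ as $\Omega_N$ intersected with $\{|m_N(E+\ri\eta_j)|\le\log N\}$ at one fixed microscopic height $\eta_j=1/(N\sigma)$, which corresponds to $w=\ri$. Then $s_N(w)=\sum_j((x_j-w)^{-1}-x_j/(1+x_j^2))+\text{const}$ has the Herglotz form. The elementary estimate $\sup_{w\in B}|f(w)-f(\ri)|\le C_B\Im f(\ri)$ bounds $\sup_B|s_N|$ by $C_B(|s_N(\ri)|+1)$, where $|a_E|/\sigma$ is bounded. The $r$-th moment bound at the single height then transfers to $\sup_B$. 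Tightness for locally uniform analytic convergence follows from Montel's theorem together with Cauchy estimates, and the lower half-plane follows from $s_N(\bar w)=\overline{s_N(w)}$.
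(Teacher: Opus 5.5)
Your derivation of \eqref{rs:height-moments} and \eqref{rs:compact-moments} from the tail bound is fine and matches the paper: you use the same event $\Gamma_N=\Gamma_{N,1/(N\sigma)}$ and the same Herglotz comparison with $s_N(\ri)$. The gap is in the tail bound \eqref{rs:factorial-tail} itself.

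Your one-step inequality rests on a ``restoring drift'' that you assert but do not derive, and it is not uniform in the way you need. On the concentration event, $\mathcal D\approx 4(z^2/d^2-1)\widehat m_N^2$. This gives at best $\widetilde s_N-s_N\approx-2s_N'/s_N$ in microscopic units, which also requires concentration of $\partial_zK$. The component of this increment that decreases $|s_N|$ is $-2\Re(\bar s_N^2s_N')/|s_N|^3$.
\begin{itemize}
\item If $|s_N(\ri)|$ is large because of a cluster at the origin, this component equals $-2$.
\item If it is large because of an excess of particles at distance $X\gg1$, it is only of order $-1/X$.
\end{itemize}
Against this stand jumps of size up to $A_0=2\pi/c_*$ on the failure event, whose probability Chebyshev controls only by $C/T$. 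A Lyapunov-type inequality for $\phi(|m|)$ at a single spectral parameter therefore does not close.

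The missing idea is that no drift is needed, because on accepted proposals the forward and reverse determinants are exact reciprocals: $\mathcal D\,\mathcal D_{\rm rev}=1$.
\begin{itemize}
\item If $T_A=|m_N(z)|$ is large and $\|K-\overline K\|\le\delta T_A$, the coefficient $4(z^2/d^2-1)$ of $\widehat m_N^2$ forces $|\mathcal D|\ge\delta T_A^2$.
\item If the reversed compression were also concentrated, then $T_{\widetilde A}\ge T_A-A_0$ by \eqref{rs:rank} would force $|\mathcal D_{\rm rev}|\ge\delta T_{\widetilde A}^2$, contradicting reciprocity.
\item Hence $\{T_A>M\}$ with a concentrated forward compression forces a failed reverse compression.
\item Exchangeability maps this to $\{T_A>M-A_0\}$ with a failed forward compression. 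By Chebyshev and $\E_*\|K-\overline K\|^2\le C(1+T_A)$, this has probability at most $\E[\mathbf1_{T_A>M-A_0}C/T_A]$. This is exactly where your observation $\Im m/(N\eta)\lesssim T_A$ enters.
\end{itemize}
The result is the recursion $F_N(M)\le (C/M)F_N(M-A_0)+CN^{-\beta}$. Iterating it $\sim M/A_0$ times gives the factorial product $\prod_j C/(M-jA_0)\le Ce^{-\kappa M\log M}$, and the inhomogeneous terms sum geometrically to $CN^{-\beta}$.

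Two further steps of your plan would not deliver the stated bound even if a drift were granted.
\begin{itemize}
\item Moment bounds $\E|m|^r\le(Cr)^r$ give only $e^{-cM}$ after optimizing Markov's inequality. The $M\log M$ exponent requires moments of size $(Cr/\log r)^r$.
\item The localization error in the exchangeability identity is $O(N^{-\beta}\sup F)$. For a power truncated at level $L\ge M$ this grows like $N^{-\beta}L^r$. A moment iteration with $r$ growing like $M$ therefore does not obviously produce the clean additive $CN^{-\beta}$ uniformly up to $M=\log N$.
\end{itemize}
Working directly with the tail recursion avoids both problems.
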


\begin{proof}
The mechanism is a contradiction between a large forward determinant
and a large reverse determinant. A large scalar transform makes each
determinant typically quadratic in that transform, but the two
determinants must be reciprocal. Thus a large scalar transform forces
an atypical compression in at least one direction. Iterating this
observation gives a much stronger tail than a single variance bound.

Write $T_A=|m_N(z)|$. By \eqref{rs:variance-bound},
\[
\E_*\|K-\overline K\|^2\leq C(1+T_A)
\]
throughout the indicated height range. The coefficient of
$\widehat m_N^2$ in $\det(I_2+2D\overline K)$ is
$4(z^2/d^2-1)$; its modulus is uniformly bounded below. All remaining
coefficients are bounded, and $|\widehat m_N-m_N|\leq C/N$. It follows
that, for fixed small $\delta>0$ and sufficiently large $M_0$,
\[
T_A\geq M_0,\quad \|K-\overline K\|\leq\delta T_A
\quad\Longrightarrow\quad
|\mathcal D(z)|\geq\delta T_A^2.
\]
Here the determinant perturbation is at most
$C\delta T_A^2+C\delta T_A$. Chebyshev's inequality bounds the probability
that the compression fails this condition:
\begin{equation}\label{rs:relative-failure}
\PP_*(\|K-\overline K\|>\delta T_A)\leq C/T_A
\quad\text{on }\Omega_N\cap\{T_A\geq M_0\}.
\end{equation}

The rank shift in \eqref{rs:rank} is at most $A_0=2\pi/c_*$. On an
accepted proposal the reverse determinant is $\mathcal D^{-1}$. If both
graphs lie in $\Omega_N$, both compressions satisfy their favorable
events, and $T_A>M$ for large $M$, then
\[
1=|\mathcal D\mathcal D_{\rm rev}|
\geq\delta^2T_A^2T_{\widetilde A}^2>1,
\]
since $T_{\widetilde A}\geq T_A-A_0$. Thus, when the scalar transform is
large and the forward compression is favorable, the reverse proposal
must fail the concentration estimate.

Set $F_N(M)=\PP(\Omega_N\cap\{T_A>M\})$. Rejected proposals contribute
at most $(C/N)F_N(M)$. The initial concentration failure contributes at
most $(C/M)F_N(M)$ by \eqref{rs:relative-failure}. The reversed failure
contributes at most $C(M-A_0)^{-1}F_N(M-A_0)$, using the exact reversal
on accepted proposals. The probability that either needed graph event
fails is at most $CN^{-\beta}$ by stationarity. Therefore, after
absorption and an increase of $M_0$,
\[
F_N(M)\leq\frac CM F_N(M-A_0)+CN^{-\beta}.
\]
To see the gain from iteration, set
$n=\lfloor(M-M_0)/A_0\rfloor$. The homogeneous coefficient after
$n$ steps is bounded by
\[
 \prod_{j=0}^{n-1}\frac{C}{M-jA_0}
 \le \frac{(C/A_0)^n}{n!}
 \le C\exp\{-\kappa M\log(2+M)\},
\]
after changing constants for bounded $M$. Once
$C/M_0\leq1/2$, the inhomogeneous products sum to a bounded geometric
series. This proves \eqref{rs:factorial-tail} uniformly in $\eta$.
Integrating the tail only up to $\log N$ gives
\eqref{rs:height-moments}, because $N^{-\beta}(\log N)^r$ tends to zero
for every fixed $r$.

For \eqref{rs:compact-moments}, use $\Gamma_N=\Gamma_{N,1/(N\sigma)}$,
choosing $c_*\leq1/\sigma$. The spectral representation with unit masses
at $x_i$ gives, on each compact $B\subset\bC^+$,
\[
|s_N(w)-s_N(\ri)|
\leq |w-\ri|\sum_i\frac1{|x_i-w|\,|x_i-\ri|}
\leq C_B\Im s_N(\ri).
\]
Here $|x-\ri|/|x-w|$ is uniformly bounded for real $x$ and $w\in B$.
Thus $\sup_B|s_N|\leq C_B|s_N(\ri)|$, with the same conclusion on the
lower half-plane by reflection. The height moment estimate, together
with the fixed real subtraction, now proves \eqref{rs:compact-moments}
and hence analytic tightness by normal-family compactness.
Cauchy's formula also gives the corresponding capped
moments for each fixed spectral derivative.
The cap in the moment statement is essential at this stage.
The additive error $N^{-\beta}$ in the tail estimate cannot be
integrated to infinity with a uniform bound, since the trivial
finite-$N$ bound on the transform is of order $N$ at microscopic
height. It can be integrated up to $\log N$, because
$N^{-\beta}(\log N)^r\to0$. For tightness of the analytic field,
caps whose probability tends to one are already enough, and this is
all that the point-process proof needs. Only for the correlation
upgrade do we remove the cap on counting observables; this is done
in Lemmas~\ref{co:repair}--\ref{co:ui} of
Appendix~\ref{app:correlations}.
\end{proof}

\subsection{The Gaussian marked limit}\label{rs:limit-section}

We identify finitely many eigenvector marks using
Corollary~\ref{en:qualitative} and control the remaining spectral sum
using Theorem~\ref{th:stars}. The number of retained marks stays fixed
while $N\to\infty$.

Define the fixed matrices and constants
\[
V=V(E),\qquad H=a_EV-D/d,\qquad
q=\frac{d-2}{\sqrt{4(d-1)-E^2}},\qquad c=\sqrt{1+q^2},
\]
and the normalized marked transform
\begin{equation}\label{rs:marked-definition}
S_N(w)=\sigma^{-1}V^{-1/2}\bigl(K(z_N(w))-H\bigr)V^{-1/2}.
\end{equation}
Its imaginary part is positive semidefinite on $\bC^+$.
Here $V,H,q,c$ depend only on the fixed energy $E$ and degree $d$;
$K,S_N$ also depend on the edge proposal. For a basis eigenvector
orthogonal to the constant vector, with eigenvalue in the chosen
bulk interval, set
\[
\boxi_i=V(\lambda_i)^{-1/2}\bmg_i.
\]
These finite-$N$ marks come from the independent edge proposal.
By \eqref{rs:mean},
\begin{equation}\label{rs:marked-mean}
\E_*S_N(w)=s_N(w)I_2+O_B\bigl(N^{-1}(1+|m_N(z_N(w))|)\bigr),
\end{equation}
uniformly on each fixed compact $B\subset\bC^+$.

\subsubsection{The centered spectral tail}

\begin{lemma}[Removal of the remote eigenvector marks]\label{rs:tail}
For every compact $B\subset\bC^+$ and sufficiently large fixed $R$, there
are graph events $\Gamma_{N,R}$ of probability tending to one such that
\begin{equation}\label{rs:tail-bound}
\limsup_{N\to\infty}\E\left[\mathbf1_{\Gamma_{N,R}}
 \sup_{w\in B}\left\|S_N(w)-s_N(w)I_2
 -\sum_{|x_i|\leq R}\frac{\boxi_i\boxi_i^T-I_2}{x_i-w}\right\|^2\right]
\leq\frac{C_B}{R}.
\end{equation}
The constant does not depend on $R$. The events are graph-measurable;
the expectation in~\eqref{rs:tail-bound} is over both the graph and
the independent edge proposal. Equivalently it is the graph
expectation of the conditional proposal average $\E_*$.
\end{lemma}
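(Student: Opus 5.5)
The plan is to write $S_N(w)-s_N(w)I_2$ as a conditional mean error plus a centered spectral sum. The near part of the sum is compared directly with the displayed finite Gaussian-mark sum. The remote part is shown to have small conditional second moment under $\E_*$, using Theorem~\ref{th:stars} and the local law of Proposition~\ref{sp:inputs}. Throughout, $z=z_N(w)$ with $w\in B$. The event $\Gamma_{N,R}$ will be the intersection of $\Omega_N$, the capped event $\Gamma_N$ of Proposition~\ref{rs:scalar}, and the good-root local-law event. By \eqref{rs:marked-mean}, the mean error is $O_B(N^{-1}\log N)$ on $\Gamma_N$ and is negligible. Hence it suffices to bound $\E_*\|S_N-\E_*S_N-\Sigma_R\|^2$ uniformly on $B$, where $\Sigma_R$ denotes the retained sum. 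The supremum over $B$ is recovered at the end by Cauchy's formula on a slightly larger compact set, because all terms are analytic in $w$.

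\emph{Near eigenvalues.} Since $\sigma^{-1}N^{-1}(\lambda_i-z)^{-1}=(x_i-w)^{-1}$, the centered spectral representation gives
\[
 S_N-\E_*S_N=\sum_{i\ne1}\frac{V^{-1/2}\bigl(\bmg_i\bmg_i^T-V(\lambda_i)\bigr)V^{-1/2}}{x_i-w}.
\]
For $|x_i|\le R$ we have $\lambda_i-E=O(R/N)$. Hence $V^{-1/2}V(\lambda_i)^{1/2}=I_2+O(R/N)$, and each summand equals $(\boxi_i\boxi_i^T-I_2)/(x_i-w)$ up to a relative error $O(R/N)$. On $\Gamma_N$ there are $O_R(\log N)$ such indices. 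Their squared marks have bounded $\E_*$-moments by the scalar fourth-moment case of Theorem~\ref{th:stars}. So this comparison error vanishes as $N\to\infty$ for fixed $R$.

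\emph{Remote eigenvalues in the bulk.} For indices with $\lambda_i\in J$ and $|x_i|>R$, expand the conditional second moment exactly as in the proof of Lemma~\ref{rs:variance}. The diagonal terms give
\[
 C\sum_{|x_i|>R}|x_i-w|^{-2}.
\]
Using the mesoscopic density bound \eqref{sp:meso}, in the form $\#\{i:|x_i-x|\le h\}\le Ch$ for $h\ge N^{\varepsilon}$, together with the capped count at smaller scales, I expect this sum to be at most $C_B/R$ after taking the graph expectation on $\Gamma_{N,R}$. The off-diagonal covariances are $O(N^{-\chi})$ by Theorem~\ref{th:stars}. They are multiplied by $\bigl(\sum_{\lambda_i\in J}|x_i-w|^{-1}\bigr)^2\le CN^{2\varepsilon}$, using \eqref{rs:absolute-sum}. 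Since $2\varepsilon<\chi$, this is $o(1)$.

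\emph{Eigenvalues outside $J$ (main obstacle).} This part of $K$ has bounded operator norm, but it carries only the factor $\sigma^{-1}$ and no factor $1/N$. A crude variance bound is therefore $O(1)$ rather than small, and the real issue is to show that the fluctuation of this $O(1)$ contribution tends to zero. I would first enlarge $J$ to a fixed bulk interval so that only eigenvalues far from $E$ remain outside it. I would then compare with the mesoscopic resolvent at $z_0=E+\ri N^{-c_0}$, for which
\[
 G(z)-G(z_0)=\sum_i\frac{(z-z_0)\,\bmu_i\bmu_i^T}{(\lambda_i-z)(\lambda_i-z_0)}.
\]
For this difference, the remote part again has small conditional variance by the same diagonal/off-diagonal fourth-moment count, now with weights $|z-z_0|/(|\lambda_i-z||\lambda_i-z_0|)$. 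For $W^TG(z_0)W$ itself, the local law \eqref{sp:star} makes the within-edge entries $G_{ll},G_{la},\dots$ deterministic up to $N^{-1/128}$ at good roots. The cross entries such as $G_{lb}(z_0)$ between the two independent edges have $\E_*|G_{lb}|^2\le\Im m_N(z_0)/(N\eta_0)\to0$ by the Ward identity. Hence $W^TG(z_0)W$ concentrates around its mean under $\E_*$. The delicate point is to organize this comparison so that the near-energy part of $G(z_0)$, which is already counted in the retained and remote bulk sums, is not double-counted. I would do this by applying the $z_0$ comparison only to the spectral projection outside $J$ and absorbing the boundary terms of $J$ into the bulk estimate.

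Collecting the three bounds and taking $\limsup_{N\to\infty}$ yields \eqref{rs:tail-bound}.
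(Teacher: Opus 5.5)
Your decomposition (conditional mean error, retained window $|x_i|\le R$, centered bulk tail, and the spectrum away from $E$) is the same as the paper's. The mean-error and retained-window steps are essentially identical to it. The genuine difference is in the far spectrum.

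\textbf{How the paper handles the far spectrum.} The paper takes a smooth cutoff $\vartheta$ and approximates $(1-\vartheta(t))/(t-E)$ uniformly on $[-d,d]$ by a polynomial $Q_\delta$. When the two sampled edges have disjoint tree neighborhoods of radius $\deg Q_\delta$, which holds for all but a vanishing fraction of proposals, $W^TQ_\delta(A)W$ is a deterministic walk count. This gives centered variance $C\delta^2+o(1)$ from local tree geometry alone, with no rate.

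\textbf{Your route.} Your mesoscopic comparison at $z_0=E+\ri N^{-c_0}$ also works. It even gives a polynomial rate and tolerates a sharp cutoff. It should, however, be organized as
\[
 W^TG_{\rm out}(z)W=W^TG(z_0)W-W^TG_{\rm in}(z_0)W
 +W^T\bigl(G_{\rm out}(z)-G_{\rm out}(z_0)\bigr)W .
\]
\begin{itemize}
\item The last term is $O(N^{-c_0})$ in operator norm, deterministically, because eigenvalues outside $J$ stay a fixed distance from $E$. A fourth-moment count there is neither needed nor available: Theorem~\ref{th:stars} covers only eigenvalues in $K$, not those near the spectral edges.
\item The fourth-moment count is needed instead for the subtracted term $W^TG_{\rm in}(z_0)W$. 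By the computation of Lemma~\ref{rs:variance} at height $\eta_0=N^{-c_0}$, its centered second moment is at most $C\Im m_N(z_0)/(N\eta_0)+CN^{-\chi}(\log N)^2$. This is what your phrase about absorbing the $J$-part into the bulk estimate has to mean.
\item For $W^TG(z_0)W$, your use of \eqref{sp:star} for within-edge entries and of the Ward identity for cross-edge entries is right. The at most $N^{1/2}$ bad roots must be absorbed using $\|G(z_0)\|\le N^{c_0}$, which requires $c_0<1/4$.
\end{itemize}

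\textbf{The centered bulk tail needs a different input.} The ingredients you name do not give $C_B/R$ as stated. The cap in $\Gamma_N$ only yields $\mu_N([-h,h])\le C(1+h^2)\log N$. That bound contributes order $\log N$, rather than $O(1/h)$, from each dyadic scale $h$ between $R$ and $N^{\varepsilon}$. The paper's step is the single comparison
\[
 \sum_{|x_i|>R}|x_i-w|^{-2}\le\frac{C_B}{R}\Im s_N(\ri R),
 \qquad R\ge2\sup_{w\in B}|w|+1,
\]
together with an $R$-uniform bound on the expectation of $\Im s_N(\ri R)$. The paper builds that bound into $\Gamma_{N,R}$ by adding the height cap $\Gamma_{N,R/(N\sigma)}$ and invoking \eqref{rs:height-moments}. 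With your event you could instead integrate \eqref{rs:factorial-tail} at height $R/(N\sigma)$ up to the bound $\Im s_N(\ri R)\le R\,\Im s_N(\ri)\le CR\log N$, which holds on $\Gamma_N$. With these adjustments your argument proves the lemma.
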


\begin{proof}
We split the spectrum twice. A smooth macroscopic cutoff separates
energies away from $E$, where the resolvent is a continuous function
of $A$ and local tree geometry is enough. Inside the bulk cutoff,
we retain the microscopic window $[-R,R]$ and use the fourth-moment
estimate for the remaining centered sum. Centering is crucial: the
uncentered tail need not tend to zero, because it carries the
macroscopic background of the Stieltjes transform.

Take $\Gamma_{N,R}$ to be the intersection of $\Omega_N$, $\Gamma_N$,
and $\Gamma_{N,R/(N\sigma)}$. For every fixed $R\geq1$, the last event
is defined for all sufficiently large $N$. Its height-moment constants
in \eqref{rs:height-moments} are independent of $R$. Finite-prefix Gaussianity will be applied separately through
Corollary~\ref{en:qualitative}, whose error tends to zero in probability.

Choose a smooth real function $\vartheta$, equal to one near $E$ and
supported in a compact bulk interval $J$. The outer part of the
compression has scalar weight
\[
f_{N,w}(t)=\frac{1-\vartheta(t)}{t-z_N(w)}.
\]
Uniformly for $t\in[-d,d]$ and $w$ in a fixed compact set, this converges
to the continuous real function $f(t)=(1-\vartheta(t))/(t-E)$, extended
across $E$. Given $\delta>0$, choose a real polynomial $Q_\delta$ with
$\sup_{[-d,d]}|f-Q_\delta|\leq\delta$. Functional calculus bounds the
compression error by $C\delta+o(1)$.

For the fixed degree of $Q_\delta$, the local tree event in
Proposition~\ref{sp:inputs} ensures that all but a vanishing fraction of
roots have the needed tree neighborhood. Two independently sampled edge
neighborhoods of this fixed radius are disjoint, with no path of length
at most $\deg Q_\delta$ between them, except with probability $O(N^{-1})$.
On the remaining proposals, the compression of $Q_\delta(A)$ is the
same deterministic two-edge tree compression: its local entries count
the same walks and its cross-edge entries vanish. Since its norm is
bounded on every proposal, its centered conditional second moment
tends to zero. The outer compression thus has centered variance at most
$C\delta^2+o(1)$. First let $N\to\infty$, then $\delta\to0$.

For the bulk part, the covariance calculation in Lemma~\ref{rs:variance}
gives the following bound for its centered contribution from $|x_i|>R$:
\[
C_B\sum_{|x_i|>R}|x_i-w|^{-2}
+CN^{-\chi}\left(\sum_{\lambda_i\in J}|x_i-w|^{-1}\right)^2.
\]
The second term is $O_B(N^{-\chi+2\varepsilon})$ on $\Omega_N$, by
\eqref{rs:absolute-sum}. If $R\geq2\sup_{w\in B}|w|+1$, then
\begin{equation}\label{rs:tail-ward}
\sum_{|x_i|>R}|x_i-w|^{-2}
\leq C_B\sum_i\frac1{x_i^2+R^2}
=\frac{C_B}{R}\Im s_N(\ri R).
\end{equation}
The expectation of the final imaginary part, restricted to
$\Gamma_{N,R}$, is uniformly bounded by \eqref{rs:height-moments}.
The centered bulk tail therefore satisfies the required $C_B/R$ estimate.

For the retained indices, $\vartheta(\lambda_i)=1$ for large $N$, and
\[
V^{-1/2}(\bmg_i \bmg_i^T-V(\lambda_i))V^{-1/2}
=\boxi_i\boxi_i^T-I_2+O\bigl((R/N)(1+|\boxi_i|^2)\bigr).
\]
Theorem~\ref{th:stars} bounds
$\E_*[(1+|\boxi_i|^2)(1+|\boxi_j|^2)]$ uniformly. The squared retained
error is therefore at most
$C_{R,B}N^{-2}\mu_N([-R,R])^2$ in conditional expectation. Since
\[
\mu_N([-R,R])\leq2R\Im s_N(\ri R),
\]
its expectation on $\Gamma_{N,R}$ tends to zero. The mean error in
\eqref{rs:marked-mean} also tends to zero there by
\eqref{rs:compact-moments}.

We explain the last analytic step, which will recur below.
Cover $B$ by finitely many disks $D_j$ whose concentric disks of
twice the radius remain off the real axis. For an analytic scalar
entry $F$ of the error field, Cauchy's formula on the larger circle
gives
\[
 \sup_{w\in D_j}|F(w)|^2
 \le C_{B,j}\int_{\partial(2D_j)}|F(z)|^2\,|\dd z|.
\]
The same bound applies entrywise to matrices. The preceding
pointwise second-moment estimates are uniform on these circles;
Tonelli's theorem then gives the squared-supremum estimate
\eqref{rs:tail-bound}. The constants may depend on the fixed compact
set but not on $R$ once $R$ exceeds that set.
\end{proof}

\subsubsection{The scalar measure and the finite prefixes}

\begin{lemma}[Scalar Nevanlinna subsequential limits]\label{rs:scalar-limit}
Every sequence has a subsequence on which $(\mu_N,s_N)$ converges in
distribution, vaguely in the first coordinate and locally uniformly in
the second. Each limit has
\begin{equation}\label{rs:nevanlinna}
s(w)=a+\int_\R\left(\frac1{x-w}-\frac{x}{1+x^2}\right)\dd\mu(x),
\qquad a\in\R,\qquad
\int_\R\frac{\dd\mu(x)}{1+x^2}<\infty.
\end{equation}
The measure $\mu$ is a locally finite integer counting measure. For
every compact $B\subset\bC\setminus\R$ and fixed $0<r<\infty$,
\begin{equation}\label{rs:limit-moments}
\E\sup_{w\in B}|s(w)|^r<\infty,
\qquad
\sup_{Y\geq1}\E|s(\ri Y)|^r<\infty.
\end{equation}
\end{lemma}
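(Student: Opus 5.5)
The plan is to derive everything from the capped compact moments \eqref{rs:compact-moments} and the exact identity $s_N(w)=\sum_j(x_j-w)^{-1}-a_E/\sigma$. The first step is joint tightness. For $s_N$, restrict to $\Gamma_N$, whose complement has probability $O(N^{-\beta})$ and therefore does not affect limits in distribution. On this event, \eqref{rs:compact-moments} together with Cauchy estimates bounds $\sup_B|s_N|$ and $\sup_B|s_N'|$ in $L^r$ for every compact $B\subset\bC\setminus\R$. Montel's theorem then gives tightness in the Fr\'echet space of analytic functions on $\bC\setminus\R$ with the locally uniform topology. For $\mu_N$, the bound $\mu_N([-R,R])\le 2R\,\Im s_N(\ri R)$ (each point in $[-R,R]$ contributes at least $1/(2R)$ to $\Im s_N(\ri R)$) combines with \eqref{rs:height-moments} at height $R/(N\sigma)$ to give tightness of $\mu_N([-R,R])$ for every $R$. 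By the standard criterion this yields vague tightness. Prokhorov on the product space then produces a jointly convergent subsequence, and by Skorokhod we may assume almost sure convergence.

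The second step identifies the limit structure. A vague limit of integer counting measures is an integer counting measure, because a locally finite counting measure has atoms of integer mass, and this property is closed in the vague topology. The Nevanlinna form is the delicate point. Vague convergence alone only controls $\int\phi\,\dd\mu_N$ for compactly supported $\phi$, while the tails of the sum defining $s_N$ are not compactly supported. I will handle this by writing, for fixed $w$,
\[
s_N(w)-s_N(\ri)=\int\Bigl(\frac1{x-w}-\frac1{x-\ri}\Bigr)\dd\mu_N(x).
\]
The kernel is $O_B((1+x^2)^{-1})$, so the contribution from $|x|>R$ is at most $C_B\sum_{|x_j|>R}(1+x_j^2)^{-1}\le C_B R^{-1}\Im s_N(\ri R)$, as in \eqref{rs:tail-ward}. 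Its expectation on $\Gamma_{N,R}$ is $O(1/R)$ uniformly in $N$. Letting $N\to\infty$ and then $R\to\infty$ shows that $s(w)-s(\ri)=\int((x-w)^{-1}-(x-\ri)^{-1})\dd\mu$. Fatou's lemma gives
\[
\int(1+x^2)^{-1}\dd\mu\le\liminf_N \Im s_N(\ri)<\infty
\]
almost surely. Setting $a=\Re s(\ri)-\int(\Re(x-\ri)^{-1}-x/(1+x^2))\dd\mu=\Re s(\ri)$ produces \eqref{rs:nevanlinna}. It also shows that $\Im s(\ri)=\int(1+x^2)^{-1}\dd\mu$; this requires the tail estimate in order to exclude mass escaping to infinity, which is exactly what the uniform $O(1/R)$ bound provides. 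Since $s_N$ is real on the real part in the sense that $\overline{s_N(w)}=s_N(\bar w)$, $a$ is real.

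The third step is the moment bounds \eqref{rs:limit-moments}. The first bound follows from \eqref{rs:compact-moments} and Fatou's lemma, using continuity of $\sup_B|\cdot|$ and $\mathbf1_{\Gamma_N}\to1$ in probability. The second bound, uniform in $Y\ge1$, is the main obstacle, since \eqref{rs:compact-moments} is only for fixed compacts. For this I will use \eqref{rs:height-moments} at heights $\eta=Y/(N\sigma)$. Those constants are uniform over $c_*/N\le\eta\le\eta_*$, which covers all fixed $Y\ge1$ for large $N$. They give $\E[\mathbf1_{\Gamma_{N,Y/(N\sigma)}}|m_N|^r]\le C_r$, hence a bound on $s_N(\ri Y)=(m_N-a_E)/\sigma$ uniform in $Y$. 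The events $\Gamma_{N,Y/(N\sigma)}$ have probability at least $1-CN^{-\beta}$, so Fatou's lemma passes the bound to $s(\ri Y)$ with a constant independent of $Y$.
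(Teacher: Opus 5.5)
Your proof is correct. It differs from the paper's mainly in where the mass that might escape to infinity is controlled.

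The paper first passes to a joint limit and applies the abstract Herglotz representation to $s$. That representation a priori contains a linear term $\alpha w$ with $\alpha\ge0$, which records mass escaping to infinity. The paper identifies the finite-interval representing measure with the vague limit $\mu$ by weighted-measure convergence. Only afterwards does it kill $\alpha$, using $\alpha Y\le\Im s(\ri Y)$ together with the limiting bound $\sup_{Y\ge1}\E|s(\ri Y)|^r<\infty$.

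You instead work at finite $N$. You bound the contribution of particles with $|x|>R$, both to $s_N(w)-s_N(\ri)$ and to $\Im s_N(\ri)$, by $(C_B/R)\Im s_N(\ri R)$. This holds uniformly in $N$ in expectation, via \eqref{rs:height-moments} at $\eta=R/(N\sigma)$. You then obtain the compensated representation directly, including the identity $\Im s(\ri)=\int(1+x^2)^{-1}\dd\mu$. As a result you need neither the representation theorem nor a separate identification of its measure with $\mu$.

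The key input is the same in both routes: height moments that are uniform in the height parameter. The paper's route is shorter once the Herglotz theorem is granted. Yours is more hands-on and quantitative; it is essentially the limiting form of \eqref{rs:tail-ward}, which the paper uses later anyway.

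Two small remarks. First, converting your restricted-expectation tail bound into an almost-sure identity needs the usual bounded truncation. For example, bound $\E\min(|\cdot|,1)$ and use $\PP(\Gamma_{N,R}^c)\to0$, since the good events exist only at finite $N$. Second, reality of $a$ is automatic from $a=\Re s(\ri)$. What actually gives the representation in the lower half-plane is that your kernel bound holds on compact subsets of $\bC\setminus\R$ in both half-planes, not the reflection remark.
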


\begin{proof}
The representing point measure and the analytic function must be
extracted jointly. Vague convergence of point measures records only
particles in fixed bounded intervals; an analytic limit could in
principle retain an additional contribution from particles escaping
to infinity. In the Nevanlinna representation this contribution is
a nonnegative linear term. The uniform bound at large imaginary
height is what rules it out.

Analytic tightness follows from Proposition~\ref{rs:scalar}. The bound
\[
\mu_N([-R,R])\leq(1+R^2)\Im s_N(\ri)
\]
gives tightness of the point measures on each compact interval. A joint
subsequence therefore exists. The Nevanlinna representation and
continuity of its representing measure on finite intervals give
\[
s(w)=a+\alpha w+\int_\R
 \left(\frac1{x-w}-\frac{x}{1+x^2}\right)\dd\mu(x),\qquad\alpha\geq0.
\]
Indeed $\int(1+x^2)^{-1}\dd\mu_N(x)=\Im s_N(\ri)$ is tight.
Weighted-measure convergence identifies the finite-interval measure
with $\mu$; mass escaping to infinity contributes to $\alpha$.
Locally bounded integer masses remain integer under vague convergence.

By bounded truncation and Fatou's lemma, \eqref{rs:compact-moments}
holds in the limit, since the omitted events have vanishing probability.
Applying
\eqref{rs:height-moments} at $\eta=Y/(N\sigma)$ gives the bound
uniform in $Y\geq1$. Since $\alpha Y\leq\Im s(\ri Y)$, letting
$Y\to\infty$ in expectation gives $\alpha=0$.
For the passage from capped to limiting moments, fix a bounded
continuous truncation of the relevant supremum and let $N\to\infty$.
Its contribution on the complement of the cap is at most its bound
times $\PP(\Gamma_N^c)$, which tends to zero. Increasing the truncation
and applying monotone convergence gives the desired limiting moment
bound. Thus no unproved uniform integrability of the uncapped
finite-$N$ transform is being invoked.

The representing measure can also be recovered locally from
$\pi^{-1}\Im s(x+\ri\eta)\,\dd x$ as $\eta\downarrow0$.
This inversion formula identifies it with the vague limit of the
particle measures on intervals whose endpoints carry no atoms.
A locally bounded sequence of integer counting measures has an
integer counting limit: isolate each limiting point in a sufficiently
small interval and use that the number of approximating points is an
integer. Multiplicities may merge at this stage; simplicity will
follow from identification or, in the tagged setting, from a separate
argument.
\end{proof}

\begin{lemma}[Conditional Gaussian representation of the marks]\label{rs:gaussian-limit}
The triples $(\mu_N,s_N,S_N)$ are jointly tight. Each subsequential
limit can be realized with marks $\boxi_x$, independent standard real
two-dimensional Gaussians conditionally on $(\mu,s)$, such that
\begin{equation}\label{rs:gaussian-series}
S(w)=s(w)I_2+\sum_{x\in\mu}\frac{\boxi_x\boxi_x^T-I_2}{x-w}.
\end{equation}
Points with multiplicity receive separate marks. The series converges
conditionally in $L^2$, locally uniformly off the real axis. Its tail
outside $[-R,R]$ has squared unconditional $L^2$ supremum on a fixed
compact set bounded by $C_B/R$, for sufficiently large $R$. Moreover,
\begin{equation}\label{rs:all-marked-moments}
\E\sup_{w\in B}\bigl(|s(w)|+\|S(w)\|\bigr)^r<\infty
\end{equation}
for every compact $B\subset\bC\setminus\R$ and fixed $0<r<\infty$, and
the same holds for each fixed spectral derivative.
\end{lemma}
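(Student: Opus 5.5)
The proof decomposes $S_N$ into a finite prefix, a remote tail, and a scalar part, and passes each to the limit separately. Tightness of $S_N$ comes from Lemma~\ref{rs:tail} together with Proposition~\ref{rs:scalar}. On $\Gamma_{N,R}$, $S_N-s_NI_2$ is the prefix sum $\sum_{|x_i|\le R}(\boxi_i\boxi_i^T-I_2)/(x_i-w)$ plus an error whose squared supremum on $B$ has expectation at most $C_B/R$. The prefix has a tight number of terms, since $\mu_N([-R,R])\le2R\Im s_N(\ri R)$. Its marks have bounded fourth moments under $\E_*$ by Theorem~\ref{th:stars}. Hence the whole triple is tight for the product of the vague topology and locally uniform analytic convergence. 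Cauchy estimates on circles, as at the end of the proof of Lemma~\ref{rs:tail}, upgrade pointwise control to suprema on $B$.

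To identify a subsequential limit, I would fix $R$ with $\pm R$ not atoms of $\mu$ almost surely, and a number $n$. I would then condition on the event that $\mu_N([-R,R])=n$. The $n$ window eigenvalues are labelled in decreasing order, with the fixed measurable basis choice, so they form a graph-measurable tuple, and their eigenvalues all converge to $E$ since their distances are $O(R/N)$. Restricting Corollary~\ref{en:qualitative} to the edge coordinates gives the following. The joint law under $\E_*$ of the $2n$ coordinates $(\bmg_i)$ converges in $W_2$, conditionally on the graph, to independent copies of $N(0,V(E))$. This uses the fact that $\bmg_i$ is a fixed linear function of the two sampled edges' star coordinates, and that the two edges are independent. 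After normalization by $V(\lambda_i)^{-1/2}$, the marks $\boxi_i$ become independent standard two-dimensional Gaussians. The convergence is in probability over graphs and uniform in the selection, so I can pass to the limit jointly with $(\mu_N,s_N)$. For this I would test against bounded Lipschitz functions $F(\mu_N,s_N)\,G(\boxi_1,\dots,\boxi_n)$ and replace $\E_*G$ by its Gaussian value. This yields, in the limit, independent standard Gaussian marks attached to the $n$ points of $\mu$ in $[-R,R]$, conditionally on $(\mu,s)$. It also yields the prefix formula for $S-sI_2$ up to the tail error. Points with multiplicity are handled automatically, because distinct basis vectors give separate marks.

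Next I remove the cutoff. The conditional Gaussian structure makes the terms $(\boxi_x\boxi_x^T-I_2)/(x-w)$ conditionally centered and orthogonal in $L^2$, with conditional variance $C\sum_x|x-w|^{-2}$. This sum is finite almost surely by \eqref{rs:nevanlinna}. Hence the series converges conditionally in $L^2$. The bound $C_B/R$ on the tail outside $[-R,R]$ passes to the limit by Fatou's lemma applied to bounded truncations, as in Lemma~\ref{rs:scalar-limit}. Consistency across increasing $R$ follows because the marks for a larger window restrict to those of a smaller one; this lets me realize all marks on one space. Letting $R\to\infty$ then gives \eqref{rs:gaussian-series}.

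For the moments \eqref{rs:all-marked-moments}, the scalar part is \eqref{rs:limit-moments}. For the series, conditionally on $(\mu,s)$ it is a second Gaussian chaos, so hypercontractivity bounds its $r$th conditional moment by the $r/2$ power of the conditional variance. That variance is at most $C_B\sum_x(1+x^2)^{-1}\le C_B\Im s(\ri)$, whose moments are finite. The supremum over $B$ and the derivatives again follow from Cauchy's formula. I expect the main obstacle to be the joint passage to the limit in the finite-prefix step. Corollary~\ref{en:qualitative} is quenched and qualitative, while the window cardinality and the labelling are random. The conditioning on $\mu_N([-R,R])=n$ must therefore be arranged carefully, and so must the non-atom choice of $R$ ensuring that the window count converges.
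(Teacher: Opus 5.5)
Your proposal is correct and follows essentially the same route as the paper. Fixed finite prefixes are identified through Corollary~\ref{en:qualitative}, tested against bounded functions of the scalar state $(\mu_N,s_N)$. The remote part is controlled by Lemma~\ref{rs:tail}, and the Gaussian series converges conditionally in $L^2$ with variance $C\Im s(w)/\Im w$. Hypercontractivity together with Cauchy's formula then gives the moments.

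The differences are minor. You condition on the exact window count $n$, whereas the paper caps at $k$ and pads with nearby eigenvectors. You also obtain the $C_B/R$ tail bound by Fatou from Lemma~\ref{rs:tail}, rather than recomputing it in the limit from $\sup_{Y\ge1}\E\Im s(\ri Y)<\infty$. Both variants work, because the corollary is uniform over all selections.
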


\begin{proof}
There are two logically separate claims: conditional Gaussianity of
any fixed finite prefix, and control of the infinite remainder.
The first uses only Corollary~\ref{en:qualitative}. The second uses
the polynomial fourth-moment estimates through Lemma~\ref{rs:tail}.
We never apply qualitative convergence to a number of eigenvectors
that grows with $N$.

Fix a subsequential scalar limit and a cutoff $R$ such that
$\mu(\{-R,R\})=0$ almost surely. Such cutoffs can be chosen to tend to
infinity: \eqref{rs:limit-moments} gives finite expected counts on compact
intervals, so only countably many deterministic points can have positive
probability of being atoms.

On the event of at most $k$ eigenvalues in $[-R,R]$, select their
eigenvectors in spectral order and pad with nearby orthonormal
eigenvectors so that all $k$ selected eigenvalues tend to $E$.
For example, if $a>0$ is the coarse-rigidity exponent,
the window $[E-N^{-a/2},E+N^{-a/2}]$ contains at least $k$ bulk
eigenvalues for all large $N$ and accommodates every retained vector. For fixed $k$,
Corollary~\ref{en:qualitative} gives conditional joint edge convergence
in probability, uniformly over these graph-measurable selections.
The two edges are independent; the bounded linear maps
defining $\bmg_i$ and $V(\lambda_i)^{-1/2}$ therefore give independent
standard Gaussian limiting marks, by \eqref{rs:mark-covariance}.

The conditional bounded-Lipschitz error is bounded and tends to
zero in probability, hence also in expectation. This remains true
after multiplication by any bounded graph-measurable function.
For clarity, let $b_N$ be a bounded graph-measurable function and
let $f$ be a bounded Lipschitz test of the retained marks. If
$\epsilon_N$ is the conditional Gaussian approximation error, then
\[
 \left|\E\{b_N[\E_*f(\boxi_1,\ldots,\boxi_k)
                  -\E_{\bm G}f(\bm G_1,\ldots,\bm G_k)]\}\right|
 \le\|b_N\|_\infty\E\epsilon_N\longrightarrow0.
\]
Here $\bm G_1,\ldots,\bm G_k$ are independent standard Gaussian
vectors, and $\E_{\bm G}$ refers only to them. The outer expectation
is over the graph, since the proposal was averaged by $\E_*$.
Choose $b_N$ among bounded tests of the scalar spectral state and
pass to its joint limit. A monotone-class argument then gives the
product Gaussian conditional law given the full state $(\mu,s)$,
including the real constant in its Nevanlinna representation, not
merely given the particle locations.
Counting tightness removes the cap $k$. Changing the order at repeated
limiting points only permutes independent identically distributed marks.
A consistent exhaustion by cutoffs therefore defines the Gaussian
family on all particles.

Finite marked sums are analytically tight. Lemma~\ref{rs:tail}
approximates $S_N-s_NI_2$ by these sums on every compact set, taking
$N\to\infty$, then $k\to\infty$, then $R\to\infty$. This proves
joint tightness. In a subsequential coupling, Fatou's lemma transfers the
truncated
bounds at each fixed cutoff to the limit. The indicators tend to one
almost surely
along a further subsequence simultaneously at the countable cutoffs,
since their failure probabilities vanish.

Conditionally on $(\mu,s)$, the centered summands are independent
quadratic Gaussian chaoses. Their variance sum is bounded by
\[
C\sum_{x\in\mu}|x-w|^{-2}
=C\frac{\Im s(w)}{\Im w}<\infty.
\]
Thus the centered Gaussian series converges conditionally in $L^2$.
Cauchy's formula on surrounding circles proves its locally uniform
version. For $R$ large relative to a compact set, the limiting version of
\eqref{rs:tail-ward}, together with
$\sup_{Y\geq1}\E\Im s(\ri Y)<\infty$, gives the unconditional squared
$L^2$ tail bound $C_B/R$. The finite-prefix identification and the
vanishing tail identify the limit with \eqref{rs:gaussian-series}.

The required higher moments are obtained only after taking the
Gaussian limit. For a centered scalar quadratic Gaussian chaos $Z$,
hypercontractivity gives
$\|Z\|_{L^r}\le(r-1)\|Z\|_{L^2}$ for $r\ge2$.
Apply this conditionally to each entry of a finite centered marked
sum. The bound is independent of the number of summands, so it
passes to the $L^2$ limit. For $0<r<2$, conditional
Cauchy--Schwarz is enough. This bounds the conditional moments in
terms of the variance. Spectral derivatives introduce fixed powers of $|\Im w|^{-1}$.
Nevanlinna
bounds control these variances on surrounding circles by $C_B\Im s(\ri)$.
Thus \eqref{rs:limit-moments} and Cauchy's formula give
\eqref{rs:all-marked-moments}, including derivatives, without higher
eigenvector moments.
One should distinguish the two series in this construction.
The scalar Nevanlinna sum is compensated by $x/(1+x^2)$.
The additional matrix sum is instead centered in the Gaussian marks
and converges in conditional $L^2$. It need not be absolutely
convergent term by term. Its imaginary part, combined with the
scalar part, is
\[
 \Im S(w)=\sum_{x\in\mu}
   \frac{\Im w}{|x-w|^2}\boxi_x\boxi_x^T\succeq0.
\]
This positive series converges by the same weighted second-moment
bound. The distinction between compensation in space and centering
in marks is useful when the switching map is applied.
\end{proof}

\subsubsection{The stationary marked state}

\begin{proposition}[The limiting switching involution]\label{rs:marked}
Every subsequential limit $(\mu,s,S)$ has the representation
\eqref{rs:nevanlinna}, the conditional Gaussian marks
\eqref{rs:gaussian-series}, and all moments in
\eqref{rs:all-marked-moments}. Its law is invariant under
\begin{equation}\label{rs:rotation}
(\mu,s,S)\longmapsto(\mu,s,OSO^T),\qquad O\in O(2).
\end{equation}
Almost surely $S(w)+qD$ is invertible for every nonreal $w$. The map
\begin{equation}\label{rs:involution}
\widetilde S=-qD-c^2L(S+qD)^{-1}L^T,\qquad
\widetilde s=s-\partial_w\log\det(S+qD)
\end{equation}
together with the representing point measure $\widetilde\mu$ of
$\widetilde s$ is a law-preserving involution. The joint law of a state
and its image is invariant under exchange of the two states. Furthermore,
\begin{equation}\label{rs:limit-rank}
|\widetilde s(w)-s(w)|\leq\frac{2\pi}{|\Im w|}.
\end{equation}
For nonreal $w,v$, the conditional Gaussian identities are
\begin{align}
\E[S_{22}(w)-S_{11}(w)\mid\mu,s]&=0,\label{rs:wick-one}\\
\E[S_{11}(w)S_{22}(w)\mid\mu,s]&=s(w)^2,\label{rs:wick-two}\\
\E[S_{12}(w)S_{12}(v)\mid\mu,s]
 &=\frac{s(w)-s(v)}{w-v}.\label{rs:wick-three}
\end{align}
These conditional expectations average only the Gaussian marks,
with the full scalar state fixed. The products are bilinear: no
complex conjugation is intended. Conjugates are obtained by taking
$v$ in the opposite half-plane. At coincident arguments the divided
difference means $s'(w)$.
\end{proposition}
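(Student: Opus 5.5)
The first three assertions come directly from Lemmas~\ref{rs:scalar-limit} and~\ref{rs:gaussian-limit}. Rotation invariance \eqref{rs:rotation} holds conditionally on $(\mu,s)$, because the law of an i.i.d.\ family of standard Gaussian vectors $\boxi_x$ is invariant under $\boxi_x\mapsto O\boxi_x$. Applying this map to \eqref{rs:gaussian-series} gives $OSO^T$ and leaves $s$ and $\mu$ unchanged. The Wick identities then follow termwise from the conditionally $L^2$-convergent series. For \eqref{rs:wick-one}, the two diagonal entries of $\boxi\boxi^T-I_2$ are exchangeable. For \eqref{rs:wick-two}, write $S_{11}=s+\sum_x(\xi_{x,1}^2-1)/(x-w)$ and similarly for $S_{22}$; the two centered sums are conditionally independent, so the mixed term has mean zero and only $s^2$ survives. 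For \eqref{rs:wick-three}, $S_{12}(w)=\sum_x\xi_{x,1}\xi_{x,2}/(x-w)$ with $\E(\xi_1\xi_2)^2=1$ and zero cross terms, so the conditional mean is $\sum_x(x-w)^{-1}(x-v)^{-1}$. This sum equals $(s(w)-s(v))/(w-v)$ by \eqref{rs:nevanlinna}, since the compensators cancel in the difference.

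For the involution I would pass to the limit in Lemma~\ref{rs:switch}. Substituting $K=H+\sigma V^{1/2}S_NV^{1/2}$ into \eqref{rs:compression-shift} expresses $\widetilde S_N$ algebraically in terms of $S_N$. The deterministic constants require checking an identity: $\widetilde K$ is computed with $\widetilde W=WP$, and $V,H,D,P$ are explicit, so one has to verify that the resulting map is exactly \eqref{rs:involution} up to $O(1/N)$, with $q$ and $c$ coming from $a_E$ and $\Im m_d(E)$. The key points are $H+D/2=\sigma V^{1/2}qDV^{1/2}$ up to $O(N^{-1})$, together with the relation $PDP=-D$. The limits of $1/\sigma$ times the dominant terms produce $c^2=1+q^2$ through $a_E^2+\sigma^2$. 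For the scalar part, \eqref{rs:trace-shift} gives $\widetilde s_N-s_N=-\partial_w\log\mathcal D(z_N(w))$. Moreover $\mathcal D=\det(I_2+2DK)=\det(2D)\det(K+D/2)$, and $\det(K+D/2)$ equals $\sigma^2\det V\det(S_N+qD)+o(1)$. The constant factors therefore disappear under $\partial_w\log$, which yields the second formula in \eqref{rs:involution}.

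Next I would take a joint subsequence of $(\mu_N,s_N,S_N,\widetilde\mu_N,\widetilde s_N,\widetilde S_N)$. Tightness holds because, by exchangeability of $(A,\widetilde A)$, the reversed triple has the same law as the original one. Rejected proposals have probability $O(N^{-1})$, and on those the two states coincide. Exchangeability passes to the limit, so the pair of limiting states is exchangeable, and in particular the map preserves the law. It is an involution because the finite-$N$ reverse of the reverse proposal is the original one. The bound \eqref{rs:limit-rank} is the limit of \eqref{rs:rank} under the scaling $z_N(w)$, since $N|\Im z|=|\Im w|/\sigma$ and $\sigma$ is absorbed by the normalization of $s_N$.

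\textbf{Main obstacle.} The hard step is almost sure invertibility of $S+qD$ together with continuity of the map at the limit. I would argue as follows. $\det(S(w)+qD)$ is analytic in $w$, and by the limiting Wick identities it is not identically zero. More robustly, the reverse identity $\mathcal D\,\mathcal D_{\rm rev}=1$ passes to the limit as $\det(S+qD)\det(\widetilde S+qD)=\mathrm{const}\neq0$, and both factors have finite moments by \eqref{rs:all-marked-moments} applied to each state, since the two states have equal laws. This forces the determinant to be nonvanishing off the real axis. Once invertibility is known, the algebraic map is continuous near the limit. Hurwitz's theorem then transfers convergence of $\partial_w\log\det$ from finite $N$, and the subsequential reverse limit is identified with \eqref{rs:involution}.
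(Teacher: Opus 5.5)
Your proposal is correct and follows essentially the paper's route: exact finite-$N$ algebra, tightness of the reversed triple from reversal invariance of the accepted graph--proposal measure, and invertibility of $S+qD$ from the tight reversed field. The identities are in fact exact on accepted proposals, via $H+D/2=\sigma qV^{1/2}DV^{1/2}$ and $2\sigma(\det V)^{1/2}=c^{-1}$, rather than holding only up to $O(N^{-1})$ or through $a_E^2+\sigma^2$. The only cosmetic difference is that the paper passes the matrix identity $(S_N+qD)^{-1}=-c^{-2}L^T(\widetilde S_N+qD)L$ to the limit, and your relation $\det(S+qD)\det(\widetilde S+qD)=c^4$ is its determinant.
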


\begin{proof}
The finite-$N$ Woodbury identity is converted into the normalized
map by a deterministic change of coordinates. Its coefficients are
of order one at fixed $d$; no expansion in $1/d$ is used.
There is also a genuine continuity issue: convergence of $S_N$ does
not automatically imply convergence of $(S_N+qD)^{-1}$.
We resolve it by expressing the inverse in terms of the reversed
field, which is tight by reversibility.

The Gaussian representation and moment assertions were proved above.
Replacing each $\boxi_x$ by $O\boxi_x$ preserves its conditional product
law and changes the centered series to $O(S-sI_2)O^T$. This proves
\eqref{rs:rotation}.

We first compute the marked switching exactly at finite $N$. Substitution
of $a_E$ and $\sigma$ gives
\[
H+D/2=\sigma qV^{1/2}DV^{1/2},\qquad
\sigma^{-1}\tfrac12V^{-1/2}PDV^{-1/2}=cL.
\]
Inserting these identities into \eqref{rs:compression-shift} and
\eqref{rs:marked-definition} yields, on accepted proposals,
\begin{equation}\label{rs:finite-involution}
\widetilde S_N=-qD-c^2L(S_N+qD)^{-1}L^T.
\end{equation}
The determinant identity becomes
\[
\mathcal D(z_N(w))=-c^{-2}\det(S_N(w)+qD).
\]
Equations \eqref{rs:trace-shift} and $z_N'(w)=1/(N\sigma)$ therefore
give the exact normalized identities
\begin{equation}\label{rs:finite-scalar}
\widetilde s_N=s_N-\partial_w\log\det(S_N+qD),\qquad
|\widetilde s_N(w)-s_N(w)|\leq\frac{2\pi}{|\Im w|}.
\end{equation}

The accepted graph-proposal measure is invariant under reversal. Its
total mass is $1-O(N^{-1})$. Conditioning this joint measure on acceptance
changes the original law by $O(N^{-1})$ in total variation and preserves
the reversal identity. In particular the original and reversed triples
have identical marginal laws under this conditioning and are both tight.
To control inversion in the limit, extract a joint analytic subsequential
limit of the two triples. Put $M_N=S_N+qD$. Equation
\eqref{rs:finite-involution} writes the whole
inverse field as
\[
M_N^{-1}=-c^{-2}L^T(\widetilde S_N+qD)L.
\]
The right side is tight in the analytic topology. Passing to the
limit in $M_NM_N^{-1}=I_2$ gives directly
\[
 (S+qD)\bigl[-c^{-2}L^T(\widetilde S+qD)L\bigr]=I_2.
\]
Thus the bracketed matrix is the inverse. The identity holds
analytically throughout
both half-planes on one probability-one set.

Cauchy's formula gives convergence of spectral derivatives, while the
last display gives convergence of the inverse. Thus
\eqref{rs:finite-involution} and
\eqref{rs:finite-scalar} give \eqref{rs:involution} and
\eqref{rs:limit-rank}. The reversed scalar function has the representing
point measure given by the vague limit of the reversed eigenvalues, by
Lemma~\ref{rs:scalar-limit}. It is therefore $\widetilde\mu$.
The symmetry under exchange of the finite state and its image passes
to the limiting joint law.

To check the involution directly, write $M=S+qD$. Then
\[
\widetilde M=-c^2LM^{-1}L^T,\qquad
\widetilde M^{-1}=-c^{-2}LML^T,\qquad
\det\widetilde M=\frac{c^4}{\det M}.
\]
Since $L^2=-I_2$, the matrix transformation applied twice returns $M$.
The two scalar logarithmic derivatives cancel by the determinant
identity. Thus the map is an involution on the full state, including its
representing point measure.

Finally the two Gaussian coordinates are independent. Their centered
quadratic series therefore give \eqref{rs:wick-one} and
\eqref{rs:wick-two}, while
\[
\E[S_{12}(w)S_{12}(v)\mid\mu,s]
=\sum_{x\in\mu}\frac1{(x-w)(x-v)}.
\]
The absence of a linear term in \eqref{rs:nevanlinna} identifies this
sum with the divided difference in \eqref{rs:wick-three}. Cauchy--Schwarz
gives absolute convergence of the sums. The conditional
and unconditional moment bounds above also allow differentiation and
passage to coincident arguments in these identities.
For reference, the Gaussian contractions can be read directly from
one particle. If $\boxi=(\xi_1,\xi_2)^T\sim N(0,I_2)$, then
\[
 \E(\xi_1^2-1)(\xi_2^2-1)=0,\qquad
 \E(\xi_1\xi_2)^2=1,\qquad
 \E(\xi_1^2-1)=\E(\xi_2^2-1)=0.
\]
Different particles have independent marks. Thus diagonal centered
series in different coordinates have zero covariance, while the
covariance of the off-diagonal series is the sum of the products of
its scalar kernels. The algebraic identity
\[
 \frac1{x-w}-\frac1{x-v}
 =\frac{w-v}{(x-w)(x-v)}
\]
gives the divided difference. If the scalar representation had
contained a term $\alpha w$, the divided difference would also have
contained $\alpha$, with no corresponding mark contraction. This
explains why removal of the linear term precedes the loop-equation
calculation.
\end{proof}

\subsection{The marked switching flow}
\label{lp:sec}

We derive the bulk loop equations from the marked state of
Proposition~\ref{rs:marked}, retaining its matrices $D,P,L$ and constants
$q>0$, $c^2=1+q^2$. The marked switching map is
\[
\mathcal T(s,S)=
\left(
s-\partial_w\log\det(S+qD),
-qD-c^2L(S+qD)^{-1}L^T
\right).
\]
Here and below a logarithmic derivative means
$\partial_w\log\det M=\Tr(M^{-1}\partial_wM)$, independently of a
logarithm branch.

The scalar representation in Proposition~\ref{rs:marked} gives
\[
\sum_i\frac1{(x_i-w)(x_i-z)}=\frac{s(w)-s(z)}{w-z},\qquad
\sum_i\frac1{|x_i-w|^2}=\frac{\Im s(w)}{\Im w}.
\]
Both sums converge absolutely; the first is interpreted by a derivative
at coincident arguments. In applying the flow below, we will use the
finiteness of all polynomial moments of marked values and spectral
derivatives, established in \eqref{rs:all-marked-moments}.

\begin{lemma}[The stationary conjugated-rotation flow]\label{lp:flow}
Let
\[
O_\theta=e^{\theta L},\qquad
\mathcal R_\theta(s,S)=(s,O_\theta SO_\theta^T),\qquad
\Phi_\theta=\mathcal T\mathcal R_\theta\mathcal T.
\]
For each fixed real $\theta$, $\Phi_\theta$ is defined almost surely
and preserves the joint marked law. Its infinitesimal vector field
is
\begin{equation}\label{lp:vector}
\mathcal VS=\alpha[L,S]+\kappa(P+SPS),
\qquad
\mathcal Vs=2\kappa\,\partial_wS_{12},
\qquad
\alpha=\frac{1-q^2}{1+q^2},\quad
\kappa=\frac{2q}{1+q^2}.
\end{equation}
For every polynomial $F$ in finitely many scalar and marked values
and their spectral derivatives,
\begin{equation}\label{lp:stationary}
\E\mathcal VF=0.
\end{equation}
\end{lemma}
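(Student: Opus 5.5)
The plan is to obtain law preservation from the two symmetries already established in Proposition~\ref{rs:marked}, to compute the generator by differentiating the composition $\mathcal T\mathcal R_\theta\mathcal T$ at $\theta=0$, and then to differentiate $\E F\circ\Phi_\theta=\E F$ under the expectation.

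\emph{Law preservation.} By Proposition~\ref{rs:marked}, $\mathcal T$ is an almost surely defined involution that preserves the marked law. Since $O_\theta\in SO(2)$, the rotation $\mathcal R_\theta$ preserves the law by \eqref{rs:rotation}. So $\mathcal R_\theta\mathcal T(s,S)$ has the same law as $(s,S)$. Consequently, almost surely, $\mathcal R_\theta\mathcal T(s,S)$ lies in the full-measure set on which $S+qD$ is invertible for every nonreal $w$. Hence the second application of $\mathcal T$ is defined almost surely, and $\Phi_\theta$ preserves the law.

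\emph{Generator.} Write $M=S+qD$ and $(s',S')=\mathcal T(s,S)$, so that $M':=S'+qD=-c^2LM^{-1}L^T$. The derivative of the rotation at $\theta=0$ is $\delta S'=[L,S']=[L,M']-q[L,D]$. Linearizing $\mathcal T$ at $S'$ and using $M'^{-1}=-c^{-2}L^TML$ together with $LL^T=I_2$ gives
\[
 \mathcal VS=c^{-2}M\,L\,\delta S'\,L^T M .
\]
For the first piece, $LM'L^T=-c^2M^{-1}$ gives $M\,L[L,M']L^T M=-c^2M[L,M^{-1}]M=c^2[L,M]$. For the second piece, $L[L,D]L^T=[L,D]=2P$. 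Therefore
\[
 \mathcal VS=[L,S]+q[L,D]-2qc^{-2}(S+qD)P(S+qD).
\]
I would then expand this using the $2\times2$ identities $DPD=-P$, $PD=L$, $DP=-L$ and $c^2=1+q^2$. The aim is to collect the terms linear in $S$ into $\alpha[L,S]$ and the remaining terms into $\kappa(P+SPS)$. This is a routine but sign-sensitive check, and I would verify it on one explicit matrix.

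For the scalar component, differentiate $s'-\partial_w\log\det M''$ along the rotation, where $M''$ is the matrix $M$ evaluated at the rotated state. By Jacobi's formula this gives $-\partial_w\Tr\bigl(M'^{-1}[L,S']\bigr)$ up to the terms just computed. Substituting $M'^{-1}=-c^{-2}L^TML$ reduces it to a trace of $P$ against $S$, which I expect to give $2\kappa\,\partial_wS_{12}$.

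\emph{Stationarity.} For fixed $\theta$ we have $\E F(\Phi_\theta)=\E F$. I would write $F(\Phi_\theta)-F=\int_0^\theta(\mathcal VF)(\Phi_u)\,\dd u$. This uses the flow property: $\Phi_u$ is a one-parameter group, because $\mathcal T^2=\mathrm{id}$ and $\mathcal R_u\mathcal R_v=\mathcal R_{u+v}$. Since $\Phi_u$ preserves the law, $\E(\mathcal VF)(\Phi_u)=\E\mathcal VF$ for every $u$. Dividing by $\theta$ then yields \eqref{lp:stationary}, provided $\mathcal VF$ is integrable and Fubini applies.

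\emph{Main obstacle: integrability.} The issue is that $\mathcal VF$ is a polynomial in $S$, $s$ and finitely many derivatives only after the inverse has been eliminated. The vector field \eqref{lp:vector} is polynomial, and its spectral derivatives are polynomials in derivatives of $S$ and $s$. Hence $\mathcal VF$ is again a polynomial in marked values and their spectral derivatives, so the moment bounds \eqref{rs:all-marked-moments} (including derivatives) give integrability. The joint measurability in $(u,\omega)$ needed for Fubini would be handled by defining $\Phi_u$ on the single probability-one set where the inverse identity of Proposition~\ref{rs:marked} holds. On that set the flow is given by a closed formula that is continuous in $u$.
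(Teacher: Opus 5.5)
Your overall route is the paper's: the law is preserved because it is invariant under $\mathcal T$ and $\mathcal R_\theta$; the generator comes from linearizing $T$ at $S'=T(S)$ with $M'^{-1}=-c^{-2}LML^T$; and stationarity follows by integrating $\frac{\dd}{\dd u}F(\Phi_u)=(\mathcal VF)(\Phi_u)$ along the orbit, using invariance at each fixed $u$ and \eqref{rs:all-marked-moments}. Two steps, however, do not go through as written.

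\emph{Sign error in the generator.} You claim $L[L,D]L^T=[L,D]=2P$, but this is false. Conjugation by $L$ is rotation by $\pi/2$, which negates every symmetric traceless $2\times2$ matrix ($LDL^T=-D$, $LPL^T=-P$). Hence $L[L,D]L^T=[L,LDL^T]=-2P$. Your intermediate formula therefore has the wrong sign on the quadratic term. Expanding it gives the coefficient $(1+3q^2)/(1+q^2)$ on $[L,S]$ and $-\kappa$ on $SPS$, which cannot match \eqref{lp:vector}. With the correct sign you get $\mathcal VS=[L,M]+\kappa MPM$. Then $[L,M]=[L,S]+2qP$ and $MPM=SPS-q[L,S]-q^2P$ give exactly $\alpha[L,S]+\kappa(P+SPS)$. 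The same sign governs the scalar part. Since $\Tr(M'^{-1}[L,M'])=0$ and $\Tr(M'^{-1}P)=c^{-2}\Tr(MP)$, you get $-\partial_w\Tr(M'^{-1}[L,S'])=\kappa\,\partial_w\Tr(MP)=2\kappa\,\partial_wS_{12}$. The paper instead composes the determinant identities, $s_\theta-s=\partial_w\log(\det M_\theta/\det M)$, so that $\mathcal Vs=\partial_w\Tr(M^{-1}\mathcal VM)$.

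\emph{Orbit regularity, the more substantive gap.} You assert that on the full-measure set of Proposition~\ref{rs:marked} the flow is given by a closed formula continuous in $u$. This does not hold. That set only guarantees invertibility of $S+qD$, and hence of $M'$. The orbit also needs $O_uS'O_u^T+qD$ to be invertible for every $u$, and invariance gives this only almost surely for each fixed $u$.

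For example, take a state with no atoms, $S\equiv aI_2$ with $|a|$ large. Then $S+qD$ is invertible, but $\det(O_uS'O_u^T+qD)=\det S'-q^2-2q\tau\cos 2u$, where $\tau=\tfrac12(S'_{11}-S'_{22})$. This vanishes at some real $u$, so the orbit has a genuine pole. Nothing proved at this stage excludes configurations with at most one atom, where $\Im S$ is only semidefinite. With at least two atoms, almost surely $\Im S\succ0$, hence $\Im S'\succ0$, and no poles occur.

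The missing argument is the paper's:
\begin{itemize}
\item For a polynomial $F$, the orbit $u\mapsto F(\Phi_u)$ is rational in $(\cos u,\sin u)$.
\item Tonelli and invariance at each deterministic $u$ give $\E\int_0^T|(\mathcal VF)(\Phi_u)|\dd u=T\,\E|\mathcal VF|<\infty$, so almost surely the derivative is integrable on $[0,T]$.
\item A nonremovable pole of a rational orbit would make its derivative non-integrable, so all singularities are removable.
\item After filling them, the orbit is absolutely continuous and the fundamental theorem of calculus applies.
\end{itemize}
Joint measurability of $(u,\omega)\mapsto\Phi_u$ alone does not supply this.
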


\begin{proof}
The switching is a discrete involution, not a small perturbation.
The continuous parameter comes from rotating the Gaussian marks.
Conjugating that rotation by switching preserves the law and creates
a nontrivial scalar motion. We first compute its derivative
algebraically and then justify differentiating expectations; these
are separate steps.

Put $M=S+qD$ and write $T(S)$ for the matrix component of
$\mathcal T(s,S)$. Since $L^T=-L$,
\[
T(S)+qD=-c^2LM^{-1}L^T,\qquad
\bigl(T(S)+qD\bigr)^{-1}=-c^{-2}LML^T.
\]
Proposition~\ref{rs:marked} gives the involution and its invariance,
while \eqref{rs:rotation} gives invariance under $\mathcal R_\theta$.
For each fixed angle the successive inputs have the original marked
law and lie in the domain of $\mathcal T$ almost surely. Hence
$\Phi_\theta$ preserves that law, and
$\Phi_{\theta+\varphi}=\Phi_\theta\Phi_\varphi$ wherever defined.

The vector field follows by differentiating the conjugated rotation.
For a symmetric matrix increment $\mathcal H$,
\[
dT_Z[\mathcal H]
=c^2L(Z+qD)^{-1}\mathcal H(Z+qD)^{-1}L^T.
\]
At $Z=T(S)$, take $\mathcal H=[L,T(S)]$. The preceding inverse identity gives
\[
dT_{T(S)}[\mathcal H]=-c^{-2}M\mathcal H M.
\]
Indeed $\mathcal H$ is symmetric and traceless, so $L\mathcal H L=\mathcal H$. Also,
\[
\mathcal H=-2qP+c^2(-M^{-1}L+LM^{-1}),
\]
and hence
\[
\mathcal VM=[L,M]+\kappa MPM.
\]
To track the constants, expand
\[
 [L,M]=[L,S]+2qP,\qquad
 MPM=SPS-q[L,S]-q^2P.
\]
Indeed $PD=L$, $DP=-L$, and $DPD=-P$.
The coefficient of $[L,S]$ is $1-\kappa q=\alpha$,
and that of $P$ is $2q-\kappa q^2=\kappa$.
Therefore
\[
[L,M]+\kappa MPM
=\alpha[L,S]+\kappa P+\kappa SPS,
\]
which is the matrix equation in \eqref{lp:vector}.

Let $M_\theta=S_\theta+qD$ be the matrix component along the flow.
Composing the scalar determinant identities gives
\[
s_\theta-s
=\partial_w\log\frac{\det M_\theta}{\det M}.
\]
Since $\Tr(M^{-1}[L,M])=0$,
\[
\mathcal Vs
=\partial_w\Tr(M^{-1}\mathcal VM)
=\kappa\,\partial_w\Tr(PM)
=2\kappa\,\partial_wS_{12}.
\]
The group identity gives \eqref{lp:vector} at every regular angle of
an orbit.

For a polynomial observable $F$, its orbit and derivative are rational
in $\sin\theta,\cos\theta$, with nonzero denominators at zero.
Invariance at each deterministic angle and
\eqref{rs:all-marked-moments} give
\[
\E\int_0^T|\mathcal VF(\Phi_\theta(s,S))|\dd\theta
=T\E|\mathcal VF(s,S)|<\infty.
\]
A nonremovable real pole of the orbit would make this integral
infinite. Its finitely many real singularities on $[0,T]$ are therefore removable;
after filling them, the orbit is absolutely continuous, even when $\Im S$
is only semidefinite. The fundamental theorem of calculus yields
\[
0
=\E\bigl[F(\Phi_T(s,S))-F(s,S)\bigr]
=\int_0^T\E\mathcal VF(\Phi_\theta(s,S))\dd\theta
=T\,\E\mathcal VF.
\]
This proves \eqref{lp:stationary}.
The integration argument is preferable to an unsupported exchange
of a derivative and an expectation at zero. It uses stationarity at
each deterministic time and integrability of the derivative along
the orbit. Tonelli's theorem first gives an almost surely integrable
derivative on every bounded time interval. Any nonremovable pole of
a rational orbit would contradict that integrability. Filling the
removable singularities then permits the fundamental theorem of
calculus. No deterministic lower bound on
$\det(S+qD)$ is required.
\end{proof}

\subsection{Identification of the point-process limit}\label{fe:sec}

We now close the main-text argument. Averaging the stationary flow
conditionally on its scalar data gives the full microscopic loop
hierarchy. The characterization theorem of \cite{BH} then identifies
the limit, and tightness turns subsequential identification into
convergence of the full sequence.

\begin{proposition}[The full microscopic loop hierarchy]\label{lp:criterion}
Let $(\mu,s,S)$ be any subsequential marked limit in
Proposition~\ref{rs:marked}. Then, for every integer $p\geq1$ and every
$w_1,\ldots,w_p\in\bC\setminus\R$,
\begin{equation}\label{lp:hierarchy}
\begin{aligned}
0={}&
\E\left[
\bigl(s'(w_1)+s(w_1)^2+1\bigr)
\prod_{j=2}^p s(w_j)
\right]\\
&+
2\sum_{j=2}^p
\E\left[
\partial_{w_j}\frac{s(w_1)-s(w_j)}{w_1-w_j}
\prod_{\substack{2\leq\ell\leq p\\\ell\neq j}}s(w_\ell)
\right].
\end{aligned}
\end{equation}
At coincident arguments the divided difference is continued
holomorphically. Consequently $\mu$ has the law of $\Theta_1$.
\end{proposition}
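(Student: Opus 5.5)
The plan is to apply the stationarity identity \eqref{lp:stationary} of Lemma~\ref{lp:flow} to suitably chosen polynomial observables. We then average the result over the Gaussian marks, conditionally on the scalar state, using \eqref{rs:wick-one}--\eqref{rs:wick-three}.

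For the first loop equation ($p=1$), take $F=S_{12}(w)$. By \eqref{lp:vector},
\[
\mathcal VS=\alpha[L,S]+\kappa(P+SPS).
\]
Since $[L,S]_{12}=S_{22}-S_{11}$ and $(SPS)_{12}=S_{11}S_{22}+S_{12}^2$, we get
\[
0=\E\bigl[\alpha(S_{22}-S_{11})+\kappa(1+S_{11}S_{22}+S_{12}^2)\bigr](w).
\]
Conditioning on $(\mu,s)$, \eqref{rs:wick-one} removes the $\alpha$ term. Identity \eqref{rs:wick-two} turns $S_{11}S_{22}$ into $s^2$, and \eqref{rs:wick-three} at coincident arguments turns $S_{12}^2$ into $s'$. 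Dividing by $\kappa>0$ gives $\E[s'+s^2+1]=0$.

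For general $p$, take
\[
F=S_{12}(w_1)\prod_{j\ge2}s(w_j).
\]
The vector field acts as a derivation. The term in which $\mathcal V$ hits $S_{12}(w_1)$ reproduces the computation above, multiplied by the product of the $s(w_j)$. These factors are scalar-measurable, so the conditional Wick identities still apply.

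The terms in which $\mathcal V$ hits $s(w_j)$ give $2\kappa\,\partial_{w_j}S_{12}(w_j)$. Conditionally on $(\mu,s)$, the product
\[
S_{12}(w_1)\,\partial_{w_j}S_{12}(w_j)
\]
has mean $\partial_{w_j}\frac{s(w_1)-s(w_j)}{w_1-w_j}$, by differentiating \eqref{rs:wick-three} in $v$. This derivative is legitimate by the moment bounds \eqref{rs:all-marked-moments} and dominated convergence. Dividing by $\kappa$ then yields exactly \eqref{lp:hierarchy}, with the factor $2$ inherited from $\mathcal Vs=2\kappa\partial_wS_{12}$.

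Integrability of every term follows from \eqref{rs:all-marked-moments} and Hölder's inequality. The coincident-argument case follows by continuity.

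The main obstacle is the final identification. We need to check that \eqref{lp:hierarchy}, together with the Nevanlinna representation \eqref{rs:nevanlinna}, is precisely the hypothesis of \cite[Theorem~1.5]{BH}, in its normalization. That normalization has intensity $1/\pi$ and the real constant centered so that the $\beta=1$ hierarchy reads $s'+s^2+1$ with coefficient $2/\beta=2$. We also need the required a priori bounds \eqref{rs:limit-moments}, including the uniform bound at large height $\sup_Y\E|s(\ri Y)|^r<\infty$ and the absence of a linear term.

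We would verify the constants by checking the first equation against $\Theta_1$. Its mean density $1/\pi$ forces $\E s(\ri Y)\to \ri$, consistent with $s^2+1$. Once this matches, \cite{BH} identifies the law of $\mu$ as $\Theta_1$. Since every subsequential limit has this law and the sequence is tight by Lemma~\ref{rs:scalar-limit}, the full sequence converges.
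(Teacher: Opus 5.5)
Your proposal is correct and matches the paper's proof: apply stationarity to $F=S_{12}(w_1)\prod_{j\ge2}s(w_j)$, average the marks conditionally via \eqref{rs:wick-one}--\eqref{rs:wick-three}, divide by $\kappa$, and identify the law through \cite[Assumption~1.4 and Theorem~1.5]{BH} at $\beta=1$. One harmless slip: $[L,S]_{12}=S_{11}-S_{22}$, not $S_{22}-S_{11}$, but this term vanishes under \eqref{rs:wick-one} with either sign.
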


\begin{proof}[Proof of Proposition~\ref{lp:criterion}]
For $p=1$, the desired identity resembles a Stein identity: it says
that the expectation of a differential expression in the resolvent
vanishes. That single equation is not the characterization theorem.
We derive all $p$ simultaneously, including mixed upper- and
lower-half-plane arguments, so that the full cited hierarchy applies.

By~\eqref{rs:wick-one}--\eqref{rs:wick-three}, conditionally on
$(\mu,s)$,
\[
 \begin{gathered}
 \E[S_{11}(w)-S_{22}(w)\mid\mu,s]=0,\qquad
 \E[S_{11}(w)S_{22}(w)\mid\mu,s]=s(w)^2,\\
 \E[S_{12}(w)S_{12}(v)\mid\mu,s]
   =\frac{s(w)-s(v)}{w-v}.
 \end{gathered}
\]
At $v=w$, the last expression is $s'(w)$. The moment bounds justify
spectral differentiation of these identities.
Since $[L,S]_{12}=S_{11}-S_{22}$ and
$(SPS)_{12}=S_{11}S_{22}+S_{12}^2$, equation~\eqref{lp:vector} gives
\[
 \mathcal V S_{12}
 =\alpha(S_{11}-S_{22})
   +\kappa(1+S_{11}S_{22}+S_{12}^2),\qquad
 \mathcal V s=2\kappa S_{12}'.
\]
Apply~\eqref{lp:stationary} to
$F=S_{12}(w_1)\prod_{j=2}^p s(w_j)$. The product rule gives
\begin{align*}
 \mathcal VF={}&
 \bigl[\alpha(S_{11}(w_1)-S_{22}(w_1))
    +\kappa(1+S_{11}(w_1)S_{22}(w_1)+S_{12}(w_1)^2)\bigr]
       \prod_{j=2}^p s(w_j)\\
 &+2\kappa\sum_{j=2}^p S_{12}(w_1)S_{12}'(w_j)
       \prod_{\substack{2\le\ell\le p\\\ell\ne j}}s(w_\ell).
\end{align*}
First average the marks conditionally on $(\mu,s)$. The $\alpha$
term vanishes, and the three displayed contractions evaluate the
remaining terms. Then average the scalar state and divide by
$\kappa>0$. This is the tower property
$\E\mathcal VF=\E[\E(\mathcal VF\mid\mu,s)]$, and it gives
\eqref{lp:hierarchy}. The same real marks are used in both
half-planes, so mixed arguments require no separate construction.
The moment bounds allow holomorphic continuation to coincident
arguments.

In the terminology of \cite[Definition~1.3]{BH}, particle-generated
means that the representing measure of the Nevanlinna function is
a locally finite integer atomic measure, with multiplicities
retained. Equation \eqref{lp:hierarchy} is exactly
\cite[Assumption~1.4]{BH} at $\beta=1$: the derivative coefficient is
$(2-\beta)/\beta=1$ and the interaction coefficient is $2/\beta=2$.
All its expectations are finite by the compact moment bounds.
Since $\beta=1$ is rational, \cite[Theorem~1.5]{BH} identifies $\mu$
as $\Theta_1$ in the intensity-$1/\pi$ normalization fixed in the
introduction.
\end{proof}

\begin{proposition}[Fixed-energy point-process convergence]\label{th:pointprocess}
For each fixed bulk energy $E$, the point measure
$\mu_N=\sum_j\delta_{N\sigma(\lambda_j-E)}$, where
$\sigma=\pi\rho_d(E)$, converges in law for the vague topology to
$\Theta_1$, the $\mathrm{Sine}_1$ process with intensity $1/\pi$.
This is precisely the \hyperref[th:main-pointprocess]{main point-process
theorem}.
\end{proposition}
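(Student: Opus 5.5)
The argument is the standard ``tightness plus unique subsequential limit'' scheme, and all ingredients are already in place.

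\emph{Step 1: tightness of $\mu_N$.} The sequence $(\mu_N)$ is tight for the vague topology. This uses the bound $\mu_N([-R,R])\le(1+R^2)\Im s_N(\ri)$ together with Proposition~\ref{rs:scalar}. That proposition gives $\PP(\Gamma_N^c)\to0$ and bounded capped moments of $s_N(\ri)$. Hence the counts in every compact interval are tight, which is exactly vague tightness on locally finite counting measures. Lemma~\ref{rs:scalar-limit} already records this as joint subsequential convergence of $(\mu_N,s_N)$.

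\emph{Step 2: identify every subsequential limit.} Fix an arbitrary subsequence. I then extract a further subsequence along which the triple $(\mu_N,s_N,S_N)$ converges jointly, with $S_N$ built from an independent edge proposal. Lemma~\ref{rs:gaussian-limit} guarantees this is possible. The limit $(\mu,s,S)$ has the following properties:
\begin{itemize}
\item by Proposition~\ref{rs:marked}, it is a stationary marked state with the switching involution and Gaussian marks;
\item by Lemma~\ref{lp:flow}, the conjugated-rotation flow preserves its law;
\item by Proposition~\ref{lp:criterion}, $\mu$ satisfies the full loop hierarchy and therefore has the law of $\Theta_1$.
\end{itemize}
Adding the independent proposal does not change the marginal law of the graph-measurable pair $(\mu_N,s_N)$. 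So the vague limit of $\mu_N$ along this subsequence is $\Theta_1$.

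\emph{Step 3: conclude for the full sequence.} Every subsequence has a further subsequence converging in law to the same limit $\Theta_1$. The space of locally finite counting measures with the vague topology is Polish, so the whole sequence converges in law to $\Theta_1$.

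\emph{Step 4: match the normalization.} The coordinates $x_j=N\sigma(\lambda_j-E)$ with $\sigma=\pi\rho_d(E)$ are those in the main point-process theorem, and $\Theta_1$ is taken in the intensity-$1/\pi$ convention of \cite{BH}. No rescaling is needed. The eigenvalue $\lambda_1=d$ sits at $x_1\to+\infty$ and is irrelevant to vague limits.

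\emph{Main obstacle.} The only delicate point is bookkeeping. The vague limit of $\mu_N$ in the joint extraction must coincide with the representing measure $\mu$ of $s$ used in Proposition~\ref{lp:criterion}, with multiplicities retained, and must contain no mass escaping to infinity. Lemma~\ref{rs:scalar-limit} settles both: it identifies the representing measure with the vague limit via the inversion formula, and it rules out a linear Nevanlinna term. I would simply cite that lemma at this step.
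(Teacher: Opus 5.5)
Your proposal is correct and follows essentially the same route as the paper. Both arguments obtain joint tightness and a further subsequential marked limit from Proposition~\ref{rs:marked}, with Lemma~\ref{rs:scalar-limit} identifying the vague limit of $\mu_N$ with the representing measure of $s$; both then identify $\Theta_1$ through the loop hierarchy of Proposition~\ref{lp:criterion} and finish with the subsequence argument, keeping the normalization $\sigma=\pi\rho_d(E)$ unchanged. The only cosmetic difference is in the last step: the paper argues with a bounded continuous test, while you invoke Polishness, which the subsequence principle does not actually require.
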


\begin{proof}[Proof of Proposition~\ref{th:pointprocess} and the main theorem]
Proposition~\ref{rs:marked} gives a joint marked limit along a further
subsequence of every subsequence. Proposition~\ref{lp:criterion}
identifies its configuration law as that of $\Theta_1$.
Thus every subsequential limit of $\mu_N$ has the same law.

For completeness, if the full sequence failed to converge to this
law, some bounded continuous test on the space of point measures
would have expectations separated from the limiting value along a
subsequence. Joint tightness would provide a further convergent
subsequence; the preceding identification would force its expectation
to converge to that same value, a contradiction.

Since $\sigma=\pi\rho_d(E)$, this is already the asserted convergence
\[
 \sum_{j=1}^N\delta_{N\pi\rho_d(E)(\lambda_j-E)}
 \ \Longrightarrow\ \Theta_1.
\]
The eigenvalue $d$ leaves every compact microscopic interval, so it
may be included or omitted. This proves the main theorem. Only
convergence in law has been used; no uniform integrability of
uncapped finite-$N$ counting observables is asserted here.
\end{proof}

\paragraph{The stronger conclusions.}
The main-text argument is now complete. Theorem~\ref{th:gaps} is proved
in the appendices. Appendix~\ref{app:correlations} adds uniform
integrability and obtains the fixed-energy correlation measures.
Appendix~\ref{sec:gap-universality} treats a deterministic eigenvalue
label, derives conditional Gibbs laws for its tagged limit, and
identifies the consecutive gaps using
Appendix~\ref{app:gap-identification}. These are additional arguments,
not formal consequences of vague point-process convergence.

\appendix
\section{Fixed-energy correlation universality}\label{app:correlations}

We prove Theorem~\ref{th:gaps}(ii). The main theorem already gives
convergence in law at each fixed energy. The additional issue is
uniform integrability: the $k$-point observable is bounded by a
constant times the $k$th power of a local particle count, and is
not a bounded point-process test. We combine the scalar tail from
Section~\ref{rs:section} with a high-probability counting cap, then
pass to expectations. Throughout this appendix, $E$ is fixed and
all graph probabilities and expectations refer to the original
uniform graph law, unless a different graph is explicitly indicated.

Set
\[
 n_N(I)=\#\{j:N\pi\rho_d(E)(\lambda_j-E)\in I\}
\]
for a fixed bounded interval $I$. Thus $n_N(I)=\mu_N(I)$ in the
same coordinates as the main theorem and Theorem~\ref{th:gaps}.
Proposition~\ref{rs:scalar} gives, for some $b>0$,
\begin{equation}\label{co:tail}
 \PP(n_N(I)>M)\leq C_Ie^{-c_IM\log(2+M)}+C_IN^{-b},\qquad M\geq M_I.
\end{equation}
Indeed the Poisson kernel at a fixed imaginary parameter bounds the
count on $I$ by $C_I\Im s_N$. The graph-event failure contributes
$O(N^{-b})$, with $b$ independent of the moment order.

\begin{lemma}[Repair of finitely many geometric defects]\label{co:repair}
Fix $0<c_0<1/100$, $D_0>0$, and
$R=\lfloor(c_0/4)\log_{d-1}N\rfloor$. Outside an event of probability
$O(N^{-D_0})$, a uniform simple regular graph can be transformed by at most
$K_{D_0}$ valid switchings into a simple regular graph such that every
radius-$R$ ball has excess at most one and fewer than $N^{c_0}$ of these
balls are not trees. The number $K_{D_0}$ is independent of $N$.
\end{lemma}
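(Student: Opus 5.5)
We will combine first-moment counts of dense subgraphs in the configuration model with an explicit repair by switchings.

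\emph{Step 1: counts of dense small subgraphs.} For a connected multigraph $F$ with $v$ vertices and $e=v+t$ edges, the expected number of copies in the configuration model is at most $C_F (d-1)^{O(e)} N^{v-e}=C_FN^{-t}$, with $C_F$ at most exponential in $e$. Every radius-$R$ ball of excess at least two contains a connected subgraph of excess exactly two and at most $4R+2$ vertices: take two short cycles, or a theta graph, together with the paths joining them. There are at most $(Cd)^{O(R)}\le N^{Cc_0}$ shapes of such a subgraph. The expected number of these excess-two witnesses is therefore $O(N^{-1+Cc_0})$. Next consider connected witnesses of excess at least $D_0+1$ of size $O(D_0R)$, obtained by linking $D_0+1$ disjoint defects within mutual distance $2R$. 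Their expected number is $O(N^{-D_0-1+C_{D_0}c_0})$, which is $O(N^{-D_0})$ once $c_0$ is small; the bound $c_0<1/100$ may need to be reduced depending on $D_0$, or we take $R$ with a $D_0$-dependent constant. It follows that, outside probability $O(N^{-D_0})$, the set of vertices lying in balls of excess at least two is covered by at most $K'_{D_0}$ clusters, each of diameter $O(R)$ and total excess $O(D_0)$. All these bounds transfer to the uniform simple graph by the bounded conditioning factor from the proof of Lemma~\ref{ch:count}.

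\emph{Step 2: few non-tree balls.} The number of cycles of length at most $2R+1$ has expectation $\sum_{\ell\le 2R+1}(d-1)^\ell/(2\ell)\le N^{c_0/2+o(1)}$. A non-tree radius-$R$ ball contains such a cycle, and each cycle lies in at most $d(d-1)^{R}\le N^{c_0/4}$ balls. So the expected number of non-tree balls is at most $N^{3c_0/4+o(1)}$. A higher-moment (factorial moment) computation of the short-cycle count, which is Poisson-like in the configuration model, then shows that this number is at most $N^{c_0}/2$ outside probability $O(N^{-D_0})$.

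\emph{Step 3: repair.} In each bad cluster we pick one edge on a short cycle through the cluster. We switch it with a uniformly chosen edge $\{b,c\}$ far away, meaning at distance at least $10R$ from all clusters and from all short cycles. Such edges are plentiful, since all but $N^{1-c}$ edges qualify. The switch destroys that cycle. The new edges $\{l,b\},\{a,c\}$ could create new short cycles only if $b,c$ were within distance $2R$ of $l,a$ through another path. Because the original neighbourhood of $\{b,c\}$ is a tree far from every defect, at most one new cycle of length at most $2R+1$ passes through each new edge. A counting argument shows that a positive fraction of the choices of $\{b,c\}$ create none. Each switch lowers the total excess of the clusters by at least one, so at most $K_{D_0}=O(D_0K'_{D_0})$ valid switchings suffice. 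The switchings are valid because distinctness and non-adjacency are guaranteed by the distance condition. After the repairs, every radius-$R$ ball has excess at most one, and the count of non-tree balls has increased by at most $O(K_{D_0}N^{c_0/4})$, so it stays below $N^{c_0}$.

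The main obstacle is Step 1's uniformity. We must bound, with failure probability $N^{-D_0}$ and arbitrary $D_0$, the total number of excess-two configurations. This needs the exponential-in-$R$ entropy of shapes to be beaten by the polynomial gain $N^{-t}$, which is what forces $c_0$ small relative to $D_0$. We would first try a union bound over connected witnesses of excess $D_0+1$ built as spanning trees of minimal unions of cycles.
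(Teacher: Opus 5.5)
Your outline has the same three ingredients as the paper's proof. Your Step~2 is essentially the paper's high-moment bound \eqref{co:cycles} on the number of cycles of length at most $2R+1$, which gives the margin \eqref{co:margin}. Your Step~3 is the same switching of surplus edges with remote edges lying in tree regions. The genuine gap is in Step~1. You take witnesses made of exactly $D_0+1$ defects and obtain the bound $N^{-D_0-1+C_{D_0}c_0}$. This is $O(N^{-D_0})$ only if $c_0$ is made small depending on $D_0$, or $R$ is rescaled, and you acknowledge this yourself. The lemma, however, fixes an arbitrary $c_0\in(0,1/100)$, and with it $R$, independently of $D_0$, so your route proves a weaker statement. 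Moreover, connected witnesses bound only the excess inside a single cluster. They say nothing about how many mutually distant clusters occur, so ``covered by at most $K'_{D_0}$ clusters'' does not follow from what you wrote.

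The missing idea is that the number of defects in the union bound should be a free parameter, chosen after $D_0$ and $c_0$. Each additional separated defect gains a full factor $N^{-1}$ and costs only $N^{O(c_0)}$ with an explicit constant. So taking more defects, not a smaller $c_0$, gives any polynomial decay.

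The paper calls $v$ bad when $B_{2R}(v)$ has excess at least two. It explores, in the configuration model, the $2R$-balls of $t$ roots at mutual distance greater than $4R+2$. Each exploration makes $M=N^{c_0/2+o(1)}$ pairing queries, and excess two needs two collision queries. Hence $t$ separated bad roots have probability at most $N^t(C_dM^4/N^2)^t=N^{-(1-2c_0)t+o(1)}$. A fixed $t$ with $(1-2c_0)t>D_0+3$ then works for every $c_0<1/100$. A maximal separated family of bad roots has fewer than $t$ members and covers every bad vertex within distance $4R+2$. The excess per cluster is likewise bounded by a free constant: every radius-$8R$ ball has excess at most $K_0$, where $(1-8c_0)K_0>D_0+3$.

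The factor exponential in $R$ comes from the embedding weight $(d-1)^{e}$. The number of shapes of bounded excess is only polylogarithmic, not $(Cd)^{O(R)}$. You need these explicit exponents to see that $c_0<1/100$ suffices.

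Finally, Step~3 needs no counting over choices of $\{b,c\}$. If $B_{10R}(b)$ is a tree and $\{b,c\}$ is farther than $20R$ from all deletion endpoints and other remote edges, no short cycle is created at all. What must be checked is a ball centred on a newly attached branch, whose cyclic part lies in the old $2R$-ball of its attachment vertex. This is why the paper deletes all non-spanning-tree edges of the $8R$-balls at once, so that \eqref{co:forest} holds.
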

\begin{proof}
For a finite graph $H$, its excess is
$|E(H)|-|V(H)|+$ the number of connected components. It counts the
edges that must be removed to obtain a spanning forest. In a
connected ball, excess zero means a tree and excess one permits a
single independent cycle. The goal is not to make the entire graph
a tree, but to eliminate the finitely many clusters containing two
or more short independent cycles. We also need a margin in the
number of non-tree balls, since a repair can modify some nearby
neighborhoods.

We first obtain a margin in the number of non-tree balls. If $C_\ell$
counts cycles of lengths at most $\ell=O(\log N)$, then for each fixed
integer $r\geq1$,
\begin{equation}\label{co:cycles}
 \E C_\ell^r\leq (C_r(\ell+1))^{C_r}(d-1)^{r\ell}.
\end{equation}
To see this, expand over ordered $r$-tuples of cycles, allowing
repetitions, and consider their union. It has minimum degree at least two.
Write $h=e-v\geq0$. Suppress degree-two vertices in every component that is not an
isolated cycle. The resulting kernel has minimum degree at least
three. If it has $v_0$ vertices and $e_0$ edges, then
$2e_0\ge3v_0$ and $e_0-v_0\le h$. Thus $v_0\le2h$ and
$e_0\le3h$. Suppression records a length for each kernel edge;
these lengths are at most $r\ell$.
The kernel, subdivisions, isolated cyclic components, and the $r$ cycle
markings have at most $(C_r(\ell+1))^{C_r(h+1)}$ possibilities.
For each union the expected number of embeddings in the configuration
model is at most
\[
 \frac{N^v\prod_x(d)_{\deg x}}
 {(Nd-1)(Nd-3)\cdots(Nd-2e+1)}
 \leq2N^{-h}(d-1)^e.
\]
Here $(d)_j=d(d-1)\cdots(d-j+1)$ is the falling factorial,
$(d)_j\le d(d-1)^{j-1}$ for $j\ge1$, and $e=O(\log N)$; degree greater than
$d$ gives zero embeddings. The sum over $h\leq r\ell$ is bounded by a
geometric series, since a
fixed power of $\log N$ divided by $N$ tends to zero.
Conditioning on simplicity costs a bounded factor. This proves
\eqref{co:cycles}.

Take $\ell=2R+1$ and $r>8(D_0+3)/c_0$. Markov's inequality gives
$C_\ell\leq N^{5c_0/8}$ outside probability $O(N^{-D_0-2})$.
Every non-tree radius-$R$ ball contains a cycle of length at most $2R+1$.
Each such cycle can belong to at most $C_d\ell(d-1)^R$ of these balls.
Thus their number is at most
\begin{equation}\label{co:margin}
 C_d\ell(d-1)^RC_\ell\leq N^{7c_0/8+o(1)}<\tfrac12N^{c_0}.
\end{equation}

Call a vertex bad if its radius-$2R$ ball has excess at least two.
For fixed prescribed roots, expose their radius-$2R$ balls sequentially
in the configuration model, including all half-edges at depth $2R$ and
therefore discovering vertices at depth $2R+1$. Stop if the exposed
vertex sets meet. Each exploration makes at most
$M=C_d(d-1)^{2R+1}=N^{c_0/2+o(1)}$ pairing
queries. Before stopping, each query has conditional probability at most
$C_dM/N$ of a collision within the current exploration. Each pairing to a previously discovered vertex increases the
excess by at most one; a pairing to a new vertex adds one edge and
one vertex and does not increase it. Starting from a root,
excess at least two therefore requires two collision queries.
For one exploration there are at most $M^2$ choices of their
locations and each pair has conditional probability at most
$(C_dM/N)^2$, giving the bound $C_dM^4/N^2$. If the prescribed roots are at
distance greater than $4R+2$, stopping cannot occur. Choosing two query
locations for each root and summing over ordered tuples therefore gives
\[
 \PP(\text{there are }t\text{ separated bad roots})
 \leq N^t(C_dM^4/N^2)^t=N^{-(1-2c_0)t+o(1)}.
\]
Choose the fixed integer $t$ with $(1-2c_0)t>D_0+3$.
A maximal such collection has fewer than $t$ vertices and covers every
bad vertex within distance $4R+2$.

By \cite[Proposition 2.1]{HY}, applied with parameter $8c_0$, every
radius-$8R$ ball has excess at most a fixed $K_0$, outside probability
$O(N^{-D_0-2})$, provided $(1-8c_0)K_0>D_0+3$.
For each chosen center, select a spanning tree of its radius-$8R$ ball.
Delete the union $\mathcal F$ of the omitted edges; its size is at most
$tK_0$. Each originally bad radius-$2R$ ball is contained in one of
these radius-$8R$ balls, since $6R+2\leq8R$ for large $N$.
The remaining graph $\mathcal G\setminus\mathcal F$, of maximum
degree $d$, consequently satisfies
\begin{equation}\label{co:forest}
 \operatorname{excess}B_{2R}(v,\mathcal G\setminus\mathcal F)\leq1\qquad\text{for every }v.
\end{equation}
For vertices that were not bad, this also follows by monotonicity of
excess under taking subgraphs.

Write the deleted edges as $\{l_j,a_j\}$. For each one choose an oriented edge
$(b_j,c_j)$ of the original graph such that $B_{10R}(b_j)$ is a tree
and $\{b_j,c_j\}$ is at distance greater than $20R$ from all deletion
endpoints and previously chosen remote edges. Such choices exist:
the structural estimate with parameter $10c_0$ excludes at most
$N^{10c_0}=o(N)$ roots, with arbitrarily large polynomial probability,
and the distance exclusions cover at most
$C_dtK_0(d-1)^{20R}=O_{D_0}(N^{5c_0})=o(N)$ vertices. Perform the
switches
\[
 \{l_j,a_j\},\{b_j,c_j\}\longmapsto
 \{l_j,b_j\},\{a_j,c_j\}.
\]
The four endpoints of each switch are distinct, and its added edges were absent by the
distance restrictions. Thus the switches are valid and restore regularity.

Deleting $\{b_j,c_j\}$ separates its two branches inside the remote tree
ball. A path of length at most $2R$ entering either branch through a new
edge cannot leave through its distant boundary or through the other
branch. It also cannot encounter another repair site. Hence the new
branches attach trees and give no new short connection between the old
vertices. The cyclic part of a final radius-$R$ ball is contained either
in an old radius-$R$ ball or in the old radius-$2R$ ball of its first
attachment vertex. Equation \eqref{co:forest} bounds its excess by one.
Only $O_{D_0}((d-1)^R)$ roots can have their radius-$R$ ball changed:
each is within distance $R$ of a switching endpoint in the original or
final graph. Together with \eqref{co:margin}, this gives fewer than
$N^{c_0}$ non-tree balls. Choose all centers, spanning trees, and remote
edges in a fixed label order to obtain a deterministic repair map.
The order of the repair choices matters only for defining the map,
not for the estimates. The list of deleted edges is fixed first.
Remote edges are chosen away from \emph{all} deletion endpoints, so
an earlier switching cannot remove a later deletion edge or create a
forbidden edge for it. Removing all deletion and remote edges first,
and then adding the new edges, describes the same final graph.
This viewpoint makes~\eqref{co:forest} directly applicable to the
old part of every final short ball. The number of repair sites is
bounded independently of $N$, which is essential for the counting
cap in the next lemma.
\end{proof}

\begin{lemma}[A high-probability microscopic counting cap]\label{co:cap}
For every $D_0>0$ and bounded interval $I$, there exist $B,C<\infty$ such
that
\begin{equation}\label{co:cap-eq}
 \PP(n_N(I)>C(\log N)^B)\leq C_{D_0,I}N^{-D_0}.
\end{equation}
\end{lemma}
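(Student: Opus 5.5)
Write $\mathcal G'$ for the repaired graph of Lemma~\ref{co:repair}. The plan is to prove a deterministic or quasi-deterministic counting bound on $\mathcal G'$ and then carry it back to the original graph. The point of the repair is that the bound does not need the polynomial-probability events of Proposition~\ref{sp:inputs}. Those events only fail with probability $N^{-b_0}$, while here we need $N^{-D_0}$ for arbitrary $D_0$. So we first apply Lemma~\ref{co:repair} with the given $D_0$, producing $\mathcal G'$ with the following two properties, outside probability $O(N^{-D_0})$:
\begin{itemize}
\item every radius-$R$ ball has excess at most one;
\item fewer than $N^{c_0}$ radius-$R$ balls are not trees.
\end{itemize}
The map uses at most $K_{D_0}$ switchings. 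Each switching is a rank-two perturbation, so by eigenvalue interlacing
\[
 n_N(I)\le n'_N(I^+)+4K_{D_0},
\]
where $n'_N$ is the count for $\mathcal G'$ and $I^+$ is a slightly enlarged interval. It therefore suffices to bound the count for graphs with these deterministic local properties.

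For such graphs we invoke the local law of \cite{HY} in its high-probability form. The structural geometric conditions are exactly the hypotheses there, and conditionally on them the local-law failures have arbitrarily large polynomial exponents, as noted in the proof of Proposition~\ref{sp:inputs}. We apply it at the smallest height it allows, namely $\eta=N^{-1}(\log N)^{B_1}$, and obtain $\Im m(E+\ri\eta)\le C$ with probability $1-O(N^{-D_0})$. Two points need care.
\begin{itemize}
\item The repaired graph is not uniformly distributed. I would handle this by bounding its failure probability by the probability of the corresponding event for the original graph. The repair map is at most $C(N d)^{2K_{D_0}}$-to-one, and the uniform law is invariant under switchings. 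This gives at most a polynomial factor $N^{O(K_{D_0})}$, which is absorbed by choosing the local-law exponent larger than $D_0+O(K_{D_0})$.
\item We need the precise form of the local law at height $(\log N)^{B_1}/N$.
\end{itemize}
Given the bound on $\Im m$, the Poisson-kernel bound gives
\[
 \#\{j:|\lambda_j-E|\le\eta\}\le 2N\eta\,\Im m_N(E+\ri\eta)\le C(\log N)^{B_1}.
\]
Covering the microscopic interval $E+I/(N\sigma)$ by $O(1)$ such windows then yields \eqref{co:cap-eq}.

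The main obstacle is the transfer between the non-uniform repaired law and the graph law, together with ensuring that the local law of \cite{HY} really holds at the polylogarithmic scale with an arbitrary polynomial failure exponent. If the pushforward argument is too lossy, the fallback is different. In that case I would use the deterministic structure directly: the $\le N^{c_0}$ defects would be controlled by a deterministic perturbation bound, with rank at most $N^{c_0}$ in the mesoscopic count and refined by a finite-rank interlacing argument near each defect. The Green-function self-consistent equation at good roots would be used in a pathwise way, giving a bound valid on a single high-probability event.
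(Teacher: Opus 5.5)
Your proposal is correct and is essentially the paper's proof. Both repair the graph by Lemma~\ref{co:repair} and transfer the local law of \cite{HY}, which holds with arbitrarily large polynomial probability on the geometric class, to the non-uniform repaired graph via the polynomial fiber bound $(K+1)(C_dN^2)^K$ of the deterministic switching map; both then bound the count by the Poisson kernel at height $(\log N)^B/N$ and return to the original graph by rank-$2K$ interlacing. That interlacing already gives the $4K$ error on the same interval $I$, so the enlargement to $I^+$ and your fallback route are unnecessary.
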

\begin{proof}
The repaired graph is not uniformly distributed in its geometric
class. We therefore cannot simply apply a uniform-graph local law to
its distribution. Instead we bound how many original graphs can
have the same repaired image, and choose the failure exponent of
the local law large enough to absorb this multiplicity.

Let $\Psi$ be the map in Lemma \ref{co:repair}, using at most $K$ switches.
Its image belongs to the geometric class $\overline\Omega(c_0)$ of
\cite[Definition 1.1]{HY}, because $\omega_d\geq1$.
Each simple regular graph has at most $C_dN^2$ switching neighbors.
By reversibility every fiber of $\Psi$ has size at most
$(K+1)(C_dN^2)^K$. If $\mathcal A\subset\overline\Omega(c_0)$, then
\[
 \PP(\Psi(\cG)\in\mathcal A,\ \cG\text{ is repairable})
 \leq(K+1)(C_dN^2)^K\PP(\mathcal G\in\mathcal A).
\]
The local law \cite[Theorem 4.2]{HY} holds with arbitrarily large
polynomial probability under the uniform law on
$\overline\Omega(c_0)$. Take its failure exponent greater than
$2K+D_0+2$. At height $\eta=(\log N)^B/N$, with $B$ sufficiently large,
it gives $\Im m_{\Psi(\cG)}(E+\ri\eta)\leq C$ and hence at most
$C(\log N)^B$ eigenvalues in $E+I/(N\pi\rho_d(E))$.
The fiber bound transfers this event to the repaired graph with failure
$O(N^{-D_0})$. Finally
\[
 \operatorname{rank}(A_{\cG}-A_{\Psi(\cG)})\leq2K,
\]
so rank interlacing at the two interval endpoints bounds the change
in the eigenvalue count by $4K$, proving \eqref{co:cap-eq}.
The fiber estimate follows by reversing at most $K$ switchings:
a fixed image has at most
$\sum_{j=0}^K(C_dN^2)^j\le(K+1)(C_dN^2)^K$ possible preimages.
For a bad subset $\mathcal A$ of the geometric class, its relative
size under all regular graphs is no larger than its relative size
under that class. Thus the local-law failure probability multiplied
by the fiber bound is still $O(N^{-D_0})$ with the chosen exponent.
Finally, an interval count is a difference of two half-line counts.
A rank-$2K$ perturbation changes either half-line count by at most
$2K$, explaining the factor $4K$ rather than $2K$ for the interval.
\end{proof}

\begin{lemma}[Uniform integrability of microscopic counts]\label{co:ui}
For each fixed integer $p\geq1$, the family $n_N(I)^p$ is uniformly
integrable.
\end{lemma}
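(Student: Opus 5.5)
The idea is to split $\E n_N(I)^{p}$ according to the size of the count, and handle the three ranges with the two estimates just proved: the factorial-type tail \eqref{co:tail} and the counting cap of Lemma~\ref{co:cap}. It suffices to show that
\[
\lim_{M\to\infty}\limsup_{N\to\infty}\E\bigl[n_N(I)^p\mathbf 1_{n_N(I)>M}\bigr]=0,
\]
since for each fixed $N$ the variable $n_N(I)\le N$ is bounded, so finitely many $N$ cause no trouble. The difficulty is that \eqref{co:tail} carries an additive error $C_IN^{-b}$ with $b$ fixed, which cannot absorb the trivial bound $n_N(I)^p\le N^p$ once $p>b$. The cap is exactly what closes this gap.

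Fix $p$ and choose $D_0>p+1$. Lemma~\ref{co:cap} gives constants $B,C$ with $\PP(n_N(I)>C(\log N)^B)\le C_{D_0,I}N^{-D_0}$. I split
\[
\E\bigl[n_N^p\mathbf 1_{n_N>M}\bigr]
\le \E\bigl[n_N^p\mathbf 1_{M<n_N\le C(\log N)^B}\bigr]
 +N^p\,\PP\bigl(n_N>C(\log N)^B\bigr).
\]
The second term is at most $C N^{p-D_0}\to0$.

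For the first term I use the layer-cake formula truncated at $T_N=C(\log N)^B$:
\[
\E\bigl[n_N^p\mathbf 1_{M<n_N\le T_N}\bigr]\le M^p\,\PP(n_N>M)+\int_M^{T_N}pt^{p-1}\PP(n_N>t)\,\dd t .
\]
Inserting \eqref{co:tail} for $M\ge M_I$ bounds this by
\[
M^pC_Ie^{-c_IM\log(2+M)}+\int_M^\infty pt^{p-1}C_Ie^{-c_It\log(2+t)}\dd t \;+\; C_IN^{-b}\bigl(M^p+T_N^p\bigr).
\]
The last contribution is $O(N^{-b}(\log N)^{Bp})\to0$ as $N\to\infty$ for each fixed $M$. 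The remaining terms are independent of $N$ and tend to zero as $M\to\infty$, which gives uniform integrability.

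The only point needing care is that \eqref{co:tail} itself holds with a uniform $b$, independent of $p$. It follows from \eqref{rs:factorial-tail} by bounding $n_N(I)\le C_I\Im s_N(w_0)$ at a fixed height $w_0$, equivalently $|m_N|$ at height of order $1/N$, and from $\PP(\Omega_N^c)\le CN^{-\beta}$. This is stated in the text, so the main obstacle, the polynomial failure probability, has already been isolated into Lemma~\ref{co:cap}, and the present lemma is a routine truncation.
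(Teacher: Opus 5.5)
Your proposal is correct and follows the paper's proof essentially step by step. You choose $D_0>p+1$ in Lemma~\ref{co:cap}, integrate \eqref{co:tail} through the truncated layer-cake inequality up to the cap $C(\log N)^B$, and bound the contribution beyond the cap by $N^{p}\,\PP\bigl(n_N(I)>C(\log N)^B\bigr)=O(N^{p-D_0})$. Finally, you handle the finitely many small $N$ with the trivial bound $n_N(I)\le N$, exactly as the paper does.
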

\begin{proof}
Uniform integrability means that the expected contribution from
large values tends to zero uniformly in $N$, not merely that those
values have small probability. We combine two estimates with
different purposes: the strong scalar tail controls moderate counts,
while the arbitrarily high-probability cap handles the rare graphs
on which that tail estimate has only a polynomial error.

Choose $D_0>p+1$ in Lemma \ref{co:cap}, and let $L_N=C(\log N)^B$ be
its bound. Integrating \eqref{co:tail} up to $L_N$, and using
$n_N(I)\leq N$ beyond it, gives
\[
 \limsup_{N\to\infty}\E[n_N(I)^p\mathbf1_{n_N(I)>M}]
 \leq C_p M^pe^{-c_IM\log(2+M)}
 +C_p\int_M^\infty t^{p-1}e^{-c_It\log(2+t)}\dd t.
\]
The discarded terms are at most $CN^{-b}L_N^p+CN^{p-D_0}$ and tend to
zero. The right-hand side tends to zero as $M\to\infty$, which proves
uniform integrability.
The tail-integration identity behind the estimate is
\[
 \E[X^p\mathbf1_{M<X\le L}]
 \le M^p\PP(X>M)+\int_M^Lpt^{p-1}\PP(X>t)\,\dd t
 \quad(X\ge0).
\]
Here $X=n_N(I)$ and $L=L_N$. The remaining event $X>L_N$
contributes at most $N^p\PP(X>L_N)\le CN^{p-D_0}$.
For the finitely many $N$ below any chosen threshold, the variable
is bounded by $N$, so its tail eventually vanishes. The displayed
limsup estimate therefore proves uniform integrability for the
whole family, not just along a subsequence.
\end{proof}

\begin{proof}[Proof of Theorem~\ref{th:gaps}(ii)]
For a point measure $\zeta$, let $\mathcal C_\varphi(\zeta)$ be the sum
of $\varphi$ over its ordered distinct particles. Take a bounded interval
$I$ containing all coordinate projections of $\supp\varphi$ in its
interior. Then
\[
 |\mathcal C_\varphi(\zeta)|\leq\|\varphi\|_\infty\zeta(I)^k.
\]
At a simple locally finite configuration with no particles at the
boundary of $I$, this functional is continuous for vague convergence:
match the finitely many particles in a compact neighborhood of
$\supp\varphi$ and pass their coordinates to the limit.
The \hyperref[th:main-pointprocess]{main point-process theorem} gives
convergence of the graph point measure to $\Theta_1$ in these same
coordinates. This process is simple. Its intensity measure is
$\pi^{-1}\dd x$, so it almost surely has no particle at either fixed
endpoint of $I$. Lemma \ref{co:ui} gives convergence of the expectation
of the first sum in~\eqref{th:corrformula} to $\E\mathcal C_\varphi(\Theta_1)$.
The GOE expectation has the same limit by the classical bulk correlation
asymptotics, see~\cite[(2.3)]{BEYY}, after multiplying the usual
density-normalized coordinates by $\pi$ on both sides. Equivalently,
one applies that result to the test
$\bmv\mapsto\varphi(\pi\bmv)$. This proves~\eqref{th:corrformula}.
To justify the expectation passage explicitly, compose
$\mathcal C_\varphi$ with a continuous truncation of the real line.
The resulting observable is bounded and continuous at almost every
configuration under the limiting law. The almost-sure-continuity
version of the continuous mapping theorem therefore gives convergence
of its expectation. Then remove the truncation using
$|\mathcal C_\varphi|\le\|\varphi\|_\infty n_N(I)^k$ and
Lemma~\ref{co:ui}. Repeated labels are excluded in the factorial
sum, even if two finite-$N$ eigenvalues happen to coincide; at the
simple limiting configuration the matched-particle description is
unambiguous.
\end{proof}

\section{Gap universality at a deterministic label}\label{sec:gap-universality}

We prove Theorem~\ref{th:gaps}(i). A deterministic eigenvalue label
cannot be recovered by simply centering the fixed-energy limit at
one of its particles. We therefore adapt the marked construction of
Section~\ref{sec:fixed-energy} while retaining the chosen label.
The rooted loop equation gives rigidity; coordinate integration by
parts gives conditional Gibbs laws. The comparison in
Appendix~\ref{app:gap-identification} then identifies the gap law.
This argument uses the eigenvector and switching inputs from the main
text, but does not require the counting upgrade in
Appendix~\ref{app:correlations}.

\subsection{Tagged marked limits}\label{gp:section}

For a deterministic bulk label $i=i_N$, pass to a subsequence with
$\gamma_i\to E$ and, in the descending eigenvalue order, put
\[
 \sigma=\pi\rho_d(E),\qquad
 x_{N,j}=N\sigma(\lambda_{i-j}-\lambda_i),\qquad j\in\{i-N,\dots,i-1\}.
\]
The relative labels increase, $x_{N,0}=0$, and every fixed offset stays
in the bulk. Throughout this appendix the point measure is centered
at the tag:
\[
 \mu_N=\sum_{j=i-N}^{i-1}\delta_{x_{N,j}}.
\]
Thus $\mu_N$ now denotes the tagged measure, not the fixed-energy
measure from Section~\ref{sec:fixed-energy}. Its limit $\mu$ is
likewise rooted at label zero; it is not identified with the
unrooted law of $\Theta_1$. The proof retains the limiting labelled
array and identifies its gaps separately. All estimates are uniform
in the original bulk label.

The change of sign in the index is deliberate: although the original
spectrum is decreasing, the relative coordinates $(x_{N,j})$ are
increasing. For example,
$x_{N,-1}=N\sigma(\lambda_{i+1}-\lambda_i)\le0$, while
$x_{N,1}=N\sigma(\lambda_{i-1}-\lambda_i)\ge0$.
A switching can move the chosen eigenvalue by more than a prescribed
microscopic amount, so fixed-energy tightness cannot simply be
reused after substituting $E=\lambda_i$. The next lemma supplies
the needed random-center estimate.

Retaining $V,H$ from \eqref{rs:marked-definition} and writing
$z_i(w)=\lambda_i+w/(N\sigma)$, define
\[
 s_{N,i}(w)=\sigma^{-1}(m_N(z_i(w))-a_E),\qquad
 S_{N,i}(w)=\sigma^{-1}V^{-1/2}(K(z_i(w))-H)V^{-1/2}.
\]

\begin{lemma}[Scalar tightness around a deterministic label]\label{gp:tag}
Fix $c_*>0$ and a sufficiently small $\eta_*>0$. There are
$C,c,\beta>0$ and a graph event $\Omega_N$ with
$\PP(\Omega_N^c)\leq CN^{-\beta}$ such that, for
$T_i=|m_N(\lambda_i+\ri \eta)|$,
\begin{equation}\label{gp:tag-tail}
 \PP(\Omega_N\cap\{T_i>M\})
 \leq C e^{-c(\log M)^2}+CN^{-\beta},
 \qquad c_*/N\leq\eta\leq\eta_*,\quad M\geq C.
\end{equation}
Consequently the events
$\Gamma_{N,i,\eta}=\Omega_N\cap\{T_i\leq\log N\}$ have probability
tending to one and
$\sup_{N,i,\eta}\E[\mathbf1_{\Gamma_{N,i,\eta}}T_i^r]<\infty$
for every fixed $0<r<\infty$.
\end{lemma}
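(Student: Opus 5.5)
The plan is to rerun the reversibility argument of Proposition~\ref{rs:scalar}, now with the spectral parameter centered at the random eigenvalue $\lambda_i$. The new feature is that a switching moves this center. The forward and reverse determinants are still reciprocal: $\mathcal D(z)\widetilde{\mathcal D}(z)=1$ for every fixed nonreal $z$. The difficulty is that the reverse graph's natural center is $\widetilde\lambda_i$, not $\lambda_i$. I would therefore evaluate both determinants at the same point $z=\lambda_i+\ri\eta$ of the forward graph and compare $|\widetilde m_N(\lambda_i+\ri\eta)|$ with the reverse graph's own tagged quantity $\widetilde T_i=|\widetilde m_N(\widetilde\lambda_i+\ri\eta)|$.

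\textbf{Step 1 (forward step).} The variance bound \eqref{rs:variance-bound} holds uniformly over all $z=E'+\ri\eta$ with $E'\in J$, since its proof uses only the uniform local law and Theorem~\ref{th:stars}. In particular it holds at the graph-dependent point $z=\lambda_i+\ri\eta$. Chebyshev then gives, exactly as in \eqref{rs:relative-failure}, a lower bound $|\mathcal D|\ge\delta T_i^2$ off a proposal event of probability at most $C/T_i$. For the reverse graph at the same $z$, the rank bound \eqref{rs:rank} gives $|\widetilde m_N(z)|\ge T_i-A_0$.

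\textbf{Step 2 (reverse step and moving center).} The reverse determinant must be controlled through the reverse graph's tagged quantity at its own center. Rank-two interlacing gives $\widetilde\lambda_i\in[\lambda_{i+2},\lambda_{i-2}]$, so
\[
 |\widetilde\lambda_i-\lambda_i|\le\max\bigl(\lambda_{i-2}-\lambda_i,\ \lambda_i-\lambda_{i+2}\bigr).
\]
If $T_i=|m_N(\lambda_i+\ri\eta)|$ is large and the neighboring gaps are at most $h\eta$, then comparing Stieltjes transforms at nearby centers costs a factor of about $h^2$. Concretely, $|x-\lambda_i-\ri\eta|\le(1+h)|x-\lambda_i'-\ri\eta|$, so the imaginary parts differ by a factor of at most $(1+h)^2$, and the real parts are handled by the compact-set bound from the introduction. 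This gives $\widetilde T_i\ge c(1+h)^{-2}T_i-A_0$.

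The complementary event is that a neighboring gap exceeds $h\eta$. A large $T_i$ with a large gap should itself be unlikely, but controlling this is exactly where the weaker tail $e^{-c(\log M)^2}$ enters, in place of $e^{-cM\log M}$. Rather than requiring $\widetilde T_i>M-A_0$, I would only require $\widetilde T_i>M^{1/2}$, say, allowing $h\lesssim M^{1/4}$. The bad event that $T_i$ is large while the gaps are wide is absorbed by noting that wide gaps force $\Im m_N$ near $\lambda_i$ to be controlled by the terms of $\lambda_{i\pm1}$ only.

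\textbf{Step 3 (iteration).} This yields a recursion of the form
\[
 F(M)\le\frac{C}{M}F(M^{1/2})+CN^{-\beta}
 \qquad\text{or}\qquad
 F(M)\le\frac{C}{M}F(M/2)+CN^{-\beta}.
\]
Iterating the second form $\log M$ times gives $\prod_j C2^{j}/M\approx e^{-c(\log M)^2}$, which is the stated bound. So I would aim for the $M\mapsto M/2$ recursion. Uniformity in $\eta$ and in $i$ comes from the uniformity of the local law. Finally, integrating the tail up to $\log N$ gives the capped moments, exactly as in Proposition~\ref{rs:scalar}.

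\textbf{Main obstacle.} The hardest step will be the center comparison in Step 2: showing that large $T_i$ for the forward graph forces $\widetilde T_i\gtrsim T_i/2$ for the reverse graph with high proposal probability. The issue is the denominator $(\lambda_i-\lambda_{i\pm1})$ near the tag.

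I would first try a direct decomposition of $m_N(\lambda_i+\ri\eta)$. The term from $\lambda_i$ itself is $\ri/(N\eta)$ and is bounded, since $\eta\ge c_*/N$. A large value of $T_i$ therefore comes from many eigenvalues within roughly $T_i^{-1}$-scale windows, or from the real part. I would then use eigenvalue interlacing to show that the reverse center moves by at most two spacings, which are themselves small whenever $\Im T_i$ is large. The real-part case would be handled through the divided-difference bound of the introduction.
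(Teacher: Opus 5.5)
Your skeleton agrees with the paper's: forward Chebyshev with proposal failure probability $C/T$, reversibility, a recursion $M\mapsto c_0M$ whose iterated product is $e^{-c(\log M)^2}$, and integration of the tail up to $\log N$. But Step~2, which you correctly identify as the crux, is left open, and the route you sketch would not close it. \emph{Tracking the same label $i$ is the wrong object.} Suppose $\lambda_i$ is the lowest eigenvalue of an anomalously dense cluster with a wide gap below it. Then $T_i$ is large. Yet interlacing allows $\widetilde\lambda_i$ to drop into the gap while the cluster keeps a reversed eigenvalue with label $i-1$ near $\lambda_i$, so $\widetilde T_i$ need not be comparable with $T_i$.

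Your remedy restricts to neighboring gaps at most $h\eta$ and absorbs the rest. That requires bounding $\PP(T_i>M,\ \text{some neighboring gap}>h\eta)$, a microscopic large-gap estimate far below the scale $N^{-1+\varepsilon}$ of the local law. Nothing available supplies it, and a large $T_i$ does not force small gaps on both sides. The variant with $h\sim M^{1/4}$ would in any case give $F(M)\le CM^{-1}F(M^{1/2})$, whose iteration decays only like $M^{-2+o(1)}$.

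The missing idea is to locate a nearby reversed eigenvalue directly from Woodbury.
Also require $|\bmg_i|^2\le c_1N\eta T$; this fails with proposal probability at most $C/T$, since $\E_*|\bmg_i|^2=2$ and $N\eta\ge c_*$.
Together with $\|(K+D/2)^{-1}\|\le C/T$, \eqref{rs:woodbury} gives $\Im\langle\bmu_i,\widetilde G(z)\bmu_i\rangle\ge3/(4\eta)$. So the reversed spectral measure of $\bmu_i$ gives mass at least $1/2$ to $[\lambda_i-\eta,\lambda_i+\eta]$.
Hence some $\widetilde\lambda_j$ with $|j-i|\le2$ lies within $\eta$ of $\lambda_i$, and the equal-height Herglotz comparison with \eqref{rs:rank} gives $\widetilde T_j\asymp T$.
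The recursion must then run over five labels, $F_i(M)\le\frac CM\sum_{|j-i|\le2}F_j(c_0M)+CN^{-\beta}$. This is harmless, because every label reached stays within $O(\log N)$ of $i$, where all estimates are uniform.

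The second missing ingredient is the reverse concentration estimate. In Proposition~\ref{rs:scalar} the evaluation point is deterministic, so exchangeability turns the reversed concentration failure into $C(M-A_0)^{-1}F_N(M-A_0)$. Here, seen from the reversed graph, the point $\lambda_i+\ri\eta$ is a function of the reverse proposal, so Chebyshev at a fixed point of that graph does not apply. The uniformity of \eqref{rs:variance-bound} in the real part, which you invoke, covers graph-measurable points. It therefore justifies your forward step, but not this one.

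The paper anchors the reversed compression at the reversed graph's own eigenvalue $\widetilde\lambda_j$. Writing $W_j(w)$ for the centered reversed compression at $\widetilde\lambda_j+\eta w$, it proves $\E_*\sup_{|u|\le1}\|W_j(u+\ri)\|^2\le C(1+\widetilde T_j)$ by Cauchy's formula. The old, proposal-dependent center is inserted only after the supremum is taken. This is a second reason the nearby reversed eigenvalue must lie within $O(\eta)$ of $\lambda_i$. Without these two ingredients, the recursion of your Step~3 is not established.
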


\begin{proof}
The proof differs from the fixed-energy argument in one essential
way: after switching, the old center need not be the new eigenvalue
with the same label. We first find a nearby reversed eigenvalue with
one of five possible labels, then control the reversed compression
uniformly over all possible small shifts of that center. Only after
this uniform estimate is available do we select the nearby label.

Choose three nested compact bulk intervals around $E$ and include coarse
rigidity in the event of Lemma~\ref{rs:variance}. The uniform counting
bound and separation from the outside spectrum make
\eqref{rs:variance-bound} uniform in the middle interval, including
all centers below and all labels within $O(\log N)$ of $i$.

Write $z=\lambda_i+\ri\eta$ and $T=T_i$. Since $D^2=I_2$,
\eqref{rs:mean} gives
$K+D/2=m_N(z)V(z)+O(1)+(K-\overline K)$.
The least singular value of $V(z)$ is bounded below. For fixed small
$\delta>0$ and large $T$,
\[
 \|K-\overline K\|\leq\delta T
 \quad\Longrightarrow\quad \|(K+D/2)^{-1}\|\leq C/T.
\]
The conditional failure probability is $C/T$. Let
$\bmg_i=\sqrt N W^T\bmu_i$. By \eqref{rs:mark-covariance},
$\E_*|\bmg_i|^2=2$, so $|\bmg_i|^2\leq c_1N\eta T$ also fails with
probability at most $C/T$. On an accepted proposal satisfying both
bounds, \eqref{rs:woodbury} gives
\[
 \langle \bmu_i,\widetilde G(z)\bmu_i\rangle
 =\frac{\ri}{\eta}+\frac{\bmg_i^T(K+D/2)^{-1}\bmg_i}{N\eta^2},
 \qquad
 \Im\langle \bmu_i,\widetilde G(z)\bmu_i\rangle\geq\frac{3}{4\eta},
\]
for small $c_1$. Let $\nu$ be the spectral measure of $\widetilde A$ at the unit
vector $\bmu_i$, and write
$p=\nu([\lambda_i-\eta,\lambda_i+\eta])$.
The Poisson kernel is at most $1/\eta$ everywhere and at most
$1/(2\eta)$ outside this interval. Consequently
\[
 \frac{3}{4\eta}
 \le\int\frac{\eta}{(t-\lambda_i)^2+\eta^2}\,\dd\nu(t)
 \le\frac p\eta+\frac{1-p}{2\eta},
\]
so $p\ge1/2$. In particular, the interval contains a reversed
eigenvalue. Interlacing supplies a nearby eigenvalue with
$|j-i|\leq2$: either extreme label can replace a nearby eigenvalue
beyond this range.

For a scalar Herglotz transform, parameters of equal height whose real
parts differ by at most that height satisfy
\[
 \Im m(z)\asymp\Im m(z_0),\qquad
 |m(z)-m(z_0)|\leq C\Im m(z_0),\qquad |m(z)|\asymp|m(z_0)|.
\]
These bounds follow by comparing denominators and remain valid on fixed
scaled compact subsets of $\bC^+$. Together with \eqref{rs:rank} they give
$\widetilde T_j:=|\widetilde m_N(\widetilde\lambda_j+\ri \eta)|\asymp T$. By
\eqref{rs:compression-shift}, $\widetilde K(z)$ is bounded, whereas its
conditional mean has size comparable with $\widetilde T_j$.

To estimate this deviation at the old, proposal-dependent center, set
\[
 W_j(w)=\widetilde K(\widetilde\lambda_j+\eta w)
             -\overline{\widetilde K}(\widetilde\lambda_j+\eta w).
\]
Cauchy's formula on the circle $|w-2\ri|=7/4$ and
\eqref{rs:variance-bound} give
\begin{equation}\label{gp:adaptive}
 \E_*\sup_{|u|\leq1}\|W_j(u+\ri)\|^2\leq C(1+\widetilde T_j).
\end{equation}
The circle has heights in $[1/4,15/4]$ and surrounds $[-1,1]+\ri$,
so we use the variance lemma with its height range adjusted accordingly.
The centers used here correspond to fixed labels of the reversed graph.
In this display $\E_*$ averages proposals with the reversed graph
held fixed, and $\overline{\widetilde K}$ is that conditional
proposal mean. The old center is selected only after taking the
supremum in~\eqref{gp:adaptive}.

Put $F_i(M)=\PP(\Omega_N\cap\{T_i>M\})$. Exact proposal reversal,
\eqref{gp:adaptive}, and conditional Markov give, after absorbing the
initial failures and rejection,
\[
 F_i(M)\leq\frac CM\sum_{|j-i|\leq2}F_j(c_0M)+CN^{-\beta},
 \qquad 0<c_0<1.
\]
At depth $n$, every label lies within $2n$ of the starting label.
The sum over the at most five choices at each step can be absorbed
by replacing $C$ with $5C$. If
$n\asymp\log M/|\log c_0|$ is the first time that $c_0^nM$
reaches a sufficiently large constant, the homogeneous product is
bounded by
\[
 \prod_{j=0}^{n-1}\frac{5C}{c_0^jM}.
\]
Its logarithm is
$n\log(5C)-n\log M-\frac{n(n-1)}2\log c_0
=-c(\log M)^2+O(\log M)$, with a positive $c$ after reducing
constants. Choosing the terminal threshold large makes all
successive coefficients at most $1/2$, so the accumulated exceptional
terms are bounded by a geometric sum. Since $T_i\leq N/c_*$, all labels remain
within $O(\log N)$ of $i$. This proves \eqref{gp:tag-tail};
integration of the tail bound up to $\log N$ gives the capped moments.
This tail gives all capped moments because
$\int_C^\infty M^{r-1}e^{-c(\log M)^2}\,\dd M<\infty$
for each fixed $r$. At $M=\log N$, however, the leading failure
bound is $e^{-c(\log\log N)^2}$. It tends to zero but is not asserted
to be a negative power of $N$. In the tagged subsequence arguments
below, only a vanishing cap failure probability is needed.
\end{proof}

\begin{proposition}[Construction and normalization of a tagged limit]\label{gp:state}
Every sequence has a joint subsequential limit of all fixed relative
labels and the scalar and marked transforms centered at $\lambda_i$.
It has $x_0=0$, infinitely many particles in both directions, and the
representations \eqref{rs:nevanlinna} and \eqref{rs:gaussian-series}.
The marks are independent standard real two-dimensional Gaussians
conditionally on the full scalar state. The bounds
\eqref{rs:limit-moments} and \eqref{rs:all-marked-moments} hold at
every fixed label, with uniform constants, and
\begin{equation}\label{gp:infty}
 s(\ri Y)\longrightarrow\ri\quad\hbox{in probability},\qquad
 \E\|S(\ri Y)-s(\ri Y)I_2\|^2\leq C/Y.
\end{equation}
Its law is invariant under Gaussian rotations and under
$\mathcal T$ followed by recentering at the same ordered label.
\end{proposition}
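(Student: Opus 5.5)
The plan is to repeat the fixed-energy construction of Section~\ref{rs:limit-section} with the random center $\lambda_i$ in place of $E$, using Lemma~\ref{gp:tag} wherever Proposition~\ref{rs:scalar} was used before.

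\textbf{Tightness and Nevanlinna limit.} First I obtain joint tightness. On $\Gamma_{N,i,\eta}$ with $\eta=1/(N\sigma)$, the capped moments of Lemma~\ref{gp:tag} bound $\Im s_{N,i}(\ri)$. The Herglotz comparison from the introduction then gives analytic tightness of $s_{N,i}$ on compact sets off the axis. It also bounds $\mu_N([-R,R])\le(1+R^2)\Im s_{N,i}(\ri)$, which makes every fixed relative label $x_{N,j}$ tight; the label $j$ is trapped between counts on intervals. A diagonal extraction over countably many labels, together with the transforms, gives a joint subsequential limit with $x_0=0$. The Nevanlinna representation and the removal of the linear term $\alpha w$ follow as in Lemma~\ref{rs:scalar-limit}. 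For this I use the capped moments at height $Y/(N\sigma)$, which are uniform in $Y\ge1$ by Lemma~\ref{gp:tag}, and bounded truncation with Fatou to pass them to the limit.

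\textbf{Marks.} The marks are handled as in Lemma~\ref{rs:gaussian-limit}. The centered tail bound of Lemma~\ref{rs:tail} transfers because Lemma~\ref{rs:variance} was made uniform over centers in the middle interval, including random centers. Conditional Gaussianity of finite prefixes follows from Corollary~\ref{en:qualitative}: the tagged eigenvector and its finitely many neighbors form a graph-measurable tuple whose eigenvalues converge to $E$, since $\gamma_i\to E$ and coarse rigidity holds. The monotone-class argument then gives independence of the marks from the full scalar state, including the tagged labels.

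\textbf{Asymptotics \eqref{gp:infty} and infinitely many particles.} For \eqref{gp:infty}, compare $s(\ri Y)$ with the finite-$N$ quantity $\sigma^{-1}(m_N(\lambda_i+\ri Y/(N\sigma))-a_E)$ at mesoscopic heights. The local law \eqref{sp:meso} together with coarse rigidity gives $m_N(\lambda_i+\ri\eta')\to m_d(E+\ri0)$ uniformly for $N^{-1+\alpha}\le\eta'\le N^{-a/2}$. Since $\Im m_d(E+\ri0)=\pi\rho_d(E)=\sigma$ and $\Re m_d(E+\ri0)=a_E$, this gives $s_{N,i}(\ri Y)\approx \ri$ whenever $Y$ tends to infinity slowly.

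The delicate step is exchanging the limits $N\to\infty$ and $Y\to\infty$. I would control the error $|s_N(\ri Y)-s_N(\ri Y')|$ between heights $Y$ and $Y'\gg Y$ by the Herglotz bound. That bound is of order $\Im s$ times $Y'/Y$, which is too weak. The route I would actually use is the variance identity $\Var(\text{counts})$, derived from the loop equation with $p=1$ in tagged form; this is the main obstacle. A simpler route avoids it: $\Im s(\ri Y)=\sum_x Y/(x^2+Y^2)$ equals the mesoscopic count average, and \eqref{sp:meso} holds uniformly down to $\eta'=N^{-1+\alpha}$ for any $\alpha$. A diagonal choice $Y=Y_N\to\infty$ with $Y_N\le N^{\alpha}$, combined with the Herglotz equicontinuity in $Y$ on compact ratios, then gives convergence in probability. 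The second estimate in \eqref{gp:infty} is the $R=Y$ case of the tail bound \eqref{rs:tail-ward}, which gives $C/Y$. Infinitely many particles on both sides follow because $\Im s(\ri Y)\to 1$ while the real part tends to $0$. Finitely many particles on one side would force the real part toward $\mp\infty\cdot 0$ asymmetrically; more concretely, it would give $\Re s(\ri Y)\to\pm1/\ldots$ and would contradict $s\to\ri$ through the symmetric Poisson averages of counts on $[0,Y]$ and $[-Y,0]$.

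\textbf{Invariance.} Rotation invariance follows exactly as in \eqref{rs:rotation}. For the switching, I use exchangeability of $(A,\widetilde A)$. The recentered reversed state at the same label $i$ has the same law as the original tagged state, because the label is deterministic and the graph law is stationary. Rejected proposals cost $O(N^{-1})$. Exact identities \eqref{rs:finite-involution} hold with center $\lambda_i$, and the new center $\widetilde\lambda_i$ sits at a tight microscopic shift $\widetilde x$ by Lemma~\ref{gp:tag} and interlacing. Recentering is therefore a continuous translation $w\mapsto w+\widetilde x$ of the analytic fields, and it passes to the joint limit as in Proposition~\ref{rs:marked}.
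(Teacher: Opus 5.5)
Your construction of the rooted field and marks matches the paper's first step: Lemma~\ref{gp:tag} in place of Proposition~\ref{rs:scalar}, the uniform centered tail, padded finite prefixes via Corollary~\ref{en:qualitative}, and removal of the linear term. Rotation invariance and the bound $\E\|S(\ri Y)-s(\ri Y)I_2\|^2\le C\,\E\Im s(\ri Y)/Y\le C/Y$ are also fine. The normalization $s(\ri Y)\to\ri$, however, does not follow from your route.

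\begin{itemize}
\item The input \eqref{sp:meso} is only an upper bound. Even a genuine law $m_N\to m_d$ for $\eta\ge N^{-1+\alpha}$ controls $s_{N,i}(\ri Y)$ only for $Y\ge N^{\alpha}$.
\item Since $s(\ri Y)$ is the limit at \emph{fixed} $Y$, the claim $\lim_{Y\to\infty}s(\ri Y)=\ri$ is equivalent to $s_{N,i}(\ri Y_N)\to\ri$ for \emph{every} $Y_N\to\infty$, however slowly. That is a local law at the optimal microscopic scale, and one convenient diagonal sequence proves nothing.
\item Herglotz comparison at bounded ratios gives only $|s(\ri Y')-s(\ri Y)|\le C\,\Im s(\ri Y)$. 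This is comparability, not smallness, so it cannot be chained across $\log(N^\alpha/Y)$ dyadic scales.
\item Nothing in the mesoscopic inputs excludes a wrong local density on a window of microscopic size growing slower than every power of $N$.
\item The tagged loop equation \eqref{gp:loop} you allude to cannot be used either: it comes from the recentered flow, which presupposes this proposition.
\end{itemize}

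The missing idea is that the normalization comes from the switching involution of the limit. Keep the old origin, so the reversed state satisfies $(S^{\rm o}+qD)L(S+qD)L^T=-c^2I_2$ and $|s^{\rm o}-s|\le 2\pi/|\Im w|$. Then show two facts.
\begin{itemize}
\item $S^{\rm o}$ has no matrix linear term. This uses $\Im G(e+\ri\eta)\preceq\Im G(u+\ri\eta)+\Im G(v+\ri\eta)$ for consecutive eigenvalues $u\le e\le v$, together with capped moments of the reversed graph at the labels $i+k$, $|k|\le2$.
\item The reversed process is nonempty. Otherwise the old process is finite, and a direct computation forces one particle, $a=\pm q$, and a positive linear coefficient in $S^{\rm o}$, contradicting the first fact.
\end{itemize}
Hence some reversed anchor $\widetilde\lambda_{i+k}$ lies at a finite shift, and its centered Gaussian field vanishes at $\ri Y$. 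Half the trace of the product identity then gives $s^{\rm o}(\ri Y)s(\ri Y)+1\to0$, and the rank bound yields $s(\ri Y)^2+1\to0$.

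A second gap is the order of your extraction. The bound $\mu_N([-R,R])\le(1+R^2)\Im s_{N,i}(\ri)$ is an \emph{upper} bound on counts and cannot prevent $x_{N,1}$ from escaping to infinity. Tightness of the $j$th label needs $\PP(\mu_N((0,R])\ge j)\to1$ as $R\to\infty$, uniformly in $N$, which is a lower bound. The paper gets this in stages:
\begin{itemize}
\item build the rooted limit with escaping neighbours allowed;
\item prove the normalization;
\item deduce infinitely many limiting particles on each side. Your one-sided argument should be: with finitely many particles on the left, monotone convergence of $\Re s(\ri Y)$ forces $\sum_{x\ge1}x^{-1}<\infty$, which makes $\Im s(\ri Y)\to0$;
\item only then does integer count convergence on intervals avoiding atoms bound each label.
\end{itemize}

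The same dependence affects your switching invariance. The shift $N\sigma(\widetilde\lambda_i-\lambda_i)$ is tight by interlacing with $x_{N,\pm2}$, i.e.\ by label tightness, not by Lemma~\ref{gp:tag}. Moreover, making ``recenter at the same ordered label'' a map on limiting states requires the exact inertia identity $N_>(\widetilde A,t)-N_>(A,t)=n_-(S_{N,i}(w)+qD)-1$ on common real gaps, which fixes the counting offset in the limit. Continuity of translation alone does not do this, because limiting particles may still merge at this stage.
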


\begin{proof}
We separate construction of a rooted point measure from tightness
of its labels. Initially a fixed neighboring label is allowed to
escape to infinity. We first prove the normalization at imaginary
infinity and show that infinitely many particles remain on both
sides. Only then can fixed neighboring labels be recovered without
circular reasoning.

\proofstep{1. Construct the scalar field and Gaussian marks}
First apply Lemmas~\ref{rs:tail}--\ref{rs:gaussian-limit} at
$\lambda_i$, using Lemma~\ref{gp:tag} for scalar moments. The uniform
variance estimate above supplies the centered tail bound, and
Corollary~\ref{en:qualitative} applies to graph-measurable finite
prefixes. All retained eigenvalues converge to $E$ by coarse rigidity.
For padding, use the window
$[\lambda_i-N^{-a/2},\lambda_i+N^{-a/2}]$, where $a$ is the
coarse-rigidity exponent; it contains enough eigenvalues for every
fixed prefix. This allows $\gamma_i$ to approach $E$ arbitrarily slowly.
The number of selected vectors is fixed while $N$ tends to infinity.
At the fixed constants $E,\sigma,V,H$, the mean error is
\[
 O\bigl((|\lambda_i-E|+N^{-1})(1+|m_N|)\bigr)=o_{\PP}(1)
\]
on compact sets by coarse rigidity and capped moments.
The squared conditional whitening error is bounded by
$C_{R,B}(|\lambda_i-E|+R/N)^2\mu_N([-R,R])^2$ and vanishes on
the two height caps. The outside polynomial approximation still
converges to $(1-\vartheta(t))/(t-E)$.
We obtain a rooted marked limit, initially allowing escaping neighbors.
Uniform height moments remove its scalar linear coefficient and give
the second bound in \eqref{gp:infty}.

\proofstep{2. Define switching without changing the origin}
The old-origin reverse $(s^{\rm o},S^{\rm o})=\mathcal T(s,S)$
exists analytically. Two particles make $\Im S$ positive definite.
With only the root, $S(w)=aI_2-\boxi\boxi^T/w$; a kernel of $S+qD$
would lie in $\boxi^\perp\cap\ker(aI_2+qD)$, an event of probability
zero since $q>0$. Passing the inverse identities to the limit gives
\begin{equation}\label{gp:old-reverse}
 |s^{\rm o}(w)-s(w)|\leq\frac{2\pi}{|\Im w|},\qquad
 (S^{\rm o}+qD)L(S+qD)L^T=-c^2I_2.
\end{equation}

\proofstep{3. Rule out a reverse matrix linear term}
If $u\le e\le v$ are consecutive eigenvalues, every spectral point
$t$ lies outside $(u,v)$. For $t\le u$, its distance from $e$ is
at least its distance from $u$; for $t\ge v$, use $v$ instead.
Comparing each spectral projection's nonnegative Poisson weight gives
\[
 \Im G(e+\ri \eta)\preceq\Im G(u+\ri \eta)+\Im G(v+\ri \eta).
\]
Let $S_N^{\rm o}$ denote the reverse field at the old origin.
Fix $Y\geq1$, put $\eta=Y/(N\sigma)$, and intersect acceptance
with the five reversed caps $\widetilde\Gamma_{N,i+k,\eta}$ to obtain
$\Theta_{N,Y}$. Interlacing, reversal and
$\Tr\overline K=2\widehat m_N$ give
\[
 \E\bigl[\mathbf1_{\Theta_{N,Y}}\Tr\Im S_N^{\rm o}(\ri Y)\bigr]
 \leq C\sum_{|k|\leq2}
 \E\bigl[\mathbf1_{\widetilde\Gamma_{N,i+k,\eta}}
       \Im\widehat{\widetilde m}_N(\widetilde\lambda_{i+k}+\ri \eta)\bigr]\leq C.
\]
Since $\PP(\Theta_{N,Y})\to1$, Fatou yields
\begin{equation}\label{gp:reverse-height}
 \sup_{Y\geq1}\E\Tr\Im S^{\rm o}(\ri Y)<\infty.
\end{equation}
Thus $S^{\rm o}$ has no matrix linear coefficient.

\proofstep{4. Show that the reverse process is nonempty}
An empty reverse scalar measure would make the old measure finite
by the rank interval bound. Absorb its compensators into $a$ and,
writing $n\geq1$ for its multiplicity, set
\[
 Q(w)=\prod_{j=1}^n(x_j-w)\det\left(aI_2+qD+
                  \sum_{j=1}^n\frac{\boxi_j\boxi_j^T}{x_j-w}\right).
\]
The polynomial $Q$ satisfies $s^{\rm o}=a-Q'/Q$. Its degree is
$n$ unless $a=\pm q$, when it is $n-1$ almost surely: the leading
coefficient is a nonzero multiple of a sum of Gaussian squares in
the null direction of $aI_2+qD$. Its zeros are real by the Herglotz
property. Emptiness therefore forces $n=1$, $a=\pm q$. Direct
inversion gives the positive linear coefficient
$c^2\xi_2^{-2}\bme_1\bme_1^T$ or $c^2\xi_1^{-2}\bme_2\bme_2^T$, contradicting
\eqref{gp:reverse-height}.

\proofstep{5. Identify the value at imaginary infinity}
Extract jointly the five reverse anchored states and their shifts
$\Delta_k=N\sigma(\widetilde\lambda_{i+k}-\lambda_i)$ in the extended real
line. A finite reverse pole and interlacing imply that some
$\Delta_k$ is finite. At each fixed anchor the centered Gaussian
field $Z_k=S_k-s_kI_2$ satisfies
$\E\sup_{|u|\leq R}\|Z_k(u+\ri Y)\|^2\leq C_R/Y$ for large $Y$,
by Cauchy's formula and its Gaussian series. Hence
\[
 \PP(\|S^{\rm o}(\ri Y)-s^{\rm o}(\ri Y)I_2\|>\varepsilon)
 \leq \PP(\min_k|\Delta_k|>R)+\frac{C_R}{\varepsilon^2Y}.
\]
Let $Y\to\infty$ and then $R\to\infty$. Here the supremum over possible
shifts is taken before the anchor is selected. Write $S=sI_2+Z$ and $S^{\rm o}=s^{\rm o}I_2+Z^{\rm o}$.
Both centered fields tend to zero in probability at $\ri Y$, and
both scalar fields are tight uniformly in $Y$ by the height moments
and the rank bound. Since $LDL^T=-D$, the leading part of the
matrix product in~\eqref{gp:old-reverse} is
\[
 (s^{\rm o}I_2+qD)(sI_2-qD)
 =(s^{\rm o}s-q^2)I_2+q(s-s^{\rm o})D.
\]
Taking half its trace and using $c^2=1+q^2$ gives
$s^{\rm o}(\ri Y)s(\ri Y)+1\to0$ in probability. The scalar rank bound
then gives $s(\ri Y)^2+1\to0$, and the upper half-plane selects $\ri$.

\proofstep{6. Prevent labels from escaping}
If only finitely many particles lie to the left, remove their
terms. The remaining real summands
$x/(x^2+Y^2)-x/(1+x^2)$ decrease with $Y$. An almost surely
convergent subsequence in \eqref{gp:infty} makes their limit finite,
so monotone convergence gives $\sum_{x\geq1}x^{-1}<\infty$.
The bound $Y/(x^2+Y^2)\leq(2x)^{-1}$ would then make the imaginary
part tend to zero. Reflection excludes a finite right side.
For any fixed positive integer $j$, the limiting process has at
least $j$ particles to the right in $[0,R]$ with probability tending
to one as $R\to\infty$. Choose $R$ to avoid deterministic atoms
and use convergence of the integer counts in that interval. The
same is then true of the finite-$N$ process for large $N$, which
bounds its $j$th neighboring position. Apply the same argument on
the left. This is the additional ingredient, beyond local counting
tightness, that bounds each label. Rank interlacing then bounds the
same-label reverse shift. Diagonal extraction
gives the joint labelled state and the asserted moments and marking
at each offset.

\proofstep{7. Retain the deterministic label under reversal}
The label in the inverse map is fixed by the exact inertia identity
\begin{equation}\label{gp:inertia}
 N_>(\widetilde A,t)-N_>(A,t)=n_-(D/2+K(t))-1
                        =n_-(S_{N,i}(w)+qD)-1,
 \qquad t=\lambda_i+\frac{w}{N\sigma},
\end{equation}
where $w$ is real and $t$ lies in a common spectral gap. Here $N_>$
counts eigenvalues strictly above its argument, and $n_-$
counts negative eigenvalues. For completeness, apply Sylvester's law of inertia to the block
matrix
\[
 \begin{pmatrix}A-t&W\\W^T&-D/2\end{pmatrix}.
\]
Taking the Schur complement of $-D/2$ gives $\widetilde A-t$,
whereas taking that of $A-t$ gives $-(D/2+K(t))$.
For a common real gap, all the relevant matrices are invertible.
Since $-D/2$ has one positive and one negative eigenvalue, equality
of the positive inertias gives the first equality in
\eqref{gp:inertia}. The second follows from the positive congruence
$D/2+K=\sigma V^{1/2}(S_{N,i}+qD)V^{1/2}$. Integer-valued measure convergence excludes
poles on compact subsets of common gaps; Nevanlinna bounds then give
analytic convergence there. Locally constant inertia therefore fixes
the limiting counting offset and label zero. Exact proposal reversal
preserves the recentered law; Gaussian rotations preserve it with
all labels fixed.
The seven steps also show the roles of the two normalizations.
Keeping the old origin makes the rank-two algebra exact. Recentring
at the same ordered label is postponed until tightness and the
inertia identity specify that label in the limit. At every other
fixed relative label the original index is still deterministic and
bulk, so the same construction gives the same moment bounds. No
translation invariance or label-stationarity of the limiting point
process is assumed in this argument.
\end{proof}

\begin{lemma}[Simplicity of the tagged limiting process]\label{gp:simple}
The labelled scalar configuration in Proposition~\ref{gp:state} is
almost surely simple.
\end{lemma}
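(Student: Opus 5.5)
We will show that the labelled limit has no double point, i.e.\ $\PP(x_j=x_{j+1})=0$ for each fixed relative label $j$, and then take a countable union over $j$. Since the paper has not assumed translation invariance or label stationarity of the tagged limit, the plan uses only the marked switching structure and the Gaussian marks of Proposition~\ref{gp:state}. The tool is the switching map $\mathcal T$ followed by recentering at the same label, which preserves the law. A double point should force a degenerate configuration of marks, and such configurations have probability zero.

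\proofstep{Step 1: the local form of $S$ at a double point}
On the event $\{x_j=x_{j+1}=y\}$, the representation \eqref{rs:gaussian-series} gives the behaviour of $S$ near $y$:
\[
 S(w)=\frac{\boxi_j\boxi_j^T+\boxi_{j+1}\boxi_{j+1}^T}{y-w}+R(w),
\]
where $R$ is analytic near $y$. Conditionally on the scalar state the two marks are independent standard Gaussians. Hence the residue matrix $\Xi=\boxi_j\boxi_j^T+\boxi_{j+1}\boxi_{j+1}^T$ is almost surely positive definite, so it has rank two.

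\proofstep{Step 2: the switched state at $y$}
Next compute the switched state near $y$. Since $S+qD$ has an invertible rank-two pole, the inverse $(S+qD)^{-1}$ is analytic near $y$ and vanishes at $y$. The matrix component of $\mathcal T$ is therefore regular at $y$. The scalar part satisfies
\[
 \widetilde s-s=-\partial_w\log\det(S+qD),
\]
and $\det(S+qD)\sim \det\Xi\,(y-w)^{-2}$. This adds $-2/(y-w)$ to $s$ and cancels the pole weight $2$. So the switched configuration has no particle at $y$, only nearby simple zeros of $\det(S+qD)$.

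The inertia identity \eqref{gp:inertia} then gives the next point. The switched configuration loses the double point at $y$ and gains particles at zeros of $\det(S+qD)$ on each side of $y$. Almost surely these zeros are simple, because conditionally on the scalar state $\det(S+qD)$ is a nondegenerate polynomial function of the Gaussian marks. Thus, at the same label, on the event $\{x_j=x_{j+1}\}$ the reversed state satisfies $\widetilde x_j<\widetilde x_{j+1}$.

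\proofstep{Step 3: transferring to the original state}
Now apply invariance under $\mathcal T$ with recentering. Because $\mathcal T$ is an involution and the law of the pair is exchangeable, $\PP(x_j=x_{j+1})$ equals the probability that the switched state has a double point at label $j$. The switched state has a double point exactly when $\det(S+qD)$ has a double real zero, or when a zero of $\det(S+qD)$ coincides with an unremoved pole of $s$ in the configuration $\mu$. Conditionally on $\mu$ and $s$, either event is a codimension-one algebraic condition on the countable family of marks near the relevant region. The conditional Gaussian law, which is absolutely continuous, gives it probability zero. Combining this with Step 2 gives $\PP(x_j=x_{j+1})=0$.

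\proofstep{Main obstacle}
The hard part is the absolute continuity claim in Step 3. The function $\det(S+qD)$ involves an infinite conditionally $L^2$ series, so we need the event ``a specified zero is double'' to be null. The plan is to condition on all marks except one far-away mark $\boxi_k$, or better the mark of a single nearby particle. Varying that one mark perturbs $\det(S+qD)$ by a rank-one analytic term, which is the matrix determinant lemma applied to $\boxi_k\boxi_k^T/(x_k-w)$. The double-zero condition then becomes a nontrivial polynomial equation in $\boxi_k\in\R^2$ with coefficients fixed by the remaining data. Such an equation has Lebesgue measure zero unless it is identically satisfied, and a direct check at large $|\boxi_k|$ should exclude that. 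Finally, Fubini together with the conditional independence of the marks from Proposition~\ref{gp:state} completes the argument.
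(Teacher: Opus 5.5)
Your overall route is the paper's: compute what $\mathcal T$ does to old poles and inside gaps, show the image configuration is a.s.\ simple, and transfer back by invariance of the law under recentered switching. The case analysis in Step~3 has a genuine hole, however. You claim the switched state has a double point \emph{exactly} when $\det(S+qD)$ has a double real zero, or a zero of $\det(S+qD)$ sits at an unremoved pole. This omits old points of multiplicity $m\ge4$. There the pole weight is a sum of $m$ Gaussian rank-one terms, still positive definite of rank two. So, by your own Step~2 computation, the determinant has a double pole, and the switched point at $y$ has multiplicity $m-2\ge2$ with no zero of the determinant involved. (Step~2 also silently assumes $m=2$ when it writes the residue as $\boxi_j\boxi_j^T+\boxi_{j+1}\boxi_{j+1}^T$ and says the pole weight is cancelled.) This case is not a mark-genericity event: its probability is bounded only by that of a multiple point in the original configuration. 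Your transfer $\PP(x_j=x_{j+1})=\PP(\widetilde x_j=\widetilde x_{j+1})$ is therefore circular.

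The missing idea, which the paper uses, is already latent in your Step~2. At an old point of multiplicity $m\ge2$ the switched matrix field is analytic. So for $m\ge3$ the switched configuration would carry an atom of multiplicity $m-2\ge1$ with zero matrix pole weight. The switched state has the same marked law, in which every atom carries a nonzero Gaussian pole weight, so this has probability zero. Hence $m\ge3$ is excluded before the transfer, and old double points simply disappear.

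The remaining cases also need more than ``codimension one.'' At an old simple pole you must show the determinant's pole coefficient $-\boxi^T\operatorname{adj}(A_0)\boxi$ is nonzero. This requires $A_0\neq0$ a.s., which the paper gets by conditioning on all but one other mark. For new zeros, your one-mark perturbation treats ``double zero'' as a polynomial equation in $\boxi_k$, but the location $x$ of the zero is a free real parameter. You would be projecting a set in $(x,\boxi_k)$-space, which your sketch does not handle. The paper avoids this by using $S'(x)\succ0$ in a gap: each eigenvalue of $S(x)+qD$ is strictly increasing, so a multiple zero of the determinant forces $S(x)+qD=0$. Conditioning on all but one mark, at a particle $x_p$, write $S(x)+qD=B(x)+\boxi\boxi^T/(x_p-x)$. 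This forces $\det B(x)=0$, which happens at most twice in the gap since $B'(x)\succ0$. At those conditionally fixed points, $\boxi\boxi^T=-(x_p-x)B(x)$ is a null event.
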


\begin{proof}
We compare the multiplicities before and after one switching.
The matrix field can cancel at most two scalar particles at an old
location, while a new location is generated by a zero of a
$2\times2$ determinant. Independent Gaussian marks make the
exceptional degeneracies have probability zero. Stationarity then
transfers simplicity of the image back to the original configuration.

At an old pole of multiplicity $m\geq2$, its Gaussian matrix pole weight
$R$ is positive definite. With $t=w-x$,
$S+qD=-R/t+O(1)$, so $(S+qD)^{-1}=-tR^{-1}+O(t^2)$.
The matrix image is analytic. Moreover
$\det(S+qD)=t^{-2}\det R\,(1+O(t))$, so
$\partial_w\log\det(S+qD)=-2/t+O(1)$.
The scalar pole is $-m/t$ in the variable $t=w-x$; after subtracting
this logarithmic derivative its coefficient is $-(m-2)/t$.
Thus its multiplicity is $m-2$.
Stationary Gaussian marking excludes $m\geq3$, since every scalar
atom has a nonzero matrix pole weight.

An old double pole disappears. At an old simple pole write
$S+qD=-\boxi\boxi^T/t+A_0+O(t)$. The determinant pole coefficient is
$-\boxi^T\operatorname{adj}(A_0)\boxi$. The matrix $A_0$ is independent
of $\boxi$ and is nonzero almost surely: after conditioning on all but
one other mark, it is a fixed matrix plus a nonzero multiple of a
Gaussian rank-one matrix. Thus this coefficient is nonzero almost
surely, and the scalar logarithmic derivative cancels the simple pole.

In an old spectral gap, $S'(x)$ is positive definite. If one eigenvalue of $S(x)+qD$ vanishes and the other does not,
the derivative of the vanishing eigenvalue is
$\bm v^TS'(x)\bm v>0$ for its unit eigenvector $\bm v$.
The determinant therefore has a simple zero. A multiple zero can
only occur if both eigenvalues vanish, namely $S(x)+qD=0$. To exclude the latter,
condition on all but one Gaussian mark at a particle $x_p$ outside
the gap and write $S(x)+qD=A_0(x)+\boxi\boxi^T/(x_p-x)$.
Since $A_0'(x)>0$, its eigenvalues strictly increase and
$\det A_0(x)$ has at most two zeros in the gap. At either conditionally
fixed point, $\boxi\boxi^T=-(x_p-x)A_0(x)$ has probability zero.
A countable union over gaps and stationarity prove simplicity.
The strict inequalities used here hold almost surely
simultaneously in every gap: after removing any one fixed mark,
there remain infinitely many independent Gaussian marks spanning
$\R^2$, and each contributes a positive coefficient to $S'(x)$.
There are only countably many particles and gaps, so all the
zero-probability exceptional events can be discarded at once.
Each old scalar pole then disappears, and every newly created pole
is simple. By the law-preserving recentered switching, the original
scalar configuration is simple as well.
\end{proof}

\begin{lemma}[The recentered stationary flow]\label{gp:flow}
Apply $\Phi_\theta=\mathcal T\mathcal R_\theta\mathcal T$ with
the label convention \eqref{gp:inertia}, and recenter at its label
zero. This is a law-preserving flow with locally Lipschitz ordered
particle trajectories. Put $R_j=\boxi_j\boxi_j^T$,
$\zeta_j=(R_j)_{12}$, and
$H_j=\lim_{w\to x_j}(S(w)-R_j/(x_j-w))$.
Thus $R_j$ is the matrix pole weight and $H_j$ the regular part;
the complex-analytic residue at $w=x_j$ is $-R_j$.
Its moving-frame vector field is
\begin{align*}
 \mathcal V_0x_j&=-2\kappa(\zeta_j-\zeta_0),&
 \mathcal V_0R_j&=\alpha[L,R_j]+\kappa(H_jPR_j+R_jPH_j),\\
 \mathcal V_0s&=2\kappa(S_{12}'-\zeta_0s'),&
 \mathcal V_0S&=\alpha[L,S]+\kappa(P+SPS)-2\kappa\zeta_0S',
\end{align*}
with $\alpha,\kappa$ from \eqref{lp:vector}. Invariant
differentiation $\E\mathcal V_0F=0$ holds for smooth functions of
finitely many off-real fields whose function and derivatives have
polynomial growth. It also holds for finite-coordinate tests involving
pole weights, with compact position cutoffs and collision cutoffs at each
pole whose regular part occurs in the derivative.
\end{lemma}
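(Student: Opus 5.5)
The plan is to obtain the recentered flow from the unrecentered flow of Lemma~\ref{lp:flow}, using the tagged invariances of Proposition~\ref{gp:state}. For each fixed angle $\theta$, $\Phi_\theta=\mathcal T\mathcal R_\theta\mathcal T$ is a composition of three maps. Each one preserves the rooted law once we recenter at label zero, fixed by the inertia convention \eqref{gp:inertia}: the switching because of Proposition~\ref{gp:state}, and the rotation because it fixes all labels. So the recentered $\Phi_\theta$ preserves the law at every deterministic angle. The group property $\Phi_{\theta+\varphi}=\Phi_\theta\Phi_\varphi$ is inherited from the unrecentered maps. To see this, note that recentering is a translation $w\mapsto w+x_0(\theta)$, and translations commute with the matrix algebra in $\mathcal T$ and $\mathcal R_\theta$. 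By Lemma~\ref{gp:simple} the configuration is simple. Hence along an orbit the particles are simple poles of $s$, depending analytically on $\theta$ away from the discrete set of singular angles. Therefore the label-zero particle is tracked continuously, and the inertia count \eqref{gp:inertia} stays locally constant, so it agrees with this continuous tracking.

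Next I compute the vector field in the original frame and then add the frame correction. Lemma~\ref{lp:flow} gives $\mathcal VS=\alpha[L,S]+\kappa(P+SPS)$ and $\mathcal Vs=2\kappa\partial_wS_{12}$. Near $x_j$, write $s=1/(x_j-w)+O(1)$ and $S=R_j/(x_j-w)+H_j+O(w-x_j)$. Differentiating the moving pole gives $-\dot x_j/(x_j-w)^2$. Matching this with $2\kappa\partial_wS_{12}=2\kappa\zeta_j/(x_j-w)^2+O(1)$ yields $\dot x_j=-2\kappa\zeta_j$.

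Matching the matrix Laurent expansions then gives the pole weights. The $(x_j-w)^{-2}$ coefficient of $SPS$, namely $R_jPR_j=2\zeta_jR_j$, accounts exactly for the pole motion. The $(x_j-w)^{-1}$ coefficient produces $\mathcal VR_j=\alpha[L,R_j]+\kappa(H_jPR_j+R_jPH_j)$.

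Recentering replaces each field $F(w)$ by $F(w+x_0(\theta))$. This adds $\dot x_0F'=-2\kappa\zeta_0F'$ to the scalar and matrix fields, subtracts $\dot x_0$ from each position, and leaves the pole weights unchanged. This reproduces the four displayed formulas. Local Lipschitz continuity of the trajectories follows because $|\dot x_j|\le2\kappa|\boxi_j|^2$, and the marks evolve by a smooth ODE as long as no collision occurs.

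For $\E\mathcal V_0F=0$ I would repeat the integration argument of Lemma~\ref{lp:flow}. Stationarity at each deterministic angle gives
\[
\E\int_0^T|\mathcal V_0F(\Phi_\theta)|\,\dd\theta=T\,\E|\mathcal V_0F|<\infty.
\]
For off-real field tests, this finiteness comes from the polynomial growth assumption together with the moment bounds \eqref{rs:all-marked-moments} at each fixed label, supplied by Proposition~\ref{gp:state}. Tonelli then makes the derivative integrable along almost every orbit, so nonremovable singularities are excluded and the fundamental theorem of calculus applies.

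The main obstacle is the pole-weight tests. The regular part $H_j$ blows up when a neighbour approaches $x_j$, and the position $x_j$ may leave a compact set. I would insert smooth compact position cutoffs and collision cutoffs $\chi(|x_j-x_{j\pm1}|/\epsilon)$, so that $H_j$ is bounded by $\epsilon^{-1}$ times Gaussian mark sums plus an off-real Cauchy integral of $S$. This controls $\mathcal V_0F$ by the fixed-label moments. Differentiating the cutoffs only produces terms of the same type, so the same orbit-integration argument applies.
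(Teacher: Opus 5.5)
Your overall structure matches the paper's: law preservation from the recentred invariances in Proposition~\ref{gp:state}, Laurent matching at a simple pole (with $R_jPR_j=2\zeta_jR_j$) to get $\mathcal Vx_j$ and $\mathcal VR_j$, subtraction of the root velocity, and orbit integration with position and collision cutoffs, on whose support $\|H_j\|\le C_{R,\delta}(\|S(\ri)\|+|\boxi_j|^2)$. The gap is the claim that the ordered trajectories are locally Lipschitz \emph{through collisions}, which is the new analytic content of the lemma.

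Your justification is $|\dot x_j|\le2\kappa|\boxi_j|^2$ together with ``the marks evolve by a smooth ODE as long as no collision occurs.'' This says nothing at a collision. The equation for $R_j$ has coefficient $H_j$, which diverges exactly as a neighbour approaches. So it gives no bound on $|\boxi_j(\theta)|^2$ as $\theta$ tends to a collision angle. Moreover, a derivative bound on the open intervals between collisions does not give continuity at the collision angles themselves.

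The structure you assert for these angles is also not available. Lemma~\ref{gp:simple} gives simplicity only at each deterministic angle. By stationarity and Fubini this yields a Lebesgue-null set of collision angles, not a discrete set off which particles are analytic in $\theta$. With infinitely many particles, the final poles are not a priori the zeros of a finite analytic determinant.

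This missing control is needed twice more in your argument. First, tracking label zero continuously and reconciling it with the inertia count requires it exactly when $x_0$ meets $x_{\pm1}$. Second, the fundamental-theorem step cannot be copied from Lemma~\ref{lp:flow}. There, the orbit of a polynomial test was rational in $(\cos\theta,\sin\theta)$, so integrability of the derivative ruled out nonremovable poles. In the moving frame the orbit is $\theta\mapsto F(S_\theta(w+x_0(\theta)))$, and the statement also allows non-polynomial smooth $F$. What is needed is absolute continuity of $x_0(\theta)$ itself.

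The paper supplies this with a finite symmetric linearization on each bounded real interval.
\begin{itemize}
\item Remove the finitely many intermediate poles $t_\ell$ of $S^1=\mathcal TS$ and write $O_\theta S^1O_\theta^T+qD=H_\theta+V_\theta\diag(t_\ell-w)^{-1}V_\theta^T$.
\item Form $Q_\theta(w)$ with $\det Q_\theta=\prod_\ell(w-t_\ell)\det(O_\theta S^1O_\theta^T+qD)$ and $\partial_wQ_\theta=\diag(H_\theta',I_m)$. The latter is positive definite because the infinitely many outside Gaussian pole weights span $\R^2$.
\item The final poles, with multiplicity, are then the zeros in $w$ of the ordered eigenvalues of $Q_\theta(w)$.
\item Weyl's inequality makes these eigenvalues Lipschitz in $\theta$ and increasing in $w$ at a rate bounded below. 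Hence each ordered zero is Lipschitz, even at multiple roots.
\item The bound $[-2,2]$ on the total inertia shift then fixes the labelled branch.
\end{itemize}

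Bounding $R_j$ by the sum of nearby positive semidefinite pole weights via a contour integral would not repair your argument by itself. That bound presupposes the continuity of the zero set, which is exactly what the linearization provides.
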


\begin{proof}
Off the real axis the formulas follow from
Lemma~\ref{lp:flow}. For coordinate integration by parts we need
more: particle paths must remain controlled when they approach a
collision, and the chosen label must not jump. We establish these
facts by a finite symmetric matrix linearization on each bounded
real interval before matching pole coefficients.

For the intermediate unrooted field $S^1=\mathcal T S$, the total
counting shift at a common real gap is
\[
 [n_-(S+qD)-1]+[n_-(O_\theta S^1O_\theta^T+qD)-1]\in[-2,2].
\]
Its changes are precisely crossings of final poles. Thus each ordered
pole stays between the original labels offset by two, and the count
at $\theta=0$ fixes its continuous branch.

To handle possible collisions, we use a finite matrix representation.
On a bounded real interval, remove the finitely many intermediate poles
$t_1,\ldots,t_m$ and write
\[
 O_\theta S^1(w)O_\theta^T+qD
 =H_\theta(w)+V_\theta\diag(t_\ell-w)^{-1}V_\theta^T.
\]
The regular part is analytic across the interval, and
$H_\theta'(x)$ is positive definite, since the infinitely many
outside Gaussian pole weights span $\R^2$. The finite symmetric matrix
\[
 Q_\theta(w)=
 \begin{pmatrix}H_\theta(w)&V_\theta\\
 V_\theta^T&\diag(w-t_\ell)\end{pmatrix}
\]
satisfies
\[
 \det Q_\theta(w)=\prod_{\ell=1}^m(w-t_\ell)
 \det(O_\theta S^1(w)O_\theta^T+qD),\qquad
 \partial_wQ_\theta=\diag(H_\theta',I_m)>0.
\]
The product cancels the intermediate scalar poles in the logarithmic
derivative. Thus the determinant zeros are exactly the final scalar
poles, with multiplicities. On compact parameter intervals this derivative
has a strictly positive lower bound. Weyl's inequality controls
the change of each ordered matrix eigenvalue by the norm of the
matrix increment, which is locally Lipschitz in $\theta$.
Increasing $w$ increases every eigenvalue at a rate bounded below
by the smallest eigenvalue of $\partial_wQ_\theta$.
Dividing the first bound by this positive lower bound gives a
finite random Lipschitz constant for each ordered zero.
This proves local Lipschitz continuity even at multiple roots. At simple poles the implicit
function theorem makes them analytic.

Near a simple pole, write
$S(w)=R_j/(x_j-w)+H_j+O(w-x_j)$.
The rank-one identity
$R_jPR_j=(\boxi_j^TP\boxi_j)R_j=2\zeta_jR_j$
identifies the double pole of $\kappa SPS$.
On the other hand,
\[
 \mathcal V\left(\frac{R_j}{x_j-w}\right)
 =\frac{\mathcal VR_j}{x_j-w}
   -\frac{R_j\mathcal Vx_j}{(x_j-w)^2}.
\]
Matching double poles yields $\mathcal Vx_j=-2\kappa\zeta_j$.
Matching simple poles yields
$\mathcal VR_j=\alpha[L,R_j]+\kappa(H_jPR_j+R_jPH_j)$.
The minus sign in the double-pole differentiation is important.
Subtracting the root velocity gives the moving-frame formulas.
By stationarity, the configuration is almost surely simple at each fixed
time. Fubini's theorem therefore implies that collision times form a set
of Lebesgue measure zero almost surely.
Off-real derivatives are integrable by the marked moments and
Gaussian moments. If $|x_j|\leq R$ and its two neighboring gaps are
at least $\delta$, the matrix Nevanlinna representation gives
\[
 \|H_j\|\leq C_{R,\delta}\bigl(\|S(\ri)\|+|\boxi_j|^2\bigr).
\]
Indeed the compensated kernel of every other particle is bounded by
$[R+(1+R^2)/\delta]/(1+x_\ell^2)$. This bounds the regular parts
on the supports of the collision cutoffs. The locally Lipschitz trajectories and
these integrable derivatives justify the fundamental theorem of
calculus. Stationarity at both endpoints then gives
$\E\mathcal V_0F=0$ for the stated tests.
A collision cutoff is a smooth function that vanishes whenever a
specified neighboring gap is smaller than $\delta$, and is one
when that gap is larger than $2\delta$. It is needed only when the
observable involves a regular part $H_j$ or a pole-weight derivative.
On its support all the bounds above are uniform; near a collision
the observable is identically zero. Thus the local absolutely
continuous pieces fit together, and the exceptional collision times
of Lebesgue measure zero do not contribute to the integrated
stationarity identity.
\end{proof}

\subsection{Rooted loop equation and rigidity}\label{gp:rigidity-section}

Let $\bmx=(x_j)_{j\in\mathbb Z}$ be the increasing configuration of
Proposition~\ref{gp:state}, which is simple by Lemma~\ref{gp:simple}, with $x_0=0$. Retain $\zeta_j=\xi_{j1}\xi_{j2}$. The unrooted vector field
\eqref{lp:vector} moves the pole $x_j$ at velocity $-2\kappa\zeta_j$.
Consequently its expression $\mathcal V_0$ in the frame following label zero is
\[
 \mathcal V_0s=2\kappa(S_{12}'-\zeta_0s'),\qquad
 \mathcal V_0S_{12}=\alpha(S_{11}-S_{22})
 +\kappa(1+S_{11}S_{22}+S_{12}^2)-2\kappa\zeta_0S_{12}'.
\]
Deleting the tagged atom leaves the Herglotz function $r(w)=s(w)+1/w$.
For finitely many nonreal arguments $v$ and a differentiable
function $F$ of the corresponding values $r(v)$, invariant differentiation
of $S_{12}(w)F$ gives
\begin{equation}\label{gp:loop}
 0=\E[(1+r(w)^2+r'(w)-2r(w)/w)F]
 +2\sum_v\E[C(w,v)D_vF],
\end{equation}
Here $D_vF$ denotes differentiation with respect to the \emph{value}
$r(v)$ in the test function, whereas $\partial_v$ in the next display
differentiates the \emph{spectral parameter}. These two derivatives
should not be confused. When conjugate values occur, they are
interpreted as Wirtinger derivatives: for $z=x+\ri y$,
$\partial_z=(\partial_x-\ri\partial_y)/2$ and
$\partial_{\bar z}=(\partial_x+\ri\partial_y)/2$.
We have
\[
 C(w,v)=\partial_v\frac{r(w)-r(v)}{w-v}+\frac{r'(v)}w,
 \qquad C(w,w)=\frac12r''(w)+\frac{r'(w)}w.
\]
Indeed, conditional Gaussian averaging gives
\[
 \E[S_{12}(w)\zeta_0\mid s]=-\frac1w,\qquad
 \E[S_{12}'(w)\zeta_0\mid s]=\frac1{w^2},\qquad
 \E[S_{12}(w)S_{12}(v)\mid s]=\frac{s(w)-s(v)}{w-v}.
\]
Together with \eqref{rs:wick-two}, these identities give
\eqref{gp:loop}. Here is the cancellation explicitly. Since $s=r-1/w$,
\[
 1+s(w)^2+s'(w)-\frac2{w^2}
 =1+r(w)^2+r'(w)-\frac{2r(w)}w.
\]
For the cross term, Gaussian averaging before deleting the root
gives
\[
 \partial_v\frac{s(w)-s(v)}{w-v}+\frac{s'(v)}w.
\]
The identity
$(s(w)-s(v))/(w-v)=(r(w)-r(v))/(w-v)+1/(wv)$
shows that its additional $-1/(wv^2)$ cancels the
$1/(wv^2)$ from $s'(v)/w$, leaving exactly $C(w,v)$.
At $v=w$, Taylor expansion of the divided difference gives
$\frac12r''(w)$, accounting for the factor one half.

We use \eqref{gp:loop} for functions whose first derivatives have
polynomial growth in the fixed off-real fields and whose denominators
are separated from zero on the state space. Their derivatives along
$\mathcal V_0$ are integrable by the marked moment bounds. The labelled
flow of Lemma~\ref{gp:flow}, stationarity at each fixed time, and Fubini
justify differentiation of the expectation. The displayed derivatives
involve only off-real fields and the root mark, so this argument requires
no derivative of $\zeta_0$.

\begin{lemma}[Local law for the tagged limit]\label{gp:local}
There is an absolute constant $C$ such that, for every integer $p\geq1$
and every $w\in\bC^+$,
\begin{equation}\label{gp:local-bound}
 \|s(w)-\ri\|_{2p}\leq\frac{C\sqrt p}{\Im w}.
\end{equation}
The constants are uniform over all deterministic tag marginals. Moreover,
\begin{equation}\label{gp:gap-tail}
 \PP(x_1>R)+\PP(-x_{-1}>R)\leq Ce^{-cR^2},\qquad R\geq1.
\end{equation}
\end{lemma}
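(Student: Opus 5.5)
The plan is to derive the moment bound \eqref{gp:local-bound} from the rooted loop equation \eqref{gp:loop} by a self-consistent iteration. Put $\Lambda(w)=s(w)-\ri$. Since $r=s+1/w$ and $1+\ri^2=0$, the left factor in \eqref{gp:loop} is
\[
1+r^2+r'-\frac{2r}{w}=\Lambda(\Lambda+2\ri)+s'-\frac1{w^2}.
\]
The coefficient $2\ri+\Lambda$ is the linearization of $1+s^2$ at $s=\ri$. We apply \eqref{gp:loop} with the test function $F=\Lambda^{p-1}\overline\Lambda^{\,p}$, written through the values $r(w),\overline{r(w)}$ with Wirtinger derivatives. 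This gives
\[
2\ri\,\E|\Lambda|^{2p}=-\E\bigl[\Lambda|\Lambda|^{2p}\bigr]-\E\Bigl[\Bigl(s'-\tfrac1{w^2}\Bigr)F\Bigr]-2\E\bigl[C(w,w)D_wF+C(w,\bar w)D_{\bar w}F\bigr].
\]
The admissibility hypotheses after \eqref{gp:loop} hold by the marked moment bounds of Proposition~\ref{gp:state}, after a smooth truncation of $F$ that is removed by monotone convergence.

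The error terms are bounded by Nevanlinna estimates. Each of $s'$, $r''$, $C(w,\bar w)$ and $1/w$ is controlled by $\Im s(w)/\Im w$ or $\Im s/(\Im w)^2$, using $|s'(w)|\le \Im s(w)/\Im w$ and the analogous divided-difference bound. The quadratic term $\E[\Lambda|\Lambda|^{2p}]$ is the dangerous one. I would first restrict to $\Im w\ge Y_0$ large, where \eqref{gp:infty} and the uniform height moments make $|\Lambda|$ small with high probability. On that event $|2\ri+\Lambda|\ge1$, and the quadratic term is absorbed into the left side. Hölder's inequality then gives
\[
\E|\Lambda|^{2p}\le \frac{C}{\Im w}\bigl(1+\E|\Lambda|^{2p}\bigr)^{(2p-1)/2p}+\frac{Cp}{(\Im w)^2}\bigl(1+\E|\Lambda|^{2p}\bigr)^{(p-1)/p}.
\]
The second term carries the factor $p$ from $D_vF$, and this is what produces the $\sqrt p$ in \eqref{gp:local-bound}.

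The main obstacle is extending the bound down to heights $\Im w$ of order one, and then to all $w\in\bC^+$. There the quadratic term cannot be absorbed by smallness, and the naive self-consistent inequality has the spurious solution $s\approx-\ri$. I would run a continuity argument in $\eta=\Im w$ along vertical lines, decreasing $\eta$ from $Y_0$ in steps comparable to $\eta$. Lipschitz continuity of $s$ in $\eta$, controlled by $\Im s/\eta$, together with the Herglotz sign $\Im s>0$, excludes a jump to the branch near $-\ri$ in expectation. A Markov-type truncation handles the small-probability events on which $|\Lambda|\ge1$. For $\Im w<1$, the stated bound $C\sqrt p/\Im w$ is larger than $\sqrt p$, so it follows from the bound at height one by the elementary Nevanlinna comparison \emph{$|s(w)|\le C\Im s(\ri)/\Im w$}. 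Uniformity over tags holds because every constant comes from the uniform moment bounds in Proposition~\ref{gp:state}.

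The gap tail \eqref{gp:gap-tail} then follows directly. If $x_1>R$, there is no particle in $(0,R]$. Take $w=R/2+\ri\eta$ with $\eta=R/4$. Apart from the tagged atom at $0$, whose contribution is $O(1/R)$, the configuration then has no particle within distance $R/2$ of $\Re w$. A Poisson-kernel count compares $\Im s(w)$ with $\Im s$ at larger heights, and combining this with $\Im s\approx1$ elsewhere forces $|s(w)-\ri|\ge c$ for some $c>0$. Markov's inequality with \eqref{gp:local-bound} gives probability at most $(C\sqrt p/(cR))^{2p}$. Optimizing in $p$, with $p\asymp R^2$, yields $e^{-cR^2}$. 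The left gap is handled by reflection.
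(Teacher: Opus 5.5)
\textbf{Verdict: genuine gap in the moment bound; the gap-tail step is fine.} Your gap-tail step is essentially the paper's, provided the comparison height is chosen so the Poisson-kernel ratio over atoms at distance $\ge R/2$ stays below one. Against height $R$, your $\eta=R/4$ gives ratio exactly one at distance $R/2$; the paper uses $\eta=R/100$ against height $R$, which gives ratio at most $1/20$.

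\textbf{Where the moment bound fails.}
\begin{itemize}
\item You cannot absorb $\E[\Lambda|\Lambda|^{2p}]$ ``on the event where $|\Lambda|$ is small''. Equation \eqref{gp:loop} is an identity between expectations, so it cannot be restricted to an event without differentiating a cutoff. Off that event you face the uncontrolled higher moment $\E|\Lambda|^{2p+1}$.
\item A pointwise bound $|2\ri+\Lambda|\ge1$ gives no lower bound on $|\E[(2\ri+\Lambda)|\Lambda|^{2p}]|$, because phases can cancel.
\item The continuity argument in $\eta$ has nothing to run on. There is no pathwise self-consistent equation, only one scalar moment identity per $w$. ``Excluding a jump to the branch near $-\ri$ in expectation'' is not a bootstrap that closes.
\item Importing \eqref{gp:infty} and the a priori moments of Proposition~\ref{gp:state} makes your constants depend on $p$ through those bounds. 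The lemma asserts an absolute $C$ in front of $\sqrt p$, and your optimization $p\asymp R^2$ in the gap tail needs exactly that.
\item The inhomogeneous term $1+\E|\Lambda|^{2p}$ in your displayed inequality would only yield $\|\Lambda\|_{2p}\lesssim(\Im w)^{-1/(2p)}$, not $\sqrt p/\Im w$.
\end{itemize}

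\textbf{The missing idea: there is no spurious branch.} Since $2\ri+\Lambda=s+\ri$ and $\Im s\ge0$, you have $\Im(s+\ri)\ge1$ everywhere in $\bC^+$. Taking imaginary parts of your identity already puts $\E[(1+\Im s)|\Lambda|^{2p}]$ on the left at every height.

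The paper builds this positivity into the test function. Set $z=r(w)$, $\Delta=z-\ri$ and $F=(z-\ri)^{p-1}(\bar z+\ri)^p/(z+\ri)$. Then:
\begin{itemize}
\item $(1+z^2)F=|\Delta|^{2p}$ exactly;
\item $|z+\ri|\ge1$, and $|z|,\,\Im z,\,|\Delta|\le|z+\ri|$;
\item so the factor $1/(z+\ri)$ cancels the $\Im z$ in $|r'|\le\Im z/\Im w$ and in $|C(w,w)|+|C(w,\bar w)|\le4\Im z/(\Im w)^2$.
\end{itemize}
With $M=\|\Delta\|_{2p}$, Hölder gives $M^{2p}\le\frac{3}{\Im w}M^{2p-1}+\frac{8p}{(\Im w)^2}M^{2p-2}$. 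Hence $M\le(3+\sqrt{9+32p})/(2\Im w)$ for every $w\in\bC^+$ at once, and $|1/w|\le1/\Im w$ transfers the bound from $r$ to $s$. The constants are absolute, and no large-height input, continuity argument or separate small-height comparison is needed. Your comparison $|s(w)|\le C\Im s(\ri)/\Im w$ is in any case not uniform in $\Re w$; you would have to compare with $\Re w+\ri$ instead.
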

\begin{proof}
The test is chosen so that the leading nonlinear term becomes a
positive even moment. The factor $z+\ri$ in the denominator never
vanishes for a Herglotz value $z$, and retaining it in the estimates
cancels the otherwise troublesome factor $\Im z$ in the derivative
bounds. This avoids any prior quantitative local law for the
limiting process.

Put $Y=\Im w$, $z=r(w)$, and $\Delta=z-\ri$. The sums below omit the root label $j=0$.
For $w=u+\ri Y$,
\[
 \Im r(w)=\sum_{j\ne0}\frac{Y}{|x_j-w|^2},\quad
 r'(w)=\sum_{j\ne0}\frac1{(x_j-w)^2},\quad
 r''(w)=2\sum_{j\ne0}\frac1{(x_j-w)^3}.
\]
Use $|x_j-w|\ge Y$ to compare these absolutely convergent sums.
The same comparison for the divided difference, at $v=w$ and
$v=\bar w$, gives
\[
 |r'(w)|\leq\frac{\Im z}{Y},\qquad
 |r''(w)|\leq\frac{2\Im z}{Y^2},\qquad
 |C(w,w)|+|C(w,\bar w)|\leq\frac{4\Im z}{Y^2}.
\]
In \eqref{gp:loop} take the two scalar arguments $z=r(w)$ and
$\bar z=r(\bar w)$, and set
\[
 F(z,\bar z)=\frac{(z-\ri)^{p-1}(\bar z+\ri)^p}{z+\ri}.
\]
Since $\Im z\geq0$, the denominator has modulus at least one, and
$|\Delta|$, $|z|$, and $\Im z$ are each at most $|z+\ri|$. Thus
\[
 (1+z^2)F=|\Delta|^{2p},\qquad
 |F_z|+|F_{\bar z}|\leq
 \frac{2p|\Delta|^{2p-2}}{|z+\ri|}.
\]
The term with coefficient $p-1$ is omitted when $p=1$. Keeping the
denominator in this bound cancels the factor $\Im z$ from the derivative
estimates. If $M=\|\Delta\|_{2p}$, \eqref{gp:loop} and H\"older yield
\[
 M^{2p}\leq\frac3Y M^{2p-1}+\frac{8p}{Y^2}M^{2p-2}.
\]
For $M>0$, division by $M^{2p-2}$ gives a quadratic inequality, so
\[
 M\leq\frac{3+\sqrt{9+32p}}{2Y}.
\]
The case $M=0$ is immediate. Adding $|1/w|\leq1/Y$ proves
\eqref{gp:local-bound}. In particular, for a sufficiently small $c>0$,
\[
 \sup_{w\in\bC^+}\E\exp\bigl(c(\Im w)^2|s(w)-\ri|^2\bigr)<\infty.
\]

On $x_1>R$, every atom is at distance at least $R/2$ from the midpoint $R/2$.
For $\eta=R/100$, termwise comparison of the Poisson kernels gives
\[
 \Im s(R/2+\ri \eta)\leq\frac1{20}\Im s(R/2+\ri R).
\]
If both imaginary parts were in $[3/4,5/4]$,
the displayed comparison would imply $3/4\le(5/4)/20$, a
contradiction. Therefore at least one of the two values of
$|s-\ri|$ is at least $1/4$. Their imaginary heights are fixed
positive multiples of $R$, so exponential Markov gives a bound
$Ce^{-cR^2}$. This gives the right-gap estimate in \eqref{gp:gap-tail};
the left-gap estimate follows in the same way on $(-R,0)$.
The exponential moment assertion follows directly from the even
moment bounds: expand the exponential and use
$\E[Y^{2p}|s(w)-\ri|^{2p}]\le(C\sqrt p)^{2p}$.
Stirling's estimate $p!\ge(p/e)^p$ makes the resulting series
geometric when $c$ is sufficiently small. All constants therefore
remain uniform in the original deterministic tag.
\end{proof}

\begin{lemma}[Rigidity and principal-value normalization]\label{gp:rigidity}
For every $\epsilon,D>0$ there is $C_{\epsilon,D}$ such that, for every
integer $K\geq2$,
\begin{equation}\label{gp:rigidity-bound}
 \PP\left(\max_{|j|\leq2K}|x_j-\pi j|>K^\epsilon\right)
 \leq C_{\epsilon,D}K^{-D}.
\end{equation}
For every fixed $1\le p<\infty$, the maximum on the left has $L^p$ norm at
most $C_{p,\epsilon}K^\epsilon$. Almost surely,
\begin{equation}\label{gp:pv}
 s(w)=\lim_{M\to\infty}\sum_{j=-M}^M\frac1{x_j-w},
\end{equation}
locally uniformly off the poles.
\end{lemma}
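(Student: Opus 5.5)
The plan is to turn the Stieltjes-transform estimate \eqref{gp:local-bound} of Lemma~\ref{gp:local} into counting estimates, by the standard Helffer--Sj\"ostrand argument, and then read off positions from counts. Write $n(I)=\mu(I)$ for the limiting tagged counting measure, with $x_0=0$ and the increasing labelling. The scale-invariant form of \eqref{gp:local-bound} says that at height $Y$ the random function $s$ deviates from the constant $\ri$, which is the transform of Lebesgue measure $\pi^{-1}\dd x$, by at most $C\sqrt p/Y$ in $L^{2p}$. First I would make this uniform in the real part. Take a grid of spacing $Y$ in $[-3\pi K,3\pi K]$ at heights $Y=K^{\epsilon/2}$ and at dyadic heights up to $K$. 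Choose $p$ large depending on $\epsilon,D$, and apply Markov's inequality together with a union bound over polynomially many points. Nevanlinna comparison of nearby arguments, as in the displayed Herglotz bounds in the proof of Lemma~\ref{gp:tag}, then gives $\sup|s(w)-\ri|\le K^{\epsilon/4}/\Im w$ on the whole region outside probability $C K^{-D}$.

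Next, I would bound the counting function. For an interval $[a,b]\subset[-3\pi K,3\pi K]$, choose a smooth indicator $f$ with transition regions of width $Y_0=K^{\epsilon/2}$. The Helffer--Sj\"ostrand formula expresses $\int f\,\dd\mu-\pi^{-1}\int f$ as an integral of $\bar\partial\tilde f$ against $s(w)-\ri$. The usual splitting into heights below and above $Y_0$ applies: the region below $Y_0$ is handled by monotonicity of $\Im s$ (the trick $y\Im s(x+\ri y)$ nondecreasing in $y$), and the region above by integrating by parts once in $y$. This yields $|n([a,b])-(b-a)/\pi|\le K^{\epsilon}$ on the good event, up to edge effects controlled by the same bound at scale $Y_0$. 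Applying this with $a=0$ and using $x_0=0$, the label $j$ satisfies $j=n((0,x_j])$ up to the multiplicity convention. Since the process is simple by Lemma~\ref{gp:simple}, this gives $|x_j-\pi j|\le CK^\epsilon$ for $|j|\le 2K$, after checking that $x_{\pm 2K}$ lies in $[-3\pi K,3\pi K]$. That check follows from the count estimate itself. Renaming $\epsilon$ proves \eqref{gp:rigidity-bound}.

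The $L^p$ bound is obtained by integrating the tail. I would apply \eqref{gp:rigidity-bound} at every scale, so that for $t\ge K^\epsilon$ the event $\max|x_j-\pi j|>t$ forces a deviation at a larger effective scale $K'=t^{1/\epsilon}$ (monotone in $K$). Alternatively, the gap tails \eqref{gp:gap-tail} summed over labels give polynomial moments of $\max_{|j|\le 2K}|x_j|$; combining these with \eqref{gp:rigidity-bound} via H\"older controls the contribution of the bad event.

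For \eqref{gp:pv}, I would start from the Nevanlinna representation \eqref{rs:nevanlinna} with $s(\ri Y)\to\ri$ from \eqref{gp:infty}. The symmetric partial sums differ from the compensated sum by $\sum_{|j|\le M}x_j/(1+x_j^2)$ minus the constant $a$. Rigidity at dyadic $K$ together with Borel--Cantelli gives $|x_j-\pi j|\le |j|^{\epsilon}$ for all large $|j|$ almost surely. Pairing $j$ with $-j$ then shows that $\sum_j[x_j/(1+x_j^2)+x_{-j}/(1+x_{-j}^2)]$ converges absolutely, since each term is $O(|j|^{\epsilon-2})$. Hence the symmetric sum converges to $s(w)-b$ for a random real constant $b$, uniformly off the poles. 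Letting $w=\ri Y$ with $Y\to\infty$, the symmetric sum tends to $\ri$ by the same rigidity pairing, which forces $b=0$.

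The main obstacle is the Helffer--Sj\"ostrand step on small scales: \eqref{gp:local-bound} is only useful once $\Im w\ge K^{\epsilon}$. Near the real axis I must rely on the monotonicity of $y\Im s$ and on the interval counting bound at scale $K^\epsilon$ obtained from the Poisson kernel at height $K^\epsilon$, and I would check carefully that this controls the boundary layers.
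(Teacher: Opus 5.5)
Your overall route works, but the uniformization step fails as written. With a grid of spacing $Y$ at height $Y$, the Herglotz comparison you cite gives only $|s(w)-s(w_0)|\le C\Im s(w_0)$. On the good event $\Im s(w_0)\approx1$, so this is an $O(1)$ error. You need $O(K^{\epsilon/4}/\Im w)$, which is $o(1)$ once $\Im w\ge K^{\epsilon/2}$. The same loss occurs between consecutive dyadic heights.

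Either of two routine repairs fixes this:
\begin{itemize}
\item Use the Lipschitz form $|s(w)-s(w_0)|\le 2|w-w_0|\,\Im s(w_0)/\Im w_0$, valid for $|w-w_0|\le\Im w_0/2$, on a grid of spacing $O(1)$ in both directions. This gives an $O(1/\Im w)$ error, and there are still only $O(K^2)$ points, so your union bound survives.
\item Follow the paper: apply Cauchy's formula on disks of radius comparable to the height. Minkowski's inequality then bounds the $L^{2p}$ norm of $\sup \Im w\,|s(w)-\ri|$ over a whole rectangle by $C\sqrt p$. The paper union-bounds over $O(R)$ such rectangles with $p\asymp\log R$, which gives precision $\sqrt{\log R}$ rather than a power of $K$.
\end{itemize}

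With that fixed, your argument is correct and differs from the paper's in two places.

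\emph{Counting.} The paper avoids Helffer--Sj\"ostrand. It uses the Poisson-smoothed indicator $f(t)=\pi^{-1}(\arctan(b-t)-\arctan(a-t))$, for which $\int f\,\dd\mu-(b-a)/\pi=\pi^{-1}\Im\int_a^b(s(u+\ri)-\ri)\,\dd u$ holds exactly. It deforms this contour up to height $2R$, then compares $f$ with the sharp indicator by counting particles in dyadic endpoint annuli and in far intervals. This needs no small-scale monotonicity trick and yields an error $(\log R)^{3/2}$. Your transition width $K^{\epsilon/2}$ gives an error $O(K^{\epsilon/2})$, which is enough for the statement.

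\emph{The $L^p$ bound.} Your first route is a genuine simplification. Since $H_K=\max_{|j|\le 2K}|x_j-\pi j|$ is nondecreasing in $K$, taking $K'=\lfloor t^{1/\epsilon}\rfloor$ gives $\PP(H_K>t)\le Ct^{-D/\epsilon}$ for $t\ge K^\epsilon$. Integrating this tail gives $\|H_K\|_p\le CK^\epsilon$ once $D>\epsilon p$, using nothing but \eqref{gp:rigidity-bound}. The paper instead derives $\|H_K\|_{2p}\le C_pK$ from the gap tails \eqref{gp:gap-tail} and applies Cauchy--Schwarz on the bad event; that is your second route.

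Your principal-value argument matches the paper's: Borel--Cantelli rigidity, pairing opposite labels, and normalization at imaginary infinity.
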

\begin{proof}
The local law is a bound for a smoothed counting measure. To turn
it into labelled rigidity, we first make it uniform on a collection
of complex rectangles, then integrate along a contour to estimate
interval counts, and finally invert the count starting at $x_0=0$.
The principal-value identity is obtained only after this rigidity
has been proved.

For a dyadic height $h\geq1$, Cauchy's formula on disks of radius
comparable to $h$ and \eqref{gp:local-bound} give
\[
 \left\|\sup_{|u-u_0|\leq h/4,\ h\leq v\leq2h}
 v|s(u+\ri v)-\ri|\right\|_{2p}\leq C\sqrt p.
\]
The surrounding circles have heights comparable to $h$. Covering
$|u|\leq4R$, $1\leq v\leq8R$ requires $O(R+\log R)$ such rectangles.
At a threshold $A_D\sqrt{\log R}$, the probability for one
rectangle is at most
$(C\sqrt p/(A_D\sqrt{\log R}))^{2p}$.
Choose $p$ to be a sufficiently large multiple of $\log R$ and
then $A_D$ large enough. This probability is $O(R^{-D-2})$.
A union bound over the $O(R+\log R)$ rectangles gives,
with probability at least $1-C_DR^{-D}$,
\begin{equation}\label{gp:uniform}
 \sup_{|u|\leq4R,\ 1\leq v\leq8R}v|s(u+\ri v)-\ri|
 \leq C_D\sqrt{\log R}.
\end{equation}
Here we work throughout with the infinite labelled limit constructed above.

Fix $a,b\in[-R,R]$, $a\leq b$, and let
\[
 f(t)=\frac1\pi\bigl(\arctan(b-t)-\arctan(a-t)\bigr).
\]
Positivity and the scalar representation imply
\[
 \int f\dd \mu-\frac{b-a}{\pi}
 =\frac1\pi\Im\int_a^b(s(u+\ri)-\ri)\dd u.
\]
Deform the last integral to height $2R$. Equation~\eqref{gp:uniform}
bounds the top by $C_D\sqrt{\log R}$ and the two vertical sides by
$C_D(\log R)^{3/2}$.

For $c\in[-R,R]$ and $1\leq r\leq4R$, the same event gives
\[
 \mu([c-r,c+r])\leq2r\Im s(c+\ri r)
 \leq C(r+\sqrt{\log R}).
\]
The difference between $f$ and the interval indicator is bounded by one
within distance one of an endpoint, by $C/r$ on an endpoint annulus of
radius $r\in[1,4R]$, and by $CR/r^2$ outside $[-4R,4R]$.
Dyadic summation bounds the near contribution by
$C_D(\log R+\sqrt{\log R})$. For the far annuli,
\eqref{gp:local-bound} at $\ri 2^nR$ ensures simultaneously
$\mu([-2^nR,2^nR])\leq C2^nR$, $n\geq0$, outside an event of
probability $Ce^{-cR^2}$. Their contributions sum to a constant. Thus
\[
 \sup_{a,b\in[-R,R],\ a\leq b}
 \left|\mu((a,b])-\frac{b-a}{\pi}\right|
 \leq C_D(\log R)^{3/2}
\]
with probability at least $1-C_DR^{-D}$. Endpoint atoms contribute at
most two. With $R=10\pi K$, the estimate first confines all particles with
$|j|\leq2K$ to the interval; counting from $x_0=0$ then gives
\eqref{gp:rigidity-bound}.

For the $L^p$ conclusion, let
$H_K=\max_{|j|\le2K}|x_j-\pi j|$.
The nearest-gap tail at every fixed tag gives a uniform bound for
the $L^{2p}$ norm of each adjacent gap. Expressing a position as a
sum of at most $2K$ gaps and applying Minkowski gives
$\|H_K\|_{2p}\le C_pK$. On the good rigidity event,
$H_K\le K^\epsilon$; on its complement, Cauchy--Schwarz gives
\[
 \E H_K^p\mathbf1_{H_K>K^\epsilon}
 \le\|H_K\|_{2p}^p
       \PP(H_K>K^\epsilon)^{1/2}
 \le C K^{p-D/2}.
\]
The failure exponent $D$ can be chosen arbitrarily large. This
proves the stated moment bound, which will later be used for
conditional means, not only for typical boundary locations.

Finally, apply \eqref{gp:rigidity-bound} on dyadic scales with a fixed
$\epsilon<1$ and summable failure probabilities. Almost surely,
$x_j=\pi j+O(|j|^\epsilon)$ for all sufficiently large $|j|$. The paired
sum in \eqref{gp:pv} therefore converges locally, and its difference
from the equally spaced sum tends to zero at imaginary infinity by
domination with $C|j|^{\epsilon-2}$. The equally spaced paired sum is
\[
 \lim_{M\to\infty}\sum_{j=-M}^M\frac1{\pi j-w}=-\cot w,
\]
which tends to $\ri$ at $w=\ri Y$, because
$-\cot(\ri Y)=\ri\coth Y$.
Moreover, for large $|j|$,
\[
 \left|\frac1{x_j-\ri Y}-\frac1{\pi j-\ri Y}\right|
 \le C\frac{|j|^\epsilon}{j^2+Y^2}
 \le C|j|^{\epsilon-2}.
\]
The majorant is summable when $\epsilon<1$, and each fixed term
tends to zero as $Y\to\infty$. Dominated convergence proves the
normalization of the paired sum.
The paired sum and $s$ have the same representing measure and zero
linear coefficient. Their difference is therefore a real constant,
which vanishes by the normalization at infinity.

\end{proof}

\subsection{Conditional Gibbs measures}\label{gp:gibbs-section}

\subsubsection{Coordinate integration by parts}\label{gp:coordinate}

Let $\mathsf P$ denote the law of the simple rooted array
$\bmx=(x_j)_{j\in\mathbb Z}$ constructed above. Its transform $s$
is determined by the principal-value formula~\eqref{gp:pv}.
Retain $\zeta_j=\xi_{j1}\xi_{j2}$ and write the regular scalar part as
\[
 h_j=\lim_{w\to x_j}\left(s(w)-\frac1{x_j-w}\right).
\]
The expectation in the integration-by-parts identities below is
under this rooted state and, when marks occur, their conditional
Gaussian law. Since $s$ is determined by $\bmx$, conditioning on
$\bmx$ already fixes the entire scalar state. Write
\[
 \psi(t)=\frac{t}{1+t^2},\qquad a=\Re s(\ri),\qquad
 s(w)=a+\sum_{\ell\in\mathbb Z}
 \left(\frac1{x_\ell-w}-\psi(x_\ell)\right).
\]
The series converges locally off the poles, and
$\sum_\ell(1+x_\ell^2)^{-1}=\Im s(\ri)<\infty$.
The moving-frame pole-coefficient calculation gives
\[
 \mathcal V_0x_\ell=-2\kappa(\zeta_\ell-\zeta_0),\qquad
 \mathcal V_0a=-2\kappa\sum_{\ell\ne0}
       (\zeta_\ell-\zeta_0)\psi'(x_\ell),\qquad
 \E(\mathcal V_0\zeta_j\mid\bmx)=2\kappa h_j.
\]
Taking real parts of $\mathcal V_0s$ at $\ri$ gives the middle
identity; $|\psi'(t)|\leq(1+t^2)^{-1}$ makes its series integrable.

For $j\ne0$, define $D_j=\partial_{x_j}+\psi'(x_j)\partial_a$.
Gaussian contraction in the invariant derivatives of $\zeta_jF$ and
$\zeta_0F$ gives
\begin{equation}\label{gp:ibp}
 \E D_jF=\E h_jF,\qquad
 \E\sum_{\ell\ne0}D_\ell F=-\E h_0F.
\end{equation}
We use smooth cylinder tests compactly supported in $a$ and finitely
many positions, with neighboring gaps at least $\delta$ at each
differentiated pole weight. For $0<\eta<\delta/2$,
\[
 |h_j-\Re s(x_j+\ri \eta)|
 \leq\frac\eta\delta\Im s(x_j+\ri \eta).
\]
With $\eta=\delta/4$, the fixed-tag moments bound the drift.
For the second identity, we separate the root; its scalar-derivative
coefficient $\sum_{\ell\ne0}\psi'(x_\ell)$ converges absolutely.
All other derivatives involve Gaussian mark products.

Here is the Gaussian contraction yielding~\eqref{gp:ibp}.
For a scalar cylinder test $F$, the chain rule gives
\[
 \mathcal V_0F=-2\kappa\sum_{\ell\ne0}
                   (\zeta_\ell-\zeta_0)D_\ell F.
\]
The sum is finite except for the absolutely convergent coefficient
of $\partial_aF$. The conditional Gaussian law gives
$\E[\zeta_j\mid\bmx]=0$ and
$\E[\zeta_j\zeta_\ell\mid\bmx]=\mathbf1_{j=\ell}$.
Thus, for $j\ne0$,
\[
 0=\E\mathcal V_0(\zeta_jF)
   =2\kappa\E h_jF-2\kappa\E D_jF.
\]
For $j=0$, the contraction
$\E[\zeta_0(\zeta_\ell-\zeta_0)\mid\bmx]=-1$ for $\ell\ne0$
changes the sign and gives the second identity.
To check the remaining mean, $H_j$ is independent of the mark
$\boxi_j$ conditionally on the scalar state, and
$\E[H_j\mid\bmx]=h_jI_2$. Taking the $(1,2)$ entry of the
pole-weight equation and using $\E[R_j\mid\bmx]=I_2$ gives
$\E(\mathcal V_0\zeta_j\mid\bmx)=2\kappa h_j$.
The commutator term has zero conditional expectation.

The correction in $D_j$ is not optional. Moving a single particle
while holding the regular background fixed changes its compensator
$\psi(x_j)$, so the scalar coordinate $a$ must change by
$\psi'(x_j)$ times that displacement.
For a finite $I\subset\mathbb Z\setminus\{0\}$ and $j\in I$,
\[
 D_j\left(a-\sum_{k\in I}\psi(x_k)\right)
 =\psi'(x_j)-\psi'(x_j)=0.
\]
Every outside position is also annihilated by $D_j$.
Thus, if we use $a-\sum_{k\in I}\psi(x_k)$ and the outside positions
as the fixed coordinates, $D_j$ is simply the ordinary partial
derivative in the inside position $x_j$. Smooth exterior-cylinder
factors may depend on this corrected scalar. This is the coordinate
change used to disintegrate the identity below.

\subsubsection{Conditional law of a finite block}\label{gp:conditional}

To free the root, translate by $\tau\in\R$: set
$z_j=x_j+\tau$, $s^\tau(w)=s(w-\tau)$, and
\begin{equation}\label{gp:translate}
 a^\tau=a+\sum_\ell\bigl(\psi(x_\ell+\tau)-\psi(x_\ell)\bigr).
\end{equation}
This sum converges absolutely. For $|\tau|\leq R$,
\[
 \sum_\ell|\psi'(x_\ell+\tau)|
 \leq C_R\sum_\ell(1+x_\ell^2)^{-1}=C_R\Im s(\ri).
\]
This bound and Gaussian moments dominate the truncated flow
derivatives, extending the identities to bounded smooth tests of
\eqref{gp:translate}.

Integrate the translated law against $\dd\tau$ to obtain the
sigma-finite measure $\mathsf Q$. It is not a probability measure:
since $z_0=\tau$, its mass on $\{|z_0|\le R\}$ is $2R$.
The normalized probability measure is introduced below.
The derivative
$\widetilde D_j=\partial_{z_j}+\psi'(z_j)\partial_{a^\tau}$
is the pushforward of $D_j$ for $j\ne0$, and of
$\partial_\tau-\sum_{\ell\ne0}D_\ell$ for $j=0$.
Lebesgue integration and \eqref{gp:ibp} give, for every label,
\begin{equation}\label{gp:lift-ibp}
 \int\widetilde D_jF\dd \mathsf Q=\int h_jF\dd \mathsf Q.
\end{equation}
The regular parts are translation invariant, and we retain their
collision cutoffs.

Fix a finite consecutive block $I$, which may contain zero. Choose
$J\notin I$ and a nonnegative smooth compactly supported $\varphi$
with integral one. Then
\begin{equation}\label{gp:normalization}
 \dd\mathsf Q_\varphi=\varphi(z_J)\dd \mathsf Q,
 \qquad \int_\R\varphi(x_J+\tau)\dd \tau=1.
\end{equation}
This probability law preserves every relative observable.

More precisely, for any bounded measurable function $F$ of the
relative array,
\[
 \int F((z_j-z_0)_j)\,\dd\mathsf Q_\varphi
 =\E^{\mathsf P}\left[F((x_j)_j)
                    \int_\R\varphi(x_J+\tau)\,\dd\tau\right]
 =\E^{\mathsf P}F((x_j)_j).
\]
The infinite translation measure $\mathsf Q$ has thus been
normalized without biasing any gap statistic. It is important that
$J\notin I$: using a weight attached to an inside particle would
produce an extra derivative of that weight in the conditional
integration-by-parts formula.
 Since the
normalizing label lies outside the block, inside derivatives annihilate
the weight. Removing a translation cutoff with
derivative $O(R^{-1})$ costs $O(R^{-1})$ by
\eqref{gp:normalization}, so \eqref{gp:lift-ibp} holds under
$\mathsf Q_\varphi$ for inside labels. Translation invariance preserves
the localized drift bounds.

\begin{lemma}[Conditional Gibbs laws from integration by parts]\label{gp:gibbs}
Under $\mathsf Q_\varphi$, condition on the outside positions
$\bmz=(z_\ell)_{\ell\notin I}$. The inside variables are
$\bmt=(t_j)_{j\in I}$, and $\mathcal D_I(\bmz)$ is the chamber
\[
 z_{\min I-1}<t_{\min I}<\cdots<t_{\max I}<z_{\max I+1}.
\]
There is an exterior-measurable real scalar $b$ such that the
conditional law of $(z_j)_{j\in I}$ is
\begin{equation}\label{gp:density}
 \frac{\mathbf1_{\mathcal D_I(\bmz)}(\bmt)}{Z_I(\bmz,b)}
 \prod_{j<k,\ j,k\in I}|t_j-t_k|
 \exp\left(\sum_{j\in I}\mathcal U_{\bmz,b}(t_j)\right)
 \prod_{j\in I}\dd t_j,
\end{equation}
where
\[
 \mathcal U_{\bmz,b}(t)
 =-bt+\sum_{\ell\notin I}
 \left[\log\frac{|t-z_\ell|}{\sqrt{1+z_\ell^2}}
       +t\psi(z_\ell)\right].
\]
The combined summands converge absolutely at infinity. All particles
of $I$, including the former root, are ordinary variables in
\eqref{gp:density}.
\end{lemma}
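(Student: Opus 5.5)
The plan is to read \eqref{gp:lift-ibp} under $\mathsf Q_\varphi$ as a first-order system for the conditional density of the inside block, and then identify its solution with \eqref{gp:density}.

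\emph{Step 1: coordinates.} Following the coordinate change at the end of Subsubsection~\ref{gp:coordinate}, we parametrize the state by the inside positions $\bmt=(z_j)_{j\in I}$, the outside positions $\bmz$, and the corrected scalar
\[
 \tilde a=a^\tau-\sum_{k\in I}\psi(z_k).
\]
In these coordinates each $\widetilde D_j$ with $j\in I$ is the ordinary partial derivative $\partial_{t_j}$. Since $J\notin I$, the normalizing weight $\varphi(z_J)$ is exterior. Smooth exterior-cylinder factors (depending on $\bmz,\tilde a$) may therefore be inserted into \eqref{gp:lift-ibp} without producing extra terms. Disintegrating $\mathsf Q_\varphi$ over $(\bmz,\tilde a)$ gives, for a.e.\ exterior datum, a conditional law $\nu$ of $\bmt$ on the chamber $\mathcal D_I(\bmz)$. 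It satisfies
\[
 \int\partial_{t_j}f\,\dd\nu=\int h_jf\,\dd\nu
\]
for smooth $f$ compactly supported in the open chamber, with gaps bounded below. A countable dense family of tests is needed here so that the null sets can be discarded uniformly.

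\emph{Step 2: the drift is an explicit gradient.} By the principal-value representation (Lemma~\ref{gp:rigidity}) and the translated Nevanlinna form,
\[
 h_j=a^\tau+\sum_{\ell\neq j}\Bigl(\frac1{z_\ell-z_j}-\psi(z_\ell)\Bigr)-\psi(z_j).
\]
Here the $-\psi(z_j)$ term comes from the compensator of the removed pole. Split $a^\tau=\tilde a+\sum_{k\in I}\psi(z_k)$. The inside terms combine to
\[
 \sum_{k\in I,k\ne j}(z_k-z_j)^{-1}=-\partial_{t_j}\sum_{k<l}\log|t_k-t_l|,
\]
which carries the correct sign once one notes that $\E D_jF=\E h_jF$ means the density $\rho$ satisfies $-\partial_j\log\rho=h_j$. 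The outside terms give
\[
 \partial_t\sum_{\ell\notin I}\Bigl[\log\frac{|t-z_\ell|}{\sqrt{1+z_\ell^2}}+t\psi(z_\ell)\Bigr].
\]
The remaining constant, $\tilde a$ together with the global sign, gives $-bt$ with $b$ exterior-measurable; I expect $b=\tilde a$ up to sign. In the course of this I will verify absolute convergence of the outside sum at infinity, which follows from $\sum_\ell(1+z_\ell^2)^{-1}<\infty$, the cancellation $\log|t-z|-\log\sqrt{1+z^2}+t\psi(z)=O(z^{-2})$, and the rigidity input for local uniformity in $t$. Thus $h_j=-\partial_{t_j}\Phi(\bmt)$ for the explicit log-gas energy $\Phi$, where $e^{-\Phi}$ is the weight in \eqref{gp:density}.

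\emph{Step 3: uniqueness (the main obstacle).} The identity says that $\nu e^{\Phi}$ has vanishing distributional gradient on the connected open chamber, and hence is a constant multiple of Lebesgue measure there. This requires two things. First, we must know that $\nu$ charges neither the chamber boundary, where collisions occur, nor singular sets. Simplicity (Lemma~\ref{gp:simple}) excludes inside collisions. Contact with the exterior neighbours $z_{\min I-1}$, $z_{\max I+1}$ also has measure zero, because those are also simple points of the configuration. Second, we must remove the collision cutoffs imposed on the tests: the identity is only known for $f$ supported where the gaps exceed $\delta$. Letting $\delta\to0$ yields the distributional equation on the whole open chamber. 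A distribution on a connected open set with zero gradient is constant, so no boundary integrability is needed beyond finiteness of $\nu$. The normalizer $Z_I$ is then finite a.e., because $\nu$ is a probability measure. The remaining care is the measurability of the disintegration and the fact that the drift bounds from Subsubsection~\ref{gp:coordinate} make $h_jf$ integrable. Finally, the root label plays no special role after translation, since $\widetilde D_0$ also satisfies \eqref{gp:lift-ibp}. This gives \eqref{gp:density} with the former root as an ordinary variable.
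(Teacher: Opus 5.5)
Your Steps 1--3 follow the paper's proof closely. You disintegrate with the corrected scalar $\tilde a=a^\tau-\sum_{k\in I}\psi(z_k)$ held fixed. You then compute $h_j=-\partial_{t_j}\log w_I$; this holds with $b=\tilde a$ exactly, sign included, not merely up to sign. Finally you conclude from the vanishing distributional gradient of $w_I^{-1}\nu$ on the connected chamber. Your shortcut for $0<Z_I<\infty$ is fine: it reads this off from $\nu=c\,w_I\,\dd\bmt$ being a probability measure with no mass on the chamber boundary.

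The gap is in the last step, which you never carry out. You have conditioned on $(\bmz,\tilde a)$, not on $\bmz$ alone. You call $b=\tilde a$ ``exterior-measurable'' only because you placed $\tilde a$ among the exterior data, whereas the lemma conditions on the outside positions only. Unless $\tilde a$ is almost surely a function of $\bmz$, the conditional law given $\bmz$ is
\[
 \int\mathrm{Gibbs}_{\bmz,b}\,\dd\Law(\tilde a\mid\bmz)(b).
\]
This is a mixture of log gases with different linear potentials $-bt$, which is not of the form \eqref{gp:density}. It also would not give the single exterior-determined force $W_I'$ required in Proposition~\ref{gp:rigid-gibbs}.

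The paper closes this with a separate step. From \eqref{gp:infty} and kernel comparison, $s^\tau(\ri Y)\to\ri$ in probability for bounded translations. Taking real parts gives
\[
 b=\lim_{Y\to\infty}\sum_{\ell\notin I}\Bigl(\psi(z_\ell)-\frac{z_\ell}{z_\ell^2+Y^2}\Bigr)
\]
in probability. Hence $b$ is measurable for the completed exterior $\sigma$-field, and the extra conditioning can be dropped.

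You already cite the principal-value identity, which gives the same conclusion more directly. Translating \eqref{gp:pv} and taking real parts at $\ri$ gives $a^\tau=\lim_M\sum_{|\ell|\le M}\psi(z_\ell)$. Therefore $\tilde a=\lim_M\sum_{|\ell|\le M,\ \ell\notin I}\psi(z_\ell)$ is a function of $\bmz$. You must state and use this, however; as written, your argument proves the Gibbs property only conditionally on the larger $\sigma$-field generated by $(\bmz,\tilde a)$.
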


\begin{proof}
We do not assume beforehand that a conditional density exists.
The integration-by-parts identities first identify the logarithmic
derivative of the conditional measure in the distributional sense.
An elementary argument on the connected ordered chamber then proves
absolute continuity and identifies the density. Finally we show
that the auxiliary scalar being conditioned on is already determined
by the exterior.

\proofstep{1. Disintegrate with the correct scalar fixed}
First condition on the outside positions and also on
$b=a^\tau-\sum_{j\in I}\psi(z_j)$. Let $\nu$ denote this conditional
probability measure on the inside variables $\bmt$; at this point
we do not yet assume it has a density. The inside derivatives
annihilate $b$ and all outside coordinates. Moreover
\[
 h_j=\sum_{k\in I\setminus\{j\}}\frac1{t_k-t_j}
       +b+\sum_{\ell\notin I}
       \left(\frac1{z_\ell-t_j}-\psi(z_\ell)\right).
\]
We localize to rational compact boxes inside the chamber, whose exterior
separation is determined by its two neighbors. A monotone-class
argument extends \eqref{gp:lift-ibp} to bounded exterior factors.
Countable dense test families on these boxes give the conditional
identities on one full-measure set. Conditional probabilities and
identities in this proof are understood for almost every exterior
and scalar $b$, not for every arbitrary boundary configuration.

\proofstep{2. Recover the density from its logarithmic derivative}
For the candidate unnormalized density $w_I$ in~\eqref{gp:density},
\[
 \partial_j\log w_I
 =\sum_{k\in I\setminus\{j\}}\frac1{t_j-t_k}
       -b+\sum_{\ell\notin I}
                  \left(\frac1{t_j-z_\ell}+\psi(z_\ell)\right)
 =-h_j.
\]
The conditional identities say
$\int\partial_jf\,\dd\nu=\int h_jf\,\dd\nu$ for compactly
supported smooth $f$ in the chamber. Hence, in distributions,
$\partial_j\nu=-h_j\nu=(\partial_j\log w_I)\nu$.
On a compact subset of the chamber $w_I$ is strictly positive and
smooth, so the distributional product rule gives
\[
 \partial_j(w_I^{-1}\nu)=0\quad\text{for every }j\in I.
\]
To see why this identifies a density, convolve the locally finite
measure $w_I^{-1}\nu$ with a smooth approximate identity on a ball
compactly contained in the chamber. Every derivative of the
convolution is zero on a slightly smaller ball, so that convolution
is constant there. Passing to the limit shows that the measure is
a constant multiple of Lebesgue measure on that ball. Overlapping
balls have the same constant; connectedness of the ordered chamber
therefore gives $\nu=cw_I\,\dd\bmt$ throughout it.

\proofstep{3. Normalize the density}
For bounded $t$ and large $|z|$, Taylor expansion gives
\[
 \log\frac{|t-z|}{\sqrt{1+z^2}}+t\psi(z)
 =-\frac{1+t^2}{2z^2}+O_R(|z|^{-3}),\qquad |t|\le R.
\]
Thus the tail, and its derivatives on compact subintervals, converge
uniformly because $\sum_\ell(1+z_\ell^2)^{-1}<\infty$.
There are only finitely many nearby exterior factors. Those at the
two boundary particles vanish as an inside particle approaches the
boundary; the Vandermonde factors vanish at inside collisions.
There is no positive blow-up at the chamber boundary. Thus
$0<Z_I<\infty$; simplicity gives mass one in the chamber and
$c=Z_I^{-1}$.

\proofstep{4. Remove the extra scalar conditioning}
For $|\tau|\le R$ and $Y\ge2R+1$, kernel comparison gives
\[
 |s(\ri Y-\tau)-s(\ri Y)|
 \le\frac{C_R}{Y}\Im s(\ri Y).
\]
The height moments and~\eqref{gp:infty} therefore imply
$s^\tau(\ri Y)\to\ri$ in probability uniformly for bounded
$\tau$. Under the probability law $\mathsf Q_\varphi$, the
translation variable is finite almost surely, so its tail can be
removed by taking $R\to\infty$ after $Y\to\infty$. Taking real parts yields
\[
 b=\lim_{Y\to\infty}\sum_{\ell\notin I}
 \left(\psi(z_\ell)-\frac{z_\ell}{z_\ell^2+Y^2}\right)
 \quad\text{in probability}.
\]
The finite inside resolvent sum vanishes. An almost surely convergent
subsequence makes $b$ measurable in the completed exterior sigma-field,
so the additional conditioning can be removed.
This final measurability step matters: conditioning only on the
exterior would otherwise produce a mixture of densities with
different linear potentials $-bt$. The normalization at imaginary
infinity shows that there is a single exterior-determined $b$,
so the conditional law is exactly the Gibbs density stated in the
lemma. The former root is now one of the ordinary inside variables;
there is no hidden atom constrained to stay at zero.
\end{proof}

\subsection{Identification of the gap law}\label{gp:comparison-section}

The preceding argument supplies rigidity and a conditional log-gas
structure. The following proposition isolates the consequence for gaps.
Let $\Theta_1^0$ be the Palm process of $\Theta_1$, with increasing
points $(y_j)_{j\in\mathbb Z}$ and $y_0=0$, and write
$G_j=y_{j+1}-y_j$ for its consecutive gaps. These gaps have mean $\pi$;
they are not divided by $\pi$.

The Palm law describes the configuration seen from a typical
particle, rather than from a deterministic point in space. For a
bounded interval $I$ of positive length and a bounded measurable
rooted test $F$, it is characterized by Campbell's identity
\[
 \E\sum_{x\in\Theta_1\cap I}F(\Theta_1-x)
 =\frac{|I|}{\pi}\,\E^0 F(\Theta_1^0).
\]
Here $\E^0$ is expectation under the Palm probability law. Taking
$F=1$ recovers $\E\Theta_1(I)=|I|/\pi$; in particular, the prefactor
is one for $I=[0,\pi]$. This avoids interpreting Palm conditioning
as conditioning on the probability-zero event of an atom at one
prescribed location.

\begin{proposition}[Gap identification from rigidity and conditional Gibbs laws]
\label{gp:rigid-gibbs}
Let $\mathsf P$ be the law of a strictly increasing array
$(x_j)_{j\in\mathbb Z}$ with $x_0=0$. Suppose the following hold.
\begin{enumerate}[label=\textup{(\roman*)},leftmargin=*]
\item For every $\epsilon,D>0$ and $1\le p<\infty$, there are constants
such that, for every integer $K\ge2$,
\begin{equation}\label{gp:abstract-rigidity}
\begin{split}
 \mathsf P\left(\max_{|j|\le2K}|x_j-\pi j|>K^\epsilon\right)
 &\le C_{\epsilon,D}K^{-D},\\
 \left\|\max_{|j|\le2K}|x_j-\pi j|\right\|_{L^p(\mathsf P)}
 &\le C_{p,\epsilon}K^\epsilon.
\end{split}
\end{equation}
\item For every sufficiently large even $K$, there is a probability law
$\mathsf Q_K$ on strictly increasing arrays $(z_j)_{j\in\mathbb Z}$
such that
\[
 \Law_{\mathsf Q_K}\bigl((z_j-z_0)_{j\in\mathbb Z}\bigr)
 =\mathsf P.
\]
For $I=\{-K,\ldots,K\}$, conditionally on all outside coordinates,
the inside coordinates have density
\begin{equation}\label{gp:abstract-density}
 \frac{\mathbf1_{\{z_{-K-1}<t_{-K}<\cdots<t_K<z_{K+1}\}}}{Z_I}
 \prod_{\substack{j<k\\j,k\in I}}|t_j-t_k|
 \exp\left(-\sum_{j\in I}W_I(t_j)\right)\prod_{j\in I}\dd t_j,
\end{equation}
where $0<Z_I<\infty$ and $W_I$ is an exterior-measurable $C^1$ function
on $(z_{-K-1},z_{K+1})$ satisfying
\begin{equation}\label{gp:abstract-force}
 W_I'(t)=\lim_{R\to\infty}\sum_{K<|\ell|\le R}\frac1{z_\ell-t}.
\end{equation}
The principal value pairs opposite labels and converges locally
uniformly on this interval. The auxiliary laws may depend on $K$.
\end{enumerate}
Then, for every fixed $r\in\mathbb Z$ and $k\ge1$,
\[
 \left(x_{r+1}-x_r,\ldots,x_{r+k}-x_{r+k-1}\right)
 \stackrel{\mathrm d}= (G_0,\ldots,G_{k-1}).
\]
\end{proposition}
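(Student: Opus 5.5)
The plan is to compare the block law under $\mathsf Q_K$ with a GOE-type reference whose gaps are known. This is the Erd\H{o}s--Yau local log-gas comparison \cite[Proposition~5.3]{EYGap}, used as the introduction announces. Fix $r,k$ and a bounded Lipschitz test $F$ of the $k$ consecutive gaps starting at label $r$. Since $\Law_{\mathsf Q_K}((z_j-z_0)_j)=\mathsf P$, it suffices to compute $\E^{\mathsf Q_K}F$ of the gaps $z_{r+1}-z_r,\ldots$ for inside labels.

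Write this expectation as an average over exteriors of the conditional Gibbs expectations \eqref{gp:abstract-density}. On the rigidity event of \eqref{gp:abstract-rigidity}, shifted by the finite translation $z_0$, the exterior satisfies $|z_\ell-z_0-\pi\ell|\le K^\epsilon$ for $|\ell|\le2K$. The $L^p$ bound controls the complement together with the conditional means. The goal is to show that, for such a good exterior, the conditional gap law at bulk labels $r$ (with $|r|\ll K$) differs from a reference only by $o(1)$ as $K\to\infty$.

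The reference is built by Gaussian completion. The distant exterior beyond $2K$ is replaced by deterministic classical locations $\pi\ell$, extended by a large GOE-like (or equally spaced) completion. The nearby exterior $K<|\ell|\le2K$ is kept as given. The key input is \eqref{gp:abstract-force}. The external potential $W_I$ has the derivative of the principal-value log interaction, so its difference from the completed potential is
\[
\sum_{K<|\ell|}\Bigl(\frac1{z_\ell-t}-\frac1{\pi\ell+z_0-t}\Bigr).
\]
By rigidity, this difference has derivative on the inner half of the block of size $O(K^{\epsilon-1})$ from near terms. Far terms are paired symmetrically and give a summable $O(|\ell|^{\epsilon-2})$ contribution. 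The block measures therefore fit the hypotheses of \cite[Proposition~5.3]{EYGap}: $\beta=1$ log gases on $2K+1$ particles, boundary conditions rigid at scale $K^\epsilon$, and potentials with matching convex parts. That result compares gap statistics of two such local log gases with rigid boundary conditions via Dyson Brownian motion relaxation, up to errors $K^{-c}$ for labels in the middle of the block.

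Next, identify the reference. Take the GOE of size $n\gg K$ at a bulk label, rescaled so that the mean spacing is $\pi$, and condition on its exterior. GOE rigidity shows that its conditional block laws satisfy the same hypotheses, and \cite[Proposition~5.3]{EYGap} applies to it as well. Its labelled gaps converge to those of $\Theta_1^0$: the GOE kernel asymptotics \cite{DG} give $\Theta_1$ at fixed energy, and averaging the energy over a mesoscopic window plus Campbell's identity converts this to the Palm gap law at a typical label. Applying the comparison once for $\mathsf P$ and once for GOE gives $|\E^{\mathsf P}F-\E^0F(G_r,\ldots)|\le o_K(1)$. Since $\Theta_1^0$ has stationary gap sequence (label stationarity of Palm measures), the shift by $r$ is irrelevant. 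Letting $K\to\infty$ along even integers proves the claimed equality in distribution.

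The main obstacle is verifying the precise hypotheses of the Erd\H{o}s--Yau comparison from the abstract data. Their statement is phrased with a potential coming from a finite-$N$ $\beta$-ensemble, a macroscopic convex confinement, and high-probability rigidity inside the conditional measure, not just at the boundary. The first attempt would be to establish inner rigidity of the conditional block from outer rigidity, via log-concavity of \eqref{gp:abstract-density} on the chamber (the $\beta=1$ Vandermonde plus $-W_I$, with $W_I$ convex up to the controlled error). Then check that the completed exterior produces a matching reference potential. The $L^p$ moment bound is needed to handle conditional means of boundary positions on bad exteriors, and the translation $z_0$ must be kept finite through the choice of $\mathsf Q_K$.
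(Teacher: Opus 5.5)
\textbf{There is a genuine gap.} Your outline has the right ingredients: Gaussian completion, the Erd\H{o}s--Yau comparison, and GOE kernel asymptotics with Campbell's formula. What is missing is the step that lets you invoke \cite[Proposition~5.3]{EYGap}, and with completion at scale $2K$ that step cannot be supplied.

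That proposition, and the conditional rigidity \cite[Theorem~4.2]{EYGap} it relies on, concern local equilibrium measures of a \emph{finite} $\beta$-ensemble. They assume a fixed potential with its classical locations and global rigidity estimates, a block of size between $N^{\delta_*}$ and $N^{1/4}$, and boundary conditions in the good sets of \cite{EYGap}. The conditional law $\mu_\infty$ in \eqref{gp:abstract-density} has an infinite random exterior and no confining potential, so it is not such a measure. Log-concavity of the chamber density does not change this.

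Nor is your completed reference close to $\mu_\infty$. If you keep only $K<|\ell|\le2K$, the force difference $\sum_{|\ell|>2K}\bigl((z_\ell-t)^{-1}-(\pi\ell+z_0-t)^{-1}\bigr)$ really is of order $K^{\epsilon-1}$ on the block: a common shift of the far exterior by $K^\epsilon$ produces a linear tilt of exactly that slope. Integrated across a block of length of order $K$ and summed over $2K+1$ particles, this changes the Hamiltonian by order $K^{1+\epsilon}$. That is far from the uniform $o(1)$ a total-variation comparison needs. So you would be applying the black box outside its scope, or re-proving the relaxation argument for general local log gases.

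In addition, $z_0$ is an \emph{inside} variable of the block $I$. Locations $\pi\ell+z_0$ therefore do not define a boundary condition for the law conditioned on the exterior. The paper instead centres at the exterior-measurable midpoint of $z_{-K-1}$ and $z_{K+1}$.

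\textbf{The paper's fix} is to complete much farther out, so that the completed measure is at once an honest conditional GOE measure and $O(K^{-2})$-close to $\mu_\infty$ in total variation.
\begin{itemize}
\item It keeps the true exterior for $K<|j|\le L=K^5$. It replaces only $|j|>L$ by classical locations of a GOE of size $M=K^{16}$, shifted outward by $L^{2\epsilon}$ to preserve ordering.
\item The forces then coincide for $K<|j|\le L$. The two remaining tails are $O(L^{\epsilon-1})$ at the centre (by opposite-label pairing) and have derivatives $O(1/L)$.
\item Hence the force difference on the block is $O(K/L)$, and the one-body potentials differ by $O(K^2/L)$ after normalizing constants. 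Since the Vandermonde factors agree, the full Hamiltonians differ by $O(K^3/L)=O(K^{-2})$.
\item This gives $\|\mu_\infty-\mu_M\|_{\TV}=O(K^{-2})$. The conditional means obtained from the $L^p$ rigidity by conditional Jensen then transfer to $\mu_M$.
\item Finally \cite{EYGap} is applied with \emph{both} ensembles Gaussian and $M^{\delta_*}\le K\le M^{1/4}$.
\end{itemize}
The essential scale constraint is $K^3\ll L\ll M$, and matching at $2K$ violates its first half.

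\textbf{A smaller point.} The Campbell and kernel computation identifies only a label-averaged GOE gap law. To reach a fixed label you also need the label comparison \cite[Theorem~2.3]{EYGap}. Your use of point-stationarity of $\Theta_1^0$ to absorb the shift $r$ is a valid substitute for the corresponding step in the paper.
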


The auxiliary laws in (ii) need not be the same for different $K$,
and they need not be invariant under translations. What is required
is that each preserves the same relative array law and allows all
particles of the chosen block to move. This is exactly what
\eqref{gp:normalization} provides. The proposition does not assume
that the original rooted process is already stationary under changing
the tag.

The proof is given in Appendix~\ref{app:gap-identification}. It compares
large conditional blocks with GOE blocks using the local log-gas
universality theorem of Erd\H{o}s and Yau~\cite{EYGap}. Rigidity controls
the effect of completing the distant exterior with Gaussian classical
locations. All particles in the conditional block, including label zero,
are ordinary variables in~\eqref{gp:abstract-density}.

\begin{proof}[Proof of Theorem~\ref{th:gaps}(i)]
We verify the two hypotheses of the abstract gap-identification
proposition for every subsequential tagged law, and only then return
to the finite-$N$ normalization and decreasing eigenvalue order.
Consider any deterministic bulk indices $i_N$. Pass to a subsequence
with $\gamma_{i_N}\to E$ in the strict bulk and then to a jointly
labelled tagged limit with law $\mathsf P$.
Proposition~\ref{gp:state} and Lemma~\ref{gp:simple} give a strictly
increasing array with $x_0=0$. Lemma~\ref{gp:rigidity} gives the first
hypothesis of Proposition~\ref{gp:rigid-gibbs}.

For the second hypothesis, take $I=\{-K,\ldots,K\}$ and use
$\mathsf Q_K=\mathsf Q_\varphi$ from~\eqref{gp:normalization}, with
normalizing label $J=K+2$. Its relative array has law $\mathsf P$.
Lemma~\ref{gp:gibbs} gives~\eqref{gp:abstract-density} with
$W_I=-\mathcal U_{\bmz,b}$. The translated principal-value
identity~\eqref{gp:pv} gives
\[
 a^\tau=\operatorname{PV}\sum_{\ell\in\mathbb Z}\psi(z_\ell),
 \qquad
 b=\operatorname{PV}\sum_{\ell\notin I}\psi(z_\ell).
\]
Differentiating the conditional potential therefore cancels the
compensators and yields~\eqref{gp:abstract-force}. Rigidity gives
local uniform convergence of the paired sum. The proposition now
identifies every fixed consecutive gap vector of the tagged limit.

Since $\rho_d(\gamma_{i_N})/\rho_d(E)\to1$, replacing $N\sigma$
by $N\pi\rho_d(\gamma_{i_N})$ gives the theorem's normalization,
without any further dilation. Take $r=-k$ and reverse the
$k$ coordinates to obtain the descending graph gaps. Finite GOE
reflection maps the corresponding reversed block to right gaps at
label $M/2+1$, whose limit is $(G_0,\ldots,G_{k-1})$ by
Lemma~\ref{gp:goe}. The bulk-label comparison in
\cite[Theorem~2.3]{EYGap} gives the same limit for the GOE gaps at
the original indices, with the factor $N\pi\rho_{\mathrm{sc}}$ used
in the theorem. The fixed factor $\pi$ is absorbed by composing the
test function in the cited comparison with this dilation.
Every fixed neighboring gap vector is tight by Proposition~\ref{gp:state};
thus smooth compactly supported tests identify the weak limit and
convergence extends to $F\in C_b(\R^k)$.
If uniformity over the bulk indices failed, there would be
$\varepsilon_0>0$ and deterministic indices $i_N$ along a subsequence
for which the displayed difference in the theorem is at least
$\varepsilon_0$. Compactness of the bulk quantile interval gives a
further subsequence with $\gamma_{i_N}\to E$.
The tagged construction and identification above force both the graph
and GOE expectations along that subsequence to have the same limit,
a contradiction. Thus the convergence is uniform over the full bulk
index range, not merely pointwise for a fixed proportion $i/N$.
\end{proof}

\section{Gap identification by Gaussian completion}\label{app:gap-identification}

We prove Proposition~\ref{gp:rigid-gibbs}, which is invoked at the end
of Appendix~\ref{sec:gap-universality}. Gaussian completion, exterior
rigidity, and conditional mean bounds reduce its abstract gap law to
the GOE reference law. The fixed-energy point-process theorem and the
correlation upgrade do not use this comparison.
Throughout this appendix, $\mathsf P$ and $\mathsf Q_K$
satisfy the two hypotheses of that proposition. In the conditional
comparison, probabilities of exterior events and unsubscripted
expectations are under $\mathsf Q_K$. Superscripts $\mu_\infty$,
$\mu_M$ and $\mathrm{GOE}_M$ specify, respectively, the two
fixed-exterior conditional laws and the unconditional GOE law.
The GOE identification proof states its own expectation convention.
Only in this appendix, GOE eigenvalues are ordered increasingly,
$\mu_1\le\cdots\le\mu_M$, to agree with the labelled arrays.

Gaussian completion here means replacing only the \emph{distant
exterior particle locations} of a limiting conditional log gas by
suitable GOE classical locations. It does not mean adding a Gaussian
matrix perturbation to the graph. The nearby boundary particles and
the whole inside chamber are kept unchanged. A force estimate then
compares the two conditional densities, after which the cited local
log-gas theorem compares their gaps. The use of that theorem is
isolated in this appendix.

\subsection{Exterior rigidity and conditional means}

Set $\delta_*=1/32$, choose $\xi>0$ sufficiently small for
\cite[Proposition 5.3]{EYGap}, and put
\[
 \xi_*=\frac{\xi^2}{16},\qquad \nu=\frac{\xi^2}{64},\qquad
 0<\epsilon<\min(\xi^2/1000,1/5),\qquad M=K^{16},\qquad L=K^5.
\]
Here $K\to\infty$ through even integers.

These powers provide comfortable separation rather than optimized
rates. The inside block has order $K$ particles, the original
exterior is kept out to order $L=K^5$, and the auxiliary GOE has
size $M=K^{16}$. Thus $K\ll L\ll M$, while
$K^3/L=K^{-2}$ will control the full conditional Hamiltonian error.
The small exponents are chosen before $K$ tends to infinity.
 Work under the auxiliary law
$\mathsf Q_K$ in Proposition~\ref{gp:rigid-gibbs}, put
$I=\{-K,\ldots,K\}$, and condition on the exterior sigma-field
$\mathcal E$.
Define the exterior-measurable midpoint and interval length by
\[
 m=\frac{z_{-K-1}+z_{K+1}}2,\qquad
 \ell=z_{K+1}-z_{-K-1},\qquad J=(-\ell/2,\ell/2).
\]
Use the single centered array $u_j=z_j-m$ for every label $j$.
After conditioning on $\mathcal E$, the coordinates with $j\notin I$
are fixed. Let $\mu_\infty$ be the conditional law of the inside
array $(u_j)_{j\in I}$, and put
$\alpha_j=j\ell/(2K+2)$ for its equally spaced reference locations. The rigidity assumption
\eqref{gp:abstract-rigidity} and a dyadic union give, outside an event of probability $O(K^{-D})$,
\begin{equation}\label{gp:exterior}
 \ell=2\pi(K+1)+O(K^\epsilon),\qquad
 |u_j-\pi j|\leq C(K+|j|)^\epsilon\quad (|j|>K).
\end{equation}
Indeed centering changes each rooted position by $O(K^\epsilon)$.
Moreover $H=\max_{j\in I}|u_j-\alpha_j|$ is translation invariant
and $\|H\|_p\leq C_{p,\epsilon}K^\epsilon$. Conditional Jensen's inequality and
Markov's inequality give
\[
 \PP\left(\max_{j\in I}|\E^{\mu_\infty}u_j-\alpha_j|>K^{\xi_*}\right)
 \leq C_{p,\epsilon}K^{-p(\xi_*-\epsilon)}.
\]
Thus, for every $D$, an exterior event of probability $1-O(K^{-D})$
satisfies \eqref{gp:exterior} and
\begin{equation}\label{gp:means}
 \max_{j\in I}|\E^{\mu_\infty}u_j-\alpha_j|\leq K^{\xi_*}.
\end{equation}

To check that the event of good conditional means is an exterior
event, observe that
$\max_j|\E[u_j\mid\mathcal E]-\alpha_j|
\le\E[H\mid\mathcal E]$. Conditional Jensen gives
$\E(\E[H\mid\mathcal E])^p\le\E H^p$.
This proves the displayed probability estimate without assuming
independence between the inside and outside coordinates. Since
$\epsilon<\xi_*$, taking $p$ sufficiently large gives any prescribed
power $K^{-D}$. The $L^p$ part of the rigidity hypothesis is used
here; a high-probability position estimate alone would not directly
control these conditional means.

Fix an integer $r$ and $k\ge1$. The two gap vectors are
\[
 \begin{aligned}
 \bmg_r&=(u_{r+1}-u_r,\ldots,u_{r+k}-u_{r+k-1}),\\
 \bmg_{M,r}&=M
       (\mu_{M/2+r+1}-\mu_{M/2+r},\ldots,
        \mu_{M/2+r+k}-\mu_{M/2+r+k-1}).
 \end{aligned}
\]
The first is unaffected by centering at $m$. For fixed $r,k$, all
its labels lie in the inside block once $K$ is large enough.
The GOE factor is $M$, because $\pi\rho_{\mathrm{sc}}(0)=1$;
both gap vectors therefore have the mean-spacing-$\pi$ convention.

\subsection{Comparison by Gaussian completion}

\begin{lemma}[Gaussian completion and conditional comparison]\label{gp:comparison}
For $O\in C_c^\infty(\R^k)$, uniformly over exteriors satisfying
\eqref{gp:exterior} and \eqref{gp:means},
\[
 \left|\E^{\mu_\infty}O(\bmg_r)
       -\E^{\mathrm{GOE}_M}O(\bmg_{M,r})\right|
 \leq C_O\bigl(K^{-2}+K^{-c_*}+e^{-cM^{c_2}}\bigr)
\]
for some positive $c_*,c,c_2$.
\end{lemma}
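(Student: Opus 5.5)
We construct an auxiliary conditional log gas $\mu_M$ on the same chamber $J$, with the nearby exterior kept and the distant exterior replaced by GOE classical locations. The comparison then splits as $\mu_\infty\to\mu_M\to\mathrm{GOE}_M$. The second step is the local log-gas universality of \cite[Proposition~5.3]{EYGap}. The first step is a direct density comparison controlled by a force estimate.

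Concretely, let $\gamma^M_j$ be the increasing GOE classical locations of the matrix of size $M$, scaled by $M$ and centered so that label $M/2$ sits near $m$. Near the center these have spacing $\pi$. Define $\mu_M$ on $\mathcal D_I$ by the density \eqref{gp:abstract-density}, but replace the potential $W_I$ by $W_{I,M}$. Here $W_{I,M}'(t)=\sum_{K<|j|\le L}(u_j-t)^{-1}+\sum_{L<|j|\le M/2}(\gamma^M_j-t)^{-1}$, with the GOE self-potential adjusted so that $\mu_M$ is exactly the GOE conditional law given exterior particles at those locations. By \eqref{gp:exterior}, $|u_j-\gamma^M_j|\le C|j|^\epsilon$ for $|j|>L$. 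For $t\in J$, the force difference from labels beyond $L$ is then bounded by pairing opposite labels:
\[
\sum_{|j|>L}\Bigl|\frac{1}{u_j-t}-\frac{1}{\gamma^M_j-t}\Bigr|\lesssim\sum_{|j|>L}\frac{|j|^\epsilon}{j^2}\lesssim L^{-1+\epsilon}.
\]
Using \eqref{gp:abstract-force}, the principal-value convergence, and rigidity beyond $M/2$ (a paired tail that is $O(M^{-1+\epsilon})$), we obtain $\sup_J|W_I'-W_{I,M}'|\le CL^{-1+\epsilon}$ after subtracting a constant. Since the Hamiltonian perturbation is linear up to this error over the interval $J$ of length $O(K)$, and the sum runs over $2K+1$ particles, the total Hamiltonian discrepancy is $O(K^2L^{-1+\epsilon})\le K^{-2}$ after discarding a $t$-linear term. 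The one-body linear part $\beta\sum t_j$ is handled separately: it is fixed by matching conditional means, using \eqref{gp:means} and the mean rigidity of GOE conditional blocks.

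For the comparison $\mu_\infty\to\mu_M$, write $\dd\mu_\infty=e^{-\Delta}\dd\mu_M/\E^{\mu_M}e^{-\Delta}$ with $\|\Delta\|_\infty\le CK^{-2}$. This gives a total variation error of $O(K^{-2})$ for all bounded tests.

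For $\mu_M$ versus $\mathrm{GOE}_M$, the GOE law conditioned on exterior particles $y$ is a $\beta=1$ log gas of the same form. \cite[Proposition~5.3]{EYGap} compares the gap statistics of two such conditional measures with boundary conditions satisfying rigidity (at scale $K^{\xi_*}$) and mean conditions (\eqref{gp:means}), with an error of $K^{-c_*}$. We apply it twice: once between $\mu_M$ and the GOE conditional law with typical exterior $y$, and once more after averaging over GOE exteriors. The GOE exteriors satisfy the same hypotheses outside probability $e^{-cM^{c_2}}$ by GOE rigidity. This yields the three error terms.

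The main obstacle is matching hypotheses exactly. \cite[Proposition~5.3]{EYGap} requires boundary conditions given by finite configurations with a confining external potential. Our $\mu_\infty$ instead has an infinite exterior and a principal-value force. The completion must therefore produce a finite $M$-particle configuration whose external potential meets the regularity and the $\xi,\delta_*,\nu$ conditions of that proposition. This includes controlling the linear term $b$ through the mean condition \eqref{gp:means} rather than through an explicit formula. I would first verify the force estimate uniformly on $J$ and check that the residual linear drift is at most $K^{\xi_*-1}$ per particle, which the proposition tolerates.
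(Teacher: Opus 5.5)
Your architecture is the paper's. You keep the exterior with $K<|j|\le L$, replace $|j|>L$ by GOE classical locations, compare $\mu_\infty$ with the resulting conditional GOE law $\mu_M$ in total variation via a force estimate, and then use \cite[Proposition~5.3]{EYGap} against typical Gaussian exteriors.

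The gap is in the force step. Your termwise bound $|u_j-\gamma^M_j|\le C|j|^\epsilon$ for all $|j|>L$ is false. The classical locations satisfy $\gamma^M_j=\pi j+O(|j|^3/M^2)$ only for $|j|\ll M^{2/3}$. For $|j|$ of order $M$ they differ from $\pi j\approx u_j$ by order $M$: the outermost sits at $2M$, not near $\pi M/2$. The terms $|u_j-\gamma^M_j|/j^2$ therefore sum to order one on each side, so your absolute termwise sum is $O(1)$, not $O(L^{\epsilon-1})$. This is what forces you to ``subtract a constant''.

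The resulting tilt $\beta\sum_j t_j$ cannot be removed by matching conditional means. A tilt displaces the block by order $K\beta$, so means within $K^{\xi_*}$ of $\alpha_j$ only bound $|\beta|$ by roughly $K^{\xi_*-1}$. Such a tilt, summed over $2K+1$ particles in a chamber of length of order $K$, is far from $O(1)$. So the density ratio is not $e^{O(K^{-2})}$, and both the total-variation comparison and the transfer of \eqref{gp:means} to $\mu_M$ fail. Nor does \cite[Proposition~5.3]{EYGap} cover a tilted conditional GOE whose frozen exterior sits at untilted classical locations.

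The missing observation is that there is no tilt. Hypothesis \eqref{gp:abstract-force} already fixes the whole force, including its constant part, as an opposite-label principal-value sum. So $b$ is not a free parameter, and you should pair $j$ with $-j$ on each side separately rather than comparing $u_j$ with $\gamma^M_j$.

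\begin{itemize}
\item By \eqref{gp:exterior}, $u_j^{-1}+u_{-j}^{-1}=O(j^{\epsilon-2})$, so $\operatorname{PV}\sum_{|j|>L}u_j^{-1}=O(L^{\epsilon-1})$.
\item The completed locations are odd in $j$ about $m$, so the far GOE force plus $v/(2M)$ vanishes at $v=0$ up to the single unpaired term, which is $O(1/M)$.
\item All far particles lie at distance $\gtrsim L$ from $J$, so both remainders have derivative $O(1/L)$ on $J$.
\end{itemize}

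Hence the forces differ by $O(K/L)$ uniformly on $J$, and the one-body potentials by $O(K^2/L)$. The full Hamiltonians then differ by $O(K^3/L)=O(K^{-2})$. This gives $\|\mu_\infty-\mu_M\|_{\TV}=O(K^{-2})$ and means within $O(K^{-1})$. That is exactly the conditional-mean input for \cite[Theorem~4.2]{EYGap}, which places the completed boundary in $R^*$.

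You also overlook ordering at the junction $|j|=L$. The rigidity error $L^\epsilon$ there exceeds the spacing $\pi$, so the completed exterior need not be increasing. The paper shifts the far locations outward by $L^{2\epsilon}$, symmetrically. This preserves the cancellation at $v=0$ and keeps them within $CK^{10\epsilon}$ of the classical positions.
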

\begin{proof}
\proofstep{1. Keep the near boundary and complete only the far boundary}
The comparison measure must have the same ordered chamber as
$\mu_\infty$. Keeping all exterior locations with $K<|j|\le L$
ensures this. The small outward shift in the farther Gaussian
locations prevents an ordering defect where the two descriptions
meet.

Use GOE coordinates $u=M\lambda$ and relative classical positions
\[
 \mathcal J_M=\{-M/2+1,\ldots,M/2\},\qquad
 \int_{-2}^{\Gamma_j/M}\rho_{\mathrm{sc}}(v)\dd v
       =\frac12+\frac jM.
\]
Then $\Gamma_{-j}=-\Gamma_j$ for $1\leq j<M/2$, the unpaired
position is $\Gamma_{M/2}=2M$, and
\[
 |\Gamma_j|\geq\pi|j|,\qquad
 \Gamma_j=\pi j+O(j^3/M^2)\quad (|j|\leq2L).
\]
With $\Delta=L^{2\epsilon}$, complete the exterior by
\[
 y_j=
 \begin{cases}
  u_j,&K<|j|\leq L,\\
  \Gamma_j+\operatorname{sgn}(j)\Delta,&j\in\mathcal J_M,\ |j|>L.
 \end{cases}
\]
The outward shift dominates both junction errors. Hence the array is
ordered, its configuration interval is $J$, and
\begin{equation}\label{gp:good-boundary}
 \max_{j\in\mathcal J_M\setminus I}|y_j-\Gamma_j|\leq CK^{10\epsilon}.
\end{equation}
Its conditional GOE law $\mu_M$ has one-particle Hamiltonian
\[
 W_M(v)=\frac{v^2}{4M}
       -\sum_{j\in\mathcal J_M\setminus I}\log|v-y_j|.
\]
\proofstep{2. Compare the forces and then the whole Hamiltonians}
By the conditional-force assumption~\eqref{gp:abstract-force}, the
corresponding force for $\mu_\infty$ is
$W_\infty'(v)=\operatorname{PV}\sum_{|j|>K}(u_j-v)^{-1}$.
The forces agree for $K<|j|\leq L$. Their remaining terms are
\[
 R_\infty(v)=\operatorname{PV}\sum_{|j|>L}\frac1{u_j-v},\qquad
 R_M(v)=\frac v{2M}+\sum_{j\in\mathcal J_M,\ |j|>L}\frac1{y_j-v}.
\]
Opposite-label pairing and the inverse-square tail give
\[
 |R_\infty(0)|\leq C\sum_{j>L}j^{\epsilon-2}\leq CL^{\epsilon-1},\qquad
 R_M(0)=\frac1{2M+\Delta},\qquad
 \sup_J(|R_\infty'|+|R_M'|)\leq C/L.
\]
Since $L^\epsilon\leq K$, the force difference is at most $CK/L$.
Choose additive constants so that $W_M(0)=W_\infty(0)$;
this does not change either conditional probability law. The
force bound and $|v|\le CK$ imply
\[
 \sup_{v\in J}|W_M(v)-W_\infty(v)|\le CK^2/L.
\]
There are $2K+1$ inside particles. Since their Vandermonde factors
are identical, the \emph{full} Hamiltonians therefore differ by at
most $CK^3/L=CK^{-2}$ uniformly throughout the chamber.
If two unnormalized densities differ by a multiplicative factor
between $e^{-\delta}$ and $e^\delta$, their normalizing constants
satisfy the same bound, and the ratio of normalized densities lies
between $e^{-2\delta}$ and $e^{2\delta}$. Hence their total
variation distance is at most $e^{2\delta}-1$.
Also $|u_j|\le CK$ on this common chamber, so a coordinate mean
changes by at most $CK$ times the total variation distance.
These observations give
\begin{equation}\label{gp:finite-error}
 \|\mu_\infty-\mu_M\|_{\TV}\leq CK^{-2},\qquad
 \max_{j\in I}|\E^{\mu_\infty}u_j-\E^{\mu_M}u_j|\leq CK^{-1}.
\end{equation}
In particular, \eqref{gp:means} transfers to $\mu_M$ with an
adjusted constant.

\proofstep{3. Verify the good-boundary and conditional-mean hypotheses}
We verify the hypotheses of \cite{EYGap} with size $M$ and block
center $M/2$. The relevant powers are
\[
 M^{\delta_*}\leq K\leq M^{1/4},\qquad
 M^\nu=K^{\xi^2/4},\qquad
 M^{\xi_*\delta_*/2}=K^{\xi^2/64}.
\]
Both tolerances dominate the bound in \eqref{gp:good-boundary}. After division by
$M$, this gives all three bounds in \cite[(4.7)]{EYGap}, with exponent
$\nu=\xi^2\delta_*/2$ or $\xi_*\delta_*/2$ and a fixed bulk margin;
the nonbulk bounds are weaker. Equation~\eqref{gp:exterior} gives
\cite[(4.9)]{EYGap}, absorbing its $O(1)$ particle-count difference
in $K^\xi$.

Apply \cite[Theorem 4.2]{EYGap} with exponent $\xi_*$. The stronger
boundary condition and transferred means give Gaussian rigidity on
scale $K^{\xi_*}$. In the units of~\cite{EYGap}, the particle coordinates are $u_j/M$;
thus a tolerance $M^{-1}K^{\xi_*}$ there is $K^{\xi_*}$ here.
The target threshold is $M^\nu=K^{\xi^2/4}$, whereas the
conditional fluctuation scale is $K^{\xi_*}=K^{\xi^2/16}$.
Their ratio is $K^{3\xi^2/16}$, and squaring it in the Gaussian
tail gives $K^{3\xi^2/8}$. Since
$\max_{j\in I}|\alpha_j-\Gamma_j|\le CK^\epsilon$,
\begin{equation}\label{gp:conditional-rigidity}
 \PP^{\mu_M}(|u_j-\Gamma_j|>M^\nu)
 \leq C\exp(-cK^{3\xi^2/8}),\qquad j\in I.
\end{equation}
Decrease the exponent $c_2$ in the global Gaussian estimates
\cite[(5.1)--(5.4)]{EYGap} so that $16c_2<3\xi^2/8$.
Then \eqref{gp:conditional-rigidity} is at most
$\exp(-c_1M^{c_2}/2)$ for large $K$. This is exactly the conditional
bound defining $R^*$ in \cite[(5.8)]{EYGap}; the weaker global
constants remain valid in its Lemma~5.2 and Proposition~5.3.

\proofstep{4. Compare with a typical Gaussian boundary}
Apply that proposition with both ensembles Gaussian and integrate
the second exterior over the $R^*$ event of its Lemma~5.2, whose
complement has probability $O(e^{-cM^{c_2}})$. For the fixed anchor
$r$, take the two block centers to be $M/2$ and $M/2-r$, respectively;
the comparison gap then has label $M/2$, as required in the proposition.
The cited theorem uses anchored differences, which for an
increasing array are $d_j=u_r-u_{r+j}$, $1\le j\le k$.
Our consecutive positive gaps are recovered by
$u_{r+1}-u_r=-d_1$ and
$u_{r+j}-u_{r+j-1}=d_{j-1}-d_j$ for $j\ge2$.
This invertible linear change of variables takes a smooth compactly
supported gap test to a smooth compactly supported anchored test,
with controlled derivative norm. Where the cited comparison uses
mean-spacing-one gaps, apply it to $\bmv\mapsto O(\pi\bmv)$; this
fixed dilation preserves smoothness, compact support, and the error
bounds up to constants depending on $O$. The factor $\pi$ times the
semicircle density at label $M/2+r$ is $1+O(M^{-2})$, and
Theorem~2.3 of \cite{EYGap} moves the Gaussian comparison from label
$M/2$ to $M/2+r$. These errors and \eqref{gp:finite-error} prove the
lemma.
\end{proof}

\subsection{Identification of the GOE gap limit}

\begin{lemma}[Identification of the GOE reference gaps]\label{gp:goe}
For each fixed $r,k$ and $O\in C_c^\infty(\R^k)$,
$\E^{\mathrm{GOE}_M}O(\bmg_{M,r})\to\E O(G_0,\ldots,G_{k-1})$
as $M\to\infty$ through even integers.
\end{lemma}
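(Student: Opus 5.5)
The plan is to derive the gap law from the classical GOE bulk point-process limit and Campbell's formula, without appealing to the graph. The starting input is the standard fact, via the determinantal structure and kernel asymptotics of \cite{DG}, that the GOE at energy zero has the Sine$_1$ limit. In the coordinates $u=M\lambda$ with mean spacing $\pi$, the point process $\sum_j\delta_{M\mu_j}$ converges to $\Theta_1$. Moreover, correlation functions converge locally uniformly, which gives uniform integrability of local counts.

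First, I will pass from a fixed energy to a typical particle by averaging over labels. Take a window $I=[0,\pi T]$ with $T$ large but fixed, and let $F$ be the rooted test $F(\zeta)=O(\text{first }k\text{ gaps to the right of the root})$. By Campbell's identity for $\Theta_1$,
\[
 \E\sum_{x\in\Theta_1\cap I}F(\Theta_1-x)=T\,\E O(G_0,\ldots,G_{k-1}).
\]
On the GOE side, the corresponding sum over eigenvalues $M\mu_j\in I$ converges to the left side. This uses convergence of correlation measures (see \cite[(2.3)]{BEYY}) together with the count bounds. It also needs the fact that $F$, composed with gaps, is almost surely continuous at simple configurations once the $k$ right neighbours lie in a bounded window. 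That last point follows by truncating with a cutoff on $y_{k}\le \pi T'$ and using the gap tails of $\Theta_1$.

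Second, I will identify the GOE sum with a sum over labels. Rigidity of the GOE places exactly $T+O(M^{o(1)})$ labels near $M/2$ in $I$; in fact the labels there are $M/2+j$ with $j=0,\dots,T-1$ up to $O(1)$ boundary effects. I will then use \cite[Theorem~2.3]{EYGap}, the bulk-label comparison, to show that $\E O(\bmg_{M,r})$ is asymptotically independent of the label in a window of size $T$ around $M/2$. Summing over labels in $I$ then gives $T\,\E^{\mathrm{GOE}_M}O(\bmg_{M,r})+O(1)+o(1)$, where the $O(1)$ comes from boundary labels. Dividing by $T$, letting $M\to\infty$ and then $T\to\infty$ yields the claim.

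The main obstacle is this exchange between a sum over particles in a fixed spatial window and a sum over deterministic labels. A particle lying in $I$ is random, while $\bmg_{M,r}$ is indexed by a fixed label. I will handle it by writing the spatial sum as $\sum_j O(\bmg_{j})\mathbf 1_{M\mu_j\in I}$. Rigidity gives that, with overwhelming probability, the indicator equals $\mathbf 1_{0\le j-M/2-j_0<T}$ up to $O(\log M)$ boundary labels for a random offset $j_0$. To control the random offset, I will first center at the eigenvalue nearest $0$, and then use the label-uniformity of \cite{EYGap} to replace each labelled gap expectation by the one at $M/2+r$. The boundary contribution is $O(\|O\|_\infty\log M\,/T)$ after division, which I will treat carefully but expect to be routine.
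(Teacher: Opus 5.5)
\textbf{There is a genuine gap} in your exchange between particles in a spatial window and deterministic labels, and it comes from the order of limits.

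With the window $I=[0,\pi T]$ fixed while $M\to\infty$, the eigenvalues landing in $I$ carry labels $M/2+j_0,\ldots,M/2+j_0+n-1$. Here the offset $j_0$ is random and not $O(1)$. Bulk GOE eigenvalues fluctuate on the scale of $\sqrt{\log M}$ spacings, and rigidity controls them only up to $M^{\epsilon}$. So your claim that the labels in $I$ are $M/2+j$, $0\le j<T$, up to $O(1)$ boundary effects is false.

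Your own bookkeeping exposes the problem. The boundary error $O(\|O\|_\infty\log M/T)$ diverges as $M\to\infty$ at fixed $T$, so ``first $M\to\infty$, then $T\to\infty$'' does not close.

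Nor can the offset be removed by centering at the eigenvalue nearest $0$ and then invoking \cite[Theorem~2.3]{EYGap}. That theorem compares gap expectations at \emph{deterministic} labels. The quantity $\E\sum_j O(\bmg_{M,j-M/2})\mathbf 1_{M\mu_j\in I}$ instead involves a random label set that is correlated with the gaps being tested. This correlation is exactly why a fixed-energy statement, such as your first step (which is fine by itself), does not yield a fixed-label gap law.

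\textbf{The missing idea is to let the averaging window grow with $M$.} The paper takes $n_M=\lfloor M^{1/4}\rfloor$ and proceeds as follows.
\begin{enumerate}
\item \cite[Theorem~2.3]{EYGap} replaces the fixed label by the average over the deterministic labels $|j|\le n_M$.
\item Rigidity then exchanges this average for the spatial sum over particles of $\sum_j\delta_{M\mu_j}$ in $[-\pi n_M,\pi n_M]$. Only $O(M^\eta)$ labels are mismatched, with $\eta<1/4$, and these vanish after division by $2n_M+1\asymp M^{1/4}$.
\end{enumerate}

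The price is that fixed-energy convergence to $\Theta_1$ at energy $0$ is no longer enough. Campbell's formula must be applied to the finite-$M$ GOE, writing the spatial sum of your rooted test $F$ as $\int\rho_{M,1}(t)\,\E^0_{M,t}F\,\dd t$ in terms of the one-point density and the finite-GOE Palm laws. One then needs convergence of these Palm laws \emph{uniformly} in the center $|t|\le\pi n_M$.

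The paper obtains this uniformity from two ingredients. First, the kernel asymptotics of \cite{DG} are uniform for $t/\sqrt M$ in a compact set. Second, Hadamard bounds on the Pfaffian correlation functions dominate the Palm Laplace-functional expansion. The normalization $(2n_M+1)^{-1}\int_{-\pi n_M}^{\pi n_M}\rho_{M,1}(t)\,\dd t\to1$ then plays the role of your factor $T$.
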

\begin{proof}
A fixed-label gap law is not literally a fixed-energy point-process
law. We first use the GOE gap comparison to replace the fixed label
by a short label average. Rigidity turns that average into a spatial
average over actual particles. Campbell's formula then identifies
it with a Palm expectation, to which the GOE kernel asymptotics
apply.

In this proof, $\E$ averages the GOE eigenvalues, except for an
expectation explicitly involving the limiting Palm gaps
$(G_0,\ldots,G_{k-1})$. The notation $\E^0_{M,t}$ below denotes
the finite-GOE Palm expectation, not the graph or exterior law.
Using the increasing GOE eigenvalues $\mu_j$ fixed above, put
$\Xi_M=\sum_j\delta_{M\mu_j}$,
$n_M=\lfloor M^{1/4}\rfloor$, and
$\mathcal A_M=\{j\in\mathbb Z:|j|\leq n_M\}$.
Theorem~2.3 of \cite{EYGap} gives
\[
 \E O(\bmg_{M,r})-
 \frac1{|\mathcal A_M|}\sum_{j\in\mathcal A_M}\E O(\bmg_{M,j})
 \longrightarrow0.
\]
On these labels, $\pi$ times the local semicircle density is
$1+O(M^{-3/2})$, a negligible change for $O$. Gaussian rigidity
replaces this label set by the points of $\Xi_M$ in
$[-\pi n_M,\pi n_M]$, with only $O(M^\eta)$
endpoint errors for any $0<\eta<1/4$, outside an exponentially rare event.
Let $F_O$ apply $O$ to the first $k$ right gaps of a rooted
configuration, with value zero if fewer neighbors exist. Write
$\rho_{M,q}$ for the correlation densities of $\Xi_M$ and
$\E^0_{M,t}$ for its Palm expectation at $t$, translated to zero.
The label average consequently differs by $o(1)$ from
\[
 \frac1{|\mathcal A_M|}\E
 \sum_{u\in\Xi_M\cap[-\pi n_M,\pi n_M]}F_O(\Xi_M-u)
 =\frac1{|\mathcal A_M|}\int_{-\pi n_M}^{\pi n_M}
       \rho_{M,1}(t)\E^0_{M,t}F_O\dd t.
\]
Under this Palm law the nonroot correlation densities are
\[
 \frac{\rho_{M,q+1}(t,t+v_1,\ldots,t+v_q)}{\rho_{M,1}(t)}.
\]
Apply \cite[arXiv v3, Theorem 1.1 and Corollary 1.2]{DG} with
$V(x)=x^2/2$, so its GOE weight is $e^{-x^2/4}$ and $x=\sqrt M\mu$.
The centers $t/\sqrt M=O(M^{-1/4})$ remain in a compact set.
The resulting kernel convergence, expressed in the coordinates
$M\mu$, is therefore uniform for $|t|\leq\pi n_M$ and bounded
$\bmv$, and $\rho_{M,1}(t)=1/\pi+o(1)$.
The bounded matrix entries, including the sign entry, give the
Pfaffian bound $C^q(2q)^{q/2}$ by Hadamard's inequality.
After fixing the root and dividing by $q!$, these bounds are summable
in every bounded interval. For example, for a nonnegative continuous $f$ of compact support,
the reduced Palm Laplace functional is
\[
 \sum_{q=0}^\infty\frac{(-1)^q}{q!}
 \int\prod_{j=1}^q(1-e^{-f(v_j)})
 \frac{\rho_{M,q+1}(t,t+v_1,\ldots,t+v_q)}{\rho_{M,1}(t)}
 \,\dd\bm v.
\]
For finite $M$ the series terminates; we can regard the remaining
terms as zero. The Pfaffian bounds, divided by $q!$ and multiplied
by the fixed support volume to the power $q$, give a summable
majorant independent of $M,t$. Dominated convergence proves uniform
convergence of these local Laplace functionals. This identifies
uniform local convergence of the reduced Palm laws; adding the
fixed root gives the Palm laws $\Theta_1^0$.
If $\supp O\subset[-R_O,R_O]^k$, then $F_O$ depends only on
$[0,kR_O+1]$ and is almost surely continuous under the simple Palm law.
Consequently $\E^0_{M,t}F_O\to\E O(G_0,\ldots,G_{k-1})$ uniformly.
The prefactor in the Campbell identity satisfies
\[
 \frac1{|\mathcal A_M|}\int_{-\pi n_M}^{\pi n_M}\rho_{M,1}(t)\,\dd t
 =\frac{2\pi n_M}{|\mathcal A_M|}
       \left(\frac1\pi+o(1)\right)\longrightarrow1.
\]
Thus the spatial-window length and the intensity cancel, giving the
claimed Palm expectation.
For the endpoint replacement, divide the number
$O(M^\eta)$ of possible mismatched labels by
$|\mathcal A_M|\asymp M^{1/4}$; it vanishes for $\eta<1/4$.
The boundedness of $O$ controls the rare exceptional event as well.
For locality of $F_O$, a nonzero value requires all first $k$ gaps
to be at most $R_O$, so the required neighbors lie within
$[0,kR_O]$. If a required next gap is larger, its absence is detected
within $[0,kR_O+1]$ and the test is zero. Under a simple Palm process
with no particle at the fixed outer boundary, this is a bounded,
almost surely continuous local observable. Thus local Palm
convergence suffices; no assertion about an unbounded search for
neighbors is needed.
\end{proof}

\begin{proof}[Proof of Proposition~\ref{gp:rigid-gibbs}]
At this stage the microscopic graph limit has already been taken.
The only large parameters are the conditional block size $K$ and
the auxiliary GOE size $M=K^{16}$. In particular, none of the
qualitative eigenvector limits is being used uniformly in $K$.
For $O\in C_c^\infty(\R^k)$, integrate Lemma~\ref{gp:comparison}
over the exterior. Equations~\eqref{gp:exterior} and~\eqref{gp:means}
hold outside an event of probability $O(K^{-D})$, so
\[
 \left|\E^{\mathsf Q_K}O(\bmg_r)
       -\E^{\mathrm{GOE}_M}O(\bmg_{M,r})\right|
 \le C_O\bigl(K^{-2}+K^{-D}+K^{-c_*}+e^{-cM^{c_2}}\bigr).
\]
Relative-gap expectations under $\mathsf Q_K$ equal those under
$\mathsf P$ by hypothesis, even though $\mathsf Q_K$ depends on $K$.
Let $K\to\infty$ through even integers, with $M=K^{16}$, and apply
Lemma~\ref{gp:goe}. Equality for smooth compactly supported tests
determines the claimed distribution of each finite gap vector.
Indeed such tests determine finite Borel measures on $\R^k$ by
approximation of continuous compactly supported functions. Both
sides are probability laws. The left side is independent of $K$
because each $\mathsf Q_K$ preserves the same relative law
$\mathsf P$, so passing to larger conditional blocks does not
change the object being identified.
\end{proof}

\end{document}